\newtheorem{thm}{Theorem}[section]
\newtheorem{lem}[thm]{Lemma}
\newtheorem{prop}[thm]{Proposition}
\newtheorem{cor}[thm]{Corollary}
\theoremstyle{definition}
\newtheorem{defn}[thm]{Definition}
\theoremstyle{remark}
\newtheorem{rmk}[thm]{Remark}
\theoremstyle{remark}
\newtheorem{ex}[thm]{Example}
\def\wh{\widehat}
\def\leq{\leqslant}
\def\geq{\geqslant}
\def\l{\lambda}
\def\m{\mu}
\def\n{\nu}
\def\ie{{\it i.e.}\/,}
\colorlet{lgray}{white!85!black}
\colorlet{lred}{white!85!red}
\colorlet{lgreen}{white!80!green}
\colorlet{dgreen}{black!30!green}
\definecolor{green}{rgb}{0.1,0.8,0.1}
\definecolor{yellow}{rgb}{1.0,0.85,0.25}
\newcommand{\bra}[1]{\left\langle #1\right|}
\newcommand{\ket}[1]{\left|#1\right\rangle}
\renewcommand{\tikz}[2]{
\begin{tikzpicture}[scale=#1,baseline=(current bounding box.center),>=stealth]
#2
\end{tikzpicture}}
\renewcommand{\vert}[7]{
W_{#1,#2} \left(#3; 
\tikz{0.3}{
\draw[lgray,line width=1pt,->] (-1,0) -- (1,0);
\draw[lgray,line width=1pt,->] (0,-1) -- (0,1);
\node[left] at (-1,0) {\tiny $#5$};\node[right] at (1,0) {\tiny $#7$};
\node[below] at (0,-1) {\tiny $#4$};\node[above] at (0,1) {\tiny $#6$};
}\right)}
\newcommand{\vertt}[8]{
	W_{#1,#2}^{(#3)} \left(#4; 
	\tikz{0.3}{
		\draw[lgray,line width=1pt,->] (-1,0) -- (1,0);
		\draw[lgray,line width=1pt,->] (0,-1) -- (0,1);
		\node[left] at (-1,0) {\tiny $#6$};\node[right] at (1,0) {\tiny $#8$};
		\node[below] at (0,-1) {\tiny $#5$};\node[above] at (0,1) {\tiny $#7$};
	}\right)}
\def\B{\mathcal{B}}
\def\C{\mathcal{C}}
\def\F{{\sf F}}
\def\G{{\sf G}}
\def\I{\bm{I}}
\def\J{\bm{J}}
\def\K{\bm{K}}
\def\AA{\bm{A}}
\def\BB{\bm{B}}
\def\CC{\bm{C}}
\def\DD{\bm{D}}
\def\l{{\sf L}}
\def\m{{\sf M}}
\def\n{{\sf N}}
\newcommand{\Is}[2]{\I_{[#1,#2]}}
\renewcommand\le{\leq}
\renewcommand\ge{\geq}
\def\stoch{{\rm stoch}}
\def\cG{\mathcal{G}}
\def\P{\bm{P}}
\def\O{\mathcal{O}}
\def\E{\mathbb{E}}
\def\fm{\mathfrak{m}}
\def\fn{\mathfrak{n}}
\def\fx{\mathfrak{x}}
\def\fk{\mathfrak{k}}
\def\fl{\mathfrak{l}}
\def\ff{\mathfrak{f}}
\def\fp{\mathfrak{p}}
\def\fe{\mathfrak{e}}
\def\aa{\bm{\alpha}}
\def\bb{\bm{\beta}}
\def\cc{\bm{\gamma}}
\def\dd{\bm{\delta}}
\def\z{\mathfrak Z}
\numberwithin{equation}{section}
\begin{document}

\title{Observables of coloured stochastic vertex models\\ and their polymer limits}

\author{Alexei Borodin and Michael Wheeler}

\address[Alexei Borodin]{ Department of Mathematics, MIT, Cambridge, USA, and
Institute for Information Transmission Problems, Moscow, Russia. E-mail: borodin@math.mit.edu }

\address[Michael Wheeler]{ School of Mathematics and Statistics, The University of Melbourne, Parkville,
Victoria, Australia. E-mail: wheelerm@unimelb.edu.au}

\begin{abstract} In the context of the coloured stochastic vertex model in a quadrant, we identify a family of observables whose averages are given by explicit contour integrals. The observables are certain linear combinations of $q$-moments of the coloured height functions of the model. In a polymer limit, this yields integral representations for moments of partition functions of strict-weak, semi-discrete Brownian, and continuum Brownian polymers with varying beginning and ending points of the polymers.   
\end{abstract}

\maketitle

\setcounter{tocdepth}{1}
\makeatletter
\def\l@subsection{\@tocline{2}{0pt}{2.5pc}{5pc}{}}
\makeatother
\tableofcontents

\section{Introduction}
\label{sec:intro}

At least since the work of Kardar \cite{Kardar} in 1987, moments of polymer 
partition functions and related quantities have been an indispensable tool in 
analyzing models from the so-called Kardar-Parisi-Zhang (KPZ) universality class 
in (1+1) dimensions \cite{KPZ}, such as various (integrable) models of directed 
polymers in a random environment, exclusion and zero-range processes, and random 
growth models. An advanced (although non-rigorous) replica analysis of moments 
allowed Calabrese-Le Doussal-Rosso \cite{CalabreseDR} and Dotsenko 
\cite{Dotsenko} to derive the long-time asymptotics of the 
so-called narrow wedge solution to the KPZ equation. An alternative approach to 
the asymptotics of this solution based on the pioneering work of Tracy-Widom 
\cite{TracyW1, TracyW2, TracyW3} was developed rigorously by Amir-Corwin-Quastel 
\cite{ACQ} and in the physics literature by Sasamoto-Spohn 
\cite{SS}.\footnote{The two approaches were mostly reconciled by 
Borodin-Corwin-Sasamoto \cite{BorodinCS}.} The moment method became rigorous 
with appearance of explicit integral representations for $q$-moments of 
suitable $q$-deformed models together with asymptotic analysis of their 
generating functions in Borodin-Corwin \cite{BorodinC}.\footnote{A simpler 
approach to the asymptotics through moments was later suggested in 
\cite{Borodin2}, \cite{BorodinO}.}

Dozens of papers with analysis of $q$-moments of various integrable 
probabilistic systems have been written since then. The stochastic six vertex 
model, first introduced by Gwa-Spohn in 1992 \cite{GwaS}, also joined in via the 
work of Borodin-Corwin-Gorin \cite{BorodinCG}, see also 
Borodin-Corwin-Petrov-Sasamoto \cite{BorodinCPS, BorodinCPS2}, Corwin-Petrov 
\cite{CorwinP}, Borodin-Petrov \cite{BorodinP1,BorodinP2} for various 
approaches to the $q$-moments of this model.    

In 2016, Kuniba-Mangazeev-Maruyama-Okado \cite{KunibaMMO} introduced 
Yang-Baxter integrable \emph{coloured} stochastic vertex models, see also 
Bosnjak-Mangazeev \cite{BosnjakM}, Aggarwal-Borodin-Bufetov \cite{AggarwalBB}. 
Colours correspond to the simple roots in the underlying quantum affine algebra 
$U_q(\widehat{\mathfrak{sl}_{n+1}})$, with $n=1$ corresponding to the 
\emph{colourless} or rank-1 case considered previously.

Our recent paper \cite{BW} offered an extensive algebraic analysis of these 
coloured models and uncovered certain distributional correspondences between 
coloured and (much more studied) colourless ones. Such correspondences were 
further extended in Borodin-Bufetov \cite{BorodinBufetov}, 
Borodin-Gorin-Wheeler \cite{BGW}, and they gave access to various unknown 
marginals of the coloured models. However, so far no explicit formulas for 
observables of the coloured models have been found, apart from those that arise 
through matching with colourless models. The primary goal of this paper is to 
remedy this fact.

We obtain explicit integral representations for certain linear combinations of 
$q$-moments of coloured height functions for the coloured stochastic vertex 
model in a quadrant. Further, following a path worked out in \cite{BGW}, we 
degenerate the (fully fused) coloured vertex to directed polymers, thus 
obtaining formulas for  joint moments of polymer partition functions with 
different starting points in the same noise field. The limiting objects include 
the KPZ equation (equivalently, the continuum Brownian polymer), the 
O'Connell-Yor semi-discrete Brownian polymer \cite{OY}, the strict-weak or Gamma 
polymer of Corwin-Sepp\"al\"ainen-Shen \cite{CSS} and O'Connell-Orthmann 
\cite{OCO}, and the Beta polymer of Barraquand-Corwin \cite{BarCor}.

Let us describe our results in more detail.

The coloured stochastic vertex model in a quadrant can be viewed as a Markovian 
recipe of constructing random coloured up-right paths in $\mathbb Z_{\ge 
1}\times\mathbb Z_{\ge 1}$ with the colours labeled by natural numbers. Let us 
also initially assume that no horizontal edge of the lattice can be occupied by 
more than a single path; this restriction will eventually be removed.
The model depends on a quantization parameter $q\in \mathbb C$, spin parameter  
$s\in\mathbb C$, and row \emph{rapidities} denoted by $x_1,x_2,\ldots\in\mathbb 
C$. 

Along the boundary of the quadrant, we demand that no paths enter the 
quadrant from the bottom. On the other hand, a single coloured path enters the 
quadrant from the left in each row. We assume that the colours of the paths 
entering on the left are weakly increasing in the upward direction, and denote 
by $\lambda_1\ge 0$ the number of paths of colour 1, by $\lambda_2\ge 0$ the 
number of paths of colour 2, \emph{etc}. Let us also denote by 
$\ell_k=\lambda_1+\dots+\lambda_k$, $k\ge 1$, the partial sums of this sequence and 
also set $\ell_0=0$.

Once the paths are specified along the boundary, they progress in the up-right 
direction within the quadrant using certain interaction probabilities, also 
known as \emph{vertex weights}.
For each vertex of the lattice, once we know the colours of the entering paths 
along the bottom and left adjacent edges, we decide on the colors of the exiting 
paths along the top and right edges according to those 
probabilities.\footnote{Such decisions at different vertices are independent.} 
They are given by the table \eqref{intro:s-weights}, where it is assumed that 
$x$ is the rapidity of the row to which the vertex belongs, $1\le i<j$, 
$\bm{I}=(I_1,I_2,\dots)$ denotes a vector whose coordinates $I_k$ are equal to 
the number of paths of colour $k$ that enter the vertex from the bottom, and
$\bm{I_i^{\pm}}=\bm{I}\pm \bm{e}_i$ with $\bm{e_i}$ being the standard basis 
vector with 1 as its $i$th coordinate and all other coordinates equal to zero. 
We also use the notations $\bm{I}_{ab}^{+-}=\bm{I}+\bm{e}_a-\bm{e}_b$, and 
$\bm{I}_{\ge a}=I_a+I_{a+1}+\cdots$. The weight of any vertex that does not fall 
into one of the six categories in \eqref{intro:s-weights} is set to zero.
See Section \ref{sec:prelim} for the origin of these weights.

\begin{align}
\label{intro:s-weights}
\begin{tabular}{|c|c|c|}
\hline
\quad
\tikz{0.7}{
	\draw[lgray,line width=1.5pt,->] (-1,0) -- (1,0);
	\draw[lgray,line width=4pt,->] (0,-1) -- (0,1);
	\node[left] at (-1,0) {\tiny $0$};\node[right] at (1,0) {\tiny $0$};
	\node[below] at (0,-1) {\tiny $\I$};\node[above] at (0,1) {\tiny $\I$};
}
\quad
&
\quad
\tikz{0.7}{
	\draw[lgray,line width=1.5pt,->] (-1,0) -- (1,0);
	\draw[lgray,line width=4pt,->] (0,-1) -- (0,1);
	\node[left] at (-1,0) {\tiny $i$};\node[right] at (1,0) {\tiny $i$};
	\node[below] at (0,-1) {\tiny $\I$};\node[above] at (0,1) {\tiny $\I$};
}
\quad
&
\quad
\tikz{0.7}{
	\draw[lgray,line width=1.5pt,->] (-1,0) -- (1,0);
	\draw[lgray,line width=4pt,->] (0,-1) -- (0,1);
	\node[left] at (-1,0) {\tiny $0$};\node[right] at (1,0) {\tiny $i$};
	\node[below] at (0,-1) {\tiny $\I$};\node[above] at (0,1) {\tiny $\I^{-}_i$};
}
\quad
\\[1.3cm]
\quad
$\dfrac{1-s x q^{\bm{I}_{\ge 1}}}{1-sx}$
\quad
& 
\quad
$\dfrac{s(sq^{I_i}-x) q^{\bm{I}_{\ge i+1}}}{1-sx}$
\quad
& 
\quad
$\dfrac{sx(q^{I_i}-1) q^{\bm{I}_{\ge i+1}}}{1-sx}$
\quad
\\[0.7cm]
\hline
\quad
\tikz{0.7}{
	\draw[lgray,line width=1.5pt,->] (-1,0) -- (1,0);
	\draw[lgray,line width=4pt,->] (0,-1) -- (0,1);
	\node[left] at (-1,0) {\tiny $i$};\node[right] at (1,0) {\tiny $0$};
	\node[below] at (0,-1) {\tiny $\I$};\node[above] at (0,1) {\tiny $\I^{+}_i$};
}
\quad
&
\quad
\tikz{0.7}{
	\draw[lgray,line width=1.5pt,->] (-1,0) -- (1,0);
	\draw[lgray,line width=4pt,->] (0,-1) -- (0,1);
	\node[left] at (-1,0) {\tiny $i$};\node[right] at (1,0) {\tiny $j$};
	\node[below] at (0,-1) {\tiny $\I$};\node[above] at (0,1) 
	{\tiny $\I^{+-}_{ij}$};
}
\quad
&
\quad
\tikz{0.7}{
	\draw[lgray,line width=1.5pt,->] (-1,0) -- (1,0);
	\draw[lgray,line width=4pt,->] (0,-1) -- (0,1);
	\node[left] at (-1,0) {\tiny $j$};\node[right] at (1,0) {\tiny $i$};
	\node[below] at (0,-1) {\tiny $\I$};\node[above] at (0,1) {\tiny $\I^{+-}_{ji}$};
}
\quad
\\[1.3cm] 
\quad
$\dfrac{1-s^2 q^{\bm{I}_{\ge 1}}}{1-sx}$
\quad
& 
\quad
$\dfrac{sx(q^{I_j}-1) q^{\bm{I}_{\ge j+1}}}{1-sx}$
\quad
&
\quad
$\dfrac{s^2(q^{I_i}-1)q^{\bm{I}_{\ge i+1}}}{1-sx}$
\quad
\\[0.7cm]
\hline
\end{tabular} 
\end{align}

The \emph{stochasticity} of the weights encodes the fact that these weights add 
up to 1 when summed over all possible states of the outgoing edges with the 
states of incoming edges being fixed. When the parameters of the model are such 
that all the weights are nonnegative, we obtain \emph{bona fide} transition 
probabilities. However, if we agree to deal with complex-valued discrete 
distributions, we will not actually need the positivity assumption for our main 
algebraic result.   

Let us now fix $n\ge 1$ and focus on the state of the model between row $n$ and 
row $n+1$.  
That is, let us record the locations where the paths of colours $1,2,\dots$ 
exit the $n$th row upwards as an $n$-dimensional coloured vector (equivalently, 
a \emph{coloured composition}) $\nu$ with coordinates in $\mathbb{Z}_{\ge 1}$.
By colour conservation that our weights observe, the counts of different 
colours in such a vector are provided by a (finite) sequence 
$\lambda=(\lambda_1,\lambda_2,\dots)$ corresponding to the colours that enter via the left edges of the first $n$ rows; we 
call $\lambda$ the \emph{colouring composition}.\footnote{For convenience of 
notation, we assume that the left-incoming colours in rows $n$ and $n+1$ are 
different, so that we do not need to split the last coordinate in this 
sequence.} We will write the coordinates of $\nu$ as 
\begin{equation}
\label{eq:coloured_blocks}
\nu 
= 
\bigl(\nu_1 \geq \cdots \geq \nu_{\ell_1}\, |\,
\nu_{\ell_1+1} \geq \cdots \geq \nu_{\ell_2} \,|\,\nu_{\ell_2+1} \geq \cdots \geq \nu_{\ell_3} \,|\,\cdots\bigr),
\end{equation}
where the groups separated by vertical bars list the coordinates of the paths between rows $n$ and $n+1$ of colour 1, colour 2, \emph{etc}. See Figure \ref{Fig_coloured} for an example. 

\begin{figure}[t]
	\begin{center}
		{\scalebox{0.9}{\includegraphics{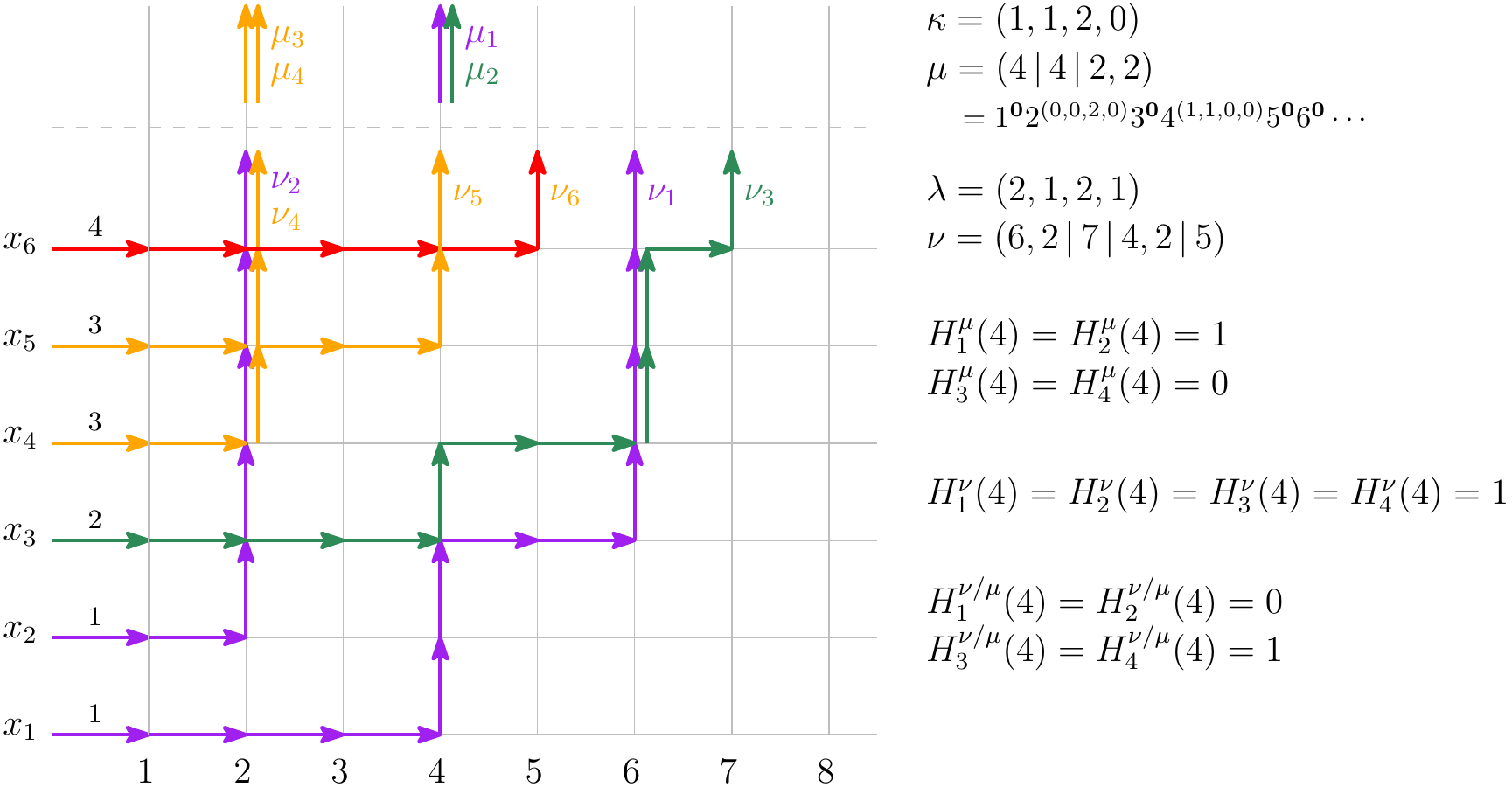}}
			\caption{A possible configuration of the coloured vertex 
				model (best viewed in colour).}
			\label{Fig_coloured}}
	\end{center}
\end{figure}

Our observables on random $\nu$'s are also indexed by coloured compositions. 
Let us fix a  composition $\kappa=(\kappa_1,\kappa_2,\dots)$ satisfying 
$\kappa\le\lambda$ coordinate-wise (otherwise the observable vanishes 
identically), and a coloured composition $\mu$ with coordinates in 
$\mathbb{Z}_{\ge 1}$ whose colour counts are given by $\kappa$. It is also 
convenient to parameterize $\mu$ differently by writing 
$\mu=1^{\bm{m}^{(1)}}2^{\bm{m}^{(2)}}\cdots$, where the vectors 
$\bm{m}^{(j)}=({m}^{(j)}_1,{m}^{(j)}_2,\dots)$ are such that their coordinates 
$m_i^{(j)}$ count the number of parts of $\mu$ of colour $i$ equal to $j$, cf. 
Figure \ref{Fig_coloured}.

Yet another way to describe coloured compositions is through their \emph{coloured height functions}
defined as follows, cf. Figure \ref{Fig_coloured}:
\begin{gather*}
H_i^\rho(x)=\#\{j:\text{colour}(\rho_j)=i,\,\rho_j\ge x\};\\ H_{>i}^\rho(x)=\sum_{k> 
	i}H_k^\rho(x),\ H_{\ge i}^\rho(x)=\sum_{k\ge 
	i}H_k^\rho(x); \quad H^{\nu/\mu}_*\equiv H^\nu_*-H^\mu_*. 
\end{gather*} 
Here $\text{colour}(\rho_j)$ refers to the number of the block in the splitting of the form \eqref{eq:coloured_blocks} for the coloured composition $\rho$ that $\rho_j$ belongs to. 

For $\kappa$-coloured $\mu$ as above, let us now define an observable $\mathcal O_\mu$, whose values on $\lambda$-coloured $\nu$'s are given by 

\begin{multline*}
\O_\mu(\nu)=\displaystyle\prod_{i,j\ge 1} 
q^{m_i^{(j)}H_{>i}^{\nu/\mu}(j+1)}{\displaystyle\binom{H_i^{\nu/\mu}(j+1)}{m_i^{
			(j) } } } _q\\
=\displaystyle\prod_{i,j\ge 1}  
\frac{\bigl(q^{H_{>i}^{\nu/\mu}(j+1)}-q^{H_{\ge 
			i}^{\nu/\mu}(j+1)}\bigr)\bigl(q^{H_{>i}^{\nu/\mu}(j+1)}-q^{H_{\ge 
			i}^{\nu/\mu}(j+1)-1}\bigr)\cdots \bigl(q^{H_{>i}^{\nu/\mu}(j+1)}-q^{H_{\ge 
			i}^{\nu/\mu}(j+1)-m_i^{(j)}+1}\bigr)}{(q;q)_{m_i^{(j)}}}\,.
\end{multline*}

In the \emph{rainbow case} $\lambda=(1,1,\dots,1)$ of all colours being different, $\O_{\mu}$ simplifies to
\begin{equation*}
\O^\text{rainbow}_\mu(\nu)=\displaystyle\prod_{\substack{i,j\,:\,m_i^{(j)}=1}} 
\bm{1}_{H_i^{\nu/\mu}(j+1)=1} \,q^{H_{>i}^{\nu/\mu}(j+1)}=\displaystyle\prod_{\substack{i,j\,:\, m_i^{(j)}=1}}  \frac{q^{H_{> i+1}^{\nu/\mu}(j+1)}-q^{H_{> i}^{\nu/\mu}(j+1)}}{q-1},
\end{equation*}
and in the \emph{colour-blind case} $\lambda=(n)$ it is given by
\begin{equation*}
\O_\mu^{\text{colour-blind}}(\nu)=\frac{(1-q^{H^\nu(\mu_1+1)})(1-q^{
		H^\nu(\mu_2+1)-1})\cdots (1-q^{H^\nu(\mu_m+1)-m+1})}{\prod_{j\ge 1} 
	(q;q)_{\text{mult}_j(\mu)}}\,,
\end{equation*}
where $H^\nu=H^\nu_{\ge 1}$ is the 
colour-blind height function, $\text{mult}_j(\mu)=\#\{i:\mu_i=j\}$, and 
$m=\ell(\mu)$ is the number of parts in $\mu$.  

In order to write down an integral representation for the average of $\O_{\mu}$, 
we need to introduce certain rational functions $f_\mu(\kappa;z_1,\dots,z_m)$ 
with the number of variables $m=\ell(\mu)$ equal to the number of parts of 
$\mu$. In the rainbow case of pairwise distinct colours, coloured compositions 
are the same as uncoloured ones, and for anti-dominant compositions 
$\delta=(\delta_1\le\delta_2\le\dots\le \delta_m)$ these functions are 
completely factorized:
\begin{align*}
f_{\delta}(z_1,\dots,z_m)
=
\frac{\prod_{j \geq 0} (s^2;q)_{\text{mult}_j(\delta)}}{\prod_{i=1}^{m} (1-s z_i)}
\prod_{i=1}^{m} \left( \frac{z_i-s}{1-sz_i} \right)^{\delta_i}.
\end{align*}
Note that we dropped $\kappa$ from the notation for $f_\delta$ as it plays no 
role in the rainbow situation. 
For non-anti-dominant $\mu$'s, one way to define $f_\mu$ is by the following 
recursion that allows to move step by step from anti-dominant compositions toward 
the dominant ones:
If $\mu_i<\mu_{i+1}$ for some $1\le i\le m-1$, then
\begin{equation*}
T_i \cdot f_{\mu}(z_1,\dots,z_m) = f_{(\mu_1,\dots,\mu_{i+1},\mu_i,\dots,\mu_m)}(z_1,\dots,z_m),
\end{equation*}
where 
\begin{align*}
T_i \equiv q - \frac{z_i-q z_{i+1}}{z_i-z_{i+1}} (1-\mathfrak{s}_i),
\quad
1 \leq i \leq m-1,
\end{align*}
with elementary transpositions $\mathfrak{s}_i$ acting by $\mathfrak{s}_i \cdot h(z_1,\dots,z_m):=h(z_1,\dots,z_{i+1},z_i,\dots,z_m)$, are the Demazure--Lusztig operators of the polynomial representation of the Hecke algebra of type $A_{m-1}$. 	 
The rainbow functions $f_\mu$ were thoroughly studied in \cite{BW} under 
the name of \emph{spin nonsymmetric Hall-Littlewood functions}; they also play a 
central role in the present work. 

For generic, not necessarily rainbow $\kappa$ and $\kappa$-coloured $\mu$, $f_\mu$ is defined as a suitable sum of rainbow functions. Concretely, let $\theta:\{1,\dots,m\}\to\{1,2,\dots\}$ be the unique monotone map such that $|\theta^{-1}(j)|=\kappa_j$ for all $j\ge 1$. Then for any composition $\varkappa$ with $m$ parts, we can define a $\kappa$-coloured composition $\theta_*(\varkappa)$ by colouring the coordinate $\varkappa_i$ by colour $j$ if and only if $\theta(i)=j$, for all $i=1,\dots,m$. With this, we set
$$
f_\mu(\kappa,z_1,\dots,z_m)=\sum_{\varkappa\,:\,\theta_*(\varkappa)=\mu} f_\varkappa(z_1,\dots,z_m).
$$

Finally, denote by $c_1<\dots<c_\alpha$ the colours of parts of $\mu$, and denote by 
$\fm_1,\dots,\fm_\alpha\ge 1$ the number of parts of $\mu$ of colours 
$c_1,\dots,c_\alpha$, respectively ($\fm_j$'s are simply re-numbered nonzero 
coordinates of $\kappa$). Set $\fm[a,b]=\fm_a+\fm_{a+1}+\dots+\fm_b$ and recall that $\ell_k=\lambda_1+\dots+\lambda_k$ for $k\ge 1$, $\ell_0=0$. 

We can now state the main result of this paper. 

\begin{thm}
	\label{thm:intro-main}
	 With the above notations, we have
	\begin{multline}
	\label{intro:main}
	\E\, \O_\mu= \frac{q^{\sum_{u\ge 1}\sum_{i>j} m_i^{(u)}m_j ^{(u)}}}{\prod_{j\ge 
			1} (s^2;q)_{|\bm{m}^{(j)}|}}
	\frac{(-s)^{\mu_1+\mu_2+\dots}}{(2\pi\sqrt{-1})^m}\oint\cdots\oint_{\text{around }\{x_j^{-1}\}}
	\prod_{1\le i<j\le m} \frac{y_j-y_i}{y_j-qy_i} \\
	\times
	\prod_{k=1}^{\alpha}\left(
	\sum_{j=0}^{\fm_k} \frac{(-1)^j q^{\binom{\fm_k-j}{2}}}
	{(q;q)_j(q;q)_{\fm_k-j}}
	\prod_{p>\fm[1,{k-1}]}^{j+\fm[1,{k-1}]}\prod_{a>\ell_{c_{k}-1}}^n\frac{1-qx_ay_p}{1-x_ay_p}
	\prod_{r>j+\fm[1,{k-1}]}^{\fm[1,k]}\prod_{b>\ell_{c_{k}}}^n\frac{1-qx_by_r}{1-x_by_r}
	\right)
	\\
	\times  f_{\mu}(\kappa;y_1^{-1},\dots,y_m^{-1})
	\prod_{i=1}^m 
	\frac{(y_i-s)dy_i}{y_i^2}\,,
	\end{multline}
	where (positively oriented) integration contours are chosen to encircle all 
	points $\{x_j^{-1}\}_{j=1}^n$ and no other singularities of the 
	integrand, or as \emph{$q$-nested} closed simple curves with $y_{i}$-contour containing $q^{-1}\cdot (y_j\text{-contour})$ for all $i<j$, and all of the contours encircling $\{x_j^{-1}\}_{j=1}^n$. The contours can also be chosen to either encircle or not encircle the point $0$.   
\end{thm}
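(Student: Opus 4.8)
The plan is to reduce the general coloured statement to the rainbow case by colour-merging, and to prove the rainbow case by the standard vertex-model ``observable'' technology: realize $\E\,\O_\mu$ as a partition function of an auxiliary row of vertices acting on the state of the model, commute that row through the quadrant using the Yang--Baxter equation, and identify the result with an $f_\mu$-weighted sum over exit data that collapses to a contour integral. Concretely, first I would recall from \cite{BW} that the $q$-moments (and their linear combinations $\O_\mu$) of the coloured height functions are computed by attaching a finite collection of ``dual'' rapidity lines carrying $y_1,\dots,y_m$ below the horizon between rows $n$ and $n+1$; the specific linear combination encoded in $\O_\mu$ is exactly the one for which these extra lines can be taken with \emph{generic} spectral parameters rather than the specialized geometric-progression values needed for bare $q$-moments. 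The key input is the known action of the transfer matrix / exchange relations on the functions $f_\mu$: the operators $T_i$ in the statement are precisely the Hecke generators that intertwine the $f_\mu$ as $\mu$ runs over a permutation orbit, so the whole family $\{f_\mu\}$ is the correct eigenbasis.

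The steps, in order, would be: (1) Set up the rainbow case $\lambda=(1^n)$. Write $\E\,\O^{\text{rainbow}}_\mu(\nu)$ as a sum over $\nu$ of $\O^{\text{rainbow}}_\mu(\nu)$ times the partition function $Z_n(\nu)$ of the quadrant truncated at height $n$, with the six weights \eqref{intro:s-weights}. (2) Recognize $\sum_\nu \O^{\text{rainbow}}_\mu(\nu)\,(\cdot)$ as the evaluation against a product of dual vertices: each factor $\frac{q^{H^{\nu/\mu}_{>i+1}(j+1)}-q^{H^{\nu/\mu}_{>i}(j+1)}}{q-1}$ is, up to normalization, the weight of a single coloured dual path crossing the configuration, so the full observable is a partition function on $m$ additional lines with parameters $y_1,\dots,y_m$. (3) Use the Yang--Baxter equation to drag these $m$ lines from above the configuration down past all $n$ rows; each crossing with the $x_a$-line contributes the conjugation factors $\frac{1-qx_ay_p}{1-x_ay_p}$, which after bookkeeping over which colours each dual line must traverse produces exactly the big product $\prod_{k=1}^\alpha(\cdots)$ in \eqref{intro:main} — including the alternating $j$-sum, which arises from the fusion/summation over how many of the $\fm_k$ like-coloured dual lines cross the block boundary at $\ell_{c_k-1}$ versus $\ell_{c_k}$. (4) After the lines are dragged off, what remains is a finite sum over boundary data that, using the explicit boundary weights and the completeness of the $f_\mu$ basis, telescopes into the single rational function $f_\mu(y_1^{-1},\dots,y_m^{-1})$ times the prefactors; converting the finite sum over exit positions into a contour integral around $\{x_j^{-1}\}$ is the usual Cauchy/residue step, and the geometric factor $(y_j-y_i)/(y_j-qy_i)$ together with the $q$-nesting of contours records the non-commutativity (the $T_i$) of the exchange. (5) Finally, for general $\lambda$, invoke the colour-merging definition $f_\mu(\kappa;\cdot)=\sum_{\varkappa:\theta_*(\varkappa)=\mu}f_\varkappa(\cdot)$ together with the analogous colour-merging identity for $\O_\mu$ (the $q$-binomial in the definition of $\O_\mu$ is precisely the combinatorial factor counting the fibers of $\theta$), so that summing the rainbow formula over the fiber reproduces \eqref{intro:main} verbatim, with the prefactor $q^{\sum_{u}\sum_{i>j}m_i^{(u)}m_j^{(u)}}/\prod_j(s^2;q)_{|\bm m^{(j)}|}$ emerging from the merging of $(s^2;q)$-Pochhammers and the $q$-power count of inversions within each colour block.

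**Main obstacle.** The genuinely delicate step is (3)--(4): tracking exactly how the $m$ dual lines, which are \emph{not} in anti-dominant order relative to the colour blocks of $\mu$, reorganize as they pass the block boundaries $\ell_0<\ell_{c_1}<\dots<\ell_{c_\alpha-1}<n$. This is where the Hecke operators $T_i$ enter — the reordering is governed by them, and one must check that the resulting sum over dual-line/colour-block incidences is organized precisely as the alternating $j$-sum with the two nested double products over $a>\ell_{c_k-1}$ and $b>\ell_{c_k}$. Getting the ranges of the products and the power $q^{\binom{\fm_k-j}{2}}$ exactly right, and verifying that no residues at $0$ or at $y_i=qy_j$ are picked up under the stated contour prescription (hence the freedom to include or exclude $0$), is the combinatorial heart of the argument; everything else is either quoted from \cite{BW} or a routine residue computation.
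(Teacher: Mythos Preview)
Your proposal sketches a plausible-sounding strategy, but it diverges substantially from the paper's actual argument, and the steps you flag as ``the combinatorial heart'' are genuine gaps rather than routine bookkeeping.

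The paper does \emph{not} realize $\O_\mu$ as $m$ dual rapidity lines and drag them through the quadrant via the Yang--Baxter equation. Instead, it proceeds in two decoupled steps. First (Theorem~\ref{thm:obs-avg}), it shows
\[
\E\,\O_\mu=\text{(explicit prefactor)}\cdot f_{\mu}^{\stoch}(\lambda;x_1,\dots,x_n)
\]
with $\mu$ of full length $n$, by recognizing the ratio $\cG_{\nu/\mu}/\cG_\nu$ as $\O_\mu(\nu)$ times constants, where $\cG_{\nu/\mu}$ is a specific analytic-continuation/limit of the skew function $G_{\nu/\mu}$; then the skew-Cauchy identity \eqref{eq:fG-skew} divided by its $\mu=0^n$ case gives the expectation directly. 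No YBE dragging of auxiliary lines appears here. Second (Theorem~\ref{thm:f-integral}), the paper proves an $m$-fold integral representation for the full-length $f_\mu(\lambda;x_1,\dots,x_n)$. The rainbow case is handled by first freezing the nonzero-part colours at the top $(c_1,\dots,c_m)=(n-m+1,\dots,n)$, where column~0 is forced and the identity reduces to the known integral formula \eqref{eq:f-integral-repr}, and then lowering the $c_j$ one step at a time via the Hecke operator $T_{c_j-1}$ acting on the integrand. The passage from rainbow to general $\lambda$ is then colour-merging, but the alternating $j$-sum you highlight does \emph{not} emerge from ``how many dual lines cross a block boundary'': it comes from Lemma~\ref{lem}, an identity of symmetric-function integrals which the paper proves by an entirely separate detour through Hall--Littlewood processes (equating two ways of computing an expectation of $q$-shifted products of $q^{-\ell(\lambda^{(k)})}$).

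Concretely, your steps (2)--(4) are not established. The interpretation in (2) of each factor of $\O_\mu^{\text{rainbow}}$ as a dual vertex weight is not in \cite{BW} and is not obvious; the closest thing in the paper is the one-row fused partition function for $\cG_{\nu/\mu}$, but that sits \emph{above} the quadrant and is handled by Cauchy identities, not by commuting it down. Your claim in (3) that the $j$-sum with $q^{\binom{\fm_k-j}{2}}/(q;q)_j(q;q)_{\fm_k-j}$ arises from a block-crossing count is unsupported; in the paper this structure is the output of Lemma~\ref{lem}, whose proof (via \eqref{eq:q-ident} and \eqref{eq:from-BBW}) has no evident YBE interpretation. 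And your (4) --- ``converting the finite sum over exit positions into a contour integral is the usual Cauchy/residue step'' --- elides exactly the content of \eqref{eq:f-integral-repr} plus a careful contour deformation through $\infty$ that the paper carries out explicitly. Without filling these in, the proposal is a heuristic outline rather than a proof.
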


Our proof of Theorem \ref{thm:intro-main} is deeply rooted in the formalism of 
spin nonsymmetric Hall-Littlewood functions $f_\mu$ developed in \cite{BW}. The 
key new ingredient is the idea to treat skew-Cauchy identities for these 
functions as averages of certain observables over the measure given by terms of 
the non-skew Cauchy identity. The latter can then be identified, under a certain 
specialization, with the sum over states of the stochastic vertex model along a 
horizontal line.
Accessing observables via deformed Cauchy identities was previously used in 
\cite{BorodinP1, BorodinP2} in the colour-blind case, but the mechanism of 
deformation was different, and the path to integral representations was more 
complex. It should be noted, however, that \cite{BorodinP1} was able to reach an 
integral representation for fully inhomogeneous vertex models (which was later 
exploited analytically in \cite{BP-inhom}). So far we were not able to reach the 
same level of inhomogeneity in the coloured case, although some inhomogeneity 
can be added, and it is actually necessary for the limit to polymers.

Let us briefly mention some algebraic corollaries of Theorem \ref{thm:intro-main}.

First, in the colour-blind case $\lambda=(n)$, \eqref{intro:main} readily leads to a formula for $q$-moments of the height function with a completely factorized integrand that could be viewed as a source of all the major asymptotic advances in the area. This colour-blind reduction is discussed in Section \ref{ssec:main-colour-blind}.

Next, as the dependence on the values of the coordinates of $\mu$ is 
concentrated in the 
$f_\mu$-factor and $f_\mu$'s are eigenfunctions of a transfer-matrix of our 
vertex model, the expectations $\O_{\mu}$ satisfy certain difference equations. 
Those can be seen as evidence that $\O_{\mu}(\nu)$ is actually a \emph{duality 
functional} for our model, see Section \ref{ssec:duality} for details. Duality 
served as a major tool for analyzing ($q$-)moments since \cite{Kardar}. 
It would be very interesting to see if our prospective duality functionals can 
be related to those obtained by Kuan \cite{Kuan}. 

In the rainbow case $\lambda=(1,1,\dots,1)$, together with simplification of 
observables $\O_{\mu}$ to $\O_{\mu}^{\text{rainbow}}$ mentioned above, the 
integrand also simplifies due to the lack of $j$-summations. This case carries 
additional colour-position symmetry both in the left-hand side of 
\eqref{intro:main} and in the right-hand side, cf. Section 
\ref{ssec:main-rainbow}. When applied to $\O_\mu$, this yields another set of 
potential duality functionals. Let us also note that for anti-dominant $\mu$, 
when $f_\mu$ completely factorizes, the result can alternatively be obtained 
from the colour-blind case by applying a (highly nontrivial) shift-invariance 
property of \cite{BGW}. 

Finally, Theorem \ref{thm:intro-main} allows for \emph{stochastic fusion}: A 
cluster of neighboring rows with same left-entering colours and rapidities 
forming a geometric progression can be collapsed to a single ``fat'' row whose 
edges are allowed to carry multiple paths. One can further analytically continue 
in the parameters $q^{\text{number of rows in a cluster}}$, obtaining a 
corresponding result in the fully fused model.\footnote{Our original setup can 
be viewed as a partially fused model because vertical edges are allowed to 
carry multiple paths; it could have been obtained from the \emph{fundamental} 
model with no more than one path on any edge by clustering columns.} Details of 
this procedure can be found in \cite{BW, BGW}, and the resulting version of 
Theorem \ref{thm:intro-main} is Corollary \ref{cor:main-fused} in Section 
\ref{ssec:fusion}. 

The source of analytic corollaries of Theorem \ref{thm:intro-main} is the fact 
the coloured stochastic vertex model and its fully fused version degenerate, in 
various limits, to a variety of other probabilistic systems, see \cite[Chapter 
12]{BW} and \cite{BGW} for some of those degenerations, and the chart in the 
introduction to \cite{BGW} for a ``big picture''. In this text we only consider, 
in Section \ref{sec:polymers} below, the limit into directed random polymers 
that was worked out in \cite{BGW}. We obtain versions of Theorem 
\ref{thm:intro-main} for random Beta-polymers (first considered in 
\cite{BarCor}), strict-weak or Gamma-polymers (first considered in 
\cite{CSS,OCO}), O'Connell-Yor semi-discrete Brownian polymers \cite{OY}, and 
fully continuous\footnote{also known as \emph{continuum}} Brownian polymers 
(equivalently, the stochastic heat equation with multiplicative noise or the KPZ 
equation). 

Two simplifications happen in these limits. First, the presence of colours in 
our vertex models translates into varying starting points of the polymers, 
while the general definitions of the polymer models remain the same as in the 
colourless situation. Second, the observables simplify to pure moments of 
partition functions (no linear combinations necessary) in three of the four 
polymer models we consider. Let us illustrate what happens on the example of 
the continuum Brownian polymer, cf. Section \ref{ssec:cont_Brown}.   

Let $\widetilde{\mathcal{Z}}^{(y)}(t,x)$ be the unique solution of the following 
stochastic partial differential equation with the initial condition:
$$
\widetilde{\mathcal{Z}}^{(y)}_t=\tfrac12 \widetilde{\mathcal{Z}}_{xx}^{(y)}+\eta(t,x)\widetilde{\mathcal{Z}}^{(y)}, \qquad t>0,\quad 
x\in\mathbb{R}; \qquad \widetilde{\mathcal{Z}}^{(y)}(0,x)=\delta(x-y), 
$$
where $\eta=\eta(t,x)$ is the two-dimensional white noise, and set 
$$
\mathcal Z^{(y)}(t,x)=(2\pi t)^{1/2}\, e^{{(x-y)^2}/{2t}}\cdot \widetilde{\mathcal Z}^{(y)}(t,x).
$$ 
See, \emph{e.g.},~\cite{Q} and references therein for an extensive literature on this 
equation and its close relation to continuum Brownian path integrals and the Kardar-Parisi-Zhang equation. 

Let $\varkappa=(\varkappa_1,\dots,\varkappa_m)$ be a coloured vector of length $m$ 
with real coordinates, and let the colours $s_1<\dots<s_\alpha$ of the parts of 
$\varkappa$ also take real values. Denote 
$$
m_i^{(x)}=\#\{j:\varkappa_j=x\ {\rm and\  has\  colour}\ s_i\},\qquad
\fm_i=\sum_x m_i^{(x)}, \qquad 1\le i\le \alpha, \ x\in\mathbb{R}.
$$
Then the limiting version of the fused version of Theorem \ref{thm:intro-main} 
reads, cf. Proposition \ref{prop:main-SHE} below,
\begin{multline}
\label{intro:SHE}
\E\left[ \prod_{(i,x)\,:\,m_i^{(x)}>0}
\frac{\left( 
	\mathcal{Z}^{(s_i)}(t,x)\right)^{m_i^{(x)}}}{{m_i^{(x)}}!}\right]
= 
\frac{1}{(2\pi\sqrt{-1})^m}\int\cdots\int
\prod_{1\le i<j\le m} \frac{w_j-w_i}{w_j-w_i-1} \\
\times
\prod_{k=1}^{\alpha}
\frac{\exp\left(-s_k\cdot\sum_{r>\fm[1,{k-1}]}^{\fm[1,k]}w_r\right)}{
	\fm_k!}
\cdot \fe_{\varkappa}(w_1,\dots,w_m)
\prod_{i=1}^m e^{{tw_i^2}/{2}}{dw_i},
\end{multline}
where the integration is over upwardly oriented lines 
$w_i=a_i+\sqrt{-1}\cdot\mathbb{R}$ with 
$\Re a_j>\Re a_i +1$ for $j>i$. The functions $\fe_\varkappa$ in the integrand 
are the limiting versions of the functions $f_\mu$ in \eqref{intro:main}, and 
they are defined as follows.
In the rainbow case, in the dominant sector $\kappa_1\ge\kappa_2\ge\dots\ge\kappa_m$ one has
\begin{equation*}
\fe_\varkappa(w_1,\dots,w_m)=\exp(\varkappa_1w_1+\dots+\varkappa_m w_m),
\end{equation*} 
and for $\varkappa_i>\varkappa_{i+1}$ for some $1\le i\le m-1$ one uses the exchange relations \begin{equation*}
\mathfrak{T}_i \cdot \fe_{\varkappa} = \fe_{(\varkappa_1,\dots,\varkappa_{i+1},\varkappa_i,\dots,\varkappa_m)},\qquad \mathfrak{T}_i \equiv 1 - \frac{w_i-w_{i+1}+1}{w_i-w_{i+1}} (1-\mathfrak{s}_i),
\qquad
1 \leq i \leq m-1,
\end{equation*}
to extend the definition to all rainbow vectors $\varkappa$. 
For a more general colouring, we use the (unique) colour-identifying monotone 
map $\theta $ from $\{1,\dots,m\}$ to the set of colours $\{s_i\}$, and define 
$ \fe_{\varkappa}=\sum_{{\rm rainbow}\,  \varkappa'\,:\, \theta_*(\varkappa') = \varkappa}
\fe_{\varkappa'}.$

The moment formula \eqref{intro:SHE} has a certain shift-invariance, cf. Remark \ref{rem:shift-invariance} below, that is partially explained by the KPZ-level degeneration of the results of \cite{BGW}. 
This shift-invariance is, furthermore, a corollary of the conjecture in the introduction to \cite{BGW}. It does not imply that conjecture though, because the moments are well-known to not determine the distributions of $\mathcal Z$'s uniquely.  

\subsection*{Acknowledgments}  A.~B.~ was partially supported by NSF grants DMS-1664619 and DMS-1853981. M.~W.~ was partially supported by ARC grant DP19010289.

\section{Preliminaries}\label{sec:prelim}

The goal of this section is to summarize previously proved results that we will need later on. The notation and exposition largely follows \cite{BW}. 

\subsection{The weights}\label{ssec:weights} The vertex models that we consider assign weights to finite collections of finite paths drawn on a square grid. Each vertex for which there exists a path that enters and exits it produces a weight that depends on the configuration of all the paths that go through this vertex. 
The total weight for a collection of paths is the product of weights of the vertices that the paths traverse. We tacitly assume the normalization in which the weight of an empty vertex is always equal to 1.

Our paths are going to be \emph{coloured}, \emph{i.e.}, each path carries a colour that will typically be a (positive) natural number, with colour 0 reserved for the absence of a path. Our vertex weights will actually depend on the ordering of the (nonzero) paths' colours, rather than on their exact values.   
The paths will always travel upward in the vertical direction, and in the horizontal direction a path can travel rightward or leftward, depending on the region of the grid it is in; this choice will always be explicitly specified. 

A basic family of vertex weights that we will use is denoted as 
\begin{align}
\label{eq:W-picture}
	\tikz{0.9}{
		\draw[lgray,line width=4pt,->] (-1,0) -- (1,0);
		\draw[lgray,line width=4pt,->] (0,-1) -- (0,1);
		\node[left] at (-1,0) {\tiny $\BB$};\node[right] at (1,0) {\tiny $\DD$};
		\node[below] at (0,-1) {\tiny $\AA$};\node[above] at (0,1) {\tiny $\CC$};
		\node[left] at (-1.5,0) {$(x,\l) \rightarrow$};
		\node[below] at (0,-1.4) {$\uparrow$};
		\node[below] at (0,-1.9) {$(y,\m)$};
	}
	=
	\vert{\l}{\m}{\frac{x}{y};q}{\AA}{\BB}{\CC}{\DD}
	\equiv
	W_{\l,\m}(x/y;q; \AA,\BB,\CC,\DD),
\end{align}
where $x,y$ are inhomgeneity parameters (or \emph{rapidities}) associated to the row and column, horizontal edges carry no more than $\m$ paths, vertical edges carry no more than $\l$ paths, all paths are of colours $1,\dots,n$ for some $n\ge 1$, and the specific sets of colours on the edges are encoded by compositions\footnote{equivalently, vectors with nonnegative entries} $\AA=(A_1,\dots,A_n)$, $\BB=(B_1,\dots,B_n)$, $\CC=(C_1,\dots,C_n)$, $\DD=(D_1,\dots,D_n)$ subject to the constrains
\begin{align}
	\label{wt-constrain}
	|\AA|, |\CC| \leq \m,
	\qquad
	|\BB|, |\DD| \leq \l,
\end{align}
where we use the notation $|\,\cdot\,|$ to denote the sum of all parts of a composition. 

These remarkable weights come from a family of stochastic $R$-matrices constructed in \cite{KunibaMMO} via symmetric tensor representations of the quantized affine algebra $U_q(\wh{\mathfrak{sl}_{n+1}})$; see also \cite{Kuan}, \cite{BosnjakM}, \cite{AggarwalBB}, \cite{BW}. Let us list some of their properties. 

The \emph{Yang-Baxter equation} for these weights can be written graphically as
\begin{align}
\label{graph-master}
\sum_{\CC_1,\CC_2,\CC_3}
\tikz{0.9}{
	\draw[lgray,line width=4pt,->]
	(-2,1) node[above,scale=0.6] {\color{black} $\AA_1$} -- (-1,0) node[below,scale=0.6] {\color{black} $\CC_1$} -- (1,0) node[right,scale=0.6] {\color{black} $\BB_1$};
	\draw[lgray,line width=4pt,->] 
	(-2,0) node[below,scale=0.6] {\color{black} $\AA_2$} -- (-1,1) node[above,scale=0.6] {\color{black} $\CC_2$} -- (1,1) node[right,scale=0.6] {\color{black} $\BB_2$};
	\draw[lgray,line width=4pt,->] 
	(0,-1) node[below,scale=0.6] {\color{black} $\AA_3$} -- (0,0.5) node[scale=0.6] {\color{black} $\CC_3$} -- (0,2) node[above,scale=0.6] {\color{black} $\BB_3$};
	\node[left] at (-2.2,1) {$(x,\l) \rightarrow$};
	\node[left] at (-2.2,0) {$(y,\m) \rightarrow$};
	\node[below] at (0,-1.4) {$\uparrow$};
	\node[below] at (0,-1.9) {$(z,\n)$};
}
\quad
=
\quad
\sum_{\CC_1,\CC_2,\CC_3}
\tikz{0.9}{
	\draw[lgray,line width=4pt,->] 
	(-1,1) node[left,scale=0.6] {\color{black} $\AA_1$} -- (1,1) node[above,scale=0.6] {\color{black} $\CC_1$} -- (2,0) node[below,scale=0.6] {\color{black} $\BB_1$};
	\draw[lgray,line width=4pt,->] 
	(-1,0) node[left,scale=0.6] {\color{black} $\AA_2$} -- (1,0) node[below,scale=0.6] {\color{black} $\CC_2$} -- (2,1) node[above,scale=0.6] {\color{black} $\BB_2$};
	\draw[lgray,line width=4pt,->] 
	(0,-1) node[below,scale=0.6] {\color{black} $\AA_3$} -- (0,0.5) node[scale=0.6] {\color{black} $\CC_3$} -- (0,2) node[above,scale=0.6] {\color{black} $\BB_3$};
	\node[left] at (-1.5,1) {$(x,\l) \rightarrow$};
	\node[left] at (-1.5,0) {$(y,\m) \rightarrow$};
	\node[below] at (0,-1.4) {$\uparrow$};
	\node[below] at (0,-1.9) {$(z,\n)$};
}
\end{align}
See also \cite[(C.1.2)]{BW} for the corresponding formula. 

These weights enjoy a \emph{transpositional symmetry}
\begin{align}
\label{bm-sym}
\vert{\l}{\m}{x;q}{\AA}{\BB}{\CC}{\DD}
=
\vert{\m}{\l}{\frac{q^{\m-\l}}{x};\frac{1}{q}}{\BB}{\AA}{\DD}{\CC}.
\end{align}

They are \emph{stochastic} in the following sense:
\begin{align}
\label{sum-to-1}
\sum_{\CC, \DD}
\vert{\l}{\m}{x;q}{\AA}{\BB}{\CC}{\DD}
=
1,
\end{align}
where the sum is over all compositions $\CC = (C_1,\dots,C_n)$ and $\DD = (D_1,\dots,D_n)$ with $|\CC|\le \m$, $|\DD|\le \l$.

They can also be given by the following explicit formula originating from \cite{BosnjakM}, see also \cite[Theorem C.1.1]{BW}:
\begin{multline}
\label{eq:W-weights}
\vert{\l}{\m}{x;q}{\AA}{\BB}{\CC}{\DD}
=
\left( \bm{1}_{\AA + \BB = \CC + \DD} \right)
x^{|\DD|-|\BB|} q^{|\AA| \l - |\DD| \m}
\\
\times
\sum_{\bm{P}}
\Phi(\CC-\bm{P},\CC+\DD-\bm{P}; q^{\l-\m} x, q^{-\m} x)
\Phi(\bm{P},\BB; q^{-\l}/x, q^{-\l}),
\end{multline}
where the sum is over compositions $\bm{P} = (P_1,\dots,P_n)$ such that $0 \leq P_i \leq \min(B_i,C_i)$ for all $1 \leq i \leq n$; and for any two compositions $\lambda, \mu \in \mathbb{N}^n$ such that $\lambda_i \leq \mu_i$ for all $1\leq i \leq n$, we used the notation
\begin{align*}
\Phi(\lambda,\mu;x,y)
:=
\frac{(x;q)_{|\lambda|} (y/x;q)_{|\mu-\lambda|}}{(y;q)_{|\mu|}}
(y/x)^{|\lambda|}\,
q^{\sum_{i<j} (\mu_i-\lambda_i) \lambda_j}
\prod_{i=1}^{n} \binom{\mu_i}{\lambda_i}_q.
\end{align*}
The substitution of $\l=\m=1$ into $W_{\l,\m}$ returns the (stochastic version of) the fundamental $R$-matrix for $U_q(\wh{\mathfrak{sl}_{n+1}})$:
\begin{align}
\label{eq:W-R}
\vert{1}{1}{\frac{x}{y};q}{\AA}{\BB}{\CC}{\DD}
=
\left\{
\begin{array}{ll}
R_{y/x}(\AA^*,\BB^*;\CC^*,\DD^*), 
& \quad
|\AA|, |\BB|, |\CC|, |\DD| \leq 1,
\\ \\
0,
& \quad
{\rm otherwise},
\end{array}
\right.
\end{align}
and where we have defined 
\begin{align*}
\I^{*}
=
\left\{
\begin{array}{ll}
0, & \quad \I = \bm{0},
\\
i, & \quad \I = \bm{e}_i\ \  (\text{$i$-th standard basis vector}),
\end{array}
\right.
\end{align*}
for any composition $\I = (I_1,\dots,I_n)$ such that $|\I| \leq 1$, and where $R_{y/x}$ denotes the fundamental $R$-matrix depicted as (with $z=y/x$) 
\begin{align}
\label{R-vert}
R_z(i,j; k,\ell)
=
\tikz{0.7}{
	\draw[lgray,line width=1.5pt,->] (-1,0) -- (1,0);
	\draw[lgray,line width=1.5pt,->] (0,-1) -- (0,1);
	\node[left] at (-1,0) {\tiny $j$};\node[right] at (1,0) {\tiny $\ell$};
	\node[below] at (0,-1) {\tiny $i$};\node[above] at (0,1) {\tiny $k$};
},
\quad
i,j,k,\ell \in \{0,1,\dots,n\},
\end{align}
and with matrix elements summarized by the following table, in which we assume that $0 \leq i < j \leq n$:
\begin{align}
\label{fund-vert}
\begin{tabular}{|c|c|c|}
\hline
\quad
\tikz{0.6}{
	\draw[lgray,line width=1.5pt,->] (-1,0) -- (1,0);
	\draw[lgray,line width=1.5pt,->] (0,-1) -- (0,1);
	\node[left] at (-1,0) {\tiny $i$};\node[right] at (1,0) {\tiny $i$};
	\node[below] at (0,-1) {\tiny $i$};\node[above] at (0,1) {\tiny $i$};
}
\quad
&
\quad
\tikz{0.6}{
	\draw[lgray,line width=1.5pt,->] (-1,0) -- (1,0);
	\draw[lgray,line width=1.5pt,->] (0,-1) -- (0,1);
	\node[left] at (-1,0) {\tiny $i$};\node[right] at (1,0) {\tiny $i$};
	\node[below] at (0,-1) {\tiny $j$};\node[above] at (0,1) {\tiny $j$};
}
\quad
&
\quad
\tikz{0.6}{
	\draw[lgray,line width=1.5pt,->] (-1,0) -- (1,0);
	\draw[lgray,line width=1.5pt,->] (0,-1) -- (0,1);
	\node[left] at (-1,0) {\tiny $i$};\node[right] at (1,0) {\tiny $j$};
	\node[below] at (0,-1) {\tiny $j$};\node[above] at (0,1) {\tiny $i$};
}
\quad
\\[1.3cm]
\quad
$1$
\quad
& 
\quad
$\dfrac{q(1-z)}{1-qz}$
\quad
& 
\quad
$\dfrac{1-q}{1-qz}$
\quad
\\[0.7cm]
\hline
&
\quad
\tikz{0.6}{
	\draw[lgray,line width=1.5pt,->] (-1,0) -- (1,0);
	\draw[lgray,line width=1.5pt,->] (0,-1) -- (0,1);
	\node[left] at (-1,0) {\tiny $j$};\node[right] at (1,0) {\tiny $j$};
	\node[below] at (0,-1) {\tiny $i$};\node[above] at (0,1) {\tiny $i$};
}
\quad
&
\quad
\tikz{0.6}{
	\draw[lgray,line width=1.5pt,->] (-1,0) -- (1,0);
	\draw[lgray,line width=1.5pt,->] (0,-1) -- (0,1);
	\node[left] at (-1,0) {\tiny $j$};\node[right] at (1,0) {\tiny $i$};
	\node[below] at (0,-1) {\tiny $i$};\node[above] at (0,1) {\tiny $j$};
}
\quad
\\[1.3cm]
& 
\quad
$\dfrac{1-z}{1-qz}$
\quad
&
\quad
$\dfrac{(1-q)z}{1-qz}$
\quad 
\\[0.7cm]
\hline
\end{tabular}
\end{align}
Observe that these weights are manifestly stochastic. 

The general weights $W_{\l,\m}$ can be reconstructed from the fundamental ones above via the procedure of \emph{stochastic fusion}. To state how it works we need a bit of notation. 

Let $\n \geq 1$, and consider a vector of nonnegative integers $(i_1,\dots,i_\n) \in \{0,1,\dots,n\}^M$. From this we define another vector,
\begin{align*}
\mathcal{C}(i_1,\dots,i_\n)
:=
(I_1,\dots,I_n),
\qquad
I_a = \#\{k : i_k = a \},
\quad 1 \leq a \leq n,
\end{align*}
which keeps track of the multiplicity of each colour $1 \leq a \leq n$ within $(i_1,\dots,i_\n)$.
Set 
\begin{align*}
{\rm inv}(i_1,\dots,i_\n) = \#\{1\le a<b \le \n: i_a > i_b \},\quad \widetilde{\rm inv}(i_1,\dots,i_\n) = \#\{1\le a<b \le \n: i_a < i_b \}, 
\end{align*}
and, denoting $I_0 := \n - \sum_{a=1}^{n} I_a$,
\begin{align*}
Z_q(\n;\I)
=
\sum_{\mathcal{C}(i_1,\dots,i_\n) = \I}
q^{{\rm inv}(i_1,\dots,i_\n)}
= \sum_{\mathcal{C}(j_1,\dots,j_\n) = \I}
q^{\widetilde{\rm inv}(j_1,\dots,j_\n)}
=
\frac{(q;q)_\n}{(q;q)_{I_0} (q;q)_{I_1} \dots (q;q)_{I_n}}.
\end{align*}
Then, cf. \cite[Appendix]{BGW},
\begin{multline}
\label{eq:fusion}
\vert{\l}{\m}{\frac{x}{y};q}{\AA}{\BB}{\CC}{\DD}
=
\frac{1}{Z_q(\m;\AA)Z_q(\l;\BB)}\\ \times
\sum_{\substack{
		\mathcal{C}(j_1,\dots,j_\l) = \BB
		\\
		\mathcal{C}(\ell_1,\dots,\ell_\l) = \DD
}}
q^{\widetilde{\rm inv}(j_1,\dots,j_\l)}
\sum_{\substack{
		\mathcal{C}(i_1,\dots,i_\m) = \AA
		\\
		\mathcal{C}(k_1,\dots,k_\m) = \CC
}}
q^{{\rm inv}(i_1,\dots,i_\m)}
\tikz{0.8}{
	\foreach\y in {2,...,5}{
		\draw[lgray,line width=1.5pt,->] (1,\y) -- (8,\y);
	}
	\foreach\x in {2,...,7}{
		\draw[lgray,line width=1.5pt,->] (\x,1) -- (\x,6);
	}
	\node[above] at (7,6) {$\tiny k_\m$};
	\node[above] at (5,6) {$\tiny \cdots$};
	\node[above] at (4,6) {$\tiny \cdots$};
	\node[above] at (2,6) {$\tiny k_1$};
	\node[left] at (0,2) {$\tiny x$}; \node[left] at (1,2) {$\tiny j_1$};
	\node[left] at (0.5,3) {$\tiny \vdots$};
	\node[left] at (0.5,4) {$\tiny \vdots$};
	\node[left] at (0,5) {$\tiny q^{\l-1}x$}; \node[left] at (1,5) {$\tiny j_\l$};
	\node[below] at (7,0) {$\tiny y$}; \node[below] at (7,1) {$\tiny i_\m$};
	\node[below] at (5,1) {$\tiny \cdots$};
	\node[below] at (4,1) {$\tiny \cdots$};
	\node[below] at (2,0.2) {$\tiny q^{\m-1}y$}; \node[below] at (2,1) {$\tiny i_1$};
	\node[right] at (8,2) {$\tiny \ell_1$};
	\node[right] at (8,3) {$\tiny \vdots$};
	\node[right] at (8,4) {$\tiny \vdots$};
	\node[right] at (8,5) {$\tiny \ell_\l$};
}
\end{multline}
where the figure on the right denotes the corresponding partition function with $R$-weights 
\eqref{R-vert}-\eqref{fund-vert} (thus, summation over all possible states of interior edges is assumed), and the colours on the boundary edges, as well as row and column rapidities, are explicitly indicated.\footnote{As in \eqref{R-vert}, the spectral parameter of an $R$-vertex is assumed to be equal to the ratio of the column rapidity and the row rapidity.} 

A key feature of the fused $R$-vertices that allows stacking them together is their \emph{$q$-echangeability}: In the above expression, the sum over $(k_1,\dots,k_\m)$ can be omitted at the expense of adding the factor of $q^{-{\rm inv}(k_1,\dots,k_\m)}Z_q(\m;\CC)$ to the summands, and, independently, the sum over $(\ell_1,\dots,\ell_\l)$ can be removed at the expense of adding the factor of $q^{-\widetilde{\rm inv}(\ell_1,\dots,\ell_\l)}Z_q(\l;\DD)$, see \cite[Proposition B.2.2]{BW} for a proof. 

In what follows we will also need partially fused weights defined as follows, cf. \eqref{bm-sym}:
\begin{gather}
\label{eq:L-stoch}
L^\stoch_{x}(\AA,b;\CC,d)
:=
\left.
\vert{1}{\m}{\frac{x}{s};q}{\AA}{\bm{e}_b}{\CC}{\bm{e}_d}
\right|_{q^{\m} \rightarrow s^{-2}},
\\  
\label{eq:M-stoch}
M^\stoch_{x}(\AA,b;\CC,d)
:=
\left.
\vert{1}{\m}{\frac{s}{x};\frac{1}{q}}{\AA}{\bm{e}_b}{\CC}{\bm{e}_d}
\right|_{q^{\m} \rightarrow s^{-2}},
\end{gather}
where the substitution of a generic complex parameter $s^2$ for $q^{-\m}$ is based on the fact that the right-hand sides are rational in $q^\m$. The weight $L_x^\stoch$ are tabulated in \eqref{intro:s-weights}.

In addition, we will use gauge transformed (non-stochastic) versions
\begin{gather}
\label{eq:L-weights}
(-s)^{-\mathbf{1}_{\ell\ge 1}}L^\stoch_x(\I,j;\K,\ell)=:
L_x(\I,j;\K,\ell)=
\tikz{0.7}{
	\draw[lgray,line width=1.5pt,->] (-1,0) -- (1,0);
	\draw[lgray,line width=4pt,->] (0,-1) -- (0,1);
	\node[left] at (-1,0) {$x \rightarrow$ \tiny $j$};\node[right] at (1,0) {\tiny $\ell$};
	\node[below] at (0,-1) {\tiny $\I$};\node[above] at (0,1) {\tiny $\K$};
},\\
\label{eq:M-weights}
(-s)^{\mathbf{1}_{j\ge 1}}M_x^\stoch(\I,j;\K,\ell)
=:
M_x(\I,j;\K,\ell)=\tikz{0.7}{
	\draw[lgray,line width=1.5pt,<-] (-1,0) -- (1,0);
	\draw[lgray,line width=4pt,->] (0,-1) -- (0,1);
	\node[left] at (-1,0)  {$y \leftarrow$ \tiny $\ell$};\node[right] at (1,0) {\tiny $j$};
	\node[below] at (0,-1) {\tiny $\I$};\node[above] at (0,1) {\tiny $\K$};
},
\end{gather}
that, as a consequence of \eqref{graph-master}, satisfy the following version of the Yang-Baxter equation, cf. \cite[(2.3.5)]{BW}:
\begin{align}
\label{graph-RLLb}
\sum_{0 \leq k_1,k_3 \leq n}
\
\sum_{\K \in \mathbb{N}^n}
\tikz{0.8}{
	\draw[lgray,line width=1.5pt,<-] 
	(-1,1) node[left,scale=0.6] {\color{black} $j_3$} -- (1,1) node[above,scale=0.6] {\color{black} $k_3$} -- (2,0) node[below,scale=0.6] {\color{black} $i_3$};
	\draw[lgray,line width=1.5pt,->] 
	(-1,0) node[left,scale=0.6] {\color{black} $i_1$} -- (1,0) node[below,scale=0.6] {\color{black} $k_1$} -- (2,1) node[above,scale=0.6] {\color{black} $j_1$};
	\draw[lgray,line width=4pt,->] 
	(0,-1) node[below,scale=0.6] {\color{black} $\I$} -- (0,0.5) node[scale=0.6] {\color{black} $\K$} -- (0,2) node[above,scale=0.6] {\color{black} $\J$};
	\node[left] at (-1.5,1) {$y \leftarrow$};
	\node[left] at (-1.5,0) {$x \rightarrow$};
}
\quad
=
\quad
\sum_{0 \leq k_1,k_3 \leq n}
\
\sum_{\K \in \mathbb{N}^n}
\tikz{0.8}{
	\draw[lgray,line width=1.5pt,<-] 
	(-2,1) node[above,scale=0.6] {\color{black} $j_3$} -- (-1,0) node[below,scale=0.6] {\color{black} $k_3$} -- (1,0) node[right,scale=0.6] {\color{black} $i_3$};
	\draw[lgray,line width=1.5pt,->] 
	(-2,0) node[below,scale=0.6] {\color{black} $i_1$} -- (-1,1) node[above,scale=0.6] {\color{black} $k_1$} -- (1,1) node[right,scale=0.6] {\color{black} $j_1$};
	\draw[lgray,line width=4pt,->] 
	(0,-1) node[below,scale=0.6] {\color{black} $\I$} -- (0,0.5) node[scale=0.6] {\color{black} $\K$} -- (0,2) node[above,scale=0.6] {\color{black} $\J$};
	\node[left] at (-2.2,1) {$y \leftarrow$};
	\node[left] at (-2.2,0) {$x \rightarrow$};
}
\end{align}
with the spectral parameter of the $R$-vertex equal to $(qxy)^{-1}$.

 The explicit values of the weights \eqref{eq:L-weights} are summarized by the table below:
\begin{align}
\label{s-weights}
\begin{tabular}{|c|c|c|}
\hline
\quad
\tikz{0.7}{
	\draw[lgray,line width=1.5pt,->] (-1,0) -- (1,0);
	\draw[lgray,line width=4pt,->] (0,-1) -- (0,1);
	\node[left] at (-1,0) {\tiny $0$};\node[right] at (1,0) {\tiny $0$};
	\node[below] at (0,-1) {\tiny $\I$};\node[above] at (0,1) {\tiny $\I$};
}
\quad
&
\quad
\tikz{0.7}{
	\draw[lgray,line width=1.5pt,->] (-1,0) -- (1,0);
	\draw[lgray,line width=4pt,->] (0,-1) -- (0,1);
	\node[left] at (-1,0) {\tiny $i$};\node[right] at (1,0) {\tiny $i$};
	\node[below] at (0,-1) {\tiny $\I$};\node[above] at (0,1) {\tiny $\I$};
}
\quad
&
\quad
\tikz{0.7}{
	\draw[lgray,line width=1.5pt,->] (-1,0) -- (1,0);
	\draw[lgray,line width=4pt,->] (0,-1) -- (0,1);
	\node[left] at (-1,0) {\tiny $0$};\node[right] at (1,0) {\tiny $i$};
	\node[below] at (0,-1) {\tiny $\I$};\node[above] at (0,1) {\tiny $\I^{-}_i$};
}
\quad
\\[1.3cm]
\quad
$\dfrac{1-s x q^{\Is{1}{n}}}{1-sx}$
\quad
& 
\quad
$\dfrac{(x-sq^{I_i}) q^{\Is{i+1}{n}}}{1-sx}$
\quad
& 
\quad
$\dfrac{x(1-q^{I_i}) q^{\Is{i+1}{n}}}{1-sx}$
\quad
\\[0.7cm]
\hline
\quad
\tikz{0.7}{
	\draw[lgray,line width=1.5pt,->] (-1,0) -- (1,0);
	\draw[lgray,line width=4pt,->] (0,-1) -- (0,1);
	\node[left] at (-1,0) {\tiny $i$};\node[right] at (1,0) {\tiny $0$};
	\node[below] at (0,-1) {\tiny $\I$};\node[above] at (0,1) {\tiny $\I^{+}_i$};
}
\quad
&
\quad
\tikz{0.7}{
	\draw[lgray,line width=1.5pt,->] (-1,0) -- (1,0);
	\draw[lgray,line width=4pt,->] (0,-1) -- (0,1);
	\node[left] at (-1,0) {\tiny $i$};\node[right] at (1,0) {\tiny $j$};
	\node[below] at (0,-1) {\tiny $\I$};\node[above] at (0,1) 
	{\tiny $\I^{+-}_{ij}$};
}
\quad
&
\quad
\tikz{0.7}{
	\draw[lgray,line width=1.5pt,->] (-1,0) -- (1,0);
	\draw[lgray,line width=4pt,->] (0,-1) -- (0,1);
	\node[left] at (-1,0) {\tiny $j$};\node[right] at (1,0) {\tiny $i$};
	\node[below] at (0,-1) {\tiny $\I$};\node[above] at (0,1) {\tiny $\I^{+-}_{ji}$};
}
\quad
\\[1.3cm] 
\quad
$\dfrac{1-s^2 q^{\Is{1}{n}}}{1-sx}$
\quad
& 
\quad
$\dfrac{x(1-q^{I_j}) q^{\Is{j+1}{n}}}{1-sx}$
\quad
&
\quad
$\dfrac{s(1-q^{I_i})q^{\Is{i+1}{n}}}{1-sx}$
\quad
\\[0.7cm]
\hline
\end{tabular} 
\end{align}
where we assume that $1 \leq i<j \leq n$, and the notation $\Is{k}{n}$ stands for 
$\sum_{a=k}^{n} I_a$. For $n=1$, these weights correspond to the image of the universal $R$-matrix for the quantum affine group $U_q(\wh{\mathfrak{sl}_{2}})$ in the tensor product of its vector representation (horizontal edges) and a Verma module (vertical edges), with parameter $s$ encoding its highest weight; we call $s$ the \emph{spin parameter}. 

\subsection{The $q$-Hahn specialization and a limit relation}\label{ssec:q-Hahn} The complicated expression \eqref{eq:W-weights}
can simplify at special values of parameters; we have already seen this in the case of $L^\stoch$ and $M^\stoch$. Another such specialization that allows $\l$ and $\m$ to remain generic is the following (see \cite[Proposition 7]{KunibaMMO} and also \cite[(7.13)]{BosnjakM}):
Assuming that $\l\le \m$, we have
\begin{multline}
\label{eq:q-Hahn}
\vert{\l}{\m}{1;q}{\AA}{\BB}{\CC}{\DD}
= q^{(\l-\m)|\DD|}\, \frac{(q^{\l-\m};q)_{|\AA|-|\DD|}(q^{-\l};q)_{|\DD|}}{(q^{-\m};q)_{|\AA|}}\,q^{\sum_{i<j}D_i(A_j-D_j)}\prod_{i=1}^n
{\binom{A_i}{A_i-D_i}}_q\\ =\Phi(\DD,\AA;q^{-\l},q^{-\m}).
\end{multline}

The proof follows from the fact that setting $x=1$ restricts the sum in \eqref{eq:W-weights} to a single term with $\bm{P}=\BB$. This specialization is often referred to as the \emph{$q$-Hahn point} because for $n=1$ it reproduces the orthogonality weights for the classical $q$-Hahn orthogonal polynomials. 

Note that replacing $q^{-\m}$ by a generic complex parameter $s^2$ that was done for $W_{1,\m}$ in \eqref{eq:L-stoch} can be also be performed for $W_{\l,\m}$ for $\l>1$ either in \eqref{eq:W-weights} or in \eqref{eq:q-Hahn}, because those are weights are still manifestly rational in $q^\m$. Alternatively, the result of such a replacement could be seen as a stochastic fusion of the weights $L^\stoch$ in the spirit of \eqref{eq:fusion}, where only the outer sum over $j_*$'s and $\ell_*$'s is present. 

We will also need the following limiting relation for the weights \eqref{eq:W-weights}, see \cite[Lemma 6.8]{BGW}:

Assume that $\BB=(0,\dots,0,\l)$. Then 
\begin{equation}
\label{eq:weight-limit}
\lim_{q^{-\m}=s^2\to 0} \vert{\l}{\m}{zq^{\m};q}{\AA}{\BB}{\CC}{\DD}=
\begin{cases}
\dfrac{(zq^\l;q)_\infty}{(z;q)_\infty}\dfrac{(q^{-\l};q)_d}{(q;q)_d}\,(zq^\l)^d  &\text{if}\quad \DD=(0,\dots,0,d), \ d\ge 0,\\
0,&\text{otherwise}. 
\end{cases}
\end{equation}

\subsection{Row operators and rational functions}\label{ssec:row-operators}

Let $V$ be an infinite-dimensional vector space obtained by taking the linear span of all $n$-tuples of nonnegative integers:  
\begin{align*}
V= {\rm Span}\{\ket{\I}\} =
{\rm Span}\{\ket{i_1,\dots,i_n}\}_{i_1,\dots,i_n \in \mathbb{N}}.
\end{align*}
It is convenient to consider an infinite tensor product of such spaces, 
\begin{align*}
\mathbb{V} = V_0 \otimes V_1 \otimes V_2 \otimes \cdots,
\end{align*}
where each $V_i$ denotes a copy of $V$. Let $\bigotimes_{k=0}^{\infty} \ket{\I_k}_k$ be a finite state in $\mathbb{V}$, \ie\ assume that there exists $N \in \mathbb{N}$ such that $\I_k = \bm{0}$ for all $k > N$; in what follows only such states are considered. We define two families of linear operators acting on the finite states:
\begin{align}
\label{C-row}
\C_i(x)
:
\bigotimes_{k=0}^{\infty}
\ket{\I_k}_k
\mapsto
\sum_{\J_0,\J_1,\ldots \in \mathbb{N}^n}
\left(
\tikz{0.6}{
	\draw[lgray,line width=1.5pt,->] (0,0) -- (7,0);
	\foreach\x in {1,...,6}{
		\draw[lgray,line width=4pt,->] (\x,-1) -- (\x,1);
	}
	\node[left] at (-0.5,0) {$x \rightarrow$};
	\node[left] at (0,0) {\tiny $i$};\node[right] at (7,0) {\tiny $0$};
	\node[below] at (6,-1) {\tiny $\cdots$};\node[above] at (6,1) {\tiny $\cdots$};
	\node[below] at (5,-1) {\tiny $\cdots$};\node[above] at (5,1) {\tiny $\cdots$};
	\node[below] at (4,-1) {\tiny $\cdots$};\node[above] at (4,1) {\tiny $\cdots$};
	\node[below] at (3,-1) {\tiny $\J_2$};\node[above] at (3,1) {\tiny $\I_2$};
	\node[below] at (2,-1) {\tiny $\J_1$};\node[above] at (2,1) {\tiny $\I_1$};
	\node[below] at (1,-1) {\tiny $\J_0$};\node[above] at (1,1) {\tiny $\I_0$};
}
\right)
\bigotimes_{k=0}^{\infty}
\ket{\J_k}_k,
\quad
0 \leq i \leq n,
\end{align}

\begin{align}
\label{B-row}
\B_i(x)
:
\bigotimes_{k=0}^{\infty}
\ket{\I_k}_k
\mapsto
\sum_{\J_0,\J_1,\ldots \in \mathbb{N}^n}
\left(
\tikz{0.6}{
	\draw[lgray,line width=1.5pt,<-] (0,0) -- (7,0);
	\foreach\x in {1,...,6}{
		\draw[lgray,line width=4pt,->] (\x,-1) -- (\x,1);
	}
	\node[left] at (-0.5,0) {$x \leftarrow$};
	\node[left] at (0,0) {\tiny $i$};\node[right] at (7,0) {\tiny $0$};
	\node[below] at (6,-1) {\tiny $\cdots$};\node[above] at (6,1) {\tiny $\cdots$};
	\node[below] at (5,-1) {\tiny $\cdots$};\node[above] at (5,1) {\tiny $\cdots$};
	\node[below] at (4,-1) {\tiny $\cdots$};\node[above] at (4,1) {\tiny $\cdots$};
	\node[below] at (3,-1) {\tiny $\J_2$};\node[above] at (3,1) {\tiny $\I_2$};
	\node[below] at (2,-1) {\tiny $\J_1$};\node[above] at (2,1) {\tiny $\I_1$};
	\node[below] at (1,-1) {\tiny $\J_0$};\node[above] at (1,1) {\tiny $\I_0$};
}
\right)
\bigotimes_{k=0}^{\infty}
\ket{\J_k}_k,
\quad
0 \leq i \leq n,
\end{align}
where the vertex weights are those from \eqref{eq:L-weights} and \eqref{eq:M-weights}, respectively. Note that the sums above are always finite due to path conservation, and the infinite number of empty vertices far to the right all have weight 1. 

A direct corollary of the Yang-Baxter equation \eqref{graph-RLLb} is the fact that the row operators $\B_i$ and $\C_i$ satisfy certain explicit quadratic commutation relations, cf. \cite[Section 3.2]{BW}. The simplest ones state that for a fixed $i$, $0\le i\le n$, the operators $\B_i(x)$ commute between themselves for different values of $x$, and, similarly, $\C_i(x)$ also commute for different values of $x$. Slightly more complicated are the following relations between $\B$- and $\C$-operators, cf. \cite[Theorem 3.2.3]{BW}: 

Fix two nonnegative integers $i,j$ such that $0 \leq i,j \leq n$, and complex parameters $x,y$ such that
\begin{align}
\label{eq:weight-condition}
\left|
\frac{x-s}{1-sx}
\cdot
\frac{y-s}{1-sy}
\right|
<
1.
\end{align}
Then the row operators \eqref{C-row} and \eqref{B-row} obey the following commutation relations ($0\le i,j\le n$):
\begin{equation}
\label{eq:CB}
\begin{gathered}
\C_i(x) \B_j(y) = \frac{1-qxy}{1-xy}\, \B_j(y) \C_i(x),
\quad
i<j,
\qquad
q\, \C_i(x) \B_j(y) = \frac{1-qxy}{1-xy}\, \B_j(y) \C_i(x),
\quad
i>j,\\
\C_i(x) \B_i(y) =
\frac{1-q^{-1}}{1-xy} \sum_{k < i} \B_k(y) \C_k(x)
+
\B_i(y) \C_i(x)
-
\frac{(1-q)xy}{1-xy} \sum_{k > i} \B_k(y) \C_k(x).
\end{gathered}
\end{equation}

Note that matrix elements of the left-hand sides in \eqref{eq:CB} are given by infinite sums, and \eqref{eq:weight-condition} ensures that those sums converge. It is thanks to the infinite range of the lattice in \eqref{C-row} and \eqref{B-row} that relations \eqref{eq:CB} are simpler than the usual commutation relations in the Yang-Baxter algebra.

We are now in position to introduce certain rational functions that will play a central role in what follows;
they were called \emph{nonsymmetric spin Hall-Littlewood functions} in \cite{BW}. 

For a composition $\mu$ and $n$ complex parameters $x_1,\dots, x_n$, define a rational function $f_\mu(x_1,\dots,x_n)$ as a partition function depicted below (vertex weights \eqref{eq:L-weights} are being used):
\begin{equation}
\label{eq:f-function}
f_{\mu}(x_1,\dots,x_n)
=
\tikz{0.8}{
	\foreach\y in {1,...,5}{
		\draw[lgray,line width=1.5pt,->] (1,\y) -- (8,\y);
	}
	\foreach\x in {2,...,7}{
		\draw[lgray,line width=4pt,->] (\x,0) -- (\x,6);
	}
	\node[left] at (0.5,1) {$x_1 \rightarrow$};
	\node[left] at (0.5,2) {$x_2 \rightarrow$};
	\node[left] at (0.5,3) {$\vdots$};
	\node[left] at (0.5,4) {$\vdots$};
	\node[left] at (0.5,5) {$x_n \rightarrow$};
	\node[below] at (7,0) {$\cdots$};
	\node[below] at (6,0) {$\cdots$};
	\node[below] at (5,0) {$\cdots$};
	\node[below] at (4,0) {\footnotesize$\bm{0}$};
	\node[below] at (3,0) {\footnotesize$\bm{0}$};
	\node[below] at (2,0) {\footnotesize$\bm{0}$};
	\node[above] at (7,6) {$\cdots$};
	\node[above] at (6,6) {$\cdots$};
	\node[above] at (5,6) {$\cdots$};
	\node[above] at (4,6) {\footnotesize$\bm{A}(2)$};
	\node[above] at (3,6) {\footnotesize$\bm{A}(1)$};
	\node[above] at (2,6) {\footnotesize$\bm{A}(0)$};
	\node[right] at (8,1) {$0$};
	\node[right] at (8,2) {$0$};
	\node[right] at (8,3) {$\vdots$};
	\node[right] at (8,4) {$\vdots$};
	\node[right] at (8,5) {$0$};
	\node[left] at (1,1) {$1$};
	\node[left] at (1,2) {$2$};
	\node[left] at (1,3) {$\vdots$};
	\node[left] at (1,4) {$\vdots$};
	\node[left] at (1,5) {$n$};
}
\end{equation}
with $\bm{A}(k) = \sum_{j=1}^{n} \bm{1}_{\mu_j = k} \bm{e}_j$. These are certain matrix elements of the operator $\C_1(x_1)\cdots \C_n(x_n)$ that can be symbolically written in the form $\bra{\varnothing} \C_1(x_1)\cdots \C_n(x_n) \ket{\mu}$. 
A more general definition of the $f_\mu$'s, as described in \cite[Section 3.4]{BW}, involves some of the operators $\C_i$ being repeated with different arguments (equivalently, some of the paths entering the partition function \eqref{eq:f-function} from the left being of the same colour); we will meet such functions in Section \ref{ssec:coloured-comp} below. 

Similarly to the $f_\mu$'s, one defines dual functions
\begin{equation}
\label{eq:g-function}
g_{\mu}(x_1,\dots,x_n)
=
\tikz{0.8}{
	\foreach\y in {1,...,5}{
		\draw[lgray,line width=1.5pt,<-] (1,\y) -- (8,\y);
	}
	\foreach\x in {2,...,7}{
		\draw[lgray,line width=4pt,->] (\x,0) -- (\x,6);
	}
	\node[left] at (0.5,1) {$x_1 \leftarrow$};
	\node[left] at (0.5,2) {$x_2 \leftarrow$};
	\node[left] at (0.5,3) {$\vdots$};
	\node[left] at (0.5,4) {$\vdots$};
	\node[left] at (0.5,5) {$x_n \leftarrow$};
	\node[above] at (7,6) {$\cdots$};
	\node[above] at (6,6) {$\cdots$};
	\node[above] at (5,6) {$\cdots$};
	\node[above] at (4,6) {\footnotesize$\bm{0}$};
	\node[above] at (3,6) {\footnotesize$\bm{0}$};
	\node[above] at (2,6) {\footnotesize$\bm{0}$};
	\node[below] at (7,0) {$\cdots$};
	\node[below] at (6,0) {$\cdots$};
	\node[below] at (5,0) {$\cdots$};
	\node[below] at (4,0) {\footnotesize$\bm{A}(2)$};
	\node[below] at (3,0) {\footnotesize$\bm{A}(1)$};
	\node[below] at (2,0) {\footnotesize$\bm{A}(0)$};
	\node[right] at (8,1) {$0$};
	\node[right] at (8,2) {$0$};
	\node[right] at (8,3) {$\vdots$};
	\node[right] at (8,4) {$\vdots$};
	\node[right] at (8,5) {$0$};
	\node[left] at (1,1) {$1$};
	\node[left] at (1,2) {$2$};
	\node[left] at (1,3) {$\vdots$};
	\node[left] at (1,4) {$\vdots$};
	\node[left] at (1,5) {$n$};
}
\end{equation}
as matrix elements $\bra{\mu}\B_1(x_1)\cdots \B_n(x_n)\ket{\varnothing}$ of $\B_1(x_1)\cdots \B_n(x_n)$. One proves, see \cite[Proposition 5.6.1]{BW}, that these two families of functions are closely related:
\begin{align}
\label{eq:f-g-sym}
g_{\tilde\mu}(x_n^{-1},\dots,x_1^{-1};q^{-1},s^{-1})
=
c_{\mu}(q,s)
\prod_{i=1}^{n}
x_i
\cdot
f_{\mu}(x_1,\dots,x_n;q,s),\qquad \tilde{\mu} = (\mu_n,\dots,\mu_1),
\end{align}
where the multiplicative constant $c_{\mu}(q,s)$ is given by
\begin{align}
c_{\mu}(q,s)
=
\frac{s^n (q-1)^n q^{\#\{i<j : \mu_i \leq \mu_j\}}}{\prod_{j \geq 0} (s^2;q)_{m_j(\mu)}},\qquad 
m_j(\mu):=\#\{1 \leq k \leq n:\mu_k=j\}, \quad j\ge 0.
\end{align}

It will be convenient for us to use a slightly different normalization of the dual functions:
\begin{align*}
g^{*}_{\mu}(x_1,\dots,x_n)
:=
{q^{n(n+1)/2}}{(q-1)^{-n}}
\cdot
g_{\mu}(x_1,\dots,x_n).
\end{align*}

Finally, let us introduce a third family of symmetric rational functions parameterized by a pair of compositions $\mu,\nu$, or rather by a skew composition $\mu/\nu$, by ($p=1,2,\dots$ is arbitrary)
\begin{align}
\label{eq:G-functions}
G_{\mu/\nu}(x_1,\dots,x_p)
&=
\tikz{0.8}{
	\foreach\y in {1,...,5}{
		\draw[lgray,line width=1.5pt,<-] (1,\y) -- (8,\y);
	}
	\foreach\x in {2,...,7}{
		\draw[lgray,line width=4pt,->] (\x,0) -- (\x,6);
	}
	\node[left] at (0.5,1) {$x_1 \leftarrow$};
	\node[left] at (0.5,2) {$x_2 \leftarrow$};
	\node[left] at (0.5,3) {$\vdots$};
	\node[left] at (0.5,4) {$\vdots$};
	\node[left] at (0.5,5) {$x_p \leftarrow$};
	\node[above] at (7,6) {$\cdots$};
	\node[above] at (6,6) {$\cdots$};
	\node[above] at (5,6) {$\cdots$};
	\node[above] at (4,6) {\footnotesize$\bm{B}(2)$};
	\node[above] at (3,6) {\footnotesize$\bm{B}(1)$};
	\node[above] at (2,6) {\footnotesize$\bm{B}(0)$};
	\node[below] at (7,0) {$\cdots$};
	\node[below] at (6,0) {$\cdots$};
	\node[below] at (5,0) {$\cdots$};
	\node[below] at (4,0) {\footnotesize$\bm{A}(2)$};
	\node[below] at (3,0) {\footnotesize$\bm{A}(1)$};
	\node[below] at (2,0) {\footnotesize$\bm{A}(0)$};
	\node[right] at (8,1) {$0$};
	\node[right] at (8,2) {$0$};
	\node[right] at (8,3) {$\vdots$};
	\node[right] at (8,4) {$\vdots$};
	\node[right] at (8,5) {$0$};
	\node[left] at (1,1) {$0$};
	\node[left] at (1,2) {$0$};
	\node[left] at (1,3) {$\vdots$};
	\node[left] at (1,4) {$\vdots$};
	\node[left] at (1,5) {$0$};
}
\end{align}
with $\bm{A}(k) = \sum_{j=1}^{n} \bm{1}_{\mu_j = k} \bm{e}_j$, $\bm{B}(k) = \sum_{j=1}^{n} \bm{1}_{\nu_j = k} \bm{e}_j$, for all $k\in \mathbb{Z}_{\geq0}$. These are matrix elements $\bra{\mu}\B_0(x_1)\cdots\B_0(x_p)\ket{\nu}$  of the operator $\B_0(x_1)\cdots\B_0(x_p)$, and their symmetry with respect to the $x_i$'s is a direct consequence of the commutativity of $\B_0(x_i)$'s noted above. 

\subsection{Colour-blindness}\label{ssec:colour-blind}

For any integer $k \in \{0,1,\dots,n\}$ define its colour-blind projection
\begin{align*}
\theta(k) 
:=
\bm{1}_{k \geq 1}
=
\left\{ \begin{array}{ll} 0, & \quad k=0, \\ 1, & \quad k \geq 1. \end{array} \right.
\end{align*}
Also, denote the set of compositions of a fixed length\footnote{number of entries} with a given weight\footnote{sum of entries} $k$ as
\begin{align*}
\mathcal{W}(k)
:=
\{ \K \in \mathbb{Z}_{\ge 0}^n : |\K| = k \}.
\end{align*}
Then one proves, see \cite[Proposition 2.4.2]{BW}, that
\begin{align}
\label{eq:colour-blindness}
\sum_{\K \in \mathcal{W}(k)}
L_x(\I,j;\K,0)
=
L^{(1)}_x(|\I|,\theta(j);k,0),
\quad
\sum_{\K \in \mathcal{W}(k)}
\sum_{1 \leq \ell \leq n}
L_x(\I,j;\K,\ell)
=
L^{(1)}_x(|\I|,\theta(j);k,1),
\end{align}
where $L^{(1)}$ refers to the weights \eqref{eq:L-weights} with $n=1$. 
Similarly, one has
\begin{align*}
\sum_{\K \in \mathcal{W}(k)}
M_x(\I,j;\K,0)
=
M^{(1)}_x(|\I|,\theta(j);k,0),
\quad
\sum_{\K \in \mathcal{W}(k)}
\sum_{1 \leq \ell \leq n}
M_x(\I,j;\K,\ell)
=
M^{(1)}_x(|\I|,\theta(j);k,1).
\end{align*}
This has been observed in a number of earlier publications \cite{FodaW,GarbaliGW,Kuan}, and used to different effects within those works. This property allows certain linear combinations of higher-rank partition functions to be computable as rank-1, or \emph{colour-blind} partition functions.

As an example, one derives the symmetrization identities, cf. \cite[Proposition 3.4.4]{BW}, \cite[Proposition 4.4.3]{BW},
\begin{align}
\label{eq:symm}
\sum_{\mu: \mu^{+} = \nu}
f_{\mu}(x_1,\dots,x_n)
=
\F^{\sf c}_{\nu}(x_1,\dots,x_n),\quad
\sum_{\mu: \mu^{+} = \kappa}
G_{\nu/\mu}(x_1,\dots,x_p)
=
q^{-np}
\G_{\nu^+/\kappa}(x_1,\dots,x_p),
\end{align} 
with the sums taken over all compositions with a given dominant reordering denoted by the superscript `$+$', and the symmetric rational functions $\F^{\sf c}$ and $\G$ are colour-blind objects that were considered at length in \cite{Borodin, BorodinP1, BorodinP2}.

\subsection{Recursive relations}\label{ssec:recursive} While explicit formulas representing the functions $f_\mu$ as sums of monomials are rather involved, see Chapters 6 and 7 of \cite{BW} for two different versions, there exist concise recursive relations for them. 

First, 	for \emph{anti-dominant} compositions $\delta = (\delta_1 \leq \cdots \leq \delta_n)$, the functions $f_{\delta}$ are completely factorized (cf. \cite[Propostion 5.1.1]{BW}):
\begin{align}
\label{eq:f-delta}
f_{\delta}(x_1,\dots,x_n)
=
\frac{\prod_{j \geq 0} (s^2;q)_{m_j(\delta)}}{\prod_{i=1}^{n} (1-s x_i)}
\prod_{i=1}^{n} \left( \frac{x_i-s}{1-sx_i} \right)^{\delta_i}, \quad  m_j(\delta)=\#\{1\le k\le n:\delta_k=j\}, \  j\ge 0.
\end{align}
This happens because the partition function \eqref{eq:f-function} has only one configuration of paths that contributes nontrivially. 

Second, the following recursion allows one to move step by step from anti-dominant compositions to the dominant ones, cf. \cite[Theorem 5.3.1]{BW}:

Let $\mu=(\mu_1,\dots,\mu_n)$ be a composition with $\mu_i<\mu_{i+1}$ for some $1\le i\le n-1$. Then 
	\begin{equation}
	\label{eq:exchange}
	T_i \cdot f_{\mu}(x_1,\dots,x_n) = f_{(\mu_1,\dots,\mu_{i+1},\mu_i,\dots,\mu_n)}(x_1,\dots,x_n),
	\end{equation}
	where 
	\begin{align}
	\label{eq:DL-operators}
	T_i \equiv q - \frac{x_i-q x_{i+1}}{x_i-x_{i+1}} (1-\mathfrak{s}_i),
	\quad
	1 \leq i \leq n-1,
	\end{align}
	with elementary transpositions \index{s@$\mathfrak{s}_i$; elementary transposition} $\mathfrak{s}_i \cdot h(x_1,\dots,x_n)
	:=h(x_1,\dots,x_{i+1},x_i,\dots,x_n)$, are the Demazure--Lusztig operators of the polynomial representation of the Hecke algebra of type $A_{n-1}$. 	 

\subsection{Summation identities}\label{ssec:Cauchy_ident}
The functions $f_\mu$, $g_\mu$, and $G_{\mu/\nu}$ satisfy several summation identities that can be found in \cite[Section 4]{BW}; in what follows we will need a couple of them. The first one bears a certain similarity to a summation identity proved by Sahi \cite{Sahi-Jack} and Mimachi--Noumi \cite{MimachiN} for non-symmetric Macdonald polynomials \cite{MimachiN}, it was proved as \cite[Theorem 4.3.1]{BW}.

	Let $(x_1,\dots,x_n)$ and $(y_1,\dots,y_n)$ be two sets of complex parameters such that, cf. \eqref{eq:weight-condition},
	\begin{align}
	\label{eq:weight-conditions}
	\left|
	\frac{x_i-s}{1-sx_i}
	\cdot
	\frac{y_j-s}{1-sy_j}
	\right|
	<
	1,
	\qquad
	\text{for all}\ 1 \leq i,j \leq n.
	\end{align}
	Then
	\begin{align}
	\label{eq:mimachi}
	\sum_{\mu}
	f_{\mu}(x_1,\dots,x_n)
	g^{*}_{\mu}(y_1,\dots,y_n)
	=
	\prod_{i=1}^{n}
	\frac{1}{1-x_i y_i}
	\prod_{n \geq i>j \geq 1}
	\frac{1-q x_i y_j}{1-x_i y_j}\,,
	\end{align}
	where the summation is over all compositions $\mu$ (with nonnegative coordinates).

This identity is proved by evaluating the matrix element $\bra{\varnothing}
\C_1(x_1) \dots \C_n(x_n)
\B_1(y_1) \dots \B_n(y_n)
\ket{\varnothing}$ in two different ways, by inserting the partition of unity $\sum _\mu \ket{\mu}\bra{\mu}$ between the groups of $\B$- and $\C$-operators, and by using the commutation relations \eqref{eq:CB}.  

A similar argument applied to the matrix element $\bra{\varnothing}
\C_1(x_1) \dots \C_n(x_n)
\B_0(y_1) \dots \B_0(y_p)
\ket{\nu}$ leads to the following summation identity that is reminiscent of the skew-Cauchy identities known in the theory of symmetric functions (see \cite[Proposition 4.5.1]{BW} for a detailed proof):

Let $(x_1,\dots,x_n)$ and $(y_1,\dots,y_p)$ be two sets of complex parameters satisfying the constraints \eqref{eq:weight-conditions}, and fix a composition $\nu = (\nu_1,\dots,\nu_n)$. Then one has the identity
\begin{align}
\label{eq:fG-skew}
\sum_{\mu}
f_{\mu}(x_1,\dots,x_n)
G_{\mu / \nu}(y_1,\dots,y_p)
=
\prod_{i=1}^{n}
\prod_{j=1}^{p}
\frac{1-q x_i y_j}{q(1-x_i y_j)}
\cdot
f_{\nu}(x_1,\dots,x_n),
\end{align}
where the summation is taken over all length-$n$ compositions 
$\mu = (\mu_1,\dots,\mu_n)$.

\subsection{Orhtogonality and integral representations}\label{ssec:orthogonality}

	Let $\{C_1,\dots,C_n\}$ be a collection of contours in the complex plane. We say that the set $\{C_1,\dots,C_n\}$ is admissible with respect to a pair of complex parameters $(q,s)$ if the following conditions are met:
	\begin{itemize}
		\item The contours $\{C_1,\dots,C_n\}$ are closed, positively oriented and pairwise non-intersecting.
		\item The contours $C_i$ and $q \cdot C_i$ are both contained within contour $C_{i+1}$ for all $1 \leq i \leq n-1$, where $q \cdot C_i$ denotes the image of $C_i$ under multiplication by $q$.
		\item All contours surround the point $s$.
	\end{itemize}
	An illustration of such admissible contours is given in Figure \ref{fig:contours}.

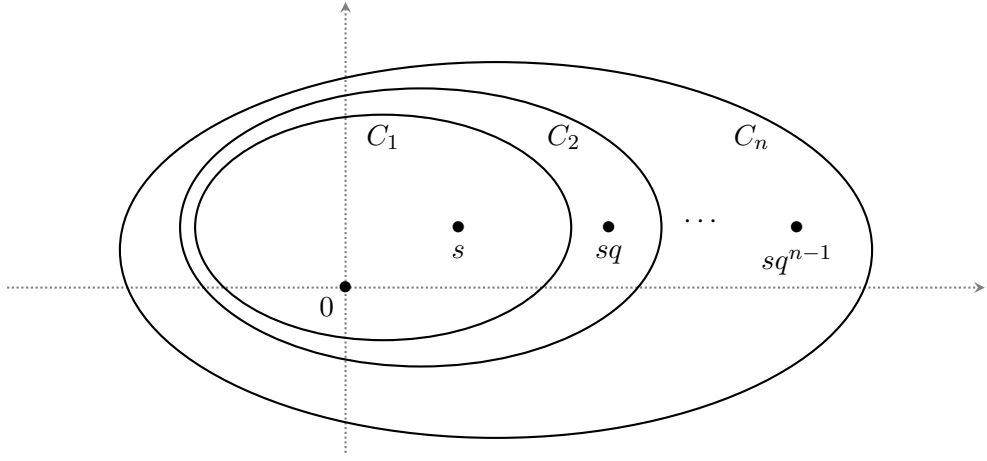
\begin{figure}
	\begin{tikzpicture}[>=stealth]
	\draw[gray,densely dotted,thick,->] (-1,2.2) -- (12,2.2);
	\draw[gray,densely dotted,thick,->] (3.5,0) -- (3.5,6);
	\draw[thick] (5.5,2.7) ellipse [x radius=5cm, y radius=2.5cm];
	\draw[thick] (4.5,3) ellipse [x radius=3.2cm, y radius=1.85cm];
	\draw[thick] (4,3) ellipse [x radius=2.5cm, y radius=1.5cm];
	\node at (3.25,1.95) {$0$};
	\node at (8.9,4.2) {$C_n$};
	\node at (6.4,4.2) {$C_2$};
	\node at (4,4.2) {$C_1$};
	\foreach\x in {5,7,9.5}{
		\node at (\x,3) {$\bullet$};
	}
	\node at (3.5,2.2) {$\bullet$};
	\node[below,text centered] at (5,2.9) {$s$};
	\node[below,text centered] at (7,2.9) {$sq$};
	\node[below,text centered] at (8.25,3.3) {$\cdots$};
	\node[below,text centered] at (9.5,2.9) {$sq^{n-1}$};
	\end{tikzpicture}
	\caption{Admissible contours $\{C_1,\dots,C_n\}$ with respect to $(q,s)$.}
	\label{fig:contours}
\end{figure}
Often when we integrate rational functions over $\{C_1,\dots,C_n\}$, the integrals can also be computed as sums of residues of the integrand inside the contours. Such sums also make sense for values of parameters that prevent admissible contours from existing, and thus the integrals could also be defined via the residue sums. Therefore, we will tacitly assume that we perform such a replacement should the admissible contours not exist, and we will also use a similar convention for other contour integrals below, cf. Remark \ref{rmk:sum-of-residues}.

Let $\mu = (\mu_1,\dots,\mu_n)$ and $\nu = (\nu_1,\dots,\nu_n)$ be two compositions. We have the following orthonormality of non-symmetric spin Hall--Littlewood functions (see \cite[Theorem 8.2.1]{BW}):
\begin{multline}
\label{eq:f-g-orthog}
 \frac{1}{(2\pi\sqrt{-1})^n} 
\oint_{C_1}
\frac{dx_1}{x_1}
\cdots 
\oint_{C_n}
\frac{dx_n}{x_n}
\prod_{1 \leq i<j \leq n}
\frac{x_j-x_i}{x_j-q x_i}
\,
f_{\mu}({x}_1^{-1},\dots,{x}_n^{-1})
g^{*}_{\nu}(x_1,\dots,x_n)
=
{\bm 1}_{\mu = \nu}.
\end{multline}

Coupled with \eqref{eq:mimachi} (which is easily shown to converge uniformly provided that the left-hand sides of \eqref{eq:weight-conditions} are bounded by a uniform constant $<1$), this leads to the integral formula 
\begin{multline}
\label{eq:f-integral-repr}
f_\mu (x_1,\dots,x_n)= \frac{1}{(2\pi\sqrt{-1})^n} 
\oint_{C_1}
\frac{dy_1}{y_1}
\cdots 
\oint_{C_n}
\frac{dy_n}{y_n}
\prod_{1 \leq i<j \leq n}
\frac{y_j-y_i}{y_j-q y_i}\, f_{\mu}({y}_1^{-1},\dots,{y}_n^{-1}) \\ 
\times \prod_{i=1}^{n}
\frac{1}{1-x_i y_i}
\prod_{n \geq i>j \geq 1}
\frac{1-q x_i y_j}{1-x_i y_j}\,.
\end{multline}

A similar integral representation for $G_{\mu/\nu}$ originating from 
\eqref{eq:fG-skew} can be found in \cite[Section 9.5]{BW}. 

\subsection{Coloured compositions}\label{ssec:coloured-comp} This section closely follows \cite[Sections 3.3-3.4]{BW}, where a more detailed exposition can be found. 

Let $\lambda = (\lambda_1,\dots,\lambda_n)$ be a composition of length $n$ and weight $m$: $|\lambda|=\sum_{i=1}^{n} \lambda_i = m$. Denote the partial sums of $\lambda$ by $\sum_{i=1}^{k} \lambda_i = \ell_k$. We introduce the set $\mathcal{S}_{\lambda}$ of $\lambda$-coloured compositions as follows:
\begin{align}
\label{eq:lambda-col}
\mathcal{S}_{\lambda}
=
\Big\{ 
\mu 
= 
(\mu_1 \geq \cdots \geq \mu_{\ell_1} |
\mu_{\ell_1+1} \geq \cdots \geq \mu_{\ell_2} |
\cdots |
\mu_{\ell_{n-1}+1} \geq \cdots \geq \mu_{\ell_n})
\Big\}.
\end{align}
That is, the elements of $\mathcal{S}_{\lambda}$ are length-$m$ compositions $\mu$, which have been subdivided into blocks of length $\lambda_k$, $1 \leq k \leq n$. These blocks demarcate the colouring of $\mu$. Within any given block, the parts of $\mu$ have the same colouring and are weakly decreasing.

Two special cases of $\lambda$-coloured compositions play special roles. The first is when $\lambda = (n,0,\dots,0)$, when compositions $\mu \in \mathcal{S}_{\lambda}$ consist of a single block whose parts are weakly decreasing; \ie\ one simply recovers partitions. As was noted in \eqref{eq:symm}, reducing to this case recovers the symmetric rational functions $\F_{*}$, $\G_{*}$ from \cite{Borodin,BorodinP1,BorodinP2}.

The second one is when $\lambda = (1,1,\dots,1) = (1^n)$. Then compositions $\mu \in \mathcal{S}_{\lambda}$ consist of $n$ blocks, each of a different colour. Thus, the parts of $\mu$ are not bound by any inequalities; accordingly, one recovers the set of all length-$n$ compositions. We will refer to these as {\it rainbow compositions}, or as composition in the \emph{rainbow sector}. The functions $f_\mu$, $g_\mu$ and $G_{\mu/\nu}$ introduced above were all defined under the assumption that the participating compositions were in the rainbow sector. 

Let $\mu \in \mathcal{S}_{\lambda}$ be a $\lambda$-coloured composition, with $\ell_k$ denoting the partial sums of $\lambda$, as above. We associate to each such $\mu$ a vector $\ket{\mu}_{\lambda} \in \mathbb{V}$, defined as follows:
\begin{align}
\label{eq:A(k)-coloured}
\ket{\mu}_{\lambda}
:=
\bigotimes_{k=0}^{\infty}
\ket{\bm{A}(k)}_k,
\qquad
\bm{A}(k) = \sum_{j=1}^{n} A_j(k) \bm{e}_j,
\qquad
A_j(k)
=
\#\{ i : \mu_i = k,\ \ell_{j-1} +1 \leq i \leq \ell_j \},
\end{align}
where by agreement $\ell_{0} = 0$. In other words, the component $A_j(k)$ enumerates the number of parts in the $j$th block of $\mu$ (these are the parts of colour $j$) which are equal to $k$. Further, we define vector subspaces $\mathbb{V}(\lambda)$ of $\mathbb{V}$ which provide a natural grading of $\mathbb{V}$:
\begin{align}
\label{eq:filter}
\mathbb{V} 
= 
\bigoplus_{m=0}^{\infty}
\bigoplus_{|\lambda|=m}
\mathbb{V}(\lambda),
\quad\quad
\mathbb{V}(\lambda) 
:=
{\rm Span}_{\mathbb{C}} 
\Big\{ \ket{\mu}_{\lambda} \Big\}_{\mu \in \mathcal{S}_{\lambda}}.
\end{align}
 The grading \eqref{eq:filter} splits $\mathbb{V}$ into subspaces with fixed particle content: $\mathbb{V}(\lambda)$ is the linear span of all states consisting of $\lambda_i$ particles of colour $i$, for all $i\ge 1$. We refer to these subspaces as {\it sectors} of $\mathbb{V}$, thus generalizing the `rainbow sector' terminology.
 
 The definitions of the rational functions $f_\mu$, $g_\nu$, and $G_{\mu/\nu}$ naturally lift to coloured composition labels. Concretely,
 let $\lambda = (\lambda_1,\dots,\lambda_n)$ be a composition of weight $m$ with partial sums $\ell_k$ as above, and fix a $\lambda$-coloured composition $\mu = (\mu_1,\dots,\mu_m) \in \mathcal{S}_{\lambda}$. In analogy with \eqref{eq:f-function}, set
 \begin{align}
 \label{eq:f-generic}
 f_{\mu}(\lambda; x_1,\dots,x_m)
 :=
 \bra{\varnothing}
 \left(
 \prod_{i=1}^{\ell_1}
 \C_1(x_i)
 \right)
 \left(
 \prod_{i=\ell_1+1}^{\ell_2}
 \C_2(x_i)
 \right)
 \cdots
 \left(
 \prod_{i=\ell_{n-1}+1}^{\ell_n}
 \C_n(x_i)
 \right)
 \ket{\mu}_{\lambda},
 \end{align}
 where $\ket{\mu}_{\lambda} \in \mathbb{V}(\lambda)$ is given by \eqref{eq:A(k)-coloured}, and $\bra{\varnothing} \in \mathbb{V}^{*}$ denotes the (dual) vacuum state
 $\bra{\varnothing}
 =
 \bigotimes_{k=0}^{\infty}
 \bra{\bm{0}}_k,$ %
 which is completely devoid of particles, and $\C_i$'s are the row-operators \eqref{C-row}. Graphically, this is the partition function of the form \eqref{eq:f-function}, with the incoming paths in the bottom $\ell_1$ rows having colour 1, having colour 2 in the $\ell_2$ rows right above those, etc.
 
 One similarly defines, in analogy with \eqref{eq:g-function} and using the row operators \eqref{B-row},
 \begin{align}
 \label{eq:generic-g}
 g_{\mu}(\lambda; x_1,\dots,x_m)
 :=
 \bra{\mu}_{\lambda}
 \left(
 \prod_{i=1}^{\ell_1}
 \B_1(x_i)
 \right)
 \left(
 \prod_{i=\ell_1+1}^{\ell_2}
 \B_2(x_i)
 \right)
 \cdots
 \left(
 \prod_{i=\ell_{n-1}+1}^{\ell_n}
 \B_n(x_i)
 \right)
 \ket{\varnothing},
 \end{align}
 where $\bra{\mu}_{\lambda} \in \mathbb{V}^{*}(\lambda)$ is the dual of the vector \eqref{eq:A(k)-coloured}, and $\ket{\varnothing} \in \mathbb{V}$ denotes the vacuum state
$\ket{\varnothing}
 =
 \bigotimes_{k=0}^{\infty}
 \ket{\bm{0}}_k.$ Again, here the exiting paths in the bottom $\ell_1$ rows have colour 1, in the next $\ell_2$ rows they have colour 2, etc. 
 
 Finally, in analogy with \eqref{eq:G-functions}, for $\mu,\nu\in \mathcal{S}_{\lambda}$ we define
\begin{equation}
\label{eq:G-generic}
G_{\mu/\nu}(\lambda;x_1,\dots,x_p)=\bra{\mu}_\lambda\B_0(x_1)\cdots\B_0(x_p)\ket{\nu}_\lambda,
\end{equation}
with 
\begin{gather*}
\bra{\mu}_{\lambda}
:=
\bigotimes_{k=0}^{\infty}
\bra{\bm{A}(k)}_k,
\quad
\bm{A}(k) = \sum_{j=1}^{n} A_j(k) \bm{e}_j,
\qquad
\ket{\nu}_{\lambda}
:=
\bigotimes_{k=0}^{\infty}
\bra{\bm{B}(k)}_k,
\quad
\bm{B}(k) = \sum_{j=1}^{n} B_j(k) \bm{e}_j,\\
A_j(k)
=
\#\{ i : \mu_i = k,\ \ell_{j-1} +1 \leq i \leq \ell_j \},\qquad
B_j(k)
=
\#\{ i : \nu_i = k,\ \ell_{j-1} +1 \leq i \leq \ell_j \},
\end{gather*}
and the graphical depiction \eqref{eq:G-functions} does not require any modifications. Since the colouring composition $\lambda$ only participates in this definition as a record of the colours in the boundary states $\mu$ and $\nu$, we will often omit it from the notation of $G_{\mu/\nu}$. 

\section{Extensions}\label{sec:extensions}

In this section we provide a few straightforward extensions of the results from Section \ref{sec:prelim}, most of which have very similar proofs. 

\subsection{Column inhomogeneities}\label{ssec:inhom} In \eqref{eq:f-function} 
we defined the functions $f_\mu$ by utilizing the weights $L_{x}$ of 
\eqref{eq:L-weights} with $x=x_i$ and a fixed spin parameter $s$ for all the 
vertices in the $i$th row, $1\le i\le n$. It is meaningful, however, to extend 
the definition when we take $x=x_i\xi_j$ and $s=s_j$ for the vertex in the $i$th 
row and $j$th column, where $1\le i\le n$, $0\le j<+\infty$, and 
$\{\xi_j\}_{j\ge 0}$ and $\{s_j\}_{j\ge 0}$ are two infinite sequences of complex parameters. Correspondingly, in the definitions \eqref{eq:g-function} and \eqref{eq:G-functions} of $g_\mu$ and $G_{\mu/\nu}$ one needs to use the weights $M_x$ of \eqref{eq:M-weights} with $x=x_i\xi_j^{-1}$ and $s=s_j$ for the vertex in the $i$th row and $j$th column, with the same sequences $\{\xi_j\}_{j\ge 0}$ and $\{s_j\}_{j\ge 0}$.   

Many of the results cited in Section \ref{sec:prelim} and their proofs extend to such an inhomogeneous setup almost \emph{verbatim}. In the colour-blind case of $n=1$ this can be seen by comparing \cite{BorodinP1} and \cite{BorodinP2}. 

The basic reason for such an easy extension lies in the fact that the Yang-Baxter equation \eqref{graph-RLLb} has a suitable extension. More exactly, the needed deformed equation has the form
\begin{multline}
\label{eq:inhom-YB}
\sum_{0 \leq k_1,k_3 \leq n}
\
\sum_{\K \in \mathbb{N}^n}
L_{x\xi}(\I,i_1;\K,k_1)
R_{(qxz)^{-1}}(i_3,k_1;k_3,j_1)
M_{z\xi^{-1}}(\K,k_3;\J,j_3)
\\
=\sum_{0 \leq k_1,k_3 \leq n}
\
\sum_{\K \in \mathbb{N}^n}
M_{z\xi^{-1}}(\I,i_3;\K,k_3)
R_{(qxz)^{-1}}(k_3,i_1;j_3,k_1)
L_{x\xi}(\K,k_1;\J,j_1),
\end{multline}
where the important thing to notice is that the parameters of the $R$-weights in the middle remain independent of $\xi$, and they are also independent of $s$ by their definition \eqref{fund-vert}\footnote{This version of the Yang-Baxter equation is also a consequence of the `master' Yang-Baxter equation \eqref{graph-master}.}. 

Let us quickly go through inhomogeneous analogs of the results from Section \ref{sec:prelim} that we will need. 

The commutation relations \eqref{eq:CB} remain unchanged, as they are related to the $R$-matrix in \eqref{eq:inhom-YB}, but the convergence condition \eqref{eq:weight-condition} needs to be modified to 
\begin{equation*}
\lim_{L\to\infty}\prod_{j=0}^L\left| \frac{\xi_jx-s_j}{1-s_j\xi_jx}\cdot \frac{y-s_j\xi_j}{\xi_j-s_jy}\right|=0, 
\end{equation*}
see \cite[Proposition 4.8]{BorodinP1} for an explanation in the $n=1$ case. In what follows, we will only need the situation of finitely many (in fact, exactly one) $\xi_j$'s different from 1 and $s_j$'s different from a certain fixed $s$, in which case, we can clearly continue using \eqref{eq:weight-condition}. 

Correspondingly, the summation identities \eqref{eq:mimachi} and \eqref{eq:fG-skew} also remain unchanged, modulo a similar comment about the convergence condition \eqref{eq:weight-conditions}.

The relation \eqref{eq:f-g-sym} between $f$'s and $g$'s remains valid with the following modifications -- the inversion of $s$ for the $g$ in the left-hand side needs to be applied to all the $s_j$'s, and the multiplicative constant $c_\mu$ needs to be read as 
$$
\frac{\prod_{j\ge 0} s_j\cdot  (q-1)^n q^{\#\{i<j : \mu_i \leq \mu_j\}}}{\prod_{j \geq 0} (s_j^2;q)_{m_j(\mu)}}\,.
$$
The proof remains the same. 

The colour-blindness results \eqref{eq:symm} remain in place with identical proofs and inhomogeneous $\F$ and $\G$ understood as in \cite{BorodinP1}. 

The factorization \eqref{eq:f-delta} for the anti-dominant compositions looks very similar (and the same argument works):
\begin{equation}
\label{eq:f-delta-inhom}
f_{\delta}(x_1,\dots,x_n)
=
\prod_{j \geq 0} (s_j^2;q)_{m_j(\delta)}\cdot 
\prod_{i=1}^{n} \left(\frac1{1-s_{\delta_i}\xi_{\delta_i} x_i} \prod_{j=0}^{\delta_i-1}  \frac{\xi_jx_i-s_j}{1-s_j\xi_jx_i}\right).
\end{equation}

The recurrence relation \eqref{eq:exchange} remains unchanged, with the action of $T_i$'s given by the same Demazure-Lusztig operators \eqref{eq:DL-operators}. Its proof was based on the commutation relations between $\C_i$'s given in \cite[Theorem 3.2.1]{BW} and on \cite[Proposition 5.3.3]{BW}, both of which remain intact in the inhomogeneous setup. 

Finally, the orthogonality relation \eqref{eq:f-g-orthog} remains literally the same, with the contours surrounding all the points $\{\xi_j s_j\}_{j\ge 0}$ instead of just $s$ in the homogeneous case.  It is a bit more difficult to convince oneself that this is so because this proof relies on many ingredients. In addition to the facts already mentioned above, one needs monomial expansions of \cite[Chapter 6]{BW} and a certain explicit contour integral computation. The latter in the inhomogeneous setup is exactly \cite[Lemma 7.1]{BorodinP1}, while the proof of the former carries over to the inhomogeneous case in the same spirit as all the other above-mentioned facts.

\subsection{A simplifying specialization of $G_{\nu/\mu}$}\label{ssec:principal} Our next goal is to prove the following statement. 

 \begin{prop}\label{prop:cG}
Fix generic complex parameters $q$ and $s$, a composition $\lambda$, and two $\lambda$-coloured compositions $\mu,\nu$ of length $n\ge 1$. Then for 
$\l\in\mathbb{N}$ and generic $\epsilon\in\mathbb{C}$, 
$G_{\nu/\mu}(\lambda;\epsilon,q\epsilon,\dots,q^{\l-1}\epsilon)$, with $G$ defined as in 
\eqref{eq:G-generic}, is a rational function in $q^\l$, and there exists a 
limit
\begin{equation}
\label{eq:cG-limit}
\cG_{\nu/\mu}:=\lim_{\epsilon\to 0} \left(q^{n\l}\cdot G_{\nu/\mu}(\lambda;\epsilon,q\epsilon,\dots,q^{\l-1}\epsilon)\right) {\Bigl|}_{q^\l=(s\epsilon)^{-1}},
\end{equation} 
where in the right-hand side we substituted a particular value into a rational function. Explicitly, $\cG_{\nu/\mu}$ has the form
\begin{equation}
\label{eq:cG-formula}
\frac{(-s)^{|\mu|}}{(-s)^{|\nu|}}\,\cG_{\nu/\mu}=\begin{cases}
               s^{-2n} \displaystyle\prod_{x=0}^\infty 
\left((s^2;q)_{|\bm{m}^{(x)}|}q^{-\sum_{i>j}m_i^{(x)}m_j^{(x)}} 
\displaystyle\prod_{i=1}^n 
q^{m_i^{(x)}H_{>i}^{\nu/\mu}(x+1)}{\displaystyle\binom{H_i^{\nu/\mu}(x+1)}{m_i^{
(x) } } } _q \right), &\text {all $\nu_i\ne 0$,} 
               \\
              0,&\text{otherwise,}
              \end{cases}
\end{equation}
	where we defined, for each $x\ge 0$, an $n$-component 
vector $\bm{m}^{(x)}$ whose $i$th component $m^{(x)}_i$, $1\le i\le n$, is equal to 
the number of parts of $\mu$ of colour $i$ that are equal to $x$ (symbolically 
$\mu=0^{\bm{m}^{(0)}}1^{\bm{m}^{(1)}}2^{\bm{m}^{(2)}}\cdots$), 
and also \emph{coloured height functions}
\begin{equation}
 \label{eq:coloured-height}
 H_i^\varkappa(x)=\#\{j:\text{colour}(\varkappa_j)=i,\,\varkappa_j\ge x\},\qquad H_{>i}^\varkappa=\sum_{k> 
i}H_k^\varkappa(x),\qquad H^{\nu/\mu}_*\equiv H^\nu_*-H^\mu_*. 
\end{equation}
The same conclusion holds in the inhomogeneous setting of Section 
\ref{ssec:inhom} under the condition that for each column $x\ge 0$ such that
there exists a part of $\mu$ equal to $x$ (\emph{i.~e.}, $|m^{(x)}|>0$), the 
column rapidity and spin parameter in that column are still equal to 1 and $s$, 
respectively, as in the homogeneous case, and the factor $(-s)^{|\mu|-|\nu|}$ in the left-hand side of \eqref{eq:cG-formula} is replaced by 
$\prod_{i=1}^n\prod_{j=\mu_i}^{\nu_i-1} (-s_j)^{-1}.$ 
\end{prop}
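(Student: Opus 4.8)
\textbf{Proof plan for Proposition \ref{prop:cG}.}

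The plan is to compute $G_{\nu/\mu}(\lambda;\epsilon,q\epsilon,\dots,q^{\l-1}\epsilon)$ by unwinding the partition-function definition \eqref{eq:G-generic} column by column, then take the stated limit. The key observation is that the geometric progression of rapidities $\epsilon,q\epsilon,\dots,q^{\l-1}\epsilon$ is precisely the combination that allows the $\l$ horizontal lines carrying the $\B_0$-operators to be fused into a single ``fat'' row: using the $q$-exchangeability noted after \eqref{eq:fusion} (and its $M$-weight analog via \eqref{bm-sym}), the product $\B_0(\epsilon)\B_0(q\epsilon)\cdots\B_0(q^{\l-1}\epsilon)$ of rank-$n$ fundamental row operators becomes a single row operator built from the fused weights $W_{\l,\m}$ with $q^{-\m}$ specialized; after the substitution $q^\l=(s\epsilon)^{-1}$ in \eqref{eq:cG-limit} this is exactly the kind of substitution that turns $W_{\l,\m}$ into a weight rational in the new variable, cf. the discussion around \eqref{eq:L-stoch}. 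So the first step is to rewrite $q^{n\l}\cdot G_{\nu/\mu}(\lambda;\epsilon,q\epsilon,\dots,q^{\l-1}\epsilon)$ as a one-row partition function in the fused weights $W_{\l,\m}$ with horizontal multiplicity $\l$, vertical edges recording the coloured compositions $\mu$ (bottom) and $\nu$ (top), entry state on the left being $\bm{0}$ (all $\l$ horizontal strands of colour $0$) and no further constraint on the right. The power $q^{n\l}$ is the normalization that makes the limit finite: it cancels the $q^{|\AA|\l-|\DD|\m}$-type prefactors in \eqref{eq:W-weights} across the $n$ relevant columns.

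The second step is to take the limit $\epsilon\to 0$ after the substitution $q^\l=(s\epsilon)^{-1}$, i.e.\ $q^{-\l}=s\epsilon\to 0$, which is the regime governed by the limit relation \eqref{eq:weight-limit} (with $\BB=(0,\dots,0,\l)$, appropriate to an all-$0$-coloured fat horizontal edge). Columns $x$ with $|\bm{m}^{(x)}|=0$ contribute a conservation/pass-through factor that telescopes, while each column $x$ with $|\bm{m}^{(x)}|>0$ contributes a genuine vertex weight whose limit, by \eqref{eq:weight-limit}, produces a ratio of $q$-Pochhammer symbols times a $q$-binomial coefficient in the relevant height-function differences. Assembling these column contributions and carefully tracking the powers of $s$, of $q$, and the inversions $q^{\sum_{i>j}m_i^{(x)}m_j^{(x)}}$ (which come from the off-diagonal exponents $q^{\sum_{i<j}(\mu_i-\lambda_i)\lambda_j}$ in $\Phi$, equivalently from the $\widetilde{\rm inv}$-weighting in the fusion \eqref{eq:fusion}) should reproduce the right-hand side of \eqref{eq:cG-formula}. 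The vanishing case ``some $\nu_i=0$'' follows because a coloured path of some colour $i$ would then have to exit the quadrant through the rightmost relevant column into a $0$-edge, forcing a weight that vanishes in the limit (equivalently, the second branch of \eqref{eq:weight-limit} with $\DD$ not of the form $(0,\dots,0,d)$); this is the analog of the ``otherwise'' alternative in \eqref{eq:weight-limit} propagated through all columns.

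The inhomogeneous refinement requires only bookkeeping: by the hypothesis, every column $x$ that actually interacts with a part of $\mu$ (those with $|\bm{m}^{(x)}|>0$) still has rapidity $1$ and spin $s$, so the local weight computation above is unchanged there; the columns carrying $\xi_j\ne 1$ or $s_j\ne s$ are traversed by paths only in the pass-through manner, and the telescoping of pass-through weights now leaves behind the product $\prod_{i=1}^n\prod_{j=\mu_i}^{\nu_i-1}(-s_j)^{-1}$ rather than $(-s)^{|\nu|-|\mu|}$, since each strand of colour $i$ travels through columns $\mu_i,\mu_i+1,\dots,\nu_i-1$ and picks up the gauge factor $(-s_j)^{-1}$ from \eqref{eq:M-weights} in column $j$. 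I expect the main obstacle to be the careful matching of all the scalar prefactors --- the interplay between the $(-s)^{|\mu|}$, $(-s)^{|\nu|}$, $s^{-2n}$ factors on the two sides, the $q^{n\l}$ normalization, and the $q$-powers $q^{-\sum_{i>j}m_i^{(x)}m_j^{(x)}}$ versus $q^{m_i^{(x)}H_{>i}^{\nu/\mu}(x+1)}$ --- since the height-function bookkeeping must be done simultaneously across infinitely many columns and the order of limits (rationality in $q^\l$ first, then $\epsilon\to 0$) has to be justified. The ``only finitely many columns are nontrivial'' remark ensures all products are finite and the limit can be taken termwise.
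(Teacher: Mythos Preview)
Your overall architecture is right and matches the paper: fuse the $\l$ rows with rapidities $\epsilon,q\epsilon,\dots,q^{\l-1}\epsilon$ into a single fat row via \eqref{eq:fusion}, obtaining a one-row partition function with vertex weights $W_{\l,\m}\left(\frac{s}{\epsilon};q^{-1}\right)\big|_{q^{\m}\to s^{-2}}$ (cf.\ \eqref{eq:M-stoch}), then specialize and take $\epsilon\to 0$ columnwise.

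However, the mechanism you invoke for the limit is wrong. The relation \eqref{eq:weight-limit} does not apply here: its hypothesis $\BB=(0,\dots,0,\l)$ means that all $\l$ horizontal paths carry the \emph{highest} colour $n$, whereas an ``all-$0$-coloured'' edge is $\BB=\bm{0}$ in our conventions (colour $0$ is not recorded in the compositions). Moreover, \eqref{eq:weight-limit} sends $q^{-\m}=s^2\to 0$, not $q^{-\l}=s\epsilon\to 0$; these are different parameters. So neither the boundary condition nor the limiting regime of \eqref{eq:weight-limit} matches the problem, and the $q$-binomial structure you want will not fall out of it.

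The correct simplification is of $q$-Hahn type. With $q^{\m}=s^{-2}$, the substitution $q^{\l}=(s\epsilon)^{-1}$ makes $q^{\m-\l}\cdot\frac{s}{\epsilon}=1$, which is precisely the condition under which the $\bm{P}$-sum in the explicit formula \eqref{eq:W-weights} (with $q\mapsto q^{-1}$) collapses to the single term $\bm{P}=\CC$: the first $\Phi$-factor becomes $1$. This gives a closed expression for the fused vertex weight at every column, still depending on $\epsilon$; multiplying by the share $(s\epsilon)^{-|\CC|}$ of $q^{n\l}$ and sending $\epsilon\to 0$ then yields, per column $x$, the weight
\[
\bm{1}_{\AA+\BB=\CC+\DD}\cdot s^{-2|\CC|}(s^2;q)_{|\CC|}\cdot q^{\sum_{i>j}(B_i-C_i)C_j}\prod_{i\ge 1}\binom{B_i}{C_i}_q,
\]
with $B_i=H_i^{\nu/\mu}(x+1)$ and $C_i=m_i^{(x)}$. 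This is where the $q$-binomials, the $(s^2;q)_{|\bm{m}^{(x)}|}$, and the exponents $q^{-\sum_{i>j}m_i^{(x)}m_j^{(x)}}$ versus $q^{m_i^{(x)}H_{>i}^{\nu/\mu}(x+1)}$ all come from. Two further corrections: in the one-row picture the bottom states encode $\nu$ and the top states encode $\mu$, not the reverse; and the vanishing for some $\nu_i=0$ is a column-$0$ phenomenon (there $\DD=\bm{0}$ forces $\AA+\BB=\CC$, and since the $q$-binomials require $\BB\ge\CC$ componentwise one must have $\AA=\bm{0}$), not a rightmost-column effect.
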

\begin{rmk}
We tacitly follow the convention that the $q$-binomial coefficients 
vanish unless their arguments are nonnegative, and the top one is at least as
large as the bottom one. Thus, the top line of \eqref{eq:cG-formula} can also
produce a zero outcome. 
\end{rmk}

\begin{proof}
Let us first switch from using the weights $M_x$ of \eqref{eq:M-weights} in the partition function of the form \eqref{eq:G-functions} to using the weights $M_x^\stoch$ of \eqref{eq:M-stoch} instead. The correcting factor $(-s)^{{\bf 1}_{j\ge 1}}$ in \eqref{eq:M-weights} taken over all vertices gives $(-s)^{|\nu|-|\mu|}$ (the exponent counts the number of horizontal steps of all the paths, which is exactly $|\nu|-|\mu|$).
Note that this matches the inverse of $(-s)^{|\mu|-|\nu|}$ in the left-hand side of \eqref{eq:cG-formula}, and in the inhomogeneous setup the product of these correcting factors gives $\prod_{i=1}^n\prod_{j=\mu_i}^{\nu_i-1} (-s_j)$ instead.  

With the weights $M^\stoch_x$, it is a bit more convenient to reflect the partition function with respect to a vertical axis so that paths move to the right horizontally. Then, noticing that the left and right boundary conditions of \eqref{eq:G-functions} consist of having no entering or exiting paths, we recognize that using the geometric progression $(\epsilon,q\epsilon,\dots,q^{\l-1}\epsilon)$ for horizontal rapidities is equivalent to performing the stochastic fusion in the vertical direction (corresponding to the outer sum in \eqref{eq:fusion}). Hence, we obtain 
that $(-s)^{|\mu|-|\nu|}G_{\nu/\mu}(\lambda;\epsilon,q\epsilon,\dots,q^{\l-1}\epsilon)$
is a one-row partition function with the incoming paths from the bottom parametrizing $\nu$, outgoing paths on the top parametrizing $\mu$, no paths entering or exiting on either side, and the vertex weights given by 
\begin{equation}
\label{eq:W-modified}
\left.
\vert{\l}{\m}{\frac{s}{\epsilon};\frac{1}{q}}{\AA}{\bm{e}_b}{\CC}{\bm{e}_d}
\right|_{q^{\m} \rightarrow s^{-2}}.
\end{equation}
Here $s$ and $\epsilon$ need to be replaced by $s_j$ and $\xi_j^{-1}\epsilon$, if the vertex is in a column $j$ that carries inhomogeneities $(s_j,\xi_j)$. Pictorially,
\begin{equation}
\label{eq:one-row}
(-s)^{|\mu|-|\nu|}G_{\nu/\mu}(\lambda;\epsilon,q\epsilon,\dots,q^{\l-1}\epsilon)=
\tikz{0.8}{
	\foreach\y in {1}{
		\draw[lgray,line width=4pt,->] (1,\y) -- (8,\y);
	}
	\foreach\x in {2,...,7}{
		\draw[lgray,line width=4pt,->] (\x,0) -- (\x,2);
	}
	\node[below] at (7,0) {\footnotesize$\bm{J}(0)$};
	\node[below] at (6,0) {\footnotesize$\bm{J}(1)$};
	\node[below] at (5,0) {\footnotesize$\bm{J}(2)$};
	\node[below] at (4,0) {$\cdots$};
	\node[below] at (3,0) {$\cdots$};
	\node[below] at (2,0) {$\cdots$};
	\node[above] at (7,2) {\footnotesize$\bm{I}(0)$};
	\node[above] at (6,2) {\footnotesize$\bm{I}(1)$};
	\node[above] at (5,2) {\footnotesize$\bm{I}(2)$};
	\node[above] at (4,2) {$\cdots$};
	\node[above] at (3,2) {$\cdots$};
	\node[above] at (2,2) {$\cdots$};
	\node[right] at (8,1) {$\bm{0}$};
	\node[left] at (1,1) {$\bm{0}$};
}
\end{equation}
with $\bm{I}(k) = \sum_{i=1}^{n} \bm{1}_{\mu_i = k} \bm{e}_i$, $\bm{J}(k) = 
\sum_{j=1}^{n} \bm{1}_{\nu_j = k} \bm{e}_j$, for all $k\in 
\mathbb{Z}_{\geq0}$.
This partition function is a rational function in $q^\l$ because 
every vertex weight \eqref{eq:W-modified} is, cf. \eqref{eq:W-weights}. 

Assume we are in the homogeneous setting first. Observe that if in the right-hand side of \eqref{eq:W-weights} we have $q^{\l-\m}x=1$, then the only nontrivially contributing term comes from $\bm{P}=\CC$, for which the first $\Phi$-factor turns into 1. 
In terms of our weights \eqref{eq:W-modified} this corresponds to 
$q^{\m-\l}s/\epsilon=1$, which is realized 
by setting $q^\l=(s\epsilon)^{-1}$ (recall that $q^\m=s^{-2}$). 

Writing out the term with $\bm{P}=\CC$ we obtain 
\begin{multline}
\label{eq:epsilon-vertex}
\bm{1}_{\AA+\BB=\CC+\DD}\bm\cdot \left(\frac 
s\epsilon\right)^{|\DD|-|\BB|}(s\epsilon)^{|\AA|} 
s^{-2|\DD|}
\\ \times
\frac{(s^{-2};q^{-1})_{|\CC|}(s\epsilon^{-1};q^{-1})_{|\BB|-|\CC|}}	
{((s\epsilon)^{-1};q^{-1})_{|\BB|}} \left(\frac 
s\epsilon\right)^{|\CC|}q^{-\sum_{i<j}(B_i-C_i)C_j}
\prod_{i=1}^n {\binom{B_i}{C_i}}_{q^{-1}}.
\end{multline}
There is an additional factor of $q^{n\l}=(s\epsilon)^{-n}$ in 
\eqref{eq:cG-limit}. Since our compositions have $n$ parts, there is a total
of $n$ paths exiting \eqref{eq:one-row} from the top, and we 
can distribute $(s\epsilon)^{-n}$ as $(s\epsilon)^{-|\CC|}$ over 
all vertices in \eqref{eq:one-row} (with $\CC$ corresponding to the paths
exiting the vertex at the top). Hence, we need to take the limit
as $\epsilon\to 0$ of \eqref{eq:epsilon-vertex} multiplied by 
$(s\epsilon)^{-|\CC|}$. This is a straightforward calculation which yields
\begin{equation}
\label{eq:limit-vertex}
 \bm{1}_{\AA+\BB=\CC+\DD}\cdot s^{-2|\CC|}(s^2;q)_{|\CC|}\cdot 
q^{\sum_{i>j}(B_i-C_j)C_j}\prod_{i=1}^n {\binom{B_i}{C_i}}_q.
\end{equation}

For the inhomogeneous setting, our hypothesis implies that for any vertex in
a column with inhomogeneity parameters $(\tilde s, \xi)$ we must have $\CC=\bm{0}$.
This turns the sum in \eqref{eq:W-weights} into a single term with $\bm{P}=\bm{0}$,
which leads to the replacement of \eqref{eq:epsilon-vertex} by 
\begin{equation}
\label{eq:epsilon-vertex2}
\bm{1}_{\AA+\BB=\CC+\DD}\bm\cdot \left(
\xi^{-1}\frac  s\epsilon\right)^{|\DD|-|\BB|}(s\epsilon)^{|\AA|} 
\tilde s^{-2|\DD|}
\frac{((s\epsilon)^{-1};q^{-1})_{|\DD|}}{(\xi^{-1}\tilde s^{-2}s\epsilon^{-1};q^{-1})_{|\DD|}}
\frac{(\xi^{-1}s\epsilon^{-1};q^{-1})_{|\BB|}}	
{((s\epsilon)^{-1};q^{-1})_{|\BB|}}.
\end{equation} 
This expression has the exact same $\epsilon\to 0$ asymptotics as \eqref{eq:epsilon-vertex} with $\CC=\bm{0}$, thus confirming \eqref{eq:limit-vertex} for the inhomogeneous setup as well.  

It remains to interpret $\BB$ and $\CC$ in \eqref{eq:limit-vertex} in terms
of the height functions. Assume that our vertex is located in the column with 
coordinate $x\ge 0$ (which means, in particular, that 
$\AA=\bm{J}(x),\CC=\bm{I}(x)$ in
the notation of \eqref{eq:one-row}). Then $B_i$ is the number of paths of 
colour $i$ in \eqref{eq:one-row} that enter from the bottom at a location 
strictly to the left of $x$ minus the number of paths of 
colour $i$ that exit through the top at a location 
strictly to the left of $x$, and this equals $H_i^{\nu/\mu}(x+1)$ as 
defined in 
\eqref{eq:coloured-height}. Furthermore, $C_i$ is exactly $m_i^{(x)}$. 
Hence, the part of \eqref{eq:limit-vertex} past the indicator function takes 
the form
\begin{equation}
s^{-2|\bm{m}^{(x)}|}(s^2;q)_{|\bm{m}^{(x)}| } \cdot q^{\sum_{j\ge 1} 
m_j^{(x)}H_{>j}^{\nu/\mu}(x+1)-\sum_{i>j}m_i^{(x)}m_j^{(x)}}\prod_{i=1}^n 
\binom{H_i^{\nu/\mu}(x+1)}{m_i^{(x)}}_q,
\end{equation}
and the product of these expressions over $x\ge 0$ gives the first line of the right-hand side of 
\eqref{eq:cG-formula}. 

On the other hand, the role of the indicator function $\bm{1}_{\AA+\BB=\CC+\DD}$
in \eqref{eq:limit-vertex} is in providing a recipe for uniquely assigning the 
paths along the horizontal edges of \eqref{eq:one-row} inductively: $\DD$ is 
assigned the value of $\AA+\BB-\CC$, starting from the 
far left where no paths are present. This works smoothly until column 0,
in which $\DD$ must be equal to $\bm{0}$, enforcing $\AA+\BB=\CC$. Since the 
$q$-binomial coefficients require $B_i\ge C_i$ for a nonzero outcome, we 
conclude that $\AA$ must be $\bm{0}$ in the 0th column, giving us the second 
line 
of the right-hand side of \eqref{eq:cG-formula}.
\end{proof}

An important special case of $\cG_{\nu/\mu}$ is when $\mu$ has only zero parts. 
A direct inspection of \eqref{eq:cG-formula} leads to the following 

\begin{cor}
For $\mu=0^n=(0,0,\dots,0)$, with notation $\cG_\nu:=\cG_{\nu/0^n}$, the limit \eqref{eq:cG-limit} takes the form
\begin{equation}
\label{eq:cG-nu}
(-s)^{-|\nu|}\,\cG_\nu=\begin{cases}
               s^{-2n} 
(s^2;q)_{n}, &\text {all $\nu_i\ne 0$,} 
               \\
              0,&\text{otherwise.}
              \end{cases}
\end{equation}
In the inhomogeneous setting, assuming that $(s_0,\xi_0)=(s,1)$, a similar formula holds, where one needs to replace $(-s)^{-|\nu|}$ in the left-hand side by $\prod_{i=1}^n\prod_{j=0}^{\nu_i-1} (-s_j)^{-1}$.  
\end{cor}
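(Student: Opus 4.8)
The plan is to obtain the corollary by a direct specialization of the explicit formula \eqref{eq:cG-formula} of Proposition~\ref{prop:cG} to $\mu = 0^n$, tracking which factors survive. First I would record what the data entering \eqref{eq:cG-formula} become in this case: since every part of $\mu$ equals $0$, the vector $\bm m^{(x)}$ vanishes for every $x \ge 1$, while $\bm m^{(0)}$ has $i$th coordinate $m_i^{(0)} = \lambda_i$ (the number of parts of colour $i$, recalling that $\mu \in \mathcal{S}_\lambda$ has $n$ parts), so that $|\bm m^{(0)}| = n$; moreover $H_i^\mu(x) \equiv 0$, hence $H^{\nu/\mu}_* = H^\nu_*$. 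The requirement in the inhomogeneous part of the proposition — that columns carrying a part of $\mu$ keep rapidity $1$ and spin $s$ — now involves only column $0$, which is why the hypothesis $(s_0,\xi_0) = (s,1)$ appears.

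Next I would observe that in the infinite product $\prod_{x \ge 0}$ of \eqref{eq:cG-formula} every factor with $x \ge 1$ equals $1$: there $(s^2;q)_0 = 1$, the exponent $\sum_{i>j} m_i^{(x)} m_j^{(x)}$ is $0$, and each $q$-binomial $\binom{H_i^{\nu/\mu}(x+1)}{0}_q = 1$. So only the $x=0$ factor remains, and I would evaluate it assuming first that all $\nu_i \neq 0$, i.e.\ every part of $\nu$ is $\geq 1$. Then $H_i^\nu(1) = \lambda_i$ and $H_{>i}^\nu(1) = \sum_{k>i}\lambda_k$, so each $q$-binomial is $\binom{\lambda_i}{\lambda_i}_q = 1$, and the two remaining $q$-powers combine to $q^{\sum_i \lambda_i \sum_{k>i}\lambda_k \,-\, \sum_{i>j}\lambda_i\lambda_j} = q^0$, since $\sum_i \lambda_i \sum_{k>i}\lambda_k = \sum_{i<k}\lambda_i\lambda_k = \sum_{i>j}\lambda_i\lambda_j$. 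What is left in the $x=0$ factor is $(s^2;q)_n$, and together with the prefactor $s^{-2n}$ and with $|\mu| = 0$ this yields $(-s)^{-|\nu|}\cG_\nu = s^{-2n}(s^2;q)_n$. If instead some $\nu_i = 0$, the hypothesis ``all $\nu_i \neq 0$'' of \eqref{eq:cG-formula} fails, so $\cG_{\nu/0^n} = 0$, matching the second line of \eqref{eq:cG-nu}.

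For the inhomogeneous statement I would simply rerun the $x=0$ computation with $s_0 = s$ (permitted by the compatibility hypothesis) and replace, as prescribed in Proposition~\ref{prop:cG}, the left-hand prefactor $(-s)^{|\mu|-|\nu|}$ by $\prod_{i=1}^n \prod_{j=0}^{\nu_i-1} (-s_j)^{-1}$; nothing else changes because the $x \ge 1$ columns contribute trivially regardless of their inhomogeneities. There is essentially no obstacle here: the entire argument is substitution into \eqref{eq:cG-formula}, and the only points requiring a moment's care are the cancellation of the two $q$-exponents and the observation that all factors with $x \ge 1$ collapse to $1$.
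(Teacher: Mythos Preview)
Your proposal is correct and takes exactly the same approach as the paper, which simply states that the corollary follows by ``a direct inspection of \eqref{eq:cG-formula}'' without writing out any details. Your write-up is a faithful expansion of that one-line proof, including the verification that the $q$-exponents at $x=0$ cancel.
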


The second symmetrization relation of \eqref{eq:symm} implies that
$\cG_\nu$ has to equal 
$$
\lim_{\epsilon\to 0} \G_{\nu^+} 
(\epsilon,q\epsilon,\dots,q^{\l-1}\epsilon)|_{q^\l=(s\epsilon)^{-1}}
$$
with $\nu^+$ denoting the dominant reordering of $\nu$, and $\G$ as in 
\cite{BorodinP1}. This limit was evaluated in \cite[Proposition 6.7]{BorodinP1}, and was shown to be equivalent to \eqref{eq:cG-nu}.\footnote{In fact, \cite{BorodinP1} provided two different evaluations for this limit, neither of which coincides with the color-blind version of the proof of Proposition \ref{prop:cG} above.}

Let us also record for the future what happens to \eqref{eq:cG-formula} when $\mu$ and $\nu$ are rainbow compositions (equivalently, $\lambda=(1,1,\dots,1)$). 

\begin{cor}\label{cor:cG-rainbow} Under the assumption that $\mu$ and $\nu$ are rainbow compositions, \eqref{eq:cG-formula} takes the form
\begin{equation}
\label{eq:cG-rainbow}
\frac{(-s)^{|\mu|}}{(-s)^{|\nu|}}\,\cG_{\nu/\mu}=\begin{cases}
s^{-2n} \displaystyle\prod_{x=0}^\infty 
\left((s^2;q)_{|\bm{m}^{(x)}|}q^{-\binom{|\bm{m}^{(x)}|}{2}} 
\displaystyle\prod_{\substack{1\le i\le n\\ m_i^{(x)}=1}} 
\bm{1}_{H_i^{\nu/\mu}(x+1)=1} \,q^{H_{>i}^{\nu/\mu}(x+1)}\right), &\text {all $\nu_i\ne 0$,} 
\\
0,&\text{otherwise.}
\end{cases}
\end{equation}
The same formula holds in the inhomogeneous setting of Section 
\ref{ssec:inhom} under the condition that for each column $x\ge 0$ such that
there exists a part of $\mu$ equal to $x$ (\emph{i.~e.}, $|\bm{m}^{(x)}|>0$), the 
column rapidity and spin parameter in that column are still equal to 1 and $s$, 
respectively, and the factor $(-s)^{|\mu|-|\nu|}$ in the left-hand side is replaced by 
$\prod_{i=1}^n\prod_{j=\mu_i}^{\nu_i-1} (-s_j)^{-1}.$ 
\end{cor}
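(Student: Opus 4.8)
The plan is to specialize the formula \eqref{eq:cG-formula} of Proposition \ref{prop:cG} to the rainbow colouring $\lambda=(1,1,\dots,1)$ and simplify; the statement is essentially a bookkeeping exercise once the right observation is made. That observation is that in the rainbow situation each colour $1\le i\le n$ labels exactly one part of $\mu$, namely $\mu_i$, and exactly one part of $\nu$, namely $\nu_i$. Hence the vectors $\bm{m}^{(x)}$ have entries $m_i^{(x)}=\bm{1}_{\mu_i=x}\in\{0,1\}$, and $|\bm{m}^{(x)}|=\#\{i:\mu_i=x\}$ counts colours rather than multiplicities.

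First I would rewrite the three $x$-dependent groups of factors in \eqref{eq:cG-formula}. Since every $m_i^{(x)}$ equals $0$ or $1$, setting $S_x=\{i:m_i^{(x)}=1\}$ gives $\sum_{i>j}m_i^{(x)}m_j^{(x)}=\#\{(i,j):i>j,\ i,j\in S_x\}=\binom{|\bm{m}^{(x)}|}{2}$, which is the exponent appearing in \eqref{eq:cG-rainbow}; the factor $(s^2;q)_{|\bm{m}^{(x)}|}$ is already in the desired form. In the product over $i=1,\dots,n$, an index with $m_i^{(x)}=0$ contributes $q^{0}\binom{H_i^{\nu/\mu}(x+1)}{0}_q=1$, so only the indices $i\in S_x$ survive, which is exactly the index set $\{i:m_i^{(x)}=1\}$ in \eqref{eq:cG-rainbow}.

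The one point that genuinely needs checking is that for $i\in S_x$ (so $\mu_i=x$) the $q$-binomial degenerates to an indicator, $\binom{H_i^{\nu/\mu}(x+1)}{1}_q=\bm{1}_{H_i^{\nu/\mu}(x+1)=1}$. Here I would note that $H_i^\mu(x+1)=\#\{j:\text{colour}(\mu_j)=i,\ \mu_j\ge x+1\}=\bm{1}_{\mu_i\ge x+1}=0$ precisely because $\mu_i=x$, while $H_i^\nu(x+1)=\bm{1}_{\nu_i\ge x+1}\in\{0,1\}$ since colour $i$ occurs once in $\nu$; therefore $H_i^{\nu/\mu}(x+1)=\bm{1}_{\nu_i\ge x+1}\in\{0,1\}$. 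Using $\binom{0}{1}_q=0$ and $\binom{1}{1}_q=1$ (the convention on $q$-binomials recalled after Proposition \ref{prop:cG}), and that the argument is $0$ or $1$, this gives the claimed collapse, and the remaining factor $q^{m_i^{(x)}H_{>i}^{\nu/\mu}(x+1)}=q^{H_{>i}^{\nu/\mu}(x+1)}$ matches \eqref{eq:cG-rainbow}. Assembling the three simplifications, and carrying over the prefactor $s^{-2n}$, the overall factor $(-s)^{|\mu|-|\nu|}$, and the dichotomy ``all $\nu_i\ne0$ / otherwise'' unchanged from \eqref{eq:cG-formula}, yields \eqref{eq:cG-rainbow}.

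For the inhomogeneous statement nothing new is required: the reductions above are purely combinatorial identities involving $\bm{m}^{(x)}$ and the height functions, none of which are affected by column inhomogeneities, so the inhomogeneous case of Proposition \ref{prop:cG} — including the replacement of $(-s)^{|\mu|-|\nu|}$ by $\prod_{i=1}^n\prod_{j=\mu_i}^{\nu_i-1}(-s_j)^{-1}$ and the hypothesis on columns containing parts of $\mu$ — transfers verbatim. I do not expect any real obstacle; the only step demanding care is evaluating $H_i^\mu$ at $x+1$ rather than at $x$, which is exactly what forces it to vanish and makes the $q$-binomial degenerate.
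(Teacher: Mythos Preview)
Your proposal is correct and follows exactly the paper's approach: the paper's proof reads simply ``Direct inspection of \eqref{eq:cG-formula}. Note that in the rainbow sector, for any given $i$, $1\le i\le n$, the $i$th colour height function $H_i^*$ defined as in \eqref{eq:coloured-height}, can only take values $0$ or $1$.'' You have spelled out that direct inspection in full detail, with the key observation being precisely that $H_i^{\nu/\mu}(x+1)\in\{0,1\}$ forces the $q$-binomial to collapse to an indicator.
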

\begin{proof} Direct inspection of \eqref{eq:cG-formula}. Note that in the rainbow sector, for any given $i$, $1\le i\le n$, the $i$th colour height function $H_i^*$ defined as in \eqref{eq:coloured-height}, can only take values $0$ or $1$.
\end{proof}

\subsection{Colour merging}\label{ssec:colour-merge} Different versions of  
colour merging properties of vertex weights have been previously observed and 
studied in several works including \cite{FodaW,GarbaliGW,Kuan,BW,BGW}. We use 
this section to formulate the statements we need in suitable notation.   

Let $n_1,n_2$ be two positive integers, and let $\theta:\{1,\dots,n_1\}\to\{1,\dots,n_2\}$ be an arbitrary monotone map. 
It induces a map $\theta_*$ that turns a $n_1$-dimensional vector into an $n_2$-dimensional one as follows: 
\begin{equation}
\label{eq:theta-star}
\theta_*:\bm{I}=(I_1,\dots,I_{n_1}) \mapsto \bm{J}=(J_1,\dots,J_{n_2}) \qquad \text{if}
\qquad J_j=\sum_{i\in\theta^{-1}(j)}I_i.
\end{equation}
In other words, we sum the coordinates of $\bm{I}$ that have the same $\theta$-image and turn the result into a coordinate of $\bm{J}$ whose index is that image. Empty sums are interpreted as having value 0.  
\begin{prop}\label{prop:merge} Denote the weights $W_{\l,\m}$ given by 
\eqref{eq:W-weights} with $n$-dimensional vector arguments as $W_{\l,\m}^{(n)}$. 
Then for any $n_1,n_2\ge 1$ and a map $\theta$ as above, we have the following 
colour merging relation: For any $\AA,\BB\in \mathbb{Z}_{\ge 0}^{n_1}$ and 
$\widetilde\CC,\widetilde\DD\in \mathbb{Z}^{n_2}_{\ge 0}$ with 
$|\AA|,|\widetilde\CC|\le \m$, $|\BB|,|\widetilde\DD|\le \l$,   
\begin{equation}
\label{eq:merge}
\sum_{\substack{\CC\in \mathbb{Z}^{n_1}_{\ge 0}\,:\,\theta_*(\CC)=\widetilde\CC
	\\
	 \DD\in \mathbb{Z}^{n_1}_{\ge 0}\,:\,\theta_*(\DD)=\widetilde\DD}}
 \vertt{\l}{\m}{n_1}{x;q}{\AA}{\BB}{\CC}{\DD} =  \vertt{\l}{\m}{n_2}{x;q}{\theta_*(\AA)}{\theta_*(\BB)}{\widetilde\CC}{\widetilde\DD}.
\end{equation} 
\end{prop}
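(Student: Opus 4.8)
The plan is to reduce \eqref{eq:merge} to a single colour-merging property of the block $\Phi$ that underlies the explicit formula \eqref{eq:W-weights}, and then to transport the merging through that formula by applying the property twice. The key step is the following identity for $\Phi$: with $\theta_*$ as in \eqref{eq:theta-star}, for all $\mu\in\mathbb{Z}^{n_1}_{\ge 0}$ and $\widetilde\lambda\in\mathbb{Z}^{n_2}_{\ge 0}$,
\begin{equation}
\sum_{\lambda\in\mathbb{Z}^{n_1}_{\ge 0}\,:\,\theta_*(\lambda)=\widetilde\lambda}\Phi(\lambda,\mu;x,y)=\Phi\left(\widetilde\lambda,\theta_*(\mu);x,y\right),
\end{equation}
with the convention that $q$-binomials vanish outside the admissible range, so the left-hand side is a finite sum.

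To prove this identity, note first that every factor of $\Phi(\lambda,\mu;x,y)$ that sees $\lambda$ only through $|\lambda|$ --- the Pochhammer ratio $(x;q)_{|\lambda|}(y/x;q)_{|\mu-\lambda|}/(y;q)_{|\mu|}$ and the power $(y/x)^{|\lambda|}$ --- is constant on the summation set, since $|\theta_*(\lambda)|=|\lambda|$, and these factors come out carrying precisely the merged values because $|\mu|=|\theta_*(\mu)|$. What remains is a purely combinatorial identity for $\sum_{\lambda}q^{\sum_{i<j}(\mu_i-\lambda_i)\lambda_j}\prod_{i}\binom{\mu_i}{\lambda_i}_q$. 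Because $\theta$ is monotone, the fibres $\theta^{-1}(a)$ are consecutive intervals, so the inversion-type exponent $\sum_{i<j}(\mu_i-\lambda_i)\lambda_j$ splits into within-block pieces and across-block pieces, and the across-block piece equals $\sum_{a<b}(\theta_*(\mu)_a-\widetilde\lambda_a)\widetilde\lambda_b$, a function of $\widetilde\lambda$ alone; simultaneously $\prod_i\binom{\mu_i}{\lambda_i}_q$ factors over blocks, and the sum of each block's contribution over the restriction of $\lambda$ to $\theta^{-1}(a)$ with fixed total $\widetilde\lambda_a$ collapses to $\binom{\theta_*(\mu)_a}{\widetilde\lambda_a}_q$ by the $q$-Chu--Vandermonde summation (a short induction on block size from its classical two-term form). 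This gives the displayed identity.

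With this in hand, I would substitute it into \eqref{eq:W-weights}. On the left of \eqref{eq:merge} the indicator $\bm{1}_{\AA+\BB=\CC+\DD}$ fixes $\DD=\AA+\BB-\CC$, so $\theta_*(\DD)=\widetilde\DD$ is automatic provided $\theta_*(\AA)+\theta_*(\BB)=\widetilde\CC+\widetilde\DD$ (both sides of \eqref{eq:merge} vanishing otherwise), and the double sum reduces to a sum over $\CC\ge 0$ with $\theta_*(\CC)=\widetilde\CC$; moreover the prefactor $x^{|\DD|-|\BB|}q^{|\AA|\l-|\DD|\m}$ already equals the one in the merged weight, being a function of $|\AA|,|\BB|,|\DD|$ only. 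I would then reparametrise $\CC=\bm{P}+\bm{R}$ with $\bm{P},\bm{R}\ge 0$; the two $\Phi$-factors of \eqref{eq:W-weights} become $\Phi(\bm{R},\AA+\BB-\bm{P};q^{\l-\m}x,q^{-\m}x)$ and $\Phi(\bm{P},\BB;q^{-\l}/x,q^{-\l})$, the crucial gain being that the second argument $\AA+\BB-\bm{P}$ of the first factor no longer involves $\bm{R}$. Summing out $\bm{R}$ with $\theta_*(\bm{R})$ held fixed, via the $\Phi$-identity, replaces the first factor by $\Phi(\widetilde\CC-\theta_*(\bm{P}),\theta_*(\AA)+\theta_*(\BB)-\theta_*(\bm{P});q^{\l-\m}x,q^{-\m}x)$, which now depends on $\bm{P}$ only through $\widetilde{\bm{P}}:=\theta_*(\bm{P})$; grouping the residual sum by $\widetilde{\bm{P}}$ and applying the $\Phi$-identity a second time to $\sum_{\bm{P}\,:\,\theta_*(\bm{P})=\widetilde{\bm{P}}}\Phi(\bm{P},\BB;q^{-\l}/x,q^{-\l})=\Phi(\widetilde{\bm{P}},\theta_*(\BB);q^{-\l}/x,q^{-\l})$ reassembles exactly the $\bm{P}$-sum of \eqref{eq:W-weights} for the $n_2$-dimensional data $(\theta_*(\AA),\theta_*(\BB),\widetilde\CC,\widetilde\DD)$. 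Combined with the already-matched prefactor and indicator, this is \eqref{eq:merge}.

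The main obstacle is the $\Phi$-merging identity, and within it the observation that the inversion exponent decouples into a term that only sees the merged data and a block-diagonal term summed exactly by $q$-Chu--Vandermonde; the remainder is careful but routine reindexing of the $\CC$- and $\bm{P}$-summations so that the identity applies twice. As an alternative route, one could deduce \eqref{eq:merge} from the stochastic fusion \eqref{eq:fusion}: colour merging for the fundamental $R$-matrix \eqref{fund-vert} is immediate (for single paths it merely relabels colours, and merging two distinct colours is handled by the stochasticity of those weights) and propagates through any partition function built from such vertices, but reconciling the $Z_q$-normalisations and the $\mathrm{inv}$/$\widetilde{\mathrm{inv}}$ statistics of the refined versus merged colour sequences in \eqref{eq:fusion} requires essentially the same $q$-multinomial input, so the direct computation above appears more economical.
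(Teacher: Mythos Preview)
Your proof is correct and takes a genuinely different route from the paper's. The paper argues in two lines: for $\l=\m=1$ the weights reduce to the fundamental $R$-matrix \eqref{fund-vert}, where the merging is immediate (and is cited from \cite{BGW}); the general case then follows by propagating this through the stochastic fusion construction \eqref{eq:fusion}, since the $Z_q$-normalisations and $\mathrm{inv}/\widetilde{\mathrm{inv}}$ statistics are manifestly compatible with merging colours. You instead work directly with the closed formula \eqref{eq:W-weights}: you isolate a merging identity for the building block $\Phi$, prove it by splitting the inversion exponent into across-block and within-block parts and collapsing the latter with $q$-Chu--Vandermonde, then apply it twice via the reparametrisation $\CC=\bm{P}+\bm{R}$ so that each application sees a fixed second argument. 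This is more computational but entirely self-contained --- it does not invoke fusion or any external reference --- and in fact runs in the opposite direction to the paper's logic: the paper later extracts the $q$-identity of Corollary~\ref{cor:q-identity} as a \emph{consequence} of Proposition~\ref{prop:merge}, whereas you use (the block-wise form of) that very identity as the engine that \emph{proves} it. Your closing remark that the fusion route ``requires essentially the same $q$-multinomial input'' is not quite right, though: the paper's fusion argument needs only the trivial $\l=\m=1$ merging and the structural compatibility of \eqref{eq:fusion}, not Chu--Vandermonde.
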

\begin{proof} For $\l=\m=1$, when the $W$-weights turn into matrix elements of 
the $R$-matrix, cf. \eqref{eq:W-R}, the statement coincides with \cite[Proposition 4.3]{BGW} 
(and it is also easy to check directly from the formula \eqref{fund-vert} for 
the weights). For general $\l,\m\ge 1$, \eqref{eq:merge} readily follows 
from the $\l=\m=1$ case and the stochastic fusion \eqref{eq:fusion}. 		
\end{proof} 

\begin{cor}\label{cor:merge} Colour merging statements similar to Proposition  
\ref{prop:merge} hold for the vertex weights $L_x$, $M_x$, $L_x^\stoch$, 
$M_x^\stoch$, $q$-Hahn weights \eqref{eq:q-Hahn}, as well as for the vertex 
weights defined by \eqref{eq:limit-vertex}. 
\end{cor}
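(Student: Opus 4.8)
The plan is to derive every case from the single colour-merging identity \eqref{eq:merge} for the fused weights $W_{\l,\m}^{(n)}$ established in Proposition~\ref{prop:merge}, by observing that each of the weights $L^\stoch_x$, $M^\stoch_x$, $L_x$, $M_x$, the $q$-Hahn weights \eqref{eq:q-Hahn} and the limit weights \eqref{eq:limit-vertex} is obtained from $W_{\l,\m}^{(n)}$ through three kinds of operation, none of which interferes with \eqref{eq:merge}: fixing one or both horizontal arguments to carry at most one path; multiplying by a gauge or normalization factor that depends on the edge data only through the $\theta_*$-invariant quantities $|\AA|,|\BB|,|\CC|,|\DD|$ or through the indicators that horizontal edges are occupied; and specializing, or taking a limit, in the parameters $q^{\m}$, $q^{\l}$, $x$, possibly after replacing $q$ by $q^{-1}$.

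First I would record two preliminary observations. (a) The identity \eqref{eq:merge} holds for $W_{\l,\m}^{(n)}$ with an arbitrary value of the quantization parameter, since its proof reduces via the stochastic fusion \eqref{eq:fusion} to the $\l=\m=1$ case, and the latter is a finite check on the table \eqref{fund-vert} valid for generic parameter; in particular \eqref{eq:merge} holds verbatim with $q$ replaced by $q^{-1}$. (b) By \eqref{eq:W-weights} both sides of \eqref{eq:merge} are rational in $q^{\m}$ and $q^{\l}$, and the identity holds for all positive integers $\m,\l$; hence it survives the substitutions $q^{\m}\mapsto s^{-2}$ and $q^{\l}\mapsto (s\epsilon)^{-1}$ used below, as well as any subsequent limit in the remaining parameters.

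The individual cases are then short. For the $q$-Hahn weights \eqref{eq:q-Hahn} one simply sets $x=1$ in \eqref{eq:merge}. For the weights \eqref{eq:limit-vertex} I would recall from the proof of Proposition~\ref{prop:cG} that they arise as $\lim_{\epsilon\to 0}\,(s\epsilon)^{-|\CC|}\,W_{\l,\m}^{(n)}(s/\epsilon;q^{-1};\AA,\bm{e}_b,\CC,\bm{e}_d)\big|_{q^{\m}=s^{-2},\,q^{\l}=(s\epsilon)^{-1}}$; since all $\CC$ with $\theta_*(\CC)=\widetilde\CC$ have the same total $|\CC|=|\widetilde\CC|$, the prefactor $(s\epsilon)^{-|\CC|}$ is constant on the merging sum, so applying \eqref{eq:merge} (with parameter $q^{-1}$), specializing, and letting $\epsilon\to 0$ yields the merging relation for \eqref{eq:limit-vertex}. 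For $L^\stoch_x$ and $M^\stoch_x$, which by \eqref{eq:L-stoch}--\eqref{eq:M-stoch} are the specializations $q^{\m}\to s^{-2}$ of $W_{1,\m}^{(n)}$ with the two horizontal arguments restricted to the form $\bm{0}$ or $\bm{e}_b$, I would apply \eqref{eq:merge} with $\BB$ fixed to $\bm{e}_b$: the sum over $\DD$ with $\theta_*(\DD)=\widetilde\DD$ then runs over $\{\bm{0}\}$ or over $\{\bm{e}_d:\theta(d)=\tilde d\}$, and colour conservation $\AA+\bm{e}_b=\CC+\DD$ collapses it to at most one term for each $\CC$, so after specialization one is left with a sum only over the vertical output $\CC$ with $\theta_*(\CC)=\widetilde\CC$, as desired. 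Finally $L_x=(-s)^{-\bm{1}_{\ell\ge 1}}L^\stoch_x$ and $M_x=(-s)^{\bm{1}_{j\ge 1}}M^\stoch_x$ by \eqref{eq:L-weights}--\eqref{eq:M-weights}, and since the occupation of a horizontal edge is unchanged by $\theta_*$ (a positive colour maps to a positive colour), these scalar prefactors factor out of both sides of the merging sums, reducing the $L_x,M_x$ cases to the ones just treated.

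I do not anticipate any real obstacle. The only points demanding attention are bookkeeping ones: checking that each normalization and gauge factor is genuinely $\theta_*$-invariant (true because each depends on the edge data only through path totals $|\cdot|$ or through occupation indicators, both manifestly unchanged by $\theta_*$), and that the parameter specializations and the limit $\epsilon\to 0$ are legitimate because \eqref{eq:merge} is a rational identity in those parameters. The mildly fussy step is the degeneration of the $\DD$-sum in the $L^\stoch_x,M^\stoch_x$ cases, but colour conservation makes it automatic.
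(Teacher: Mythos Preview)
Your proposal is correct and follows essentially the same approach as the paper: derive each case from Proposition~\ref{prop:merge} by noting that the relevant weights are obtained from $W_{\l,\m}$ via specializations, analytic continuation, multiplication by $\theta_*$-invariant gauge factors, and (for \eqref{eq:limit-vertex}) the limit $\epsilon\to 0$. The paper's own proof is a single sentence stating exactly this; you have simply filled in the bookkeeping details, which are all sound.
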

\begin{proof} Follows from the fact that all these weights are obtained from $W_{\l,\m}$ by specializations, analytic continuation, multiplication by factors that give the same contribution to the two sides of the merging relation \eqref{eq:merge}, and a limit transition $\epsilon\to 0$ in \eqref{eq:W-modified} in the case of \eqref{eq:limit-vertex}. 
\end{proof}

The colour-blindness statements of Section \ref{ssec:colour-blind} correspond, 
in the notation of Propostion \ref{prop:merge}, to $n_2=1$. In particular, 
applying the colour-blindness relation to either the q-Hanh weights 
\eqref{eq:q-Hahn} or to the weights \eqref{eq:limit-vertex} and removing common 
factors on the two sides, one obtains the following $q$-identity. 

\begin{cor}\label{cor:q-identity} For any $Q\in\mathbb{C}$, $(\alpha_1,\dots,\alpha_n)\in \mathbb{Z}_{\ge 0}^n$, and a fixed $|\beta|\in \mathbb{Z}_{\ge 0}$, $|\beta|\le |\alpha|$, one has
\begin{equation}
\label{eq:q-identity}
\sum_{\substack{0\le \beta_i\le \alpha_i,\, 1\le i\le n,\\
	\beta_1+\dots+\beta_n=|\beta|}} Q^{\sum_{1\le i<j\le n}\beta_i(\alpha_j-\beta_j)}\prod_{i=1}^n {\binom{\alpha_i}{\alpha_i-\beta_i}}_Q=
{\binom{|\alpha|}{|\alpha|-|\beta|}}_Q. 
\end{equation}	
\end{cor}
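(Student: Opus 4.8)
The plan is to read off \eqref{eq:q-identity} as the $n_2=1$ instance of the colour merging relation \eqref{eq:merge} (i.e. \emph{colour-blindness}), applied to the $q$-Hahn vertex weights \eqref{eq:q-Hahn}; this is exactly the route indicated in the paragraph preceding the statement, and one may equally well run it with the weights \eqref{eq:limit-vertex}. By Corollary \ref{cor:merge} the merging relation \eqref{eq:merge} holds for the $q$-Hahn weights, as an identity of rational functions in $q^{\l},q^{\m}$, so no genuine positivity or integrality constraints on $\l,\m$ are needed. The structural point that makes the extraction work is this: the $q$-Hahn weight $W^{(n)}_{\l,\m}(1;\AA,\BB,\CC,\DD)$ does not depend on $\BB$ or $\CC$, it vanishes unless $\AA+\BB=\CC+\DD$ and $0\le D_i\le A_i$ for all $i$, and its dependence on the \emph{vectors} $\AA,\DD$ is carried entirely by the product $\prod_i\binom{A_i}{A_i-D_i}_q$ and by the quadratic power $q^{\sum_{i<j}D_i(A_j-D_j)}$; the remaining Pochhammer prefactor depends on $\AA,\DD$ only through $|\AA|$ and $|\DD|$.

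Concretely, I would fix $\AA=(\alpha_1,\dots,\alpha_n)\in\mathbb{Z}^n_{\ge0}$ and an integer $0\le|\beta|\le|\AA|$, pick any auxiliary $\BB$ and $\l\le\m$ large enough for the weights in play to be \emph{bona fide} vertex weights (immaterial, by the previous paragraph), take $n_2=1$ so that $\theta_*$ is summation of coordinates, and specialize $\widetilde\DD=|\beta|$, $\widetilde\CC=|\AA|+|\BB|-|\beta|$. On the left of \eqref{eq:merge} the indicator $\bm 1_{\AA+\BB=\CC+\DD}$ forces $\CC=\AA+\BB-\DD$, so the double sum degenerates to a single sum over $\DD\in\mathbb{Z}^n_{\ge0}$ with $|\DD|=|\beta|$ (and the $q$-binomials kill every term with some $D_i>A_i$ or $D_i<0$), while the right-hand side is the $n=1$ weight $W^{(1)}_{\l,\m}(1;|\AA|,|\BB|,\widetilde\CC,\widetilde\DD)$, whose $i<j$-sum is empty. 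Thus \eqref{eq:merge} reads
\begin{multline*}
\sum_{\substack{\DD\in\mathbb{Z}^n_{\ge0}\,:\,0\le D_i\le A_i\\ |\DD|=|\beta|}}
q^{(\l-\m)|\beta|}\,\frac{(q^{\l-\m};q)_{|\AA|-|\beta|}(q^{-\l};q)_{|\beta|}}{(q^{-\m};q)_{|\AA|}}\,
q^{\sum_{i<j}D_i(A_j-D_j)}\prod_{i=1}^n\binom{A_i}{A_i-D_i}_q
\\
= q^{(\l-\m)|\beta|}\,\frac{(q^{\l-\m};q)_{|\AA|-|\beta|}(q^{-\l};q)_{|\beta|}}{(q^{-\m};q)_{|\AA|}}\,\binom{|\AA|}{|\AA|-|\beta|}_q,
\end{multline*}
and cancelling the prefactor (which depends only on $|\AA|$ and $|\beta|$, hence is the same on every term and on the right) and renaming $A_i\to\alpha_i$, $D_i\to\beta_i$, $q\to Q$ produces \eqref{eq:q-identity} verbatim. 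If instead one uses the weights \eqref{eq:limit-vertex}, the common factor to remove is $s^{-2|\beta|}(s^2;q)_{|\beta|}$ and the quadratic exponent comes out as $\sum_{i>j}(\alpha_i-\beta_i)\beta_j$, which equals $\sum_{i<j}\beta_i(\alpha_j-\beta_j)$ after interchanging the names of the two summation indices.

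The substantive input is Corollary \ref{cor:merge} itself, which is already available; the rest is bookkeeping, and the only step that genuinely needs care — the closest thing to an obstacle — is verifying that the "common prefactor" really is common (so that after its removal nothing survives but the $q$-binomial product and the quadratic $q$-power) and that the summation domain cut out jointly by $\bm 1_{\AA+\BB=\CC+\DD}$ and the $q$-binomials is precisely $\{\,0\le\beta_i\le\alpha_i,\ \sum_i\beta_i=|\beta|\,\}$. As an independent sanity check and a self-contained alternative, for $n=2$ the identity \eqref{eq:q-identity} is the classical $q$-Vandermonde (Chu--Vandermonde) summation, and the general $n$ then follows by induction, merging the coordinates two at a time — which is exactly the mechanism behind the colour-merging proof.
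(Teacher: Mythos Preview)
Your proposal is correct and follows exactly the route the paper takes: apply the colour-blindness case ($n_2=1$) of the merging relation from Corollary~\ref{cor:merge} to the $q$-Hahn weights \eqref{eq:q-Hahn} (or equivalently to \eqref{eq:limit-vertex}), observe that the Pochhammer prefactor depends only on $|\AA|$ and $|\DD|$ and hence cancels from both sides, and read off \eqref{eq:q-identity} after renaming. Your bookkeeping of the indicator and the summation domain is accurate, and the closing remark on $q$-Vandermonde plus induction is a pleasant bonus but not needed.
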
  

Since compositions are vectors, the map $\theta_*$ from \eqref{eq:theta-star} 
sends any composition of length $n_1$ to a composition of length $n_2$. This can 
be naturally extended to coloured compositions as follows. 

Let $\lambda$ be a composition with $\ell(\lambda)=n_1$, weight $|\lambda|=m$, 
and partial sums $\ell_k=\sum_{i=1}^{k} \lambda_i$. Then 
$\rho:=\theta_*(\lambda)$ is a composition with $\ell(\rho)= n_2$, same weight 
$|\rho|=m$, and partial sums that we denote as $r_k=\sum_{i=1}^{k} \rho_i$.
Further, let $\mu$ be a $\lambda$-coloured composition of length $m$, see 
Section \ref{ssec:coloured-comp} for a definition. As in 
\eqref{eq:A(k)-coloured}, we can encode $\mu$ by a sequence of $n_1$-dimensional 
vectors $\{\AA(k)\}_{k\ge 0}$:
\begin{align*}
\bm{A}(k) = \sum_{j=1}^{n_1} A_j(k) \bm{e}_j,
\qquad
A_j(k)
=
\#\{ i : \mu_i = k,\ \ell_{j-1} +1 \leq i \leq \ell_j \}.
\end{align*}
It is not difficult to see that the sequence of $n_2$-dimensional vectors 
$\BB(k):=\theta_*(\AA(k))$, $k\ge 0$,  corresponds, via
\begin{align*}
\theta_*(\AA(k))=\bm{B}(k) = \sum_{j=1}^{n_2} B_j(k) \bm{e}_j,
\qquad
B_j(k)
=
\#\{ i : \nu_i = k,\ r_{j-1} +1 \leq i \leq r_j \},
\end{align*}
to a $\rho$-coloured composition $\nu$ of length $m$ that we will define to be the image of $\mu$ under $\theta_*$: 
\begin{equation}
\label{eq:theta-comp}
\theta^*(\mu)=\nu.
\end{equation}
In less formal terms, if we view a coloured composition $\mu$ as positions of 
finitely many paths coloured by $\{1,\dots,n_1\}$, then $\nu=\theta_*(\mu)$ 
represents positions of the same paths that have been re-coloured according to 
the map $\theta$. Note that we required $\theta$ to be monotone, which 
means that the order of colours is being preserved. 

\begin{prop}\label{prop:fG-merge} Let $\lambda$ be a composition with 
$\ell(\lambda) = n_1$ of weight $|\lambda|=m$, $\rho$ be a composition with 
$\ell(\rho)= n_2$ and same weight $|\rho|=m$ such that $\theta_*(\lambda)=\rho$, 
$\nu'$ by a $\lambda$-coloured composition, and $\nu''$ be a $\rho$-coloured 
composition (both of length $m$). Then for any $p\ge 1$ and complex parameters 
$x_1,x_2,\dots$, we have
\begin{gather}
\label{eq:f-colour-merge}
\sum_{\mu: \theta_*(\mu) = \nu''}
f_{\mu}(\lambda; x_1,\dots,x_m)
= f_{\nu''}(\rho; x_1,\dots,x_m),\\
\label{eq:G-colour-merge}
\sum_{\mu: \theta_*(\mu) = \nu''}
G_{\nu'/\mu}(\lambda; x_1,\dots,x_p)
=
G_{\theta_*(\nu')/\nu''}(\rho;x_1,\dots,x_p).
\end{gather}
\end{prop}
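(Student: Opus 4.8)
The plan is to reduce both identities \eqref{eq:f-colour-merge} and \eqref{eq:G-colour-merge} to the vertex-level colour merging of Corollary \ref{cor:merge}, applied to the $L$- and $M$-weights that build the partition functions \eqref{eq:f-generic} and \eqref{eq:G-generic}. The key observation is that, by definition, $f_\mu(\lambda;x_1,\dots,x_m)$ is a matrix element $\bra{\varnothing}\C_1^{\ell_1}\cdots\C_{n_1}^{\ell_{n_1}}\ket{\mu}_\lambda$, which unfolds into a lattice partition function in the $L_x$-weights whose rows have the left-incoming colours prescribed by the blocks of $\lambda$ and whose top boundary records $\mu$ via the vectors $\AA(k)$ from \eqref{eq:A(k)-coloured}. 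Summing over all $\mu$ with $\theta_*(\mu)=\nu''$ is exactly summing the top boundary vectors $\AA(k)$ over all preimages of the $\nu''$-vectors $\BB(k)$ under $\theta_*$, independently in every column $k$.

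First I would fix the lattice and write $f_\mu(\lambda;\cdot)$ as the sum over all interior-edge configurations of a product of $L_x$-weights. Since colour conservation holds vertex by vertex, the colour content along every horizontal edge is determined, in each column, by the colours below and to the left; in particular the sum over admissible $\mu$ with $\theta_*(\mu)=\nu''$ factors through a column-by-column sum over the top vertical edges. I would then push the merging map $\theta_*$ through the lattice from the top row downward (or, dually, think of it as recolouring all paths): by Corollary \ref{cor:merge}, at each vertex $\sum L^{(n_1)}_x(\AA,b;\CC,d)=L^{(n_2)}_x(\theta_*\AA,\theta b;\theta_*\CC,\theta d)$ once we also sum over the colour $b$ of the incoming horizontal path within a fibre of $\theta$ — but here $b$ ranges over a single left-incoming colour per row, and the rows themselves are grouped so that rows with left-colours in $\theta^{-1}(j)$ become rows with left-colour $j$. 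Carrying this merging through every vertex collapses the $n_1$-coloured partition function into the $n_2$-coloured one with boundary data $\theta^*(\mu)$ on top and the recoloured left boundary, which is precisely $f_{\nu''}(\rho;x_1,\dots,x_m)$. The bookkeeping that $\theta_*$ applied to the left boundary of \eqref{eq:f-generic} turns the $\lambda$-grouped $\C$-product into the $\rho$-grouped one is a direct consequence of $\theta_*(\lambda)=\rho$ and monotonicity of $\theta$ (the order of colours, hence the order of $\C$-operators, is preserved).

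For \eqref{eq:G-colour-merge} the argument is the same, now applied to the partition function \eqref{eq:G-generic} built from the $M_x$-weights, with the $\B_0$-operators carrying only colour $0$ on the horizontal edges (so the left/right merging is trivial there) and the bottom and top boundaries recording $\nu'$ and $\mu$ respectively. Summing over $\mu$ with $\theta_*(\mu)=\nu''$ again amounts to a column-wise sum of the top vertical-edge vectors over $\theta_*$-fibres; applying the $M$-weight merging from Corollary \ref{cor:merge} vertex by vertex merges the bottom boundary to $\theta_*(\nu')$ and the top boundary to $\nu''$, giving $G_{\theta_*(\nu')/\nu''}(\rho;x_1,\dots,x_p)$. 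One should note that $\theta_*$ here also acts on the bottom boundary simply because every path present at the bottom is present somewhere, and merging is consistent across the whole lattice; this is where monotonicity of $\theta$ is again used, to ensure the decreasing-within-a-block structure of coloured compositions is respected after recolouring.

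The main obstacle is not any single hard estimate but the careful combinatorial bookkeeping: one must check that the column-wise independence of the sum over $\mu$ is genuine (i.e.\ that fixing $\theta_*(\mu)$ does not secretly correlate the $\AA(k)$'s across columns through the path-conservation constraints), and that the iterated application of the single-vertex merging relation — which requires summing over the merged colour of \emph{every} horizontal edge, not just the boundary ones — telescopes correctly through the interior of the lattice. The clean way to handle this is to invoke the fused form of the merging relation \eqref{eq:merge} one row at a time: a whole row operator $\C_i$ (resp.\ $\B_0$) is a composition of vertex weights, and Corollary \ref{cor:merge} plus the commutation-free structure of a single row shows $\sum_{\theta\text{-fibre}}(\text{row with colour }i) = (\text{row with colour }\theta(i))$ as operators on the appropriate sectors; iterating over rows then gives \eqref{eq:f-colour-merge} and \eqref{eq:G-colour-merge} directly. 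I would present the proof in this row-operator language to keep the argument short. \qed
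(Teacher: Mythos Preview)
Your proposal is correct and takes essentially the same approach as the paper: both reduce the identities to iterated applications of the single-vertex merging relation of Corollary~\ref{cor:merge} for the $L_x$- and $M_x$-weights. The paper phrases the iteration as starting at the top-right (for $f$) or top-left (for $G$) nontrivial vertex and sweeping into the bulk vertex by vertex, whereas you propose processing row by row via the row operators; these are equivalent bookkeeping schemes for the same argument.
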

\begin{rmk}
When $\lambda=(1,1,\dots,1)$ and $n_2=1$, one recovers the symmetrization formulas \eqref{eq:symm}.
\end{rmk}
\begin{proof} In complete analogy with proofs of \eqref{eq:symm} in 
\cite[Propositions 3.4.4 and 4.4.3]{BW}, the argument consists in multiple 
applications of Proposition \ref{prop:merge}, or rather its version for the 
weights $L_x$ and $M_x$ (Corollary \ref{cor:merge}) used in the definitions of 
$f$'s and $G$'s. In the case of \eqref{eq:f-colour-merge}, one starts with the 
top-right nontrivial vertex of the partition function \eqref{eq:f-function} 
(with appropriate, not necessarily rainbow colours of entering paths on the 
left), while in the case of \eqref{eq:G-colour-merge} one starts with the 
top-left nontrivial vertex of the partition function \eqref{eq:G-functions}, and 
then moves step by step into the bulk of the partition function. Once the 
colour-merging summation has been performed for all nontrivial vertices, one 
recovers the right-hand sides of \eqref{eq:f-colour-merge} and 
\eqref{eq:G-colour-merge}. 
\end{proof}

\section{Cauchy identities}\label{sec:cauchy}

The goal of this section is to show how the skew-Cauchy identity 
\eqref{eq:fG-skew} leads to formulas for averages of certain observables for 
stochastic vertex models.

For this section let us assume that we are either in the column-homogeneous 
situation, or, slightly more generally, the number of columns in which 
inhomogeneity parameters $(s_j,\xi_j)$ of Section \ref{ssec:inhom} are 
different from $(s,1)$ is finite. 
We start by extending the skew-Cauchy identity \eqref{eq:fG-skew} to limiting versions of the $G$-functions from Section \ref{ssec:principal}. 

\begin{prop}\label{prop:fG-principal}
	 Let $\mu$ be a rainbow composition of length $n\ge 1$, and $x_1,\dots,x_n$ be complex parameters satisfying
\begin{equation}
\label{eq:weight-condition3}
\left|\frac{s(x_i-s)}{1-sx_i}\right|<1,\qquad 1\le i\le n. 
\end{equation}
The we have 
\begin{equation}
\label{eq:fG-principal}
\sum_{\nu}f_{\nu}(x_1,\dots,x_n)\,\cG_{\nu/\mu}=\prod_{i=1}^n \left(1-\frac{x_i}s\right)\cdot f_{\mu}(x_1,\dots,x_n),
\end{equation}	
where $\cG_{\nu/\mu}$ is as in Corollary \ref{cor:cG-rainbow}, and $f_*$'s are 
as \eqref{eq:f-function}.
\end{prop}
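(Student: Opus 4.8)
The plan is to obtain \eqref{eq:fG-principal} as a degeneration of the skew-Cauchy identity \eqref{eq:fG-skew}. I would start from \eqref{eq:fG-skew} with $p=\l$ auxiliary variables, after renaming the two compositions so that the summation runs over $\nu$ while the fixed composition is $\mu$ (both rainbow of length $n$, so that \eqref{eq:fG-skew} applies as stated), and specialize the auxiliary variables to the geometric progression $y_j=q^{j-1}\epsilon$, $1\le j\le\l$. For parameters satisfying \eqref{eq:weight-conditions} this gives
\begin{equation*}
\sum_{\nu}f_{\nu}(x_1,\dots,x_n)\,G_{\nu/\mu}(\lambda;\epsilon,q\epsilon,\dots,q^{\l-1}\epsilon)
=
\prod_{i=1}^{n}\prod_{j=1}^{\l}\frac{1-qx_iq^{j-1}\epsilon}{q(1-x_iq^{j-1}\epsilon)}\cdot f_{\mu}(x_1,\dots,x_n).
\end{equation*}
I would then multiply both sides by $q^{n\l}$, substitute $q^{\l}=(s\epsilon)^{-1}$ as in \eqref{eq:cG-limit} (an honest substitution into a rational function of $q^{\l}$ on the right; on the left it is carried out termwise, each $q^{n\l}G_{\nu/\mu}(\lambda;\epsilon,\dots,q^{\l-1}\epsilon)$ being rational in $q^{\l}$ by Proposition \ref{prop:cG}), and let $\epsilon\to 0$.

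On the right-hand side the inner product telescopes, $\prod_{j=1}^{\l}\frac{1-qx_iq^{j-1}\epsilon}{q(1-x_iq^{j-1}\epsilon)}=q^{-\l}\frac{1-x_iq^{\l}\epsilon}{1-x_i\epsilon}$, so after multiplying by $q^{n\l}$ and inserting $q^{\l}\epsilon=s^{-1}$ the prefactor collapses to $\prod_{i=1}^{n}\frac{1-x_i/s}{1-x_i\epsilon}$, whose $\epsilon\to 0$ limit is precisely the factor $\prod_{i=1}^{n}(1-x_i/s)$ in \eqref{eq:fG-principal}. On the left-hand side each summand is handled by Proposition \ref{prop:cG}: in the rainbow case Corollary \ref{cor:cG-rainbow} gives $\lim_{\epsilon\to 0}q^{n\l}G_{\nu/\mu}(\lambda;\epsilon,\dots,q^{\l-1}\epsilon)\big|_{q^{\l}=(s\epsilon)^{-1}}=\cG_{\nu/\mu}$, so termwise the left-hand side tends to $\sum_{\nu}f_{\nu}(x_1,\dots,x_n)\,\cG_{\nu/\mu}$, which is exactly the claimed left-hand side.

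The substantive point, and the main obstacle, is to justify the interchange of $\lim_{\epsilon\to 0}$ with the infinite sum over $\nu$; the hypothesis \eqref{eq:weight-condition3} is exactly what makes this legitimate, as is already visible for $n=1$, where \eqref{eq:fG-principal} amounts to summing the geometric series $\sum_{k\ge 1}\bigl(\tfrac{-s(x-s)}{1-sx}\bigr)^{k}$, convergent precisely on \eqref{eq:weight-condition3}. I would do this by dominated convergence for series, using two uniform-in-$\nu$ estimates. First, from \eqref{eq:cG-rainbow} one reads off $|\cG_{\nu/\mu}|\le C_{\mu}\,|s|^{|\nu|-|\mu|}$ with $C_{\mu}$ independent of $\nu$ (in the rainbow sector the non-skew coloured height functions take only the values $0,1$, and the product over columns in \eqref{eq:cG-rainbow} has a number of nontrivial factors bounded in terms of $\mu$ alone); moreover, since by the reflection-and-fusion description in the proof of Proposition \ref{prop:cG} the quantity $q^{n\l}G_{\nu/\mu}(\lambda;\epsilon,\dots,q^{\l-1}\epsilon)\big|_{q^{\l}=(s\epsilon)^{-1}}$ is a finite product of vertex weights converging to those of \eqref{eq:limit-vertex}, with matching vanishing loci, the same bound persists up to a harmless factor $(1+C|\epsilon|)^{|\nu|}$ for $\epsilon$ near $0$. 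Second, iterating the Demazure--Lusztig recursion \eqref{eq:exchange}--\eqref{eq:DL-operators} from the factorized anti-dominant value \eqref{eq:f-delta} expresses $f_{\nu}$ as a bounded-length sum of bounded multiples of $\prod_{k}\bigl(\tfrac{x_{\sigma(k)}-s}{1-sx_{\sigma(k)}}\bigr)^{\nu^{-}_{k}}$ over permutations $\sigma$, where $\nu^{-}$ is the anti-dominant reordering of $\nu$, so $|f_{\nu}(x_1,\dots,x_n)|\le C'\max_{\sigma}\prod_{k}\bigl|\tfrac{x_{\sigma(k)}-s}{1-sx_{\sigma(k)}}\bigr|^{\nu^{-}_{k}}$ with $C'$ independent of $\nu$. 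Multiplying the two bounds, the factor $|s|^{|\nu|}$ recombines with $\prod_{k}\bigl|\tfrac{x_{\sigma(k)}-s}{1-sx_{\sigma(k)}}\bigr|^{\nu^{-}_{k}}$ into $\prod_{k}\bigl|\tfrac{s(x_{\sigma(k)}-s)}{1-sx_{\sigma(k)}}\bigr|^{\nu^{-}_{k}}\le\vartheta^{|\nu|}$, where $\vartheta:=\max_{i}\bigl|\tfrac{s(x_i-s)}{1-sx_i}\bigr|<1$ by \eqref{eq:weight-condition3}; since the number of length-$n$ compositions $\nu$ with $|\nu|=N$ grows only polynomially in $N$, choosing $\epsilon$ small enough that $\vartheta(1+C|\epsilon|)<1$ produces a summable dominating sequence and the interchange follows. (Alternatively, one can invoke the uniform convergence of the series in \eqref{eq:fG-skew} under a uniform bound $<1$ in \eqref{eq:weight-conditions}, valid for $\epsilon$ small since that bound degenerates exactly to \eqref{eq:weight-condition3}, and then transport the resulting analytic identity through the rational-in-$q^{\l}$ structure of Proposition \ref{prop:cG}.)
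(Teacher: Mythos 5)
Your computation of the right-hand side (the telescoping of the prefactor and its $\epsilon\to 0$ limit), your identification of the termwise limit of the left-hand side via Proposition \ref{prop:cG}, and your bounds on $f_\nu$ and on $\cG_{\nu/\mu}$ are all sound, and the dominated-convergence step justifying the interchange of $\epsilon\to 0$ with the sum over $\nu$ is essentially the easier half of what is needed. The genuine gap is earlier: the skew-Cauchy identity \eqref{eq:fG-skew} specialized at $(y_1,\dots,y_p)=(\epsilon,q\epsilon,\dots,q^{\l-1}\epsilon)$ is only known for $\l\in\mathbb{Z}_{\ge 1}$, i.e.\ at the points $q^\l\in\{q,q^2,q^3,\dots\}$, whereas you substitute the generic value $q^\l=(s\epsilon)^{-1}$ ``termwise''. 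Rationality of each summand $q^{n\l}G_{\nu/\mu}$ in $q^\l$ does not let you evaluate an \emph{infinite} sum of rational functions at a new point and conclude it still equals the (rational) right-hand side there: summation and analytic continuation in $\mathfrak{l}=q^\l$ need not commute, and for small $\epsilon$ the target point $(s\epsilon)^{-1}$ is far from the accumulation point $0$ of the known values, so there is no cheap identity-theorem argument available. Your closing parenthetical gestures at uniform convergence, but uniform convergence of the series at the integer specializations says nothing about the series of analytically continued terms at $\mathfrak{l}=(s\epsilon)^{-1}$.

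This is exactly where the paper's proof spends its effort, in a two-step analytic continuation that your proposal skips. First, for $x_i$ in a small neighbourhood of $s$ and $|q|<1$, one shows the left-hand side converges uniformly for $\mathfrak{l}$ in a neighbourhood of $0$ (using a $\mathrm{const}^{|\nu|}$ bound on the one-row partition function representing $G_{\nu/\mu}$, uniform in $\mathfrak{l}$ near $0$, together with \eqref{eq:f-estimate}); since both sides are then analytic in $\mathfrak{l}$ near $0$ and agree at the points $q^\l$ accumulating at $0$, they agree for all $\mathfrak{l}$ near $0$. Second, one sets $\mathfrak{l}=(s\epsilon)^{-1}$ — which lies in that neighbourhood only for \emph{large} $\epsilon$ — and, using the simplified weights \eqref{eq:epsilon-vertex} to get a $\mathrm{const}^{|\nu|}$ bound uniform for $\epsilon$ in a complex neighbourhood of a ray $[0,a]$, analytically continues the resulting identity in $\epsilon$ down to $\epsilon=0$, where it becomes \eqref{eq:fG-principal}; a final continuation in the $x_i$ relaxes ``near $s$'' to \eqref{eq:weight-condition3}. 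Your estimates are the right raw material for this argument, but as written the proposal asserts the identity at $q^\l=(s\epsilon)^{-1}$ for small $\epsilon$ without any bridge from the integer values of $\l$, and then only controls the subsequent $\epsilon\to 0$ limit; to repair it you would need to insert the continuation in $\mathfrak{l}$ near $0$ and the continuation in $\epsilon$ along the locus $q^\l=(s\epsilon)^{-1}$ (or an equivalent device), which is the heart of the paper's proof.
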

\begin{proof} We start with the summation identity \eqref{eq:fG-skew}, multiply 
both sides by $q^{np}$, and substitute $p=\l$,
$(y_1,\dots,y_p)=(\epsilon,q\epsilon,\dots,q^{\l-1}\epsilon)$ to match with 
\eqref{eq:cG-limit}. Then, as in the proof of Proposition \ref{prop:cG}, 
$G_{\nu/\mu}(\epsilon,q\epsilon,\dots,q^{\l-1}\epsilon)$ can be represented as a 
one-row partition function of the form \eqref{eq:one-row} with fused weights 
\eqref{eq:W-modified}, and this partition function is rational in 
$\mathfrak{l}:=q^\l$. The explicit formula for the weights \eqref{eq:W-weights}  
readily shows that $G_{\nu/\mu}(\epsilon,q\epsilon,\dots,q^{\l-1}\epsilon)$ is 
bounded in absolute value by $\text{const}^{|\nu|}$, where the constant can be 
chosen uniformly when $\mathfrak{l}$ varies in a neighborhood of 0. We also 
have 
\begin{equation}
\label{eq:f-estimate}
|f_{\nu}(x_1,\dots,x_n)|\le \text{const} \prod_{i=1}^n \left| \frac{x_i-s}{1-sx_i} \right|^{|\nu|},
\end{equation}
which follows from the fact that in the partition function \eqref{eq:f-function} only vertices of the form 
$\tikz{0.3}{
	\draw[lgray,line width=0.7pt,->] (-1,1) -- (1,1);
	\draw[lgray,line width=2.5pt,->] (0,0) -- (0,2.2);
	\node[left] at (-1,1) {\tiny $i$};\node[right] at (1,1) {\tiny $i$};
	\node[below] at (0,0) {\tiny $\bm{0}$};\node[above] at (0,2) {\tiny $\bm{0}$};
}
$ have the number of appearances that is not \emph{a priori} bounded, and the 
weight of such a vertex in the $j$th row, according to \eqref{eq:L-weights}, is 
$(x_j-s)/(1-sx_j)$ (at least sufficiently far to the right, even if there are 
finitely many column inhomogeneities). Hence, for $x_i$'s in sufficiently small 
neighborhood of $s$, the series in the left-hand side converges uniformly in 
$\mathfrak{l}$ varying in a neighborhood of 0, yielding an analytic function 
of $\mathfrak{l}$. 

On the other hand, the right-hand side of \eqref{eq:fG-skew} multiplied by 
$q^{np}$, upon the substitution of  $p=\l$ and 
$(y_1,\dots,y_p)=(\epsilon,q\epsilon,\dots,q^{\l-1}\epsilon)$, reads
\begin{align*}
\prod_{i=1}^{n}
\frac{1-\mathfrak{l}\, \epsilon x_i }{1- \epsilon x_i }
\cdot
f_{\mu}(x_1,\dots,x_n),
\end{align*}
which is clearly analytic in $\mathfrak{l}$. Since \eqref{eq:fG-skew} gives 
the equality of the two sides for $\mathfrak{l}=q^\l$, $\l=1,2,\dots$, we 
conclude (assuming $|q|<1$ to get a sequence of points accumulating to 0) that 
the two sides are equal for any 
$\mathfrak{l}$ in a neighborhood of 0. 

Let us now substitute $\mathfrak{l}=(s\epsilon)^{-1}$, as \eqref{eq:cG-limit} stipulates. Then the weights for the one-row partition function representation of $G_{\nu/\mu}$ simplify to \eqref{eq:epsilon-vertex}, and using that one checks 
that 
$$
\left|G_{\nu/\mu}(\epsilon,q\epsilon,\dots,q^{\l-1}\epsilon)\bigr|_{q^\l=(s\epsilon)^{-1}}\right| \le \text{const}^{|\nu|},
$$	
where the constant can be chosen uniformly when $\epsilon$ varies in a complex neighborhood of the positive ray $[0,a]$, where $a>0$ is arbitrary. Using \eqref{eq:f-estimate} again and restricting $x_i$'s to a sufficiently small neighborhood of $s$, we can analytically continue both sides of the identity from large $\epsilon$ (or small $\mathfrak{l}$), where it has already been proven, to small $\epsilon$ including 0. The identity at $\epsilon=0$ is exactly \eqref{eq:fG-principal}. Once we have it proven for $x_i$'s sufficiently close to $s$ and $|q|<1$, we can relax these assumptions by further analytic continuation in these parameters (although we will not need $|q|\ge 1$ below). In particular, since the explicit formula \eqref{eq:cG-formula} for $\cG_{\nu/\mu}$ shows $\text{const}\cdot |s|^{|\nu|}$ behavior of $\cG_{\nu/\mu}$ for large $\nu$, using \eqref{eq:f-estimate} we can extend the equality to $x_i$'s satisfying \eqref{eq:weight-condition3}.  
\end{proof}

Taking $\mu=(0,0,\dots,0)$ in Proposition \ref{prop:fG-principal}, we obtain the following
\begin{cor}
\label{cor:fG-nu} 
For $x_1,\dots, x_n\in \mathbb{C}$ satisfying \eqref{eq:weight-condition3}, we have
\begin{equation}
\label{eq:fG-nu}
\prod_{i=1}^n \frac{1-sx_i}{s(s-x_i)}\cdot\sum_{\nu:\,\text{all }\nu_i>0} (-s)^{|\nu|}  f_{\nu}(x_1,\dots,x_n)=1,
\end{equation}
where the summation is over all compositions $\nu$ of length $n$ with no zero parts. In the inhomogeneous setting of Section \ref{ssec:inhom}, assuming that $(s_0,\xi_0)=(s,1)$, a similar formula holds, where one needs to replace $(-s)^{|\nu|}$ in the left-hand side by $\prod_{i=1}^n\prod_{j=0}^{\nu_i-1} (-s_j)$. 
\end{cor}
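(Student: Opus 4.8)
The plan is to read Corollary~\ref{cor:fG-nu} off directly from Proposition~\ref{prop:fG-principal} by specializing to $\mu=(0,0,\dots,0)=0^n$, which is a (rainbow) composition of length $n$, and then inserting the explicit evaluations of the two objects that appear in \eqref{eq:fG-principal}.

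First I would record $f_{0^n}$. Since $0^n$ is anti-dominant with $m_0(0^n)=n$ and $m_j(0^n)=0$ for $j\ge1$, the factorization \eqref{eq:f-delta} collapses to
\[
f_{0^n}(x_1,\dots,x_n)=\frac{(s^2;q)_n}{\prod_{i=1}^n(1-sx_i)}.
\]
Second, for $\mu=0^n$ equation \eqref{eq:cG-nu} gives $\cG_{\nu/0^n}=(-s)^{|\nu|}s^{-2n}(s^2;q)_n$ when all $\nu_i\ne0$ and $\cG_{\nu/0^n}=0$ otherwise, so only compositions $\nu$ with strictly positive parts contribute to the sum on the left of \eqref{eq:fG-principal}. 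Substituting these two facts into \eqref{eq:fG-principal} turns it into
\[
s^{-2n}(s^2;q)_n\sum_{\nu:\,\text{all }\nu_i>0}(-s)^{|\nu|}f_{\nu}(x_1,\dots,x_n)
=\prod_{i=1}^n\Bigl(1-\frac{x_i}{s}\Bigr)\cdot\frac{(s^2;q)_n}{\prod_{i=1}^n(1-sx_i)}.
\]
Cancelling the common factor $(s^2;q)_n$ (harmless for generic $s$) and clearing the remaining powers of $s$ on both sides produces exactly \eqref{eq:fG-nu}. The hypothesis \eqref{eq:weight-condition3} is precisely the one under which Proposition~\ref{prop:fG-principal} was established, so nothing further is needed for convergence of the series.

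For the inhomogeneous statement I would run the same computation with inhomogeneous inputs. Under the standing assumption $(s_0,\xi_0)=(s,1)$, the anti-dominant factorization \eqref{eq:f-delta-inhom} still yields $f_{0^n}=(s^2;q)_n\prod_{i=1}^n(1-sx_i)^{-1}$ (all vertices contributing to $f_{0^n}$ sit in column~$0$), while the inhomogeneous addendum to \eqref{eq:cG-nu} replaces $(-s)^{|\nu|}$ by $\prod_{i=1}^n\prod_{j=0}^{\nu_i-1}(-s_j)$ inside $\cG_{\nu}$. The right-hand side of \eqref{eq:fG-principal} is unaffected by column inhomogeneities, since the skew-Cauchy identity \eqref{eq:fG-skew}, and hence its principal specialization in Proposition~\ref{prop:fG-principal}, is insensitive to them; the identical cancellation then yields the inhomogeneous form of \eqref{eq:fG-nu}.

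I do not anticipate a genuine obstacle here: the content is entirely in the bookkeeping of the scalar prefactors and in observing that the two $(s^2;q)_n$ factors match and cancel, which is the one place to be careful since the corollary is otherwise a verbatim specialization of an already-proved statement. As a consistency check one could instead use $\cG_\nu=\lim_{\epsilon\to0}\G_{\nu^+}(\epsilon,q\epsilon,\dots,q^{\l-1}\epsilon)\big|_{q^\l=(s\epsilon)^{-1}}$ together with \cite[Proposition~6.7]{BorodinP1} to re-derive \eqref{eq:cG-nu} and feed that into Proposition~\ref{prop:fG-principal}.
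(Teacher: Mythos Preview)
Your proposal is correct and follows essentially the same approach as the paper: substitute $\mu=0^n$ into Proposition~\ref{prop:fG-principal}, plug in the explicit value \eqref{eq:cG-nu} for $\cG_{\nu}$, and evaluate $f_{0^n}$ (the paper does this directly from the partition function \eqref{eq:f-function} rather than via \eqref{eq:f-delta}, but the result $(s^2;q)_n/\prod_i(1-sx_i)$ is identical). The inhomogeneous case is handled the same way in both your proposal and the paper.
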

\begin{proof} Straightforward substitution of $\mu=(0,\dots,0)$ and \eqref{eq:cG-nu} into \eqref{eq:fG-principal}, with the evaluation 
\begin{equation}
\label{eq:f-empty}
f_{(0,\dots,0)}(x_1,\dots,x_n)=\prod_{i=1}^n \frac{1-s^2q^{i-1}}{1-sx_i}\,,
\end{equation}
which follows from the fact that the corresponding partition function \eqref{eq:f-function} is the product of $L$-weights \eqref{eq:L-weights} for the unique path configuration that gives a nonzero contribution. 
\end{proof}

It is not difficult to extend Proposition \ref{prop:fG-principal} and Corollary \ref{cor:fG-nu} to partially merged colours. 

\begin{prop}
\label{prop:fG-merged}
Let $\lambda$ be a composition with $|\lambda|=n$, $\mu$ be a $\lambda$-coloured composition, and $x_1,\dots,x_n\in \mathbb{C}$ satisfy \eqref{eq:weight-condition3}. Then  
\begin{equation}
\label{eq:fG-merged}
\sum_{\nu\text{ is $\lambda$-coloured}}f_{\nu}(\lambda;x_1,\dots,x_n)\,\cG_{\nu/\mu}=\prod_{i=1}^n \left(1-\frac{x_i}s\right)\cdot f_{\mu}(\lambda;x_1,\dots,x_n),
\end{equation}
which in the case of $\mu$ having only zero parts, can be rewritten as 
 \begin{equation}
 \label{eq:fG-nu-merged}
 \prod_{i=1}^n \frac{1-sx_i}{s(s-x_i)}\cdot\sum_{\substack{\nu\text{ is $\lambda$-coloured}\\ \text{all }\nu_i>0}} (-s)^{|\nu|}  f_{\nu}(\lambda; x_1,\dots,x_n)=1.
 \end{equation}
 In the inhomogeneous setting of Section \ref{ssec:inhom}, relation \eqref{eq:fG-nu-merged} also holds under assumption that $(s_0,\xi_0)=(s,1)$ and with $(-s)^{|\nu|}$ in the left-hand side replaced by $\prod_{i=1}^n\prod_{j=0}^{\nu_i-1} (-s_j)$.
\end{prop}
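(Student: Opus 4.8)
The plan is to deduce Proposition~\ref{prop:fG-merged} from the rainbow case, Proposition~\ref{prop:fG-principal} (together with its Corollary~\ref{cor:fG-nu}), by summing over a fibre of the colour-merging map $\theta_*$ and invoking Proposition~\ref{prop:fG-merge}. Fix $\lambda$ with $|\lambda|=n$, let $n_1=n$ and $n_2=\ell(\lambda)$, and let $\theta:\{1,\dots,n\}\to\{1,\dots,\ell(\lambda)\}$ be the unique monotone map with $|\theta^{-1}(j)|=\lambda_j$, so that $\theta_*((1,1,\dots,1))=\lambda$. For a $\lambda$-coloured composition $\rho$ (playing the role of $\mu$ or $\nu$), the preimages under $\theta^*$ are precisely the rainbow compositions $\varkappa$ obtained by arbitrarily ordering, within each colour block, the parts of $\rho$; by \eqref{eq:f-colour-merge} one has $\sum_{\varkappa:\theta^*(\varkappa)=\rho} f_{\varkappa}(x_1,\dots,x_n)=f_{\rho}(\lambda;x_1,\dots,x_n)$, and similarly for the $G$-functions via \eqref{eq:G-colour-merge}.

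First I would write the left-hand side of \eqref{eq:fG-merged} as a double sum over pairs of rainbow compositions: expand $f_{\nu}(\lambda;\cdot)=\sum_{\varkappa:\theta^*(\varkappa)=\nu}f_{\varkappa}(\cdot)$ and, crucially, establish that $\cG_{\nu/\mu}$ admits an analogous decomposition $\cG_{\nu/\mu}=\sum_{\substack{\varkappa:\theta^*(\varkappa)=\nu \\ \varsigma:\theta^*(\varsigma)=\mu}}\cG_{\varkappa/\varsigma}$ — but summed only over those $\varsigma$ in a fixed fibre, or rather, with $\varsigma$ ranging over one fibre and $\varkappa$ over the matching one. The cleanest route is not to try to relate the \emph{coloured} $\cG$'s directly, but instead to go back one step: in the proof of Proposition~\ref{prop:fG-principal} the object $\cG_{\nu/\mu}$ arises as the $\epsilon\to0$, $q^\l=(s\epsilon)^{-1}$ limit of $q^{n\l}G_{\nu/\mu}(\lambda;\epsilon,q\epsilon,\dots,q^{\l-1}\epsilon)$, and \eqref{eq:G-colour-merge} gives $\sum_{\nu:\theta^*(\nu)=\nu''}G_{\nu'/\nu}(\lambda;\cdot)=G_{\theta_*(\nu')/\nu''}(\rho;\cdot)$ at the level of the fused $G$'s, before the limit. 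So I would run the entire argument of Proposition~\ref{prop:fG-principal} verbatim but with $f_\mu(\lambda;\cdot)$, $f_\nu(\lambda;\cdot)$ and $G_{\nu/\mu}(\lambda;\cdot)$ in place of their rainbow counterparts: the skew-Cauchy identity \eqref{eq:fG-skew} holds for coloured labels (it is the matrix-element computation $\bra{\varnothing}\prod\C_{c(i)}(x_i)\prod\B_0(y_j)\ket{\nu}_\lambda$, valid in any sector), the bound $|G_{\nu/\mu}(\lambda;\cdot)|\le \mathrm{const}^{|\nu|}$ and the estimate \eqref{eq:f-estimate} for $f_\nu(\lambda;\cdot)$ follow from the same vertex-weight inspection (the relevant far-right vertices are colour-diagonal and insensitive to the colouring), and the analytic-continuation-in-$\mathfrak l$ then in $\epsilon$ steps are identical. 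The limit at $\epsilon=0$ with $q^\l=(s\epsilon)^{-1}$ is exactly \eqref{eq:fG-merged}, with $\cG_{\nu/\mu}$ for $\lambda$-coloured $\nu,\mu$ defined through \eqref{eq:cG-limit} (which makes sense for coloured labels by Proposition~\ref{prop:cG}).

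Then \eqref{eq:fG-nu-merged} follows by specializing $\mu=0^n$: by \eqref{eq:cG-nu} one has $(-s)^{-|\nu|}\cG_{\nu/0^n}=s^{-2n}(s^2;q)_n$ when all $\nu_i>0$ and $0$ otherwise — and this holds for $\lambda$-coloured $\nu$ as well, since $0^n$ is $\lambda$-coloured for every $\lambda$ and the computation \eqref{eq:cG-formula} with $\mu=0^n$ is colour-independent. One also needs the coloured analogue of \eqref{eq:f-empty}, namely $f_{0^n}(\lambda;x_1,\dots,x_n)=\prod_{i=1}^n\frac{1-s^2q^{i-1}}{1-sx_i}$, which again follows because the partition function \eqref{eq:f-generic} with all-zero top boundary has a unique nonzero configuration whose weight does not see the colours of the entering paths. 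Substituting these into \eqref{eq:fG-merged} and cancelling the nonzero prefactors gives \eqref{eq:fG-nu-merged}. The inhomogeneous variant is handled by the same bookkeeping of $(-s_j)$-factors as in Corollary~\ref{cor:fG-nu} and Proposition~\ref{prop:cG}, under the standing assumption $(s_0,\xi_0)=(s,1)$.

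The main obstacle I anticipate is purely a matter of care rather than of new ideas: verifying that each ingredient in the proof of Proposition~\ref{prop:fG-principal} — the skew-Cauchy identity \eqref{eq:fG-skew}, the uniform bounds, the rationality in $\mathfrak l=q^\l$, and the double analytic continuation — genuinely survives the passage from the rainbow sector to a general sector $\mathbb V(\lambda)$. The commutation relations \eqref{eq:CB} and the operator identities underlying \eqref{eq:fG-skew} are stated for the $\C_i$'s and $\B_i$'s themselves, hence are sector-independent, so this should go through; but one must be slightly attentive that in \eqref{eq:f-generic} several $\C_i$'s are repeated, which is exactly the setting \cite[Section 3.4]{BW} already accommodates. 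An alternative, perhaps shorter, presentation would be to prove \eqref{eq:fG-merged} directly from the rainbow identity \eqref{eq:fG-principal} by summing both sides over the $\theta^*$-fibre of $\mu$ and using \eqref{eq:f-colour-merge} on the right and a Fubini rearrangement plus \eqref{eq:f-colour-merge}, \eqref{eq:G-colour-merge} on the left; the price is that one must first check $\sum_{\varsigma:\theta^*(\varsigma)=\mu}\cG_{\nu/\varsigma}=\cG_{\theta^*(\nu)/\mu}$ compatibly, which is most transparently seen — again — through the $G$-level merging \eqref{eq:G-colour-merge} before taking the $\epsilon\to0$ limit. I would adopt the first route.
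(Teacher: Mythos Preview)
Your proposal is correct, and in fact you describe both your chosen route and the one the paper takes. The paper opts for what you call the ``alternative, perhaps shorter, presentation'': it sums the rainbow identity \eqref{eq:fG-principal} over the $\theta_*$-fibre of a fixed $\lambda$-coloured $\mu$, applies \eqref{eq:f-colour-merge} on the right-hand side, and on the left-hand side uses the colour-merging relation for $\cG$ (obtained from \eqref{eq:G-colour-merge} by passing to the limit \eqref{eq:cG-limit}) followed by a regrouping of the rainbow $\nu$'s according to their image $\theta_*(\nu)$ and another application of \eqref{eq:f-colour-merge}. Your preferred Route~1 --- rerunning the analytic-continuation argument of Proposition~\ref{prop:fG-principal} directly in the sector $\mathbb V(\lambda)$ --- is also valid, since the skew-Cauchy identity \eqref{eq:fG-skew}, the uniform bounds, and the rationality in $q^\l$ genuinely hold for coloured labels (the underlying operator commutation \eqref{eq:CB} and the vertex-weight estimates are sector-independent). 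The paper's route simply avoids re-verifying those analytic steps by delegating them to the already-established rainbow case, at the modest cost of checking that \eqref{eq:G-colour-merge} survives the $\epsilon\to 0$ limit. Either way, \eqref{eq:fG-nu-merged} follows from \eqref{eq:fG-merged} exactly as you describe.
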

\begin{proof}
Let $\theta:\{1,\dots,n\}\to\{1,\dots,\ell(\lambda)\}$ be the unique colour merging  monotone map such that $\theta_*((1,\dots,1))=\lambda$, cf. Section \ref{ssec:colour-merge}. Then we can sum \eqref{eq:fG-principal} over rainbow $\mu$ with a given image $\theta_*(\mu)$. The right-hand side is immediately computed via \eqref{eq:f-colour-merge}, and in the left-hand side we use \eqref{eq:G-colour-merge} and subsequently perform, using \eqref{eq:f-colour-merge}, a partial summation over $\nu$'s with the same image $\theta_*(\nu)$. The result is \eqref{eq:fG-merged}, with $\theta_*(\mu)$ and $\theta_*(\nu)$ replaced back by $\mu$ and $\nu$. The second relation \eqref{eq:fG-nu-merged} follows from \eqref{eq:fG-merged} in the same way as in the rainbow case of Corollary \ref{cor:fG-nu}. 	
\end{proof}

\begin{defn}\label{def:measure} For any $n\ge 1$, composition $\lambda$ with $|\lambda|=n$, and $q,s,x_1,\dots,x_n\in\mathbb{C}$ satisfying \eqref{eq:weight-condition3}, we define a (generally speaking, complex valued)
probability measure on the subset of $\mathcal{S}_\lambda$ (defined in \eqref{eq:lambda-col}) consisting of $\lambda$-coloured compositions $\nu$  with no zero parts by
\begin{equation}
\label{eq:measure}
\P\left(\{\nu\}\right)= \prod_{i=1}^n \frac{1-sx_i}{s(s-x_i)}\cdot (-s)^{|\nu|}  f_{\nu}(\lambda; x_1,\dots,x_n).
\end{equation}
Graphically, the weight of this measure could be seen as partition functions of the form \eqref{eq:f-function} with incoming colours on the left partially identified according to $\lambda$, with vertex weights $L^\stoch$ given by \eqref{eq:L-weights}, and conditioned to have no exiting paths in the 0th column 
(\ie \  $\AA(0)=\bm{0}$ in terms of \eqref{eq:f-function}).

In what follows we will also use the notation 
\begin{equation}
\label{eq:f-stoch}
 f_{\nu}^\stoch(\lambda; x_1,\dots,x_n)=(-s)^{|\nu|}  f_{\nu}(\lambda; x_1,\dots,x_n).
\end{equation}
In the inhomogeneous setting of Section \ref{ssec:inhom}, we will assume that $(s_0,\xi_0)=(s,1)$, and in the right-hand sides of \eqref{eq:measure} and \eqref{eq:f-stoch} replace $(-s)^{|\nu|}$ by $\prod_{i=1}^n\prod_{j=0}^{\nu_i-1} (-s_j)$.
\end{defn}

The graphical interpretation is based on the observation that the prefactor of the sums in \eqref{eq:fG-nu}, \eqref{eq:fG-nu-merged} is exactly the inverse of the product of $L^\stoch$-weights of vertices $\tikz{0.3}{
	\draw[lgray,line width=0.7pt,->] (-1,1) -- (1,1);
	\draw[lgray,line width=2.5pt,->] (0,0) -- (0,2.2);
	\node[left] at (-1,1) {\tiny $i$};\node[right] at (1,1) {\tiny $i$};
	\node[below] at (0,0) {\tiny $\bm{0}$};\node[above] at (0,2) {\tiny $\bm{0}$};
}
$ in the 0th column of a partition function of the form \eqref{eq:f-function} with no turns in the 0th column, and \eqref{eq:f-stoch} is the result of computing the partition function for $f_\nu$ with $L$-weights replaced by the $L^\stoch$-weights. Together with the stochasticity of the $L^\stoch$-weights, this also implies $\eqref{eq:fG-nu}$ and $\eqref{eq:fG-nu-merged}$. 

\begin{defn}
\label{def:observables}
In the context of Definition \ref{def:measure}, for any $\lambda$-coloured composition $\mu=0^{\bm{m}^{(0)}}1^{\bm{m}^{(1)}}2^{\bm{m}^{(2)}}\cdots$ introduce an observable $\O_\mu$, whose values on $\lambda$-coloured compositions $\nu$ with no zero parts are given by
\begin{equation}
\label{eq:obs}
\O_\mu(\nu)=\prod_{x\ge 1}\displaystyle\prod_{i\ge 1} 
q^{m_i^{(x)}H_{>i}^{\nu/\mu}(x+1)}{\displaystyle\binom{H_i^{\nu/\mu}(x+1)}{m_i^{
			(x) } } } _q,
\end{equation}
where we use the coloured height functions \eqref{eq:coloured-height}. Note that only nonzero parts of $\mu$ play a role in this definition. 

For rainbow compositions, \ie\ when $\lambda=(1,\dots,1)$, the observables take a simpler form, cf. Corollary \ref{cor:cG-rainbow}:
\begin{equation}
\label{eq:obs-rainbow}
\O^\text{rainbow}_\mu(\nu)=\displaystyle\prod_{\substack{x\ge 1,\, i\ge 1\\ m_i^{(x)}=1}} 
\bm{1}_{H_i^{\nu/\mu}(x+1)=1} \,q^{H_{>i}^{\nu/\mu}(x+1)}=\displaystyle\prod_{\substack{x\ge 1,\, i\ge 1\\ m_i^{(x)}=1}}  \frac{q^{H_{> i+1}^{\nu/\mu}(x+1)}-q^{H_{> i}^{\nu/\mu}(x+1)}}{q-1}.
\end{equation}
\end{defn}

\begin{ex}
\label{ex:obs-colour-blind}	
In the colour-blind case $\lambda=(n)$, thinking of the coordinates of $\mu$ as 
being ordered: $\mu_1\ge\mu_2\ge\dots\ge \mu_n$, we can rewrite \eqref{eq:obs} 
as a shifted $q$-moment of the colourless height function: 
\begin{equation}
\label{eq:obs-colour-blind}
\O_\mu^{\text{colour-blind}}(\nu)=\frac{(1-q^{H^\nu(\mu_1+1)})(1-q^{
H^\nu(\mu_2+1)-1})\cdots (1-q^{H^\nu(\mu_k+1)-k+1})}{\prod_{j\ge 1} 
(q;q)_{\text{mult}_j(\mu)}}\,,
\end{equation}
where $\text{mult}_j(\mu)=\#\{i:\mu_i=j\}$ and $k=\max\{j:\mu_j>0\}$. 
\end{ex}

We are now in position to formulate the main result of this section.

\begin{thm}\label{thm:obs-avg} 
With notations of Definitions \ref{def:measure} and \ref{def:observables} above, we have 
\begin{equation}
\label{eq:obs-avg}
\E\, \O_\mu=\E_{\nu}\left[ \O_\mu(\nu)\right]= \frac{q^{\sum_{x\ge 1}\sum_{i>j} m_i^{(x)}m_j ^{(x)}}}{\prod_{j\ge 0} (s^2;q)_{|\bm{m}^{(x)}|}}
\prod_{i=1}^n(1-sx_i)\cdot f_{\mu}^\stoch (\lambda;x_1,\dots,x_n), 
\end{equation}
where the expectation is taken with respect to the weights \eqref{eq:measure}, and the observables are given by \eqref{eq:obs} in the general case, or by \eqref{eq:obs-rainbow} in the rainbow case. 

The formula \eqref{eq:obs-avg} also holds in the inhomogeneous setting of Section \ref{ssec:inhom} under the assumption that $(s_x,\xi_x)=(s,1)$ for any $x$ such that $|\bm{m}^{(x)}|>0$, and for $x=0$. 
\end{thm}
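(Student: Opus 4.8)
The plan is to read the desired formula \eqref{eq:obs-avg} off the skew-Cauchy-type identity \eqref{eq:fG-merged} of Proposition \ref{prop:fG-merged}, once its right-hand side has been rewritten in probabilistic terms. First I would substitute the explicit evaluation \eqref{eq:cG-formula} of $\cG_{\nu/\mu}$ (or its rainbow specialization \eqref{eq:cG-rainbow}) into the left-hand side of \eqref{eq:fG-merged}. Since $\cG_{\nu/\mu}$ vanishes unless every part of $\nu$ is nonzero, the sum over $\lambda$-coloured $\nu$ in \eqref{eq:fG-merged} automatically collapses to the support of the measure $\P$ of Definition \ref{def:measure}, so no restriction need be imposed by hand.

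The one computation requiring care is isolating the column $x=0$ in the infinite product \eqref{eq:cG-formula}. For $\nu$ with all parts positive one has, in the notation of Proposition \ref{prop:cG}, $H_i^\nu(1)=\lambda_i$ (the total number of colour-$i$ parts) and $H_i^\mu(1)=\lambda_i-m_i^{(0)}$, hence $H_i^{\nu/\mu}(1)=m_i^{(0)}$ and $H_{>i}^{\nu/\mu}(1)=\sum_{k>i}m_k^{(0)}$. Consequently every $q$-binomial in the $x=0$ factor equals $1$, and the two powers $q^{-\sum_{i>j}m_i^{(0)}m_j^{(0)}}$ and $\prod_i q^{m_i^{(0)}\sum_{k>i}m_k^{(0)}}$ are reciprocal, so the $x=0$ factor reduces to just $(s^2;q)_{|\bm m^{(0)}|}$. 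The remaining product over $x\ge1$ is, by inspection, exactly $\O_\mu(\nu)\cdot\prod_{x\ge1}\bigl((s^2;q)_{|\bm m^{(x)}|}\,q^{-\sum_{i>j}m_i^{(x)}m_j^{(x)}}\bigr)$ in the notation of \eqref{eq:obs}; in the rainbow case this matches \eqref{eq:obs-rainbow} upon noting $\sum_{i>j}m_i^{(x)}m_j^{(x)}=\binom{|\bm m^{(x)}|}{2}$ when all $m_i^{(x)}\in\{0,1\}$. Thus, for $\nu$ with all $\nu_i>0$, one gets $\cG_{\nu/\mu}=(-s)^{|\nu|-|\mu|}\,s^{-2n}\,\O_\mu(\nu)\,q^{-\sum_{x\ge1}\sum_{i>j}m_i^{(x)}m_j^{(x)}}\prod_{x\ge0}(s^2;q)_{|\bm m^{(x)}|}$.

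It then only remains to insert this into \eqref{eq:fG-merged}, replace $(-s)^{|\nu|}f_\nu(\lambda;x)$ by $f_\nu^\stoch(\lambda;x)$ and use \eqref{eq:measure} to turn $\sum_\nu f_\nu(\lambda;x)\,\cG_{\nu/\mu}$ into a scalar multiple of $\E\,\O_\mu$, while the right-hand side $\prod_i(1-x_i/s)\,f_\mu(\lambda;x)$ is rewritten through $f_\mu^\stoch$ in the same way. Solving for $\E\,\O_\mu$, the accumulated rational prefactors simplify — all powers of $s$ collapse to $\prod_{i=1}^n(1-sx_i)$ — giving exactly \eqref{eq:obs-avg}. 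Convergence causes no trouble: for $|q|<1$ the quantity $\O_\mu(\nu)$ is bounded uniformly in $\nu$ (each $q$-binomial and $q$-power appearing in it is bounded), while $\sum_\nu|f_\nu^\stoch(\lambda;x)|<\infty$ under \eqref{eq:weight-condition3} by the bound \eqref{eq:f-estimate}, so all rearrangements are term-by-term legitimate; general $q$ follows by analytic continuation exactly as in Proposition \ref{prop:fG-principal}. For the inhomogeneous statement one repeats the argument using the inhomogeneous forms of Proposition \ref{prop:cG}, Proposition \ref{prop:fG-merged} and Definition \ref{def:measure}; the hypothesis $(s_x,\xi_x)=(s,1)$ whenever $|\bm m^{(x)}|>0$ and at $x=0$ is precisely what keeps the $x=0$ reduction and the $(-s)^{\bullet}$ correction factors unchanged.

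The main obstacle — really the only step that is not bookkeeping — is this $x=0$ column analysis: verifying that the contribution of the zero parts of $\mu$ degenerates to the single clean factor $(s^2;q)_{|\bm m^{(0)}|}$ with no leftover height-function dependence, which is exactly what reconciles the product over all $x\ge0$ in $\cG_{\nu/\mu}$ with the product over $x\ge1$ in the definition of $\O_\mu$.
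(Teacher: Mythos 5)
Your proposal is correct and follows essentially the same route as the paper: substitute the explicit evaluation \eqref{eq:cG-formula} of $\cG_{\nu/\mu}$ into the merged skew-Cauchy identity \eqref{eq:fG-merged}, recognize the normalized sum as an expectation against the measure \eqref{eq:measure}, and reduce the $x=0$ column factor to $(s^2;q)_{|\bm{m}^{(0)}|}$ before solving for $\E\,\O_\mu$. The only (harmless) differences are cosmetic: you obtain $H_i^{\nu/\mu}(1)=m_i^{(0)}$ by directly counting colours of the $\lambda$-coloured $\mu$ and $\nu$ on the support, where the paper deduces it from the column-$0$ structure of the vertex weights \eqref{eq:limit-vertex}, and you plug in the measure directly instead of taking the ratio of the Cauchy identities for $\mu$ and for $\mu=(0,\dots,0)$ — both steps are equivalent to the paper's.
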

\begin{proof}
Let us take the ratio of two skew-Cauchy identities \eqref{eq:fG-merged} with $\mu$ and with $\mu=(0,\dots,0)$. This yields
\begin{equation*}
\frac{\sum_{\nu}f_{\nu}(\lambda;x_1,\dots,x_n)\,\cG_{\nu/\mu}}{\sum_{\nu}f_{\nu}(\lambda;x_1,\dots,x_n)\,\cG_{\nu}}=\frac{f_{\mu}(\lambda;x_1,\dots,x_n)}{f_{(0,\dots,0)}(\lambda;x_1,\dots,x_n)}\,,
\end{equation*}
which can be rewritten, via \eqref{eq:f-stoch} and \eqref{eq:f-empty} (stated in the rainbow case, but also holding in the non-rainbow one for the same reasons), as
\begin{equation}
\label{eq:modified-Cauchy}
\frac{\sum_{\nu}f_{\nu}(\lambda;x_1,\dots,x_n)\,\cG_{\nu}\cdot \left({\cG_{\nu/\mu}}/{\cG_{\nu}}\right)}{\sum_{\nu}f_{\nu}(\lambda;x_1,\dots,x_n)\,\cG_{\nu}}=\frac{\prod_{i=1}^n(1-sx_i)}{(-s)^{|\mu|}(s^2;q)_n}\, f_{\mu}^\stoch(\lambda;x_1,\dots,x_n).
\end{equation}
The summations are taken over $\lambda$-coloured compositions $\nu$ with no zero parts. 

Observe that the left-hand side of \eqref{eq:modified-Cauchy} is exactly $\E_\nu \left({\cG_{\nu/\mu}}/{\cG_{\nu}}\right)$, cf. Definition \ref{def:measure}. 
The expression \eqref{eq:cG-formula} for $\cG_{\nu/\mu}$ (which is also the source of our assumption in the inhomogeneous setting) implies 	
\begin{multline*}
\frac{\cG_{\nu/\mu}}{\cG_\nu}=\frac{(s^2;q)_{|\bm{m}^{(0)}|}}{(-s)^{|{\mu}|}(s^2;q)_n}\,
q^{\sum_{i=1}^n\left(m_i^{(0)}\sum_{n\ge j>i}(H_j^{\nu/\mu}(1)-m_j^{(0)})\right)}
\prod_{i\ge 1}{\binom{H_i^{\nu/\mu}(1)}{m_i^{
		(0)} } } _q\\
	\times 
 \prod_{x\ge 1}
(s^2;q)_{|\bm{m}^{(x)}|}q^{-\sum_{i>j}m_i^{(x)}m_j^{(x)}} 
\cdot \O_\mu(\nu).
\end{multline*}
The only $\nu$-dependent part of this expression is $\O_\mu(\nu)$, and it stays under the expectation; the other factors can be moved to the right-hand side of \eqref{eq:modified-Cauchy}. It only remains to notice that for $\cG_{\nu/\mu}$ not to vanish, we must have ${H_i^{\nu/\mu}(1)}=m_i^{(0)}$ for all colours $i\ge 1$. 
To see that, it might be easiest to return to the expression \eqref{eq:limit-vertex} for the vertex weights in the one-row representation \eqref{eq:one-row} of $\cG_{\nu/\mu}$, and note that in the 0th column we have 
$\DD=\bm{0}$, which together with $\AA+\BB=\CC+\DD$ and $\BB\ge \CC$ (enforced by the $q$-binomial coefficients) implies $\BB=\CC$, which is exactly the statement we are making, as $B_i=H_i^{\nu/\mu}(1)$ and $C_i=m_i^{(0)}$. Taking these equalities into account and canceling out common factors gives \eqref{eq:obs-avg}. 
\end{proof}

\section{Integral representations for $f_\mu$}
\label{sec:integral}

The result of averaging the observable $\O_\mu$ in Theorem \ref{thm:obs-avg} is the function $f_\mu$, up to simple prefactors. The coloured composition $\mu$ here has the same dimension (\ie\ number of parts) as the coloured compositions $\nu$ over which we are averaging (cf. Definition \ref{def:measure}), and thus does not offer much reduction in complexity. The goal of this section is to show that $f_\mu$ has an integral representation of the dimension equal to the number of \emph{nonzero} parts of $\mu$. Hence, by choosing $\mu$ consisting of mostly zeros, we will be able to obtain tangible formulas for averages over $\nu$'s of a growing dimension. 

\begin{defn}\label{def:mu>1} For a coloured composition $\mu$ (defined as in Section \ref{ssec:coloured-comp}), we will denote by $\mu^{\ge 1}$ the coloured composition obtained from $\mu$ by removing all of its zero parts. The colouring of $\mu^{\ge 1}$ will be the one naturally inherited from $\mu$, and we will denote it by $\lambda^{\ge 1}$ if $\mu$ is $\lambda$-coloured, or by $\text{colour}(\mu^{\ge 1})$ if $\lambda$ is not explicit.  
\end{defn}

Our first goal is to prove the following integral representation of $f_\mu$ for rainbow compositions.

\begin{prop}\label{prop:integral-rainbow}
Let $\mu=0^{\bm{m}^{(0)}}1^{\bm{m}^{(1)}}2^{\bm{m}^{(2)}}\cdots$ be a rainbow 
composition of length $n\ge 1$ not consisting entirely of zeros, let $\mu^{\ge 1}$ 
be as in Definition \ref{def:mu>1} with $m:=n-|\bm{m}^{(0)}|\ge 1$ being the 
length of $\mu^{\ge 1}$, and let $c_1<\dots<c_m$ be the colours of the 
(necessarily nonzero) parts of $\mu^{\ge 1}$. Then 
\begin{multline}
\label{eq:f-integral-rainbow}
f_\mu(x_1,\dots,x_n)=\frac{(s^2;q)_{|\bm{m}^{(0)}|}}{(1-sx_1)\cdots(1-sx_n)}
\frac{(-1)^m}{(2\pi\sqrt{-1})^m}\oint\cdots\oint_{\text{around }\{x_j^{-1}\}}
\prod_{1\le i<j\le m} \frac{y_j-y_i}{y_j-qy_i} \\
\times f_{\mu^{\ge 1}}(y_1^{-1},\dots,y_m^{-1})\prod_{j=1}^m 
\left(\frac{y_j-s}{y_j} \frac{x_{c_j}}{1-x_{c_j}y_j}\prod_{i>c_j}^n 
\frac{1-qx_iy_j}{1-x_iy_j}\, dy_j\right),
\end{multline}
where (positively oriented) integration contours are chosen to encircle all 
points $\{x_j^{-1}\}_{j=1}^n$ and no other singularities of the 
integrand, or as \emph{$q$-nested} closed simple curves with $y_{i}$-contour containing $q^{-1}\cdot (y_j\text{-contour})$ for all $i<j$, and all of the contours encircling $\{x_j^{-1}\}_{j=1}^n$.  

The formula also holds in the column inhomogeneous setting under the assumption that in the 0th column $(s_0,\xi_0)=(s,1)$. 
\end{prop}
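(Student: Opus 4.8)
The plan is to combine the orthogonality of the $f$'s and $g^*$'s from \eqref{eq:f-g-orthog} with the Cauchy-type identity \eqref{eq:mimachi}, but in a way that ``sees'' only the nonzero parts of $\mu$. The starting point is the observation that $f_\mu$ for a rainbow $\mu$ with $|\bm m^{(0)}|$ zero parts is, by the definition \eqref{eq:f-function}, a partition function whose bottom $|\bm m^{(0)}|$ rows (those carrying the colours $c$ with $\mu$-part $0$) are frozen: a path of colour $i$ with $\mu_i=0$ never moves up, it simply exits immediately through the rightmost vertex of its row after passing straight through all columns to its right. This contributes an explicit product of the ``$i$-through-$i$'' weights $\tfrac{x-s}{1-sx}$ raised to appropriate powers and, after collecting, produces the prefactor $\tfrac{(s^2;q)_{|\bm m^{(0)}|}}{\prod(1-sx_i)}$ together with the reduced partition function $f_{\mu^{\ge 1}}$ in the remaining $m$ rows — but now the weights in those $m$ rows are the \emph{coloured} weights with $n$ colours, not $m$ colours. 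So first I would make this reduction precise using the explicit table \eqref{s-weights} and track exactly how the surviving rows' weights differ from the rank-$m$ model: the presence of the (now absent) low colours $1,\dots$ below $c_1$ only affects the $q^{\I_{[k,n]}}$-type exponents, i.e.\ the vertical edge occupation counts, and since those low colours contribute nothing, one checks the reduced object is literally $f_{\mu^{\ge 1}}$ evaluated in variables $(x_{c_1},\dots,x_{c_m})$ times a correction coming from the empty ``spectator'' colours threading the partition function horizontally at higher rows.

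The cleaner route, which I would actually pursue, is to avoid re-deriving the frozen-row reduction from scratch and instead \emph{substitute} the known integral representation \eqref{eq:f-integral-repr} for $f_{\mu^{\ge 1}}$ — a contour integral in $m$ variables over admissible contours — and then \emph{re-insert} the zero parts. Concretely: take \eqref{eq:f-integral-repr} written for the rank-$m$ function $f_{\mu^{\ge 1}}(y_1^{-1},\dots,y_m^{-1})$ in the integrand, i.e.\ with the roles of $x$ and $y$ appropriately swapped, so that $f_{\mu^{\ge 1}}(x_{c_1},\dots,x_{c_m})$ equals an $m$-fold integral of $f_{\mu^{\ge 1}}(y^{-1})$ against the Cauchy kernel $\prod \tfrac1{1-x_{c_i}y_i}\prod_{i>j}\tfrac{1-qx_{c_i}y_j}{1-x_{c_i}y_j}$. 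Then I would show, by the frozen-row argument of the previous paragraph together with the recursion \eqref{eq:exchange} (or directly from \eqref{eq:f-g-orthog}), that inserting the zero parts back amounts to (i) multiplying by the stated scalar prefactor, and (ii) replacing the rank-$m$ Cauchy kernel by the rank-$n$ one restricted to the appropriate colour indices, namely $\prod_j \tfrac{x_{c_j}}{1-x_{c_j}y_j}\prod_{i>c_j}^n\tfrac{1-qx_iy_j}{1-x_iy_j}$ and the extra factor $\prod_j\tfrac{y_j-s}{y_j}$, which is exactly what accounts for the extra spectator colours $c_j<i\le n$ running through. The cross term $\prod_{i<j}\tfrac{y_j-y_i}{y_j-qy_i}$ is unchanged because it depends only on the $m$ integration variables.

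For the contour statement, I would argue as in Section~\ref{ssec:orthogonality}: the integrand has poles in $y_j$ at $y_j=x_i^{-1}$ (for $i\ge c_j$) and possibly at $y_j=0$; choosing the contours to encircle exactly $\{x_i^{-1}\}$, or as $q$-nested curves with the $y_i$-contour containing $q^{-1}\cdot(y_j$-contour$)$ for $i<j$, makes the two prescriptions agree by the same residue/deformation reasoning used for \eqref{eq:f-integral-repr}, and the factor $(y_j-s)$ kills the would-be pole of $\tfrac1{1-sx_{c_j}}$-type that is already absorbed into the prefactor. The main obstacle I anticipate is \textbf{bookkeeping the colour indices correctly in the frozen-row reduction}: one must verify that threading the extra empty colours $1,\dots,c_1-1$ etc.\ through the surviving rows produces precisely the factor $\prod_{i>c_j}^n\tfrac{1-qx_iy_j}{1-x_iy_j}$ and $\prod(y_j-s)/y_j$ and nothing more — this is where a careful use of the explicit weights \eqref{s-weights} (noting that an empty horizontal colour passing a vertex with a path of a higher colour leaving vertically picks up a $q^{I}$-type factor) and the colour-merging Proposition~\ref{prop:merge} is needed. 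The inhomogeneous refinement is then immediate since the only column touched by the argument at $0$-parts is column $0$, where we assumed $(s_0,\xi_0)=(s,1)$, so the homogeneous computation applies verbatim there.
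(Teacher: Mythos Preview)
There is a genuine gap in your approach, rooted in a misreading of the combinatorics. When $\mu_i=0$, the colour-$i$ path does \emph{not} ``pass straight through all columns to its right'': it exits at column~$0$, so it must turn upward in column~$0$ and never sees columns $\ge 1$ at all. More importantly, the ``frozen-row'' factorisation you describe --- writing $f_\mu(x_1,\dots,x_n)$ as a scalar prefactor times $f_{\mu^{\ge 1}}(x_{c_1},\dots,x_{c_m})$ --- is only valid in the very special case $(c_1,\dots,c_m)=(n-m+1,\dots,n)$, i.e.\ when all zero parts sit at the \emph{lowest} colours. In that case the column-$0$ configuration is indeed forced (low colours turn up, high colours pass through), and one recovers exactly the rank-$m$ function in the remaining columns. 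But for general $c_j$'s the column-$0$ configuration is \emph{not} uniquely determined (this is exactly the content of Remark~\ref{rem:s_0=0}: a direct argument works only at $s_0=0$), and there is no such clean factorisation. The factors $\prod_{i>c_j}\tfrac{1-qx_iy_j}{1-x_iy_j}$ do not arise from ``spectator colours threading horizontally''; they emerge from a different mechanism.

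The paper's proof proceeds precisely by first establishing the formula in the special case $(c_1,\dots,c_m)=(n-m+1,\dots,n)$ via the frozen-column-$0$ argument and the contour deformation you sketch, and then \emph{inductively lowering} the $c_j$'s one step at a time using the Hecke exchange relation \eqref{eq:exchange}. The key computation is that applying $T_{c_j-1}$ to the only asymmetric factor $\tfrac{x_{c_j}}{1-x_{c_j}y_j}$ in the integrand produces $\tfrac{x_{c_j-1}}{1-x_{c_j-1}y_j}\cdot\tfrac{1-qx_{c_j}y_j}{1-x_{c_j}y_j}$, which is exactly the integrand with $c_j\mapsto c_j-1$. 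You mention \eqref{eq:exchange} in passing, but it is not an alternative to the frozen-row reduction --- it is the essential second half of the argument without which the general case is out of reach.
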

\begin{rmk}\label{rmk:sum-of-residues} Contour integration around a specific set of singularities can be viewed formally as the sum of residues at those singularities, and such a sum would make sense if the parameters are such that required contours are not possible to construct. This gives a slightly different way of interpreting \eqref{eq:f-integral-rainbow}, as well as all the other integral representations below.   
\end{rmk}
\begin{rmk}\label{rem:recolouring} According to Definition \ref{def:mu>1}, 
$\mu^{\ge 1}$ is a coloured composition, and thus $f_{\mu^{\ge 
1}}(y_1^{-1},\dots,y_m^{-1})$ should really be written as $f_{\mu^{\ge 
1}}(\lambda;y_1^{-1},\dots,y_m^{-1})$, where $\lambda$ is the colouring 
composition for $\mu^{\ge 1}$. However, since all parts of $\mu^{\ge 1}$ have 
different colours, and our vertex weights always depend on the colours only 
through their ordering, we could replace the colours
$c_1<\dots<c_m$ represented in $\mu^{\ge 1}$ by $1,2,\dots,m$, and denoting the 
resulting rainbow composition by $\tilde\mu^{\ge 1}$, we would have 
$f_{\mu^{\ge 1}}(\lambda,y_1^{-1},\dots,y_m^{-1})=f_{\tilde\mu^{\ge 
1}}(y_1^{-1},\dots,y_m^{-1})$. It is this function that we denoted as 
$f_{\mu^{\ge 1}}(y_1^{-1},\dots,y_m^{-1})$, thus slightly abusing the 
notation. 
\end{rmk}

\begin{rmk}\label{rem:s_0=0} In the case $s_0=0$, there exists a fairly straightforward argument
leading to a formula similar to \eqref{eq:f-integral-rainbow}. Namely, in that case vertices of the form \tikz{0.27}{\draw[lgray,line width=0.7pt,->] (-1,0.2) -- (1,0.2);	\draw[lgray,line width=2.0pt,->] (0,-0.65) -- (0,1.35);	\node[left] at (-1,0.2) {\tiny $j$};\node[right] at (1,0.2) {\tiny $i$};} with $j>i$ have weight zero (in the 0th column) due to vanishing of the bottom right entry in \eqref{eq:L-weights}. This means that the paths of colours $c_1,\dots,c_m$ are not allowed make any vertical steps in column 0, as otherwise they would have no way of exiting this column to the right (because of the ordering of entering colours along the left boundary). This completely determines the configuration of paths in column 0, and the contribution of the remaining columns can be encoded as the matrix element 
$$
\bra{\varnothing}
\C_0(x_1)\cdots \C_0(x_{c_1-1})\C_{c_1}(x_{c_1})\C_0(x_{c_1+1})\cdots
\C_0(x_{c_m-1})\C_{c_m}(x_{c_m})\C_0(x_{c_m+1})\cdots \C_0(x_n)
\ket{\varkappa},
$$
with $\varkappa=\mu^{\ge 1}-1^m$ and $\C$-row operators from Section \ref{ssec:row-operators}.
The summation over $\varkappa$ of such expressions multiplied by $g^*_{\varkappa}(y_1,\dots,y_m)$ is evaluated as an explicit product similar to \eqref{eq:mimachi} via the commutation relations \eqref{eq:CB}, and then the coefficients of $g^*_\varkappa$ are extracted by the orthogonality \eqref{eq:f-g-orthog}. 

It is not clear, however, how to extend this argument to $s_0\ne 0$, and we need to employ a different idea in the proof below. 
	
\end{rmk}

\begin{proof}[Proof of Proposition \ref{prop:integral-rainbow}] Our argument consists of two steps: We will first prove the 
formula for $(c_1,\dots,c_m)=(n-m+1,\dots,n)$, and then show that the formula 
continues to hold when we reduce one of the $c_j$'s by 1. Iterations of such reductions will cover 
all possible choices of $1\le c_1<\dots<c_m\le n$. 

Both steps work identically in the column homogeneous and inhomogeneous settings, and we will give the argument in the homogeneous case. The inhomogeneous analogs of several statements from Section \ref{sec:prelim} used below can be found in Section \ref{ssec:inhom}. 

Assume that $(c_1,\dots,c_m)=(n-m+1,\dots,n)$. This means that paths that enter 
the partition function of the form \eqref{eq:f-function} from the left in rows 
\textsc{}$1,\dots,n-m$ must immediately turn up and exit on top, while paths that enter  
in rows $n-m+1,\dots,n$ must move to the right across the 0th column (recall 
that no horizontal edge can carry more than one path). Hence, the configuration
of paths in the 0th column is completely determined, and the product of the 
$L$-weights \eqref{eq:L-weights} in this column gives 
$$
\frac{(s^2;q)_{|\bm{m}^{(0)}|}}{\prod_{i=1}^{n-m} (1-sx_i)}\,\prod_{j=n-m+1}^n
\frac{x_j-s}{1-sx_j}\,.
$$
On the other hand, the contribution of the remaining columns can be written as
$$
\prod_{j=n-m+1}^n \frac{1-sx_i}{x_i-s} \cdot f_{\mu^{\ge 
1}}(x_{n-m+1},\dots,x_n),
$$
where the prefactor is responsible for the fact that the partition function for 
$f_{\mu^{\ge 1}}(x_{n-m+1},\dots,x_n)$ starts with column 0, and we only had 
weights of vertices in columns $\ge 1$ remaining. We now need to show that the 
product of the last two expressions agrees with the right-hand side of 
\eqref{eq:f-integral-rainbow}. With our choice of $c_i$'s, the integrand is 
independent of $x_1,\dots,x_{n-m}$, and its only poles are at $y_i=x_j^{-1}$
with $n-m+1\le j\le n$. The number of integration variables thus coincides with 
the number of potential pole locations, and no two variables can have 
nonvanishing residues at the same location because of $\prod_{i<j}(y_i-y_j)$ in 
the integrand. Hence, the residue locations cover all points
$x_{n-m+1}^{-1},\dots,x_{n}^{-1}$ exactly once, 
and $\prod_{j=1}^m (y_j-s)x_{c_j}$ in the integrand necessarily evaluates to 
$\prod_{j=n-m+1}^n (1-sx_j)$ in every set of nontrivial residues taken. Moving
this factor out of the integral, we are lead to the following desired equality:
\begin{multline}
\label{eq:desired-integral}
 f_{\mu^{\ge 
1}}(x_{n-m+1},\dots,x_n)=\frac{(-1)^m}{(2\pi\sqrt{-1})^m}\oint\cdots\oint_{\text
{around }\{x_j^{-1}\}}
\prod_{1\le i<j\le m} \frac{y_j-y_i}{y_j-qy_i} \\
\times f_{\mu^{\ge 1}}(y_1^{-1},\dots,y_m^{-1})\prod_{j=n-m+1}^n 
\left(\frac{1}{1-x_{j}y_j}\prod_{i>j}^n 
\frac{1-qx_iy_j}{1-x_iy_j}\,\frac{dy_j}{y_j} \right).
\end{multline}

Comparing to \eqref{eq:f-integral-repr}, we see that the difference is in $(-1)^m$ and the choice of contours, which both come from the same origin. 
Namely, let us take the right-hand side of \eqref{eq:f-integral-repr} (with $n$ replaced by $m$ and $(x_1,\dots,x_n)$ replaced by $(x_{n-m+1},\dots,x_n)$), and deform the outermost $y_m$-contour in the outside direction, moving it through $\infty$ and closing around $\{x_j^{-1}\}$. The recursive construction of (rainbow) $f_\mu$'s from Section \ref{ssec:recursive} readily shows that $(y_1\cdots y_m)^{-1}  f_{\mu^{\ge 1}}(y_1^{-1},\dots,y_m^{-1})$ is a ratio of two polynomials in $y$'s with the denominator consisting only of factors of the form $(y_i-s)$, and $f_{\mu^{\ge 1}}(y_1^{-1},\dots,y_m^{-1})$ viewed as such a ratio has the numerator degree 1 less than the  denominator degree in each of the variables $y_i$, $1\le i\le m$. Hence, our deformation of the $y_m$ collects no residues along the way and yields the factor $(-1)$ for changing the contour direction. 

Next, we do the same deformation with the $y_{m-1}$-contour. Following the same reasoning, there is only one possible pole along the way: $y_{m-1}=q^{-1}y_m$. If we leave this potential singularity inside the $y_{m-1}$-contour, and proceed similarly for subsequent deformations, then we end up with two $q$-nested contours surrounding $\{x_j^{-1}\}$.
It turns out that the $q$-nestedness is not necessary. Indeed, a direct inspection of the integrand shows that the only possible pole of $y_m$ inside its current contour is $x_m^{-1}$, and then the factor $(1-qx_my_{m-1})$ makes the residue at $y_{m-1}=q^{-1}y_m$ vanish. Hence, we can close the $y_{m-1}$-contour around $\{x_j^{-1}\}$, orient it positively, and acquire another $(-1)$. 

Continuing with this procedure for $y_{m-2},y_{m-3},\dots, y_1$-contours (in this order),
we turn \eqref{eq:f-integral-repr} into \eqref{eq:desired-integral}, thus completing the first step of the proof. 

Let us now see why lowering of the $c_j$'s in \eqref{eq:f-integral-rainbow} keeps the formula intact. From the point of view of the (rainbow) composition $\mu$, replacing $c_j\mapsto c_j-1$ (assuming there is no $i\ne j$ such that $c_i=c_j-1$)
is equivalent to swapping $\mu_{c_j-1}=0$ and $\mu_{c_j}>0$. This can be done with the help of the exchange relations \eqref{eq:exchange} by acting on the right-hand side of \eqref{eq:f-integral-rainbow} by
$$
T_{c_j-1} = q - \frac{x_{c_j-1}-q x_{c_j}}{x_{c_{j}-1}-x_{c_j}} (1-\mathfrak{s}_{c_{j}-1}),
$$
cf. \eqref{eq:DL-operators}. The only part of the right-hand side of \eqref{eq:f-integral-rainbow} that is not symmetric in $(x_{c_j-1},x_{c_j})$ is the factor $x_{c_j}/(1-x_{c_j}y_j)$ (note that the contours are symmetric too), and applying $T_{c_j-1}$ to it we read
$$
\frac{qx_{c_j}}{1-x_{c_j}y_j}- \frac{x_{c_j-1}-q x_{c_j}}{x_{c_{j}-1}-x_{c_j}}\left(\frac{x_{c_j}}{1-x_{c_j}y_j}-\frac{x_{c_j-1}}{1-x_{c_j-1}y_j}\right)=\frac{x_{c_j-1}}{1-x_{c_j-1}y_j}\frac{1-qx_{c_j}y_j}{1-x_{c_j}y_j}\,.
$$
This recovers the integrand of \eqref{eq:f-integral-rainbow} with $c_j\mapsto c_j-1$ and completes the proof. 
\end{proof}

\begin{rmk}\label{rmk:through-infinity} While the integral representation \eqref{eq:f-integral-repr} allowed for moving the contours through $\infty$, this is no longer true for \eqref{eq:f-integral-rainbow}, as the added factors $(y_i-s)$ create poles at $\infty$. 
\end{rmk}

The main result of this section is a generalization of Proposition \ref{prop:integral-rainbow} to coloured non-rainbow compositions. 

\begin{thm}\label{thm:f-integral} Let $\lambda$ be a composition of weight $|\lambda|=n$ with partial sums  $\sum_{i=1}^{k} \lambda_i = \ell_k$, $k\ge 1$; $\ell_0:=0$.  
Further, let $\mu=0^{\bm{m}^{(0)}}1^{\bm{m}^{(1)}}2^{\bm{m}^{(2)}}\cdots$ be a $\lambda$-coloured
composition of length $\ell(\mu)=|\lambda|=n$ not 
consisting entirely of zeros, let $\mu^{\ge 1}$ 
be as in Definition \ref{def:mu>1} with inherited colouring $\lambda^{\ge 1}$ 
and $m:=n-|\bm{m}^{(0)}|$ being the length of $\mu^{\ge 1}$, and let 
$c_1<\dots<c_\alpha$ be the colours of the (necessarily nonzero) parts of 
$\mu^{\ge 1}$. Finally, let $\fm_1,\dots,\fm_\alpha\ge 1$ be the number of parts 
of $\mu^{\ge 1}$ of colours $c_1,\dots,c_\alpha$, respectively, and denote 
$\fm[a,b]=\fm_a+\fm_{a+1}+\dots+\fm_b$. Then 
\begin{multline}
\label{eq:f-integral}
f_\mu(\lambda; x_1,\dots,x_n)=\frac{(s^2;q)_{|\bm{m}^{(0)}|}}{(1-sx_1)\cdots(1-sx_n)}
\frac{1}{(2\pi\sqrt{-1})^m}\oint\cdots\oint_{\text{around }\{x_j^{-1}\}}
\prod_{1\le i<j\le m} \frac{y_j-y_i}{y_j-qy_i} \\
\times
\prod_{k=1}^{\alpha}\left(
\sum_{j=0}^{\fm_k} \frac{(-1)^j q^{\binom{\fm_k-j}{2}}}
	{(q;q)_j(q;q)_{\fm_k-j}}
\prod_{p>\fm[1,{k-1}]}^{j+\fm[1,{k-1}]}\prod_{a>\ell_{c_{k}-1}}^n\frac{1-qx_ay_p}{1-x_ay_p}
\prod_{r>j+\fm[1,{k-1}]}^{\fm[1,k]}\prod_{b>\ell_{c_{k}}}^n\frac{1-qx_by_r}{1-x_by_r}
\right)
	\\
	\times  f_{\mu^{\ge 1}}(\lambda^{\ge 1};y_1^{-1},\dots,y_m^{-1})
\prod_{i=1}^m 
\frac{(y_i-s)dy_i}{y_i^2} ,
\end{multline}
where (positively oriented) integration contours are chosen to encircle all 
points $\{x_j^{-1}\}_{j=1}^n$ and no other singularities of the 
integrand, or as \emph{$q$-nested} closed simple curves with $y_{i}$-contour containing $q^{-1}\cdot (y_j\text{-contour})$ for all $i<j$, and all of the contours encircling $\{x_j^{-1}\}_{j=1}^n$. The contours can also be chosen to either encircle or not encircle the point $0$.   

 The formula also holds in the column inhomogeneous setting under the assumption that in the 0th column $(s_0,\xi_0)=(s,1)$. 	
\end{thm}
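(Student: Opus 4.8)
The plan is to derive \eqref{eq:f-integral} from the rainbow formula of Proposition~\ref{prop:integral-rainbow} by colour merging. First I would introduce the unique monotone map $\theta\colon\{1,\dots,n\}\to\{1,\dots,\ell(\lambda)\}$ whose fibre over $k$ has size $\lambda_k$, and use the colour-merging relation \eqref{eq:f-colour-merge} — with the rainbow colouring $(1^n)$ merging to $\lambda$ — to write $f_{\mu}(\lambda;x_1,\dots,x_n)=\sum_{\tilde\mu}f_{\tilde\mu}(x_1,\dots,x_n)$, the sum ranging over all length-$n$ rainbow compositions $\tilde\mu$ whose $k$-th block of coordinates is, for every $k$, a rearrangement of the $k$-th block of $\mu$. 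All such $\tilde\mu$ have the same number $|\bm{m}^{(0)}|$ of zero parts, hence the same $m=n-|\bm{m}^{(0)}|$, so the prefactor $(s^2;q)_{|\bm{m}^{(0)}|}\big/\prod_{i=1}^n(1-sx_i)$ and the number of contour variables are common to every summand; I would then substitute \eqref{eq:f-integral-rainbow} for each $f_{\tilde\mu}$.

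Next I would reorganise the sum over $\tilde\mu$ by two independent pieces of data: (i) for each colour $c$ occurring in $\mu^{\ge 1}$, the choice of which positions inside the $\lambda$-block of colour $c$ are occupied by the (nonzero) parts of $\mu$ of colour $c$ — equivalently, the colours $c^{\tilde\mu}_1<\dots<c^{\tilde\mu}_m$ of the nonzero parts of $\tilde\mu$; and (ii) the arrangement of the actual nonzero values of $\mu$ into those positions. In \eqref{eq:f-integral-rainbow} the product $\prod_{j=1}^m\bigl(\tfrac{y_j-s}{y_j}\,\tfrac{x_{c^{\tilde\mu}_j}}{1-x_{c^{\tilde\mu}_j}y_j}\prod_{i>c^{\tilde\mu}_j}^n\tfrac{1-qx_iy_j}{1-x_iy_j}\,dy_j\bigr)$ depends only on (i), whereas $f_{\tilde\mu^{\ge 1}}(y_1^{-1},\dots,y_m^{-1})$ depends on both. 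Holding (i) fixed and summing over (ii), a second application of \eqref{eq:f-colour-merge} — now for the inherited colouring $\lambda^{\ge 1}$ of $\mu^{\ge 1}$, together with the colour relabelling of Remark~\ref{rem:recolouring} — collapses the $f_{\tilde\mu^{\ge 1}}$-sum into $f_{\mu^{\ge 1}}(\lambda^{\ge 1};y_1^{-1},\dots,y_m^{-1})$, with $y_1,\dots,y_m$ split into the colour groups $\{y_{\fm[1,k-1]+1},\dots,y_{\fm[1,k]}\}$ of \eqref{eq:f-integral}.

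It then remains to perform the sum over (i), which factorises over the colours $c_k$. For the $\lambda$-block of colour $c_k$ I would use the elementary identity $\tfrac{x_c}{1-x_cy}\prod_{i>c}^n\tfrac{1-qx_iy}{1-x_iy}=\tfrac{1}{(1-q)y}\bigl(\prod_{i\ge c}^n\tfrac{1-qx_iy}{1-x_iy}-\prod_{i>c}^n\tfrac{1-qx_iy}{1-x_iy}\bigr)$ for each of the $\fm_k$ chosen colours $c$, turning the sum over the $\fm_k$-element subsets of the block into a sum of products of telescoping differences. Recombining with the sign $(-1)^m=\prod_k(-1)^{\fm_k}$ and with the residual factors $\prod_j\tfrac{y_j-s}{y_j}\,dy_j$ should reproduce both the block-$c_k$ factor $\sum_{j=0}^{\fm_k}\tfrac{(-1)^jq^{\binom{\fm_k-j}{2}}}{(q;q)_j(q;q)_{\fm_k-j}}(\cdots)$ of \eqref{eq:f-integral} and the measure $\prod_i\tfrac{(y_i-s)\,dy_i}{y_i^2}$. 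For $\fm_k=1$ this is a one-step telescoping, and the integrands match term-by-term.

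For $\fm_k\ge 2$ the recombination additionally needs a $q$-binomial summation of the type of Corollary~\ref{cor:q-identity}; moreover the integrand of \eqref{eq:f-integral} develops poles — e.g.\ at $y_r=x_a^{-1}$ for $a$ ranging over the interior of a colour block — that are absent from the rearranged rainbow sum, so the two forms do \emph{not} coincide term-by-term and one must check that they differ only by a differential whose contour integral vanishes. I expect the clean way to do this is to first symmetrise the integrand within each colour group — legitimate because $f_{\mu^{\ge 1}}(\lambda^{\ge 1};\cdot)$ is symmetric in each group of equally-coloured variables (the operators $\C_i(x)$ commute for fixed $i$) — after which the contour deformations already carried out in the proof of Proposition~\ref{prop:integral-rainbow} remove the discrepancy. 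This reconciliation of the two integrands when $\fm_k\ge 2$ is the step I expect to be the main obstacle; the rest is bookkeeping. Finally, the assertions about admissible versus $q$-nested contours and about $0$ being optionally enclosed carry over unchanged from Proposition~\ref{prop:integral-rainbow}, since the extra factors are regular in the relevant region, and the column-inhomogeneous version is obtained by running the same argument with the inhomogeneous statements of Section~\ref{ssec:inhom} in place of their homogeneous counterparts.
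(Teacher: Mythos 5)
Your reduction is the same as the paper's: merge colours via \eqref{eq:f-colour-merge} to write $f_\mu(\lambda;\cdot)$ as a sum of rainbow $f_{\tilde\mu}$'s, substitute \eqref{eq:f-integral-rainbow}, split the sum into (i) the choice of which colours inside each $\lambda$-block carry the nonzero parts and (ii) the assignment of values to those colours, and collapse the sum (ii) back into $f_{\mu^{\ge 1}}(\lambda^{\ge 1};y_1^{-1},\dots,y_m^{-1})$ by a second application of \eqref{eq:f-colour-merge}. Up to that point your argument matches the paper. The genuine gap is the sum (i). What has to be proved there is precisely the paper's Lemma \ref{lem}, identity \eqref{eq:lemma}: for each colour block, the sum over the $\fm_k$-element subsets $\ell_{c_k-1}<c_k^{(1)}<\dots<c_k^{(\fm_k)}\le\ell_{c_k}$ of the termwise integrands equals the single $j$-sum appearing in \eqref{eq:f-integral}, as an identity of $\fm_k$-fold contour integrals valid for an \emph{arbitrary} symmetric holomorphic factor $\Phi$ (here $\Phi$ absorbs $f_{\mu^{\ge1}}$ and the cross-block factors). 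You correctly observe that the two integrands do not agree termwise — the right-hand side has poles at $x_a^{-1}$ for all $a$ in the block, which no single term on the left has — but your proposed resolution, symmetrising within each colour group and then invoking ``the contour deformations already carried out in the proof of Proposition \ref{prop:integral-rainbow}'', does not do the job. The discrepancy is not a boundary term that a contour move eliminates: it is an identity between two different linear combinations of residues that holds only after integrating against the $q$-deformed Vandermonde factor $\prod_{i<j}(y_j-y_i)/(y_j-qy_i)$ and summing over the block choices, and the purely algebraic $q$-binomial identity of Corollary \ref{cor:q-identity} by itself does not account for the way the nested positions $i_1>\dots>i_{\fm_k}$ couple to distinct pole structures.

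For comparison, the paper proves Lemma \ref{lem} by an indirect route: both sides of \eqref{eq:lemma} are realised, via the moment formula \eqref{eq:from-BBW} for ascending Hall--Littlewood processes, as expectations of observables of the process; the underlying equality is the pointwise $q$-identity \eqref{eq:q-ident} (itself a special case of Corollary \ref{cor:q-identity}); this proves \eqref{eq:lemma} for $\Phi$ of the product form \eqref{eq:Phi-special}, and arbitrary symmetric $\Phi$ is then reached by an interpolation argument (choosing the $b_j$'s so that $\phi$ is close to $1$ at a prescribed tuple of potential poles and close to $0$ at the others), with a separate residue argument justifying the stated freedom in the contours. A direct residue bookkeeping, which is essentially what your telescoping-plus-symmetrisation sketch would have to become, is acknowledged in the paper as possibly feasible but is not carried out; as written, your proposal asserts rather than proves this key step, so the proof is incomplete precisely at the point you yourself flag as ``the main obstacle''.
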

\begin{rmk} For $\lambda=(1,\dots,1)$ and $\mu$ being a rainbow composition, we have $\ell_k\equiv k$, $\alpha=m$, $\fm_1=\cdots\fm_\alpha=1$, $\fm[1,k]\equiv k$, and the middle line of \eqref{eq:f-integral} evaluates to 
$$
(-1)\left(\frac 1{1-q} 
-\frac1{1-q}\frac{1-qx_{c_k}y_k}{1-x_{c_k}y_k}\right)\prod_{b>c_k}\frac{
1-qx_by_k}{1-x_by_k}=\frac{x_{c_k}y_k}{1-x_{c_k}y_k}\prod_{b>c_k}\frac{1-qx_by_k
}{1-x_by_k}\,,
$$
thus reproducing \eqref{eq:f-integral-rainbow}. 	
\end{rmk}
\begin{proof} The central role in the argument is played by the following 
\begin{lem}\label{lem} For any positive integers $\fk,\fl,\fm$ satisfying 
$\fk+\fm<\fl$, 
and any symmetric function $\Phi(z_1,\dots,z_\fm)$ that is holomorphic in a 
neighborhood of the domain encircled by the integration contours, one has
\begin{multline}\label{eq:lemma}
\sum_{\fl-1\ge i_1>\dots>i_{\fm}\ge \fk}
\oint\cdots\oint_{\text{around }\{\fx_j\}}
\prod_{ i<j} \frac{z_j-z_i}{z_j-qz_i}\, \Phi(z_1,\dots,z_{\fm})
\prod_{p=1}^{\fm}\left(\frac 1{z_p-\fx_{i_p+1}}\prod_{j=1}^{i_p}
\frac{qz_p-\fx_j}{z_p-\fx_j}dz_p\right)	\\
=\oint\cdots\oint_{\text{around }\{\fx_j\}}\prod_{ i<j} 
\frac{z_j-z_i}{z_j-qz_i}\, \Phi(z_1,\dots,z_{\fm})\\ \times
\left(\sum_{j=0}^\fm
\frac{(-1)^j q^{\binom{\fm-j}{2}}}
{(q;q)_j(q;q)_{\fm-j}}
\prod_{p>0}^j \prod_{a=1}^\fl \frac{qz_p-\fx_a}{z_p-\fx_a}
\prod_{r>j}^\fm \prod_{b=1}^\fk \frac{qz_r-\fx_b}{z_r-\fx_b}\right)
\prod_{p=1}^\fm  \frac{dz_p}{z_p}\,,
\end{multline}
where (positively oriented) integration contours are chosen (independently in the two sides of \eqref{eq:lemma}) to encircle all 
points $\{\fx_j\}_{j=1}^\fl\subset\mathbb{C}$ and no other singularities of the 
integrand, or as \emph{$q$-nested} closed simple curves with $z_{i}$-contour containing $q^{-1}\cdot (z_j\text{-contour})$ for all $i<j$, and all of the contours encircling $\{\fx_j\}_{j=1}^\fl$. The contours can also be chosen to either encircle or not encircle the point $0$.   
\end{lem}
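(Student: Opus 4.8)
\emph{Plan of proof.} The approach is to pass from the explicit factors in \eqref{eq:lemma} to the telescoping functions
\[
\psi_L(z):=\prod_{a=1}^{L}\frac{qz-\fx_a}{z-\fx_a},\qquad \psi_0\equiv 1,
\]
via the elementary identity $\dfrac{1}{z-\fx_{i+1}}\displaystyle\prod_{j=1}^{i}\dfrac{qz-\fx_j}{z-\fx_j}=\dfrac{\psi_{i+1}(z)-\psi_i(z)}{(q-1)z}$, after which the $p$-th single‑variable factor on the left of \eqref{eq:lemma} becomes $\dfrac{\psi_{i_p+1}(z_p)-\psi_{i_p}(z_p)}{(q-1)z_p}\,dz_p$. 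One immediate sanity check motivating the shape of the answer: if the string $(i_1,\dots,i_\fm)$ were summed over the \emph{full} cube $\{\fk,\dots,\fl-1\}^{\fm}$ rather than over the strictly decreasing sector, the sum would factor, each variable contributing $\sum_{i=\fk}^{\fl-1}(\psi_{i+1}-\psi_i)=\psi_\fl-\psi_\fk$. The content of the lemma is that restricting to the strictly decreasing (``$q$-flag'') sector still leaves a clean answer once one integrates against the kernel $\prod_{i<j}\frac{z_j-z_i}{z_j-qz_i}$, and the task is precisely to understand the passage from the cube to the flag modulo functions whose $q$-nested contour integral against $\prod_{i<j}\frac{z_j-z_i}{z_j-qz_i}\,\Phi\,\prod_p\frac{dz_p}{z_p}$ vanishes.

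\textbf{Induction.} I would prove the resulting identity by induction on $\fm$, the $\fm=1$ case being the bare telescoping $\sum_{i=\fk}^{\fl-1}(\psi_{i+1}-\psi_i)=\psi_\fl-\psi_\fk$ (the right side of \eqref{eq:lemma} reducing for $\fm=1$ to $\dfrac{\psi_\fk-\psi_\fl}{(1-q)z}$). For the inductive step, one peels off one index: summing $i_1$ over $\{i_2+1,\dots,\fl-1\}$ telescopes the first factor to $\psi_\fl(z_1)-\psi_{i_2+1}(z_1)$, and one splits the left side into a ``$\psi_\fl(z_1)$-part'' and a ``$\psi_{i_2+1}(z_1)$-part''. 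In the first part the remaining sum over $\fl-1\ge i_2>\dots>i_\fm\ge\fk$ is exactly the $(\fm-1)$-variable instance of the left side of \eqref{eq:lemma} for the same pair $(\fk,\fl)$, integrated against the kernel on $z_2,\dots,z_\fm$ and against the modified weight $\widetilde\Phi(z_2,\dots,z_\fm):=\Phi(z_1,\dots,z_\fm)\prod_{j=2}^{\fm}\frac{z_j-z_1}{z_j-qz_1}$, which is still symmetric in $z_2,\dots,z_\fm$ and holomorphic near the contours; the inductive hypothesis applies, and after re‑folding $\widetilde\Phi$ and recombining with the kernel this yields $\sum_{j}c^{(\fm-1)}_{j}I(\fl^{\,j+1},\fk^{\,\fm-1-j})$, in the notation $I(L_1,\dots,L_\fm):=\oint\cdots\oint\prod_{i<j}\frac{z_j-z_i}{z_j-qz_i}\,\Phi\,\prod_p\frac{\psi_{L_p}(z_p)}{z_p}\,dz_p$. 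The ``$\psi_{i_2+1}(z_1)$-part'' is the genuinely new one: here $i_2$ appears both in $\psi_{i_2+1}(z_1)$ and in the $p=2$ factor, so the sum over $i_2$ does not telescope, and it must be reduced by a further use of the inductive hypothesis (now in the $\fm-2$ variables $z_3,\dots,z_\fm$ with the truncated index range $\{\fk,\dots,i_2-1\}$, including the boundary case where the inner string is forced to be minimal and contributes a single explicit term) followed by the residue evaluation of $\sum_{i_2}$ against the kernel. When the two parts are recombined, the coefficients of $I(\fl^{\,j},\fk^{\,\fm-j})$ must obey a recursion linking the levels $\fm$, $\fm-1$, $\fm-2$; that recursion is solved by $\dfrac{(-1)^jq^{\binom{\fm-j}{2}}}{(q;q)_j(q;q)_{\fm-j}}$, which one recognizes via the Euler identity $\sum_{j}(-1)^jq^{\binom{\fm-j}{2}}\binom{\fm}{j}_q x^j=(-1)^{\fm}\prod_{k=0}^{\fm-1}(x-q^k)$ with $x$ in the role of the ratio $\psi_\fl/\psi_\fk$; the hypothesis $\fk+\fm<\fl$ guarantees that the index ranges are long enough for the string sums to be nonempty and that the $\fl-\fk>\fm$ factors of $\psi_\fl/\psi_\fk$ match degrees in the Euler identity.

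\textbf{Loose ends and the main obstacle.} The two contour prescriptions coincide and may be used interchangeably: the $q$-nesting condition is exactly what keeps the loci $z_i=qz_j$ outside the relevant contours, so ``tight around $\{\fx_j\}_{j=1}^{\fl}$ with no other singularity inside'' and ``$q$-nested around $\{\fx_j\}$'' are homotopic in the complement of the singular set (with tight contours the innermost integrals in the induction collapse to residue sums over $\{\fx_a\}$ alone). The freedom to include or exclude $0$ is immediate, since on the right side (and on the telescoped left side) the only singularity there comes from the $\tfrac1{z_p}$'s and, because $\psi_L(0)=1$ for all $L$, the associated residue is proportional to $\sum_{j}\tfrac{(-1)^jq^{\binom{\fm-j}{2}}}{(q;q)_j(q;q)_{\fm-j}}$, which vanishes by the Euler identity at $x=1$. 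The main obstacle is precisely the diagonal ``$\psi_{i_2+1}(z_1)$-part'' together with the bookkeeping needed to close the coefficient recursion: the doubly‑indexed telescoping sum does not collapse as a rational function, and one must show that the non‑telescoping remainder is exactly accounted for by the $(\fm-2)$-level inductive input and reassembles into the claimed $q$-binomial coefficients. Equivalently, the crux is a sufficiently clean characterization of which rational functions have vanishing $q$-nested contour integral against $\prod_{i<j}\frac{z_j-z_i}{z_j-qz_i}\,\Phi\,\prod_p\frac{dz_p}{z_p}$, and an organization of the induction in which only that characterization is ever invoked.
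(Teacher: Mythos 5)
There is a genuine gap. Your reduction of the single-variable factors via $\psi_L(z)=\prod_{a=1}^{L}\frac{qz-\fx_a}{z-\fx_a}$ and the telescoping identity, together with the $\fm=1$ check, are correct, but the inductive step — which is the entire content of the lemma — is not carried out. After summing $i_1$, the term $\psi_{i_2+1}(z_1)$ couples $z_1$ to the summation index of the second variable, and the sum over $\fl-1\ge i_2>\dots>i_\fm\ge\fk$ of these coupled products no longer telescopes; you name this yourself as ``the main obstacle'' but give no argument that the remainder reduces to lower-$\fm$ instances, no derivation of the asserted recursion linking levels $\fm,\fm-1,\fm-2$, and no verification that $(-1)^jq^{\binom{\fm-j}{2}}/\bigl((q;q)_j(q;q)_{\fm-j}\bigr)$ solves it. Likewise the ``clean characterization of which rational functions have vanishing $q$-nested contour integral,'' which you correctly identify as the crux, is never supplied. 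What is missing is exactly the combinatorial collapse of the nested sum over strictly decreasing indices into a single $q$-binomial coefficient — in the paper this is the identity \eqref{eq:q-ident}, itself a special case of the colour-merging identity of Corollary \ref{cor:q-identity} — and without that ingredient the plan is not a proof. A smaller but real error: your argument for the freedom to include $0$ in the right-hand contours fails for $\fm\ge 2$, because the residue at $z_p=0$ mixes the different $j$-terms, which carry different $\psi_\fl/\psi_\fk$ assignments on the remaining variables (and, depending on which variable is sent to $0$, extra factors $q^{-1}$ from the cross terms $\frac{z_p-z_i}{z_p-qz_i}$), so it is not proportional to $\sum_j(-1)^jq^{\binom{\fm-j}{2}}/\bigl((q;q)_j(q;q)_{\fm-j}\bigr)$ except when $\fm=1$.

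For contrast, the paper's proof avoids this bookkeeping entirely: both sides of \eqref{eq:lemma} are recognized as expectations, over an ascending Hall--Littlewood process, of the two sides of the elementary identity \eqref{eq:q-ident} (equivalently \eqref{eq:q-identity2}), using the known contour-integral formula for $q$-moments of such processes. This proves \eqref{eq:lemma} for all $\Phi$ of the product form $\prod_p\phi(z_p)$ with $\phi(z)=\mathrm{const}\cdot\prod_{j}\frac{1-zb_j}{1-qzb_j}$, and the general symmetric $\Phi$ is then reached by matching the coefficients of the values of $\Phi$ at tuples of poles, exploiting the freedom in $\{b_j\}$ to make $\phi$ approximate indicator behaviour; the contour freedoms ($q$-nesting and inclusion of $0$) are checked by explicit residue arguments. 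If you want a self-contained inductive proof along your lines, the statement you must first isolate and prove is precisely an identity of the type \eqref{eq:q-ident}; as it stands, your proposal presupposes its conclusion at the step where the coefficients are ``recognized.''
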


Let us postpone the proof of Lemma \ref{lem} and use it for the proof of 
Theorem \ref{thm:f-integral} first. 

Let $\theta:\{1,\dots,n\}\to \{1,\dots,\ell(\lambda)\}$ be the unique monotone
map such that 
\begin{equation}
\label{eq:theta-inverse}
\theta^{-1}(k)=\{\ell_{k-1}+1,\dots,\ell_k\}\quad \text{ for all }\quad  1\le 
k\le \ell(\lambda).
\end{equation}
We can then use the colour merging relation \eqref{eq:f-colour-merge} to obtain 
a formula for 
$\lambda$-coloured $\mu$'s from the formula \eqref{eq:f-integral-rainbow} of 
Proposition \ref{prop:integral-rainbow} for the 
rainbow ones. This means that we need to sum the right-hand sides of 
\eqref{eq:f-integral-rainbow}, written for a rainbow composition $\tilde \mu$, 
over all $\tilde\mu$ with $\theta_*(\tilde\mu)=\mu$. 

Such a summation can be performed in two steps. At the first step we choose, 
for each colour $c_k$ represented in $\mu^{\ge 1}$, $1\le k\le\alpha$, the 
$\fm_k$ colours in $\theta^{-1}(c_k)$ that are represented in $\tilde\mu^{\ge 
1}$. At the second step we choose, for each $k$, $1\le k\le \alpha$, different
assignments of the chosen $\fm_k$ colours in $\theta^{-1}(c_k)$ to the $\fm_k$  
parts of $\mu^{\ge 1}$ that have colour $c_k$. The summation of the second step 
 is exactly the colour merging applied to $f_{\tilde\mu^{\ge 1}}$ (no other 
factors of the 
integrand of \eqref{eq:f-integral-rainbow} depend on the choices in the second
step), and \eqref{eq:f-colour-merge} shows that the second step summation 
results in replacing $f_{\tilde\mu^{\ge 1}}$ in the integrand by $f_{\mu^{\ge 
1}}$. 

Returning to the first step summation, we see that $f_{\mu^{\ge 
1}}$ in the integrand is now independent of the choices involved, and we can 
focus on the rest of the integrand. For each colour $c_k$, $1\le k\le \alpha$, 
we are choosing $c_k^{(1)}<\dots<c_k^{(\fm_k)}$ in $\theta^{-1}(c_k)$, or,
according to \eqref{eq:theta-inverse}, 
$\ell_{c_k-1}<c_k^{(1)}<\dots<c_k^{(\fm_k)}\le \ell_{c_k}$. Now the sum over 
such $\fm_k$-tuple of indices can be computed, for each $1\le k\le \alpha$, 
using Lemma \ref{lem}, where one needs to make substitutions
\begin{gather*}
\fm \mapsto\fm_k,\quad \fk\mapsto (n-\ell_{c_k}),\quad \fl\mapsto 
(n-\ell_{c_k-1}),\quad  
(z_1,\dots,z_\fm)\mapsto \left(y_{\fm[1,k-1]+1},\dots,y_{\fm[1,k]}\right), \\ 
(\fx_1,\fx_2,\dots\fx_\fl)\mapsto 
\left(x_n^{-1},x_{n-1}^{-1},\dots,x_{\ell_{c_k-1}+1}^{-1}\right),\quad 
(\fx_1,\fx_2,\dots\fx_\fk)\mapsto 
\left(x_n^{-1},x_{n-1}^{-1},\dots,x_{\ell_{c_k}+1}^{-1}\right),
\\ 
(\fl> i_1>\dots>i_{\fm}\ge \fk) \mapsto 
\left(n-\ell_{c_k-1}> n-c_k^{(1)}>\dots>n-c_k^{(\fm_k)}\ge n-\ell_{c_k}\right),
\end{gather*}
and collect the factors that do not correspond to the ones in the left-hand
side of \eqref{eq:lemma} into a $\Phi(z_1,\dots,z_\fm)$. The holomorphicity
of $\Phi(z_1,\dots,z_\fm)$ in a neighborhood of $\{\fx_j\}_{j=1}^\fl$ 
is readily visible; and the fact that it is symmetric in $z_1,\dots,z_\fm$ 
follows from the fact that $f_{\mu^{\ge 1}}(y_1^{-1},\dots,y_m^{-1})$, that 
enters $\Phi(z_1,\dots,z_\fm)$ as the only factor that is not manifestly 
symmetric, is symmetric in $\left(y_{\fm[1,k-1]+1},\dots,y_{\fm[1,k]}\right)$
thanks to the commutativity of the $\C_i(x)$ operators (for a fixed $i$ 
and varying $x$) in the definition \eqref{eq:f-generic} of 
$f$-functions for coloured compositions. 

Comparing the right-hand sides of \eqref{eq:f-integral} and 
\eqref{eq:lemma},
we see that this completes the proof of Theorem \ref{thm:f-integral} modulo
the proof of Lemma \ref{lem}.
\end{proof}

\begin{proof}[Proof of Lemma \ref{lem}] While a direct book-keeping of residues of the two sides of \eqref{eq:lemma} might be possible, we will use the theory of Hall-Littlewood processes as a shortcut, with the work \cite[Section 2]{BorodinBW} as our main reference; a more detailed description can be found in \cite[Section 2]{BorodinC}. See also Remark \ref{rem:HL} on the origin of the argument given below. 
	
Consider an ascending Hall-Littlewood process with weights on sequences of partitions $\lambda^{(1)},\dots,\lambda^{(\fn)}$ proportional to 
\begin{equation}
\label{eq:HL-weight}
P_{\lambda{(1)}}(\fx_1)P_{\lambda{(2)}/\lambda{(1)}}(\fx_2)P_{\lambda{(\fn)}/\lambda{(\fn-1)}}(\fx_\fn)
Q_{\lambda{(\fn)}}(\rho), \qquad \ell(\lambda{(k)})\le k, \quad 1\le k\le \fn, 
\end{equation}
with, generally speaking, complex parameters $\{\fx_i\}_{i=1}^\fn$, $\rho$ being the specialization
of the algebra of symmetric functions into a sequence of variables $(b_1,b_2,\cdots)$, and $P_*$ and $Q_*$ being the Hall-Littlewood symmetric functions. 

Our argument is based on \cite[Proposition 2.2]{BorodinBW}, see also \cite[Propositoin 2.2.14]{BorodinC}, which says that for any $\fn\ge m_1\ge \dots\ge m_n\ge 1$, one has
\begin{multline}
\label{eq:from-BBW}
\E_{HL}\left(q^{m_1-\ell(\lambda(m_1))}\cdots q^{m_n-\ell(\lambda(m_n))}\right)
\\ =\frac{q^{\binom{n}{2}}}{(2\pi\sqrt{-1})^n}\oint\cdots\oint \prod_{i<j} \frac{z_j-z_i}{z_j-qz_i}
	\prod_{l=1}^n \left(\prod_{j\ge 1}\frac{1-z_lb_j}{1-qz_lb_j}
	\prod_{i=1}^{m_l} \frac{qz_l-\fx_i}{z_l-\fx_i}
	\frac{dz_l}{z_l}\right),
\end{multline}
where (positively oriented) $z_j$-contours are such that they surround $\{\fx_j\}_{j=1}^{m_1}$, $0$, and they are also $q$-nested in the sense that $z_i$-contour contains $q\cdot(z_j\text{-contour})$ for all $i<j$; no other poles are taken into account.  
 
Let us fix $1\le \fk<\fl\le \fn$, and consider the sum
\begin{equation}
\label{eq:q-ident}
\sum_{\fk\le i_\fm<i_{\fm-1}<\dots<i_1<\fl}\prod_{p=1}^m \frac{q^{i_p-\ell(\lambda(i_p))}-q^{i_p+1-\ell(\lambda(i_p+1))}}{1-q}=q^{\binom{\fm}{2}}
\cdot q^{m(\fk-\ell(\lambda(\fk)))} \cdot {\binom{\fl-\ell(\lambda(\fl))-\fk+\ell(\lambda(\fk))}{\fm}}_q.
\end{equation} 
The equality between the two sides of \eqref{eq:q-ident} is a (non-obvious) special case of Corollary \ref{cor:q-identity}. More exactly, choosing $Q=q$, $\alpha_i\equiv 1$, $|\beta|=\fm$, and reversing the order of indices of $\alpha_i$'s and $\beta_i$'s, turns \eqref{eq:q-identity} into
\begin{equation}
\label{eq:q-identity2}
q^{-\binom{\fm}{2}}\cdot\sum_{\substack{\beta_1,\dots,\beta_n\in\{0,1\}\\ \beta_1+\dots+\beta_n=\fm}}
\prod_{i=1}^n q^{\beta_i\cdot (i-1)}={\binom{n}{\fm}}_q.
\end{equation}
Let $1\le k_\fm<k_{\fm-1}<\dots<k_1\le n$ be the set of index values $k$ for which $\beta_{k}=1$ in \eqref{eq:q-identity2}.
Observe that 
$$
\frac{q^{j-\ell(\lambda(j))}-q^{j+1-\ell(\lambda(j+1))}}{1-q}=
\begin{cases}
q^{j-\ell(\lambda(j))}=q^{\fk-\ell(\lambda(\fk))}\cdot q^{j-\ell(\lambda(j))-\fk+\ell(\lambda(\fk))},&
\ell(\lambda(j))=\ell(\lambda(j+1)),\\
0,&\text{otherwise},
\end{cases}
$$
where `otherwise' refers to the only possible alternative $\ell(\lambda(j))+1=\ell(\lambda(j+1))$.

Set $n=\fl-\ell(\lambda(\fl))-\fk+\ell(\lambda(\fk))$, and let $\fk\le j_1<j_2<\dots<j_n<\fl$ be all the values of $j\in[\fk,\fl)$ such that $\ell(\lambda(j))=\ell(\lambda(j+1))$; equivalently 
$(j+1-\ell(\lambda(j+1)))-(j-\ell(\lambda(j))$ equals 1 rather than 0. 
These are all possible values that summation indices $i_1>\dots>i_\fm$ in the left-hand side of \eqref{eq:q-ident} can take to produce a nonzero term; let $j_{l_1}>\dots>j_{l_\fm}$ be the corresponding choices. Then matching $(k_1,\dots,k_\fm) \equiv (l_1,\dots,l_\fm)$ establishes the equivalence of \eqref{eq:q-ident} and \eqref{eq:q-identity2}, thus proving \eqref{eq:q-ident}. 

Our next step is to compute the averages, with respect to the ascending Hall-Littlewood process, of both sides of \eqref{eq:q-ident} using \eqref{eq:from-BBW}. 

The left-hand side of \eqref{eq:q-ident} is a simple linear combination of those from \eqref{eq:from-BBW}. Moving that linear combination inside the integrand and observing that 
$$
\frac1{1-q}\left(\prod_{i=1}^{i_p} \frac{qz_{p}-\fx_i}{z_{p}-\fx_i }- 	\prod_{i=1}^{i_p+1} \frac{qz_{p}-\fx_i}{z_{p}-\fx_i}\right)=\frac{z_p}{z_{p}-\fx_{i_p+1}}\prod_{i=1}^{i_p} \frac{qz_{p}-\fx_i}{z_{p}-\fx_i },
$$
we obtain
\begin{multline}
\label{eq:exp1}
\E_{HL} \left(\sum_{\fk\le i_\fm<i_{\fm-1}<\dots<i_1<\fl}\prod_{p=1}^m \frac{q^{i_p-\ell(\lambda(i_p))}-q^{i_p+1-\ell(\lambda(i_p+1))}}{1-q}\right)\\=
 \sum_{\fk\le i_\fm<i_{\fm-1}<\dots<i_1<\fl}
 \frac{q^{\binom{\fm}{2}}}{(2\pi\sqrt{-1})^\fm}\oint\cdots\oint \prod_{i<j} \frac{z_j-z_i}{z_j-qz_i}
 \prod_{p=1}^\fm \left(\prod_{j\ge 1}\frac{1-z_pb_j}{1-qz_pb_j}
 \frac1{z_{p}-\fx_{i_p+1}}\prod_{i=1}^{i_p} \frac{qz_p-\fx_i}{z_p-\fx_i}\,
 {dz_p}\right),
\end{multline}
with the same integration contours as in \eqref{eq:from-BBW}. 

Note that 0 is no longer a potential singularity of the integrand; thus, the contour may or may not contain it. Let us also explain why the presence or absence of the potential poles at $z_i=q^{-1}z_j$ does not affect the value of the integral. Generally speaking, the integral with $q$-nested contours is equal to the sum of residues at $z_p=q^{-k_p}\fx_{l_p}$, $1\le p\le\fm$, for certain values of $k_p\ge 0$ and $l_p\ge 1$, that arises by sequential evaluation of residues inside the $z_\fm, z_{\fm-1}, \dots, z_1$-contours in that order. Let $p^*$ be the maximal index such that $k_{p^*}>0$. Since the $z_\fm$-contour encircles only the poles at $\fx_j$'s, we must have $p^*\le \fm-1$. Also, since this pole must have come from a denominator factor $z_{j^*}-qz_{p^*}$ with $j^*>p^*$, due to the maximality of $p^*$, the pole must be at $z_{p^*}=q^{-1} \fx_{l^*}$ with $1\le l^*\le i_{j^*}$. But the integrand contains the factor $(qz_{p^*}-\fx_l^*)$, which will turn the residue into 0 (note that we need the fact that $i_{p^*}\ge i_{j^*}$ to guarantee the presence of this factor). We conclude that the $q$-nestedness is irrelevant for the value of the integral. 

Let us proceed to computing the Hall-Littlewood expectation of the right-hand side of \eqref{eq:q-ident}. 

Using the $q$-binomial theorem
$$
\sum_{j\ge 0} \frac{(a;q)_j}{(q;q)_j}\, z^j =\frac{(az;q)_\infty}{(z;q)_\infty}=\left(1-\frac zq\right)\cdots\left(1-\frac{z}{q^\fm}\right)\qquad \text{for}\quad a=q^{-\fm},
$$
we obtain 
$$
{\binom{A}{\fm}}_q=\frac{\left(1- {q^A}\right)\left(1- {q^{A-1}}\right)\cdots\left(1-q^{A-\fm+1}\right)}{(q;q)_\fm}=\sum_{j=0}^\fm 
	\frac{(q^{-\fm};q)_j}
	{(q;q)_j(q;q)_{\fm}}\,q^{(A+1)j} =\sum_{j=0}^\fm 
	\frac{(-1)^j q^{(A+1)j-\fm j+\binom{j}{2}}}
	{(q;q)_j(q;q)_{\fm-j}}\,.
$$
Hence, the right-hand side of \eqref{eq:q-ident} can be written as
$$
q^{\binom{\fm}{2}}
\cdot q^{m(\fk-\ell(\lambda(\fk)))} \cdot {\binom{\fl-\ell(\lambda(\fl))-\fk+\ell(\lambda(\fk))}{\fm}}_q
=\sum_{j=0}^\fm \frac{q^{\binom{\fm-j}{2}}}
{(q;q)_j(q;q)_{\fm-j}}\,q^{j(\fl-\ell(\lambda(\fl)))+(\fm-j)(\fk-\ell(\fk))},
$$
where for powers of $q$ we used $\binom{\fm}{2}-\fm j+j+\binom j2=\binom{\fm-j}{2}$. Employing \eqref{eq:from-BBW}, we now obtain
\begin{multline}
\label{eq:exp2}
\E_{HL}\left( q^{\binom{\fm}{2}}
\cdot q^{m(\fk-\ell(\lambda(\fk)))} \cdot {\binom{\fl-\ell(\lambda(\fl))-\fk+\ell(\lambda(\fk))}{\fm}}_q \right)
= \sum_{j=0}^\fm \frac{q^{\binom{\fm-j}{2}}}
{(q;q)_j(q;q)_{\fm-j}}\\ \times \frac{q^{\binom{\fm}{2}}}{(2\pi\sqrt{-1})^\fm}
\oint\cdots\oint\prod_{ i<j} 
\frac{z_j-z_i}{z_j-qz_i}
\prod_{p>0}^j \prod_{a=1}^\fl \frac{qz_p-\fx_a}{z_p-\fx_a}
\prod_{r>j}^\fm \prod_{b=1}^\fk \frac{qz_r-\fx_b}{z_r-\fx_b}
\prod_{p=1}^\fm \prod_{j\ge 1}\frac{1-z_pb_j}{1-qz_pb_j}\, \frac{dz_p}{z_p}\,,
\end{multline}
with the integration contours are as for \eqref{eq:from-BBW}. 

Since the two of sides of \eqref{eq:q-ident} are equal, the right-hand side of \eqref{eq:exp1}
and \eqref{eq:exp2} are also equal. This is literally the desired statement of Lemma \ref{lem}, eq. \eqref{eq:lemma}, with a specific choice of 
\begin{equation}
\label{eq:Phi-special}
\Phi(z_1,\dots,z_\fm)=\phi(z_1)\cdots\phi(z_\fm), \qquad \phi(z)=\text{const}\prod_{j\ge 1} \frac{1-z b_j}{1-qzb_j}\,,
\end{equation}
and a specific choice of contours for the right-hand side. The equality (of the right-hand sides of \eqref{eq:exp1} and \eqref{eq:exp2}) has been thus proven for generic $\{\fx_i\}$ and $\{b_j\}$, although certain inequalities on them are needed to make sure that the weights \eqref{eq:HL-weight} are summable (see \cite[Section 2]{BorodinBW} for details). However, the equality itself is an identity of finite sums (of residues), and the restrictions on $\{\fx_i\}$ and $\{b_j\}$ can thus be removed by analytic continuation.

The two sides of \eqref{eq:lemma} computed as (finite) sums of residues are linear combinations of values of the function $\Phi(z_1,\dots,z_\fm)$ at $\fm$-tuples of distinct (because of $\prod_{i<j}(z_j-z_i)$ in the integrand) points from the list of possible singularities, consisting of $\{\fx_i\}$, their $q$-multiples, and $0$. Values of $\Phi(z_1,\dots,z_\fm)$ with permuted $z_j$'s are equal (due to the symmetry of $\Phi$) and can be grouped together; their coefficients are certain explicit rational functions of $\{\fx_i\}$ and $q$. Fix an $\fm$-tuple of distinct possible poles. Using the freedom in the choice of $\{b_j\}$ and the constant prefactor in \eqref{eq:Phi-special}, we can make $\phi(z)$ to be arbitrarily close to 1 at the points of the chosen $\fm$-tuple, and arbitrarily close to 0 at all the other potential singularities (we are assuming that $\{\fx_i\}$ are generic, we only need to prove an identity between rational functions in them).\footnote{Indeed, $\phi(z)=\mathrm{const}\cdot\psi(x)/\psi(qz)$ with $\psi$ being an arbitrary polynomial that can be chosen to approximate any values at any finite set of points; the constant in front is needed to control $\phi(0)$.} This will lead to $\Phi$ being close to 1 at the chosen $\fm$-tuple and its permutations, and close to 0 at all other possible $\fm$-tuples. Since we already proved the equality of the two sides of \eqref{eq:lemma} for all $\Phi$'s of the form \eqref{eq:Phi-special}, this implies the equality of the coefficients of $\Phi$ evaluated at the chosen $\fm$-tuple (together its permutations) in residue expansions of the two sides of \eqref{eq:lemma}. This proves \eqref{eq:lemma} for arbitrary $\Phi$, but still with the particular choice of integration contours in the right-hand side, which are $q$-nested and include 0.

However, we already know that the declared freedom for the choice of contours is valid for the left-hand side, cf. the argument after \eqref{eq:exp1}. This implies that the values of $\Phi$ at $\fm$-tuples that include either 0 or a nontrivial $q$-multiple of a $\fx_i$ do not contribute to the left-hand side. Since we already know that such coefficients are the same on both sides, these values must not contribute to the right-hand side either, which means that we can use the same freedom of contours in the right-hand side without changing the value of the integral.      
\end{proof}

\begin{rmk}\label{rem:HL} Let us comment on the origin of our proof of Lemma \ref{lem} above that might have looked somewhat cryptic. If, in the setting of Theorem \ref{thm:f-integral}, $\mu$ has no zero parts and only one colour, then $f_\mu$ is colour-blind, according to \eqref{eq:symm}. Theorem \ref{thm:obs-avg} then implies that it is given by an average of a $q$-moment type observable \eqref{eq:obs-colour-blind} over a colour-blind stochastic vertex model. The height function of the colour-blind stochastic vertex models can be interpreted, along any down-right path in a quadrant (which includes horizontal lines), as lengths of partitions distributed according to Hall-Littlewood processes; this was the main result of \cite{BorodinBW}. Thus, we get an expression for $f_\mu$ in the form of an average over a Hall-Littlewood process.  
	
On the other hand, the symmetrization of colours in \eqref{eq:symm} can also be taken after the computation of the expectation, with respect to a rainbow coloured model, in the left-hand side of \eqref{eq:obs-avg}. As was shown in \cite[Chapter 10]{BW}, the distribution of coloured height functions at a single observation point can also be described via lengths of partitions in a Hall-Littlewood process, and this is where the computation of that expectation can take place. Once the corresponding average over the Hall-Littlewood process is computed, one can perform its colour  symmetrization. 

The two resulting expressions must be the same - the operations of averaging over our measure and symmetrizing over colours commute. Understanding the reason for that in the language of the Hall-Littlewood processes is not too difficult, this is essentially \eqref{eq:q-ident}. Rewriting what it means in terms of integral representations for averages of Hall-Littlewood observables results in a general identity for symmetric functions, which is exactly our Lemma \ref{lem} above.  
\end{rmk}

\section{Observables of stochastic lattice models}\label{sec:obs}
The purpose of this section is to combine the results of Sections \ref{sec:cauchy}-\ref{sec:integral} in order to obtain integral representations for averages of the observables introduced in Definition \ref{def:observables}, as well as to explore corollaries thereof. 

\subsection{The main result}\label{ssec:main}
Recalling graphical interpretation of Definition \ref{def:measure}, we consider 
a stochastic lattice model in the quadrant $\mathbb{Z}_{\ge 
1}\times\mathbb{Z}_{\ge 1}$, with vertex weights given by $L^\stoch_{x_i}$ in 
\eqref{eq:L-stoch} (see also \eqref{eq:L-weights} and \eqref{s-weights}) for 
the vertices in row $i$ from the bottom.  
The boundary conditions are as follows: No paths (equivalently, only paths of colour 
0) enter the quadrant through its bottom boundary; and along the left boundary 
we have a single path entering at every row, with the bottom $\lambda_1$ paths 
having colour 1, next $\lambda_2$ paths having colour 2, and so on. We focus on 
the state of the model between row $n$ and row $n+1$; that is, we record the 
locations where the paths of colour $1,2,\dots$ exit the $n$th row upwards as a 
$\lambda$-coloured composition $\nu$, where $\lambda$ is the composition with 
parts $\lambda_1,\lambda_2,\dots$, and we truncate this sequence so that 
$|\lambda|=n$. \footnote{Without loss of generality, we assume, for convenience 
of notation, that incoming paths in rows $n$ and $n+1$ have different colours.} 
The partial sums of $\lambda$ are denoted as $\sum_{i=1}^{k} \lambda_i = 
\ell_k$, $k\ge 1$; $\ell_0:=0$. 

The model is also allowed to have column inhomogeneities $\{s_j,\xi_j\}_{j\ge 
1}$, which replace the $(s,x_i)$ parameters in the $L^\stoch$-weight of the 
vertex in row $i$ and column $j$ by $(s_j, \xi_jx_i)$. cf. Section 
\ref{ssec:inhom}. For convenience, we assume that the number of column 
inhomogeneities is finite. (For our results this assumption is not restrictive 
as the state of the model far enough to the right will not play any role, and 
thus, due to stochasticity, the column inhomogeneities there can be chosen 
freely.)

Also recall the observables $\O_\mu$ of Definition \ref{def:observables}, defined for any $\lambda$-coloured composition $\mu=0^{\bm{m}^{(0)}}1^{\bm{m}^{(1)}}2^{\bm{m}^{(2)}}\cdots$, that take values 
\begin{multline}\label{eq:obs2}
\O_\mu(\nu)=\prod_{x\ge 1}\displaystyle\prod_{i\ge 1} 
q^{m_i^{(x)}H_{>i}^{\nu/\mu}(x+1)}{\displaystyle\binom{H_i^{\nu/\mu}(x+1)}{m_i^{
(x) } } } _q\\
=\prod_{x\ge 1}\displaystyle\prod_{i\ge 1}  
\frac{\bigl(q^{H_{>i}^{\nu/\mu}(x+1)}-q^{H_{\ge 
i}^{\nu/\mu}(x+1)}\bigr)\bigl(q^{H_{>i}^{\nu/\mu}(x+1)}-q^{H_{\ge 
i}^{\nu/\mu}(x+1)-1}\bigr)\cdots \bigl(q^{H_{>i}^{\nu/\mu}(x+1)}-q^{H_{\ge 
i}^{\nu/\mu}(x+1)-m_i^{(x)}+1}\bigr)}{(q;q)_{m_i^{(x)}}}
\end{multline}
given in terms of the coloured height functions \eqref{eq:coloured-height}.

For any $\lambda$-coloured $\mu$, we make a new coloured composition $\mu^{\ge 
1}$ that consists of its nonzero parts coloured in the same way with the 
colouring composition of $\mu^{\ge 1}$ denoted by $\lambda^{\ge 1}$, cf. 
Definition \ref{def:mu>1}. Clearly, given the colouring $\lambda$ of $\mu$, 
$\mu$ is uniquely reconstructed from $\mu^{\ge 1}$. We use the notation 
$m:=n-|\bm{m}^{(0)}|$ for the length of $\mu^{\ge 1}$, denote by 
$c_1<\dots<c_\alpha$ the colours of parts of $\mu^{\ge 1}$, and denote by 
$\fm_1,\dots,\fm_\alpha\ge 1$ the number of parts of $\mu^{\ge 1}$ of colours 
$c_1,\dots,c_\alpha$, respectively. Set $\fm[a,b]=\fm_a+\fm_{a+1}+\dots+\fm_b$.

We can now state the main result of this paper. 

\begin{thm}\label{thm:main} With the above notations, we have 
\begin{multline}
\label{eq:main}
\E\, \O_\mu= \frac{q^{\sum_{u\ge 1}\sum_{i>j} m_i^{(u)}m_j ^{(u)}}}{\prod_{j\ge 
1} (s^2;q)_{|\bm{m}^{(j)}|}}
\frac{1}{(2\pi\sqrt{-1})^m}\oint\cdots\oint_{\text{around }\{x_j^{-1}\}}
\prod_{1\le i<j\le m} \frac{y_j-y_i}{y_j-qy_i} \\
\times
\prod_{k=1}^{\alpha}\left(
\sum_{j=0}^{\fm_k} \frac{(-1)^j q^{\binom{\fm_k-j}{2}}}
{(q;q)_j(q;q)_{\fm_k-j}}
\prod_{p>\fm[1,{k-1}]}^{j+\fm[1,{k-1}]}\prod_{a>\ell_{c_{k}-1}}^n\frac{1-qx_ay_p}{1-x_ay_p}
\prod_{r>j+\fm[1,{k-1}]}^{\fm[1,k]}\prod_{b>\ell_{c_{k}}}^n\frac{1-qx_by_r}{1-x_by_r}
\right)
\\
\times  f_{\mu^{\ge 1}}^\stoch(\lambda^{\ge 1};y_1^{-1},\dots,y_m^{-1})
\prod_{i=1}^m 
\frac{(y_i-s)dy_i}{y_i^2}\,,
\end{multline}
where (positively oriented) integration contours are chosen to encircle all 
points $\{x_j^{-1}\}_{j=1}^n$ and no other singularities of the 
integrand, or as \emph{$q$-nested} closed simple curves with $y_{i}$-contour containing $q^{-1}\cdot (y_j\text{-contour})$ for all $i<j$, and all of the contours encircling $\{x_j^{-1}\}_{j=1}^n$. The contours can also be chosen to either encircle or not encircle the point $0$.   

The formula also holds in the column inhomogeneous setting under the assumption that $(s_x,\xi_x)=(s,1)$ for any $x\ge 1$ such that $|\bm{m}^{(x)}|>0$.
\end{thm}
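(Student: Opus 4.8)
The proof of Theorem \ref{thm:main} is obtained by simply combining the two main outputs of the preceding two sections. Recall from Section \ref{sec:cauchy} (Definitions \ref{def:measure}, \ref{def:observables}) that the stochastic lattice model described above, recorded between rows $n$ and $n+1$, produces precisely a $\lambda$-coloured composition $\nu$ with no zero parts distributed according to the measure \eqref{eq:measure}, with the graphical interpretation spelled out there identifying the $L^\stoch$-partition function (conditioned on no exiting paths in column $0$) with $\P(\{\nu\})$. Theorem \ref{thm:obs-avg} then gives
\begin{equation*}
\E\,\O_\mu = \frac{q^{\sum_{x\ge 1}\sum_{i>j} m_i^{(x)}m_j^{(x)}}}{\prod_{j\ge 0}(s^2;q)_{|\bm m^{(x)}|}}\prod_{i=1}^n(1-sx_i)\cdot f_\mu^\stoch(\lambda;x_1,\dots,x_n).
\end{equation*}
The plan is to substitute into this identity the integral representation for $f_\mu$ supplied by Theorem \ref{thm:f-integral}.

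First I would rewrite $f_\mu^\stoch(\lambda;x_1,\dots,x_n) = (-s)^{|\mu|} f_\mu(\lambda;x_1,\dots,x_n)$ using \eqref{eq:f-stoch} and insert the right-hand side of \eqref{eq:f-integral} for $f_\mu(\lambda;x_1,\dots,x_n)$. The factor $(s^2;q)_{|\bm m^{(0)}|}/\bigl((1-sx_1)\cdots(1-sx_n)\bigr)$ appearing in \eqref{eq:f-integral} cancels against $\prod_{i=1}^n(1-sx_i)$ from Theorem \ref{thm:obs-avg} together with the $j=0$ term $(s^2;q)_{|\bm m^{(0)}|}$ in the product $\prod_{j\ge 0}(s^2;q)_{|\bm m^{(x)}|}$ in the denominator, leaving exactly $\prod_{j\ge 1}(s^2;q)_{|\bm m^{(j)}|}$ in the denominator of \eqref{eq:main}. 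The power-of-$q$ prefactor $q^{\sum_{x\ge1}\sum_{i>j}m_i^{(x)}m_j^{(x)}}$ is untouched (and the $x=0$ term in that sum is absorbed into $f_\mu^\stoch$ versus $f_{\mu^{\ge1}}^\stoch$ bookkeeping, matching the fact that \eqref{eq:f-integral} already contains $f_{\mu^{\ge 1}}$ rather than $f_\mu$). Finally, the factor $(-s)^{|\mu|}=(-s)^{|\mu^{\ge1}|}$ distributes over the $m$ integration variables; combining it with $f_{\mu^{\ge1}}(\lambda^{\ge1};y_1^{-1},\dots,y_m^{-1})$ converts it into $f_{\mu^{\ge1}}^\stoch(\lambda^{\ge1};y_1^{-1},\dots,y_m^{-1})$, which is exactly the integrand appearing in \eqref{eq:main}. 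The middle line of \eqref{eq:f-integral} (the $q$-binomial-type sums over $j$, built from Lemma \ref{lem}) is carried over verbatim, and the contour prescription is inherited unchanged. Reading off the result produces \eqref{eq:main}.

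For the inhomogeneous case one checks that both input theorems were already stated under the hypothesis that $(s_x,\xi_x)=(s,1)$ whenever $|\bm m^{(x)}|>0$ and for $x=0$; under this hypothesis the correction factors $(-s)^{|\nu|}$ versus $\prod_{i,j}(-s_j)$ in Definitions \ref{def:measure}, \ref{def:observables} collapse to the homogeneous-looking expressions written in \eqref{eq:main}, since the only columns where the $q$-binomials in $\O_\mu$ and in $\cG_{\nu/\mu}$ are non-trivial are precisely the columns containing parts of $\mu$, which are by assumption homogeneous. Hence the same substitution applies with no change in the final formula.

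I do not expect any genuine obstacle here: the theorem is a clean composition of Theorem \ref{thm:obs-avg} and Theorem \ref{thm:f-integral}, and the only care needed is the bookkeeping of the scalar prefactors — tracking how the $(s^2;q)$-Pochhammers, the $(1-sx_i)$ factors, the $(-s)^{|\mu|}$ power, and the $q$-powers recombine so that all dependence on $x_1,\dots,x_n$ outside the contour integral cancels. If anything is delicate it is confirming that the $x=0$ contributions to $\sum_{x}\sum_{i>j}m_i^{(x)}m_j^{(x)}$ and to the $(s^2;q)_{|\bm m^{(x)}|}$ products are consistently accounted for on passing from $\mu$ and $f_\mu^\stoch$ to $\mu^{\ge1}$ and $f_{\mu^{\ge1}}^\stoch$; this is already implicitly handled inside Theorem \ref{thm:f-integral}, so it suffices to quote that statement. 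The proof is therefore short.
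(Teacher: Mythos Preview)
Your approach is correct and matches the paper's own proof: the theorem is indeed obtained by substituting Theorem~\ref{thm:f-integral} into Theorem~\ref{thm:obs-avg}, and your prefactor bookkeeping is accurate.

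One step you omit, which the paper includes: Theorem~\ref{thm:obs-avg} (via Definition~\ref{def:measure}) is stated under the convergence hypothesis \eqref{eq:weight-condition3} on the $x_i$'s, whereas Theorem~\ref{thm:main} carries no such restriction. The paper closes this gap by observing that $\O_\mu(\nu)$ depends only on the state of the model in columns $\le\max_i\mu_i$, so that $\E\,\O_\mu$ is a finite sum of $L^\stoch$-weight products and hence a rational function of the $x_i$'s; the right-hand side of \eqref{eq:main} is likewise rational, so equality for $x_i$'s in the open region \eqref{eq:weight-condition3} extends by analytic continuation to all parameter values. You should add this sentence; without it your argument literally proves only the restricted version.

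A minor remark on your bookkeeping commentary: there is no ``$x=0$ contribution'' to $\sum_{x}\sum_{i>j}m_i^{(x)}m_j^{(x)}$ to absorb --- the exponent in Theorem~\ref{thm:obs-avg} is already summed over $x\ge1$ only. So that part of your caveat is unnecessary, though harmless.
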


\begin{proof} If the parameters satisfy the inequalities 
\eqref{eq:weight-condition3}, then the statement is a substitution of 
Theorem \ref{thm:f-integral} into Theorem \ref{thm:obs-avg}. Observe that 
$\O_{\mu}$ is independent of the state of the model to the right of the 
maximal coordinate of $\mu$. Thus, both sides of \eqref{eq:main} are actually
rational functions in $x_i$'s, and the extra assumption 
\eqref{eq:weight-condition3} can thus be removed by analytic continuation. 
\end{proof}	

\begin{rmk}
The factor $\prod_{j\ge 1}(s^2;q)^{-1}_{|\bm{m}^{(j)}|}$ in the right-hand side of \eqref{eq:main} does not make the expression singular in the finite spin situation $s^2=q^{-J}$, $J=1,2,\dots$, because the same factor appears in the numerator when one writes $f_{\mu^{\ge 1}}^\stoch=(-s)^{|\mu|}f_{\mu^{\ge 1}}$ explicitly by taking the factorization \eqref{eq:f-delta}, where this factor is manifest, and acting on it by difference operators \eqref{eq:exchange}-\eqref{eq:DL-operators} and colour merging \eqref{eq:f-colour-merge} as needed. 
\end{rmk}

\subsection{The colour-blind case}\label{ssec:main-colour-blind}
Let us see how Theorem \ref{thm:main} works in the colour-blind situation. 
The observables then simplify to \eqref{eq:obs-colour-blind}, and we obtain 

\begin{cor}\label{cor:main-colour-blind} 
In the colour-blind case $\lambda=(n)$, with ordered coordinates
$\theta_1\ge\dots\ge\theta_m\ge 1$ of $\mu^{\ge 1}$ and no column 
inhomogeneities, we have
\begin{multline}
\label{eq:main-colour-blind}
\E \left[(1-q^{H^\nu(\theta_1+1)})(1-q^{H^\nu(\theta_2+1)-1})\cdots 
(1-q^{H^\nu(\theta_m+1)-m+1})\right]\\
=\frac{(-1)^m (-s)^{|\theta|}}{(2\pi\sqrt{-1})^m}\oint\cdots\oint_{\text{around 
}\{x_j^{-1}\}}
\prod_{1\le i<j\le m} \frac{y_i-y_j}{y_i-qy_j}\prod_{p=1}^m 
\left(\frac{1-sy_p}{y_p-s}\right)^{\theta_i}\prod_{a=1}^n 
\frac{1-qx_ay_p}{1-x_ay_p}\,\frac{dy_p}{y_p}\,,
\end{multline}
where (positively oriented) integration contours are chosen to encircle all 
points $\{x_j^{-1}\}_{j=1}^n$ and no other singularities of the 
integrand, and $H^\nu(x)=\#\{i\in\{1,\dots,n\}:\nu_i\ge x\}$. 
\end{cor}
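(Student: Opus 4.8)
The plan is to obtain Corollary~\ref{cor:main-colour-blind} as a direct specialization of Theorem~\ref{thm:main} to the colour-blind sector $\lambda=(n)$, so that $\ell(\lambda)=1$, $\ell_0=0$, $\ell_1=n$, and there is exactly one colour, $c_1=1$, carrying all $m$ nonzero parts of $\mu^{\ge 1}$; hence $\alpha=1$ and $\fm_1=m$. First I would write out the middle line of \eqref{eq:main} in this case: the product over $k$ collapses to a single factor $\sum_{j=0}^{m}\frac{(-1)^j q^{\binom{m-j}{2}}}{(q;q)_j(q;q)_{m-j}}\prod_{p=1}^{j}\prod_{a>\ell_0}^n\frac{1-qx_ay_p}{1-x_ay_p}\prod_{r=j+1}^{m}\prod_{b>\ell_1}^n\frac{1-qx_by_r}{1-x_by_r}$; since $\ell_1=n$, the second double product is empty and equals $1$, so the whole middle line becomes $\Bigl(\prod_{p=1}^m\prod_{a=1}^n\frac{1-qx_ay_p}{1-x_ay_p}\Bigr)\cdot\sum_{j=0}^{m}\frac{(-1)^j q^{\binom{m-j}{2}}}{(q;q)_j(q;q)_{m-j}}$ after pulling the (now symmetric, $j$-independent) product of $\frac{1-qx_ay_p}{1-x_ay_p}$ out of the $j$-sum. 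The remaining scalar sum $\sum_{j=0}^m \frac{(-1)^j q^{\binom{m-j}{2}}}{(q;q)_j(q;q)_{m-j}}$ is, by the $q$-binomial theorem at $a=q^{-m}$ (exactly the identity displayed inside the proof of Lemma~\ref{lem}, evaluated at the point where it equals $\binom{0}{m}_q$), equal to $\bm 1_{m=0}$ — wait, more carefully: it equals $\frac1{(q;q)_m}\sum_j (-1)^j q^{\binom{m-j}{2}}\binom{m}{j}_q = \frac1{(q;q)_m}\prod_{i=1}^{m}(1-q^{\,?})$; I would instead recognize it directly as $(q;q)_m^{-1}$ times the expansion of $\prod(1-q^{i-1}\cdot 1)$ which telescopes, and the cleanest route is to note it is the coefficient extraction giving simply $1$ when combined with the $(q;q)$ normalization already sitting in front — in any event this is a routine $q$-series simplification and is the one computational point to nail down.

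Next I would handle the $f$-factor. With $\mu^{\ge 1}$ colour-blind of length $m$, Definition~\ref{def:mu>1} and the colour-blindness relation \eqref{eq:symm} (first identity) give $f_{\mu^{\ge 1}}(\lambda^{\ge 1};\cdot)=\F^{\sf c}_{\theta}(\cdot)$ where $\theta=(\theta_1\ge\dots\ge\theta_m)$ is the reordering of $\mu^{\ge 1}$ — but actually in the colour-blind sector $f_{\mu^{\ge 1}}$ with all parts a single colour is itself already symmetric and equals the anti-dominant factorized expression \eqref{eq:f-delta} reindexed, so $f^{\stoch}_{\mu^{\ge 1}}(y_1^{-1},\dots,y_m^{-1})=(-s)^{|\theta|}\prod_{j\ge 0}(s^2;q)_{m_j(\theta)}\cdot\prod_{p=1}^m\frac{1}{1-sy_p^{-1}}\bigl(\frac{y_p^{-1}-s}{1-sy_p^{-1}}\bigr)^{\theta_p}$, which after clearing $y_p$'s rewrites as a constant times $\prod_p y_p^{-1}\bigl(\frac{1-sy_p}{y_p-s}\bigr)^{\theta_p}$ up to sign and powers of $s$. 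I would combine this with the leftover measure factor $\prod_i \frac{(y_i-s)\,dy_i}{y_i^2}$ and with the prefactor $\prod_{j\ge1}(s^2;q)_{|\bm m^{(j)}|}^{-1}=\prod_{j\ge1}(s^2;q)_{m_j(\theta)}^{-1}$ in front of \eqref{eq:main}; the $(s^2;q)_{m_j(\theta)}$ factors cancel, the $q$-exponent $q^{\sum_{u}\sum_{i>j}m_i^{(u)}m_j^{(u)}}$ is $1$ because there is only one colour, and the surviving $(y_i-s)$ against the $f$-denominator $(y_i-s)^{-\theta_i}$ and the $y_i^{-2}$ produce exactly $\bigl(\frac{1-sy_p}{y_p-s}\bigr)^{\theta_p}\frac{dy_p}{y_p}$ as in \eqref{eq:main-colour-blind}. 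Finally the left-hand side: \eqref{eq:obs-colour-blind} gives $\O_\mu^{\text{colour-blind}}(\nu)=\frac{\prod_{i=1}^m(1-q^{H^\nu(\theta_i+1)-i+1})}{\prod_{j\ge1}(q;q)_{\mathrm{mult}_j(\theta)}}$, and multiplying through by the $(q;q)$-product (which matches the same product produced by the scalar $q$-sum above) clears all $(q;q)$ factors and yields the stated expectation of $\prod_i(1-q^{H^\nu(\theta_i+1)-i+1})$.

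So the proof is essentially: (i) specialize $\alpha=1$, $\ell_1=n$ in \eqref{eq:main}; (ii) extract the symmetric product $\prod_{p,a}\frac{1-qx_ay_p}{1-x_ay_p}$ from the $j$-sum and evaluate the residual scalar $q$-series; (iii) identify $f^{\stoch}_{\mu^{\ge1}}$ with the factorized colour-blind/anti-dominant expression and cancel it against the measure and prefactor; (iv) match the $(q;q)$- and $(s^2;q)$-bookkeeping on both sides and clean up signs and powers of $s$ (producing the $(-1)^m(-s)^{|\theta|}$ and the $y_i\leftrightarrow y_j$ relabelling in $\prod\frac{y_i-y_j}{y_i-qy_j}$, which is symmetric so the relabelling is cosmetic). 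I expect the only genuinely fiddly point — hence the ``main obstacle'' — to be (ii)–(iv) done \emph{simultaneously} and correctly: tracking the precise power of $s$, the sign $(-1)^m$ (coming from the orientation/residue bookkeeping already discussed after \eqref{eq:desired-integral} and from $y_p^{-1}-s = -(s y_p-1)/y_p$), and verifying that the scalar $q$-series $\sum_{j}\frac{(-1)^jq^{\binom{m-j}{2}}}{(q;q)_j(q;q)_{m-j}}$ combines with $\prod_j(q;q)_{\mathrm{mult}_j(\theta)}^{-1}$ on the right and $\prod_j(q;q)_{\mathrm{mult}_j(\theta)}$ on the left to leave exactly the claimed constants; none of this is deep, but it is where an error would hide, so I would write those cancellations out in full.
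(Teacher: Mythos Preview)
There is a genuine error in your step (ii). After specializing $\alpha=1$, $\ell_0=0$, $\ell_1=n$, the middle line of \eqref{eq:main} reads
\[
\sum_{j=0}^{m}\frac{(-1)^j q^{\binom{m-j}{2}}}{(q;q)_j(q;q)_{m-j}}
\prod_{p=1}^{j}\prod_{a=1}^{n}\frac{1-qx_ay_p}{1-x_ay_p}\,,
\]
where the $p$-product runs only up to $j$, not up to $m$. It is therefore \emph{not} $j$-independent and cannot be pulled out of the sum. Worse, if your manipulation were valid, the residual scalar sum would be
\(
\sum_{j=0}^{m}\frac{(-1)^j q^{\binom{m-j}{2}}}{(q;q)_j(q;q)_{m-j}}
=\frac{(-1)^m}{(q;q)_m}\sum_{j'=0}^{m}(-1)^{j'}q^{\binom{j'}{2}}\binom{m}{j'}_q
=\frac{(-1)^m}{(q;q)_m}\,(1;q)_m=0
\)
for $m\ge 1$, so your right-hand side would vanish identically. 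The correct mechanism (used in the paper) is different: for any $j<m$ the variable $y_m$ appears in the integrand only through $\prod_{i<m}\frac{y_m-y_i}{y_m-qy_i}$, $f_{\mu^{\ge 1}}(\dots,y_m^{-1})$, and $(y_m-s)y_m^{-2}$, none of which has poles at $\{x_a^{-1}\}$; hence the $y_m$-integral around $\{x_a^{-1}\}$ vanishes and only the term $j=m$ survives.

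Your step (iii) also fails. In the colour-blind sector, $f_{\mu^{\ge1}}(\lambda^{\ge1};\cdot)$ is \emph{not} the factorized anti-dominant expression \eqref{eq:f-delta}: by \eqref{eq:symm} it equals $\F^{\sf c}_{\theta}$, which is a genuine sum over $\mathfrak S_m$ (see the formula quoted from \cite[Theorem~4.12]{BorodinP2} in the paper's proof). Relatedly, $\prod_{i<j}\frac{y_j-y_i}{y_j-qy_i}$ is not symmetric in the $y_i$'s, so the ``cosmetic relabelling'' you mention is not cosmetic. The paper handles both issues at once: it writes $\prod_{i<j}\frac{y_j-y_i}{y_j-qy_i}=\prod_{i\ne j}\frac{y_j-y_i}{y_j-qy_i}\cdot\prod_{i>j}\frac{y_j-qy_i}{y_j-y_i}$, symmetrizes the second factor over $\mathfrak S_m$ (producing $(q;q)_m/(1-q)^m$ and a factor $m!$), then inserts the explicit $\mathfrak S_m$-sum for $\F^{\sf c}_\theta$ and de-symmetrizes, after which everything collapses to the factorized integrand in \eqref{eq:main-colour-blind}. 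Your outline would need to replace steps (ii)--(iii) by exactly this pair of observations.
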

\begin{rmk}
\label{rem:main-colour-blind}
Eq. \eqref{eq:main-colour-blind} is readily seen to coincide with crucial 
\cite[Lemma 9.10]{BorodinP2}, which is essentially equivalent to the integral 
representation of the most general multi-point moments for the colourless 
stochastic vertex model along a single line. 

It is not hard to extend this
result, with very similar proof as the one given below, to the column 
inhomogeneous case under the condition that $(s_x,\xi_x)=(s,1)$ for any $x\ge 1$ 
such that no parts of $\mu$ are equal to $x$. However, \cite[Lemma 
9.11]{BorodinP1} gives such an extension to the fully column inhomogeneous 
situation. It remains unclear how to achieve this level of generality in the 
coloured case.
\end{rmk}
\begin{rmk}\label{rem:contours}
While the freedom in choosing the contours as $q$-nested is still there (it can 
be checked directly as in the proof of Lemma \ref{lem} or carried over from 
Theorem \ref{thm:main}), the contours cannot include 
the point 0 anymore, and its inclusion (together with $q$-nestedness) would 
actually change the $q$-shifted moments in the left-hand side into unshifted 
moments of \cite[Theorem 9.8]{BorodinP2}; see \cite[Section 9]{BorodinP2} 
for a detailed explanation of that transition.  
\end{rmk}

\begin{proof}[Proof of Corollary \ref{cor:main-colour-blind}] In the 
colour-blind case $\lambda=(n)$, using \eqref{eq:symm} and 
\eqref{eq:obs-colour-blind}, we write \eqref{eq:main} as 
\begin{multline}
\E \left[\frac{(1-q^{H^\nu(\theta_1+1)})(1-q^{H^\nu(\theta_2+1)-1})\cdots 
(1-q^{H^\nu(\theta_m+1)-m+1})}{\prod_{j\ge 
1}(q;q)_{\text{mult}_j(\theta)}}\right]= \frac{1}{\prod_{j\ge 
1} (s^2;q)_{\text{mult}_j(\theta)}}\\ \times
\frac{1}{(2\pi\sqrt{-1})^m}\oint\cdots\oint_{\text{around }\{x_j^{-1}\}}
\prod_{1\le i<j\le m} \frac{y_j-y_i}{y_j-qy_i}
\sum_{j=0}^{m} \left(\frac{(-1)^j q^{\binom{m-j}{2}}}
{(q;q)_j(q;q)_{m-j}}
\prod_{p=1}^j\prod_{a=1}^n\frac{1-qx_ay_p}{1-x_ay_p}
\right)
\\
\times  \F^{\sf c}_{\theta}(y_1^{-1},\dots,y_m^{-1})
\prod_{i=1}^m 
\frac{(y_i-s)dy_i}{y_i^2}\,,
\end{multline}
with contours around $\{x_j^{-1}\}$ and no other singularities. Observe that 
if the summation index $j$ in the integrand above takes any value $j<m$, then 
the integrand, viewed as a function in $y_m$, has no singularities at $\{x_*^{-1}\}$, and the integral 
vanishes. Hence, we can set $j=m$. \footnote{It is at this moment that we loose
the freedom to have 0 inside the contours, as the terms that we remove may have 
nontrivial residues at 0, cf. Remark \ref{rem:contours} above.} 

Further we write
$$
\prod_{i<j} \frac{y_j-y_i}{y_j-qy_i}=\prod_{ i\ne j} 
\frac{y_j-y_i}{y_j-qy_i}\cdot \prod_{i>j} \frac{y_j-qy_i}{y_j-y_i}\,,
$$
note that the first factor in the right-hand side is symmetric in $y_i$'s, and 
the same is true about all other parts of the integrand. Hence, we can 
sum over the second factor over all permutations of the $y_i$'s, which yields $(q;q)_m/(1-q)^m$ in the integrand and multiplies the value 
of the integral by $m!$. 

Finally, from \cite[Theorem 4.12]{BorodinP2} we read
$$
\F^{\sf c}_{\theta}(y_1^{-1},\dots,y_m^{-1})=\frac{\prod_{j\ge 
1} (s^2;q)_{\text{mult}_j(\theta)}}{\prod_{j\ge 
1}(q;q)_{\text{mult}_j(\theta)}}\,\frac{(1-q)^m}{\prod_{i=1}^m (1-sy_i^{-1})}
\sum_{\sigma\in\mathfrak{S}_m} 
\sigma\left(\prod_{i<j}\frac{y_i^{-1}-qy_j^{-1}}{y_i^{-1}-y_j^{-1}}
\prod_{i=1}^m \left(\frac{y_i^{-1}-s}{1-sy_i^{-1}}\right)^{\theta_i}\right),
$$
where the sum is over all permutations $\sigma$ in the symmetric group 
$\mathfrak{S}_m$ on $m$ symbols, and $\sigma$'s permute the variables 
$y_1,\dots,y_m$ in the expression that they are applied to.
Since the rest of the integrand is symmetric in the $y_i$'s, we can remove
the sum over permutation leaving only the term with $\sigma=\text{id}$, and 
divide the integral by $m!$.

Implementing the above transformation and canceling common factors yields 
\eqref{eq:main-colour-blind}.
\end{proof}

\subsection{Duality}\label{ssec:duality} One convenient feature of the integral representation \eqref{eq:main} for $\E\O_\mu$ is that the dependence on values of coordinates of $\mu$ is concentrated in the factor $f_{\mu^{\ge 1}}(\lambda^{\ge 1};y_1^{-1},\dots,y_m^{-1})$ of the integrand. This allows to easily derive certain difference equations that $\E\O_\mu$ must satisfy. Let us see how this works. 

The skew-Cauchy identity \eqref{eq:fG-skew} was stated for the rainbow case but 
is readily extended to the colour-merged case with the help of Proposition 
\ref{prop:fG-merge} (in fact, this colour merging argument was already used in 
the proof of Proposition \ref{prop:fG-merged} for $G_*$'s replaced by their 
limits $\cG_*$'s). In particular, it implies
\begin{equation*}
\sum_{\mu^{\ge 1}\text{ is $\lambda^{\ge 1}$ coloured}} f_{\mu^{\ge 1}}(\lambda^{\ge 1};y_1^{-1},\dots,y_m^{-1})G_{\mu^{\ge 1}/\varkappa}((qX)^{-1})=
\prod_{i=1}^m \frac{1-y_iX}{1-qy_{i}X}\cdot 
 f_{\varkappa}(\lambda^{\ge 1};y_1^{-1},\dots,y_m^{-1}),
\end{equation*}
where $\varkappa$ is an arbitrary $\lambda^{\ge 1}$-coloured composition, $X$ is a complex parameter.
Multiplying both sides by $(-s)^{|\varkappa|}=(-s)^{|\varkappa|-|\mu|}(-s)^{|\mu|}$, we can also replace $f$ by $f^\stoch$ and $G$ by $G^\stoch$ in this identity, where $G^\stoch_{\mu^{\ge 1}/\varkappa}=(-s)^{|\varkappa|-|\mu|}G_{\mu^{\ge 1}/\varkappa}$ is a stochastic kernel with matrix elements built from stochastic $M^\stoch$-weights \eqref{eq:M-stoch}. 

Observe that the prefactor is of the form of the exact inverse of the factors in the integrand of \eqref{eq:main} that depend on $x_i$'s. Hence, if we compute the expectation $\E_{n+1}\O_\mu$ along the $(n+1)$st row of the stochastic vertex model, with left entering colour $(n+1)$ and rapidity $X$ used in that row, and subsequently sum against the stochastic kernel $G^\stoch_{\mu^{\ge 1}/\varkappa}((qX)^{-1})$, the result will coincide\footnote{subject to certain convergence conditions that we are ignoring here} with the expectation of $\E_n\O_\mu$ computed along the $n$th row:
\begin{equation}\label{eq:duality}
\sum_{\mu^{\ge 1}}\E_{n+1}\O_\mu \cdot  G^\stoch_{\mu^{\ge 1}/\varkappa}((qX)^{-1}) = \E_n\O_\varkappa.
\end{equation}

As the transition from row $n$ to row $n+1$ is also realized by a stochastic kernel, we observe a \emph{duality} of the action of two stochastic operators on the \emph{duality functional} $\O_\mu$. 

Of course, the above arguments only verify the duality relation \eqref{eq:duality} for a specific class of distributions on row $n$. However, this class is sufficiently general ($n\ge m$ and parameters $x_1,\dots,x_n$ are arbitrary), and it is quite plausible that the duality relation will hold for generic distributions on coloured compositions $\mu^{\ge 1}$ on the $n$th row, and also for the $n$th row being the whole lattice $\mathbb{Z}$, rather than $\mathbb{Z}_{\ge 1}$ with a path entering from the left. 

It would be very interesting to see an independent verification of \eqref{eq:duality}, possibly by a reduction to duality functional constructed in \cite{Kuan}. Given that spectral decomposition of stochastic kernels $G^\stoch$ are known, cf. \cite[Section 9.5]{BW}, this  would likely lead to an alternative proof of Theorem \eqref{eq:main}, apart from other possible applications.

For a colour-blind version of the above discussion see \cite[Section 8.5]{BorodinP2}, \cite[Section 8.5]{BorodinP1}, and references therein. 

\subsection{The rainbow case}\label{ssec:main-rainbow} Let us now focus on the rainbow sector, with the colouring composition $\lambda$ being $(1,1,\dots, 1)$. The simplification of the observables $\O_\mu$ in this case was given in \eqref{eq:obs-rainbow}, and this leads us to 

\begin{cor}
\label{cor:main-rainbow}
In the notations of Section \ref{ssec:main}, assume that $\lambda=(1,1,\dots,1)$. Then 
\begin{multline}
\label{eq:main-rainbow}
\E \displaystyle\prod_{\substack{i,j\ge 1\\ m_i^{(j)}=1}}  \frac{q^{H_{\ge i}^{\nu/\mu}(j+1)}-q^{H_{> i}^{\nu/\mu}(j+1)}}{q-1}
 = \frac{q^{\sum_{j\ge 1} \binom{|\bm{m}^{(j)}|}{2}}}{\prod_{j\ge 1} (s^2;q)_{|\bm{m}^{(j)}|}}
 \frac{(-1)^m}{(2\pi\sqrt{-1})^m}\oint\cdots\oint_{\text{incl.}\{x_j^{-1}\}}
\prod_{1\le i<j\le m} \frac{y_j-y_i}{y_j-qy_i} \\
\times f^\stoch_{\mu^{\ge 1}}(y_1^{-1},\dots,y_m^{-1})\prod_{j=1}^m 
\left(\frac{y_j-s}{y_j} \frac{x_{c_j}}{1-x_{c_j}y_j}\prod_{i>c_j}^n 
\frac{1-qx_iy_j}{1-x_iy_j}\, dy_j\right),
\end{multline}
where (positively oriented) integration contours are chosen to encircle all 
points $\{x_j^{-1}\}_{j=1}^n$ and no other singularities of the 
integrand, or as \emph{$q$-nested} closed simple curves with $y_{i}$-contour containing $q^{-1}\cdot (y_j\text{-contour})$ for all $i<j$, and all of the contours encircling $\{x_j^{-1}\}_{j=1}^n$.

The formula also holds in the column inhomogeneous setting under the assumption that $(s_x,\xi_x)=(s,1)$ for any $x\ge 1$ such that $|\bm{m}^{(x)}|>0$.
\end{cor}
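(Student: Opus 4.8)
The plan is to derive Corollary~\ref{cor:main-rainbow} as the specialization of Theorem~\ref{thm:main} to the rainbow colouring $\lambda=(1,1,\dots,1)$; no fresh input is required beyond tracking how each ingredient of \eqref{eq:main} degenerates in the rainbow sector.

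First I would record the combinatorial simplifications: the partial sums become $\ell_k=k$, each nonzero part of $\mu^{\ge 1}$ carries a distinct colour so $\alpha=m$ and $\fm_1=\cdots=\fm_\alpha=1$, and hence $\fm[1,k]=k$, $\fm[1,k-1]=k-1$, $\ell_{c_k}=c_k$, $\ell_{c_k-1}=c_k-1$. With $\fm_k=1$ the inner sum over $j$ in the middle line of \eqref{eq:main} collapses to the two terms $j=0,1$; carrying out this evaluation — exactly the one recorded in the remark following Theorem~\ref{thm:f-integral} — gives for the $k$th factor
\[
\frac{1}{1-q}\Bigl(\prod_{b>c_k}^{n}\frac{1-qx_by_k}{1-x_by_k}-\prod_{a\ge c_k}^{n}\frac{1-qx_ay_k}{1-x_ay_k}\Bigr)=-\,\frac{x_{c_k}y_k}{1-x_{c_k}y_k}\prod_{b>c_k}^{n}\frac{1-qx_by_k}{1-x_by_k},
\]
so that the product over $k=1,\dots,m$ contributes an overall $(-1)^m$ together with $\prod_k\bigl(\tfrac{x_{c_k}y_k}{1-x_{c_k}y_k}\prod_{b>c_k}^n\tfrac{1-qx_by_k}{1-x_by_k}\bigr)$. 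Absorbing $\tfrac{x_{c_k}y_k}{1-x_{c_k}y_k}$ into the factor $\tfrac{(y_k-s)\,dy_k}{y_k^2}$ of \eqref{eq:main} produces $\tfrac{y_k-s}{y_k}\cdot\tfrac{x_{c_k}}{1-x_{c_k}y_k}\,dy_k$, which is precisely the product over $j$ in the integrand of \eqref{eq:main-rainbow}.

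It then remains to match the prefactor and the left-hand side. Since each $m_i^{(u)}\in\{0,1\}$ in the rainbow sector, $\sum_{i>j}m_i^{(u)}m_j^{(u)}=\binom{|\bm{m}^{(u)}|}{2}$, so $q^{\sum_{u\ge1}\sum_{i>j}m_i^{(u)}m_j^{(u)}}/\prod_{j\ge1}(s^2;q)_{|\bm{m}^{(j)}|}$ becomes $q^{\sum_{u\ge1}\binom{|\bm{m}^{(u)}|}{2}}/\prod_{j\ge1}(s^2;q)_{|\bm{m}^{(j)}|}$. On the left-hand side, the observable $\O_\mu$ reduces to $\O_\mu^{\text{rainbow}}$ of \eqref{eq:obs-rainbow}, and one checks this equals $\prod_{\substack{i,j\ge1\\ m_i^{(j)}=1}}\tfrac{q^{H_{\ge i}^{\nu/\mu}(j+1)}-q^{H_{>i}^{\nu/\mu}(j+1)}}{q-1}$: using $H_{\ge i}=H_i+H_{>i}$ the $(i,j)$th factor equals $q^{H_{>i}^{\nu/\mu}(j+1)}\cdot\tfrac{q^{H_i^{\nu/\mu}(j+1)}-1}{q-1}$, and whenever $m_i^{(j)}=1$ the unique part of $\mu$ of colour $i$ is $j$, so $H_i^\mu(j+1)=0$ and $H_i^{\nu/\mu}(j+1)=H_i^\nu(j+1)\in\{0,1\}$, whence $\tfrac{q^{H_i^{\nu/\mu}(j+1)}-1}{q-1}=\bm{1}_{H_i^{\nu/\mu}(j+1)=1}$.

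Finally, all parts of $\mu^{\ge1}$ carry distinct colours, so by Remark~\ref{rem:recolouring} one may write $f^{\stoch}_{\mu^{\ge1}}(\lambda^{\ge1};y_1^{-1},\dots,y_m^{-1})$ simply as $f^{\stoch}_{\mu^{\ge1}}(y_1^{-1},\dots,y_m^{-1})$; the contour prescriptions and the validity of the column-inhomogeneous version (under $(s_x,\xi_x)=(s,1)$ whenever $|\bm{m}^{(x)}|>0$) are inherited verbatim from Theorem~\ref{thm:main}. Collecting these identifications transforms \eqref{eq:main} into \eqref{eq:main-rainbow}. The argument is thus essentially bookkeeping; the only points needing a little care are the sign accounting in the first step — the per-colour minus sign accumulating to the global $(-1)^m$ — and the small height-function observation in the third step that legitimizes replacing the $q$-binomial factor by an indicator.
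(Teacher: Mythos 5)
Your proposal is correct and follows exactly the first of the two routes the paper's own (one-line) proof cites, namely direct substitution of $\lambda=(1,1,\dots,1)$ into Theorem \ref{thm:main}, with the collapse of the $j$-sum being the computation already recorded in the remark following Theorem \ref{thm:f-integral}. The bookkeeping you supply (the per-colour sign giving $(-1)^m$, the identity $\sum_{i>j}m_i^{(u)}m_j^{(u)}=\binom{|\bm{m}^{(u)}|}{2}$, and the observation that $H_i^{\nu/\mu}(j+1)\in\{0,1\}$ when $m_i^{(j)}=1$, which turns the $q$-binomial into an indicator) is accurate and consistent with the paper.
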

\begin{proof} This can be obtained by either a direct substitution $\lambda=(1,\dots,1)$ into Theorem \ref{thm:main}, or a substitution of Proposition \ref{prop:integral-rainbow} into the rainbow case of Theorem \ref{thm:obs-avg}. 
\end{proof}

\begin{rmk}\label{rem:factorization} Denote the parts of $\mu^{\ge 1}$ by $(\theta_1,\dots,\theta_m)$, where $\theta_1$ carries the smallest colour $c_1$ of those represented in $\mu^{\ge 1}$, $\theta_2$ carries the next smallest colour $c_2$, \emph{etc.} Then in the anti-dominant case, when 
$\theta_1\le\theta_2\le\dots\le\theta_m$, $f_\mu^\stoch(y_1^{-1},\dots,y_m^{-1})$ in the integrand of \eqref{eq:main-rainbow} completely factorizes, cf. \eqref{eq:f-delta}, \eqref{eq:f-delta-inhom}, and Remark \ref{rem:recolouring} (we write the expression for the column homogeneous case below):
\begin{equation}
\label{eq:f-theta}
f_{\mu^{\ge 1}}^\stoch(y_1^{-1},\dots,y_m^{-1})
=
{\prod_{j\ge 1} (s^2;q)_{|\bm{m}^{(j)}|}}\prod_{i=1}^{n} \frac{y_i}{y_i-s}
\prod_{i=1}^{n} \left( \frac{1-s y_i}{y_i-s} \right)^{\theta_i},
\end{equation}
leading to a completely factorized integrand in \eqref{eq:main-rainbow}. In this case there is also another path to \eqref{eq:main-rainbow}. Namely, using the shift invariance property established in \cite{BGW}, one can rewrite the left-hand side of \eqref{eq:main-rainbow} in terms of an average for a combination of $q$-moments of the height function for a colour-blind vertex on a quadrant taken at points along a down-right path in the quadrant. In their turn, such moments possess explicit integral representations, see \cite{BorodinBW} for details. See also Remark \ref{rem:shift-invariance} below for a related observation. 
\end{rmk}

\begin{rmk}\label{rem:colour-position} 
The identity \eqref{eq:main-rainbow} admits two colour-position symmetries, one for each side. 

For the left-hand side, \cite[Theorem 7.3]{BorodinBufetov} shows that the joint distributions of the coloured height functions of the (rainbow) coloured stochastic vertex model along boundaries of a certain class of down-right domains are symmetric with respect to rotations of the domains by 180 degrees that also swap the roles of colour and position of the entering/exiting paths. In our case, the domain is the rectangle, and the application of this symmetry will swap colours and positions in the observables, as well as the roles of rows and columns. This will give integral formulas for averages of the new observables, as well as indicate that those observables are also likely to be duality functionals for the same reasons as those in Section \ref{ssec:duality}.

For the right-hand side, if we represent $f_\mu^\stoch$ as a result of a 
sequence of applications of the difference operators $T_i$ given by 
\eqref{eq:exchange}-\eqref{eq:DL-operators} to a factorized expression of the 
type \eqref{eq:f-theta}, then one can use self-adjointness of the $T_i$'s 
with respect to the integral scalar product with weight $\prod_{i<j} 
(y_j-y_i)/(y_j-qy_j)$, cf. \cite[Proposition 8.1.3]{BW}, to move the application 
of the $T_i$'s to the fully factorized part of the integrand. Treating that 
part as an analog of \eqref{eq:f-delta}, or rather of \eqref{eq:f-delta-inhom}, 
at the specific value of $s=q^{-1/2}$, we will see a new $f$-like
function appearing in the integrand, that will utilize suitably permuted colours $c_1,\dots,c_m$ for its coordinates, and horizontal rapidities $x_1,\dots,x_n$ as its inhomogeneities. Thus, we will see a similar formula with positions and colours, as well as rows and columns, swapped.   

It would be interesting to see if applying both symmetries, at least in the case $s=q^{-1/2}$, returns one to the original formula, but we will not pursue that here.   
\end{rmk}

\subsection{Fusion}\label{ssec:fusion} The goal of this section is to fuse rows of the stochastic vertex model from Section \ref{ssec:main} and to see what Theorem \ref{thm:main} turns into in that situation. 

First, one can fuse finitely many rows, which in terms of individual vertices corresponds to the outer sum in \eqref{eq:fusion} (one can think of the inner sum in that relation as already performed, with our spin parameter $s$ being $q^{-\m/2}$). This starts by replacing a single row of colour $c$ with rapidity $x$ by $\l\ge 1$ rows of the same colour $c$ and rapidities forming a finite geometric progression $x,\dots,q^{\l-1}x$. Since our left boundary condition in these $\l$ rows consists of all incoming edges occupied by paths of the same colour, no summation along that boundary is necessary.
Hence, the $\l$ rows of vertices with $L^\stoch$-weights can be collapsed into a single row with the weight of a vertex in that row and column $j$ being $W_{\l,\m}(s_j^{-1}\xi_j x;q; *)|_{q^{-\m}=s_j^2}$, where $(s_j,\xi_j)$ are the column inhomogeneities as in Section \ref{ssec:inhom}, and the appearance of $s_j^{-1}$ in front of $\xi_jx_j$ is due to the argument $x/s$ in the expression \eqref{eq:L-stoch} of $L^\stoch$-weights in terms of $W$-weights. 

The right-hand side of \eqref{eq:main} also behaves well 
with respect to such fusion. More exactly, it leads to a simple replacement of 
all factors of the form $(1-qxy_k)/(1-xy_k)$, for various $k$ and $x=x_*$ being 
the rapidity of the fused row, by
\begin{equation}
\label{eq:chain}
\frac{1-qxy_k}{1-xy_k}\cdot \frac{1-q^2xy_k}{1-qxy_k}\cdots 
\frac{1-q^\l xy_k}{1-q^{\l-1}xy_k}=
\frac{1-q^\l xy_k}{1-xy_k}\,.
\end{equation}
Thus, the right-hand side can be immediately analytically continued in $q^\l$. 
It is not clear, however, what that would mean on the side of the stochastic 
vertex model as the 
left boundary condition in the fused row consists of $\l$ paths. 

To remedy this situation, we will perform a special limit transition with 
vertical inhomogeneity in column 1; this is parallel to what was done in 
\cite[Section 6]{BGW}. More exactly, we will rely on the limiting relation 
\eqref{eq:weight-limit}. According to the left-hand side of that relation, we 
will take $\xi_1=\zeta/s_1$ to turn the weight in the first column of the 
fused row into
$W_{\l,\m}(s_1^{-2} \zeta x;q; *)|_{q^{-\m}=s_1^2}$, and then take the limit $s_1\to 
0$. Here $z$ is an additional parameter that remains finite in the limit 
$s_1\to 0$; it regulates the strength of the left boundary. As we are about to 
make the rest of the model homogeneous, let us also set $\zeta$ to $s$, as this 
value will make the first column `blend in' with the other ones.  Thus, our 
limit results, by virtue of \eqref{eq:weight-limit}, a random number of 
paths of colour $c$ passing horizontally from column 1 to column 2 in the fused 
row, with the distribution of this random number given by 
\begin{equation*}
\text{Prob}\{k\}=
\dfrac{(s q^\l x;q)_\infty}{(s x ;q)_\infty}\dfrac{(q^{-\l};q)_k}{(q;q)_k}\,(s q^\l x)^k,\qquad k\ge 0.
\end{equation*}
Note that for $\l\in\mathbb{Z}_{\ge 1}$, this distribution is supported by $\{0,1,\dots,\l\}$,
as it should be, as an $\l$-fused row cannot carry more than $\l$ paths. But this distribution is also suitable for analytic continuation in $q^\l$, which we will use momentarily.  

Let us now discuss vertices in the fused row and other columns. Their weights $W_{\l,\m}(s_j^{-1}\xi_j x;q; *)$ are given by the right-hand side of \eqref{eq:W-weights}, which is explicit but rather complicated. We will consider a simpler situation instead, governed by the $q$-Hahn specialization of Section \ref{ssec:q-Hahn}. Hence, we will specialize $(x,s_j,\xi_j)\mapsto (s,s,1)$ to turn the weights into more tangible expressions as in the right-hand side of \eqref{eq:q-Hahn}. 

Finally, rather than performing fusion for a particular row of colour $c$ as we have done above (replacing it by $\l$ columns first and then fusing them together), let us do it for every row of the stochastic vertex model of Section \ref{ssec:main}. 

Gathering all the pieces together, we obtain a stochastic vertex model in the quadrant $\mathbb{Z}_{\ge 2}\times \mathbb{Z}_{\ge 1}$, depending on three parameters $q$, $s(=q^{-\m/2})$, and $z(=q^{-\l/2})$ satisfying $|q|,|s|,|z|,|s/z|<1$, defined as follows.

\begin{itemize}
	
	\item Along the boundary of the quadrant, no paths enter the quadrant 
through its bottom boundary; and along the left boundary we have a random number 
of paths entering at every row, with the bottom $\lambda_1$ rows hosting 
entering paths of colour 1, next $\lambda_2$ rows hosting paths of colour 2, and 
so on; the sequence $\{\lambda_j\}_{j\ge 1}$ of nonnegative integers is given.
As before, the partial sums of $\lambda$ are denoted as $\sum_{i=1}^{k} 
\lambda_i = 
\ell_k$, $k\ge 1$; $\ell_0:=0$. 
The distribution of the number of paths that enter in any row is given by 
	\begin{equation}
	\label{eq:first-column}
	\text{Prob}\{k\}=
	\dfrac{(s^2/z^2;q)_\infty}{(s^2 ;q)_\infty}\dfrac{(z^2;q)_k}{(q;q)_k}\,\left(\frac{s^2}{z^2}\right)^k,\qquad k\ge 0.
	\end{equation}

    \item The vertex weights for the model are given by 	
\begin{equation}
\label{eq:fused-bulk}
\text{weight}_{s,z} \left( 
\tikz{0.3}{
	\draw[lgray,line width=1pt,->] (-1,0) -- (1,0);
	\draw[lgray,line width=1pt,->] (0,-1) -- (0,1);
	\node[left] at (-1,0) {\tiny $\BB$};\node[right] at (1,0) {\tiny $\DD$};
	\node[below] at (0,-1) {\tiny $\AA$};\node[above] at (0,1) {\tiny $\CC$};
}\right)= \left(\frac{s^2}{z^2}\right)^{|\DD|}\, \frac{(s^2/z^2;q)_{|\AA|-|\DD|}(z^2;q)_{|\DD|}}{(s^2;q)_{|\AA|}}\,q^{\sum_{i<j}D_i(A_j-D_j)}\prod_{i\ge 1}
{\binom{A_i}{A_i-D_i}}_q.
\end{equation}	
\end{itemize}   

This model is more general than the one in Section \ref{ssec:main} in the sense 
that it can carry any number of paths along any edges, not just the vertical 
ones. However, it is less general in that there are no remaining row and column 
inhomogeneities.\footnote{In fact, we could have left $s$ and $z$ parameters 
column and row dependent, respectively, but chose not to do it for the sake of 
simplicity. On the other hand, we could not have left row and column rapidities 
$x_*$, $\xi_*$ generic and still had the same factorized form of the weights.}
 
As before, we focus on the paths that cross upwards from row $n$ to row $n+1$ 
for some $n\ge 1$.\footnote{Since rows above $n$ do not matter to us, we 
assume, as in Section \ref{ssec:main} above, that colours of left-entering arrows in rows $n$ 
and $n+1$ are different.} Encoding colours and horizontal positions of 
these crossings by a coloured composition $\nu$, we can define observables
$\O_\mu$ on the set of possible $\nu$'s by the same formula \eqref{eq:obs2}, 
where $\mu=2^{\bm{m}^{(2)}}3^{\bm{m}^{(3)}}\cdots$ is an arbitrary coloured
composition with no parts smaller than 2; this is what used to be $\mu^{\ge 1}$.
Let us use the familiar notations $m=\ell(\mu)$ for the length of $\mu$,  
$c_1<\dots<c_\alpha$ for the colours represented in $\mu$, and  
$\fm_1,\dots,\fm_\alpha\ge 1$ for the number of parts of $\mu$ of 
colours $c_1,\dots,c_\alpha$, respectively. Finally, let $\text{colour}(\mu)$ 
be the composition that encodes the colouring of $\mu$, and let $\mu-1^m$
denote the composition obtained from $\mu$ by subtracting 1 from each part.

\begin{cor} \label{cor:main-fused}
With the above notations for the fused stochastic vertex model, we 
have
 \begin{multline}
\label{eq:main-fused}
\E\, \O_\mu= \frac{q^{\sum_{u\ge 2}\sum_{i>j} m_i^{(u)}m_j ^{(u)}}}{\prod_{j\ge 
2} (s^2;q)_{|\bm{m}^{(j)}|}}
\frac{(-s)^m}{(2\pi\sqrt{-1})^m}\oint\cdots\oint
\prod_{1\le i<j\le m} \frac{y_j-y_i}{y_j-qy_i} \\
\times
\prod_{k=1}^{\alpha}\left(
\sum_{j=0}^{\fm_k} \frac{(-1)^j q^{\binom{\fm_k-j}{2}}}
{(q;q)_j(q;q)_{\fm_k-j}}
\prod_{p>\fm[1,{k-1}]}^{j+\fm[1,{k-1}]}\left(\frac{1-z^{-2}sy_p}{
1-sy_p}\right)^{n-\ell_{c_{k}-1}}
\prod_{r>j+\fm[1,{k-1}]}^{\fm[1,k]}\left(\frac{1-z^{-2}sy_r}{
1-sy_r}\right)^{n-\ell_{c_{k}}}
\right)
\\
\times  f_{\mu-1^m}^\stoch({\rm colour}(\mu);y_1^{-1},\dots,y_m^{-1})
\prod_{i=1}^m 
\frac{dy_i}{y_i^2}\,,
\end{multline}
where the integration contours are either positively oriented and $q$-nested around 
$s^{-1}$ with $y_{i}$-contour containing $q^{-1}\cdot 
(y_j\text{-contour})$ for all $i<j$, or negatively oriented and $q$-nested 
around $s$, with $y_j$-contour containing $q\cdot(y_i-\text{contour})$ for all 
$i<j$. The point 0 can be either inside or outside the contours in either 
case.
\end{cor}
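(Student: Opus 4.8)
The plan is to obtain \eqref{eq:main-fused} by running the row-fusion procedure of Section~\ref{ssec:fusion} on Theorem~\ref{thm:main}, taken in its column-inhomogeneous form; this parallels the colour-blind computation of \cite[Section~6]{BGW}, so I will stress only the genuinely new points. First I would apply \eqref{eq:main} to the \emph{unfused} model with $n\l$ rows in which, for each $1\le i\le n$, the $i$th original row is replaced by an $\l$-block of rows of the same left-incoming colour with rapidities $x_i,qx_i,\dots,q^{\l-1}x_i$, column $1$ is given the vertical inhomogeneity $(s_1,\xi_1)=(s_1,s/s_1)$, and all columns $\ge 2$ retain $(s,1)$; since $\mu$ has no parts $\le 1$, this is an allowed inhomogeneity pattern for Theorem~\ref{thm:main}. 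Because fusion preserves the joint law of the boundary data and of the vertical edges at the block seams, $\E\,\O_\mu$ over the fused model equals $\E\,\O_\mu$ over this $n\l$-row model, and by \eqref{eq:fusion} each $\l$-block of $L^\stoch$-rows collapses into a single $W_{\l,\m}$-row with $q^{-\m}=s^2$. On the right-hand side of \eqref{eq:main} each $\l$-block contributes a chain $\prod_{j=0}^{\l-1}\frac{1-q^{j+1}x_i y_p}{1-q^{j}x_i y_p}$, which telescopes by \eqref{eq:chain} to $\frac{1-q^{\l}x_i y_p}{1-x_i y_p}$; the intermediate poles $y_p=q^{-j}x_i^{-1}$ cancel, the $y$-contours may be taken $q$-nested around $\{x_i^{-1}\}_{i=1}^n$, and the integrand becomes manifestly rational in $\mathfrak{l}:=q^{\l}$.

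Next I would impose the $q$-Hahn specialization $(x_i,s_j,\xi_j)\mapsto(s,s,1)$ of Section~\ref{ssec:q-Hahn} in every row and every bulk column, so that by \eqref{eq:q-Hahn} (with $q^{-\m}=s^2$ and $q^{-\l}=z^{2}$) the bulk weights become \eqref{eq:fused-bulk}; simultaneously the contours around $\{x_i^{-1}\}$ all merge onto $s^{-1}$, each telescoped factor becomes $\frac{1-z^{-2}s y_p}{1-s y_p}$, and the $\l$-blocks of the $q$-Pochhammer prefactors of \eqref{eq:main} reassemble, under $q^{\l}\mapsto z^{-2}$, into the ratios $\tfrac{(s^2;q)_\infty}{(s^2/z^2;q)_\infty}$ visible in \eqref{eq:first-column}--\eqref{eq:fused-bulk}. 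The exponents $n-\ell_{c_k-1}$ and $n-\ell_{c_k}$ in \eqref{eq:main-fused} simply count the super-rows that survive the telescoping.

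It then remains to randomise the still-deterministic, $\l$-path left boundary, which I would do by pushing the column-$1$ inhomogeneity through the limit relation \eqref{eq:weight-limit}, taking $s_1\to 0$ with $\xi_1 s_1=s$ held fixed. Every $\l$-fused row being monochromatic, the left edge of the corresponding column-$1$ vertex is of the block type $\BB=(0,\dots,0,\l)$ to which \eqref{eq:weight-limit} applies (the weights depending on colours only through their ordering), and that relation produces exactly the distribution \eqref{eq:first-column} for the number of paths crossing from column $1$ into column $2$. Thus column $1$ is absorbed into the left boundary, the model lands on $\mathbb{Z}_{\ge 2}\times\mathbb{Z}_{\ge 1}$, and in the limit the inhomogeneous factor $f^\stoch_{\mu}$ of \eqref{eq:main} degenerates into a scalar multiple of $f^\stoch_{\mu-1^m}$, the scalar consuming the $\prod_i(y_i-s)$ of the integrand of \eqref{eq:main} and producing the $(-s)^m$ of \eqref{eq:main-fused}; the index shift $\mu\mapsto\mu-1^m$ reflects that deleting column $1$ moves each surviving column $\ge 2$ down by one. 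Since after the first step the right-hand side is rational in $\mathfrak{l}=q^\l$, and since only boundedly many columns matter for $\O_\mu$, the continuation in $\mathfrak{l}$ and the limit $s_1\to 0$ may be interchanged with the expectation on the left and with the contour integral on the right by the uniform-bound argument already used in the proof of Proposition~\ref{prop:fG-principal}.

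Finally, because the $\prod_i(y_i-s)$ factors have been absorbed, the integrand of \eqref{eq:main-fused} decays like $dy_i/y_i^{2}$ as $y_i\to\infty$, so each $y_i$-contour may be pushed outward through $\infty$ --- exactly as in the contour deformations in the proof of Proposition~\ref{prop:integral-rainbow} --- with no residues crossed and with the potential poles $y_i=q^{-1}y_j$ contributing nothing for the same reason as there; this turns the positively-oriented $q$-nested contours around $s^{-1}$ into negatively-oriented $q$-nested contours around $s$, with $0$ free to lie inside or outside in either regime. I expect the genuine difficulty to be the third paragraph: pinning down the $s_1\to 0$ degeneration of the inhomogeneous $f^\stoch_\mu$, bookkeeping the $q$-Pochhammer recombination together with the appearance of the $(-s)^m$ and the index shift $\mu\mapsto\mu-1^m$, and verifying that this limit (and the continuation in $q^\l$) commute with both the expectation and the contour integral; the vertical-fusion telescoping and the $q$-Hahn specialization are, by contrast, routine once \eqref{eq:chain} and \eqref{eq:q-Hahn} are in hand.
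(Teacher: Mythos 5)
Your proposal follows the paper's own proof essentially step for step: replicate each row into an $\l$-block with geometric rapidities and apply Theorem \ref{thm:main} in its column-inhomogeneous form, telescope via \eqref{eq:chain}, specialize to the $q$-Hahn point $(x,s_j,\xi_j)=(s,s,1)$, send $s_1\to 0$ with $\xi_1 s_1=s$ in column 1 so that the pass-through column-1 vertices of $f^{\stoch}_{\mu}$ contribute $(-s)^m\prod_{j}(y_j-s)^{-1}$ and force the shift $\mu\mapsto\mu-1^m$, move the contours through $\infty$ to nest around $s$, and finally continue analytically in $q^{\l}=z^{-2}$. The one slip is the aside that the $q$-Pochhammer prefactor of \eqref{eq:main} ``reassembles'' into $(s^2;q)_\infty/(s^2/z^2;q)_\infty$: that prefactor is $\l$-independent and carries over unchanged (the infinite-product ratio belongs to the boundary law \eqref{eq:first-column}, produced by \eqref{eq:weight-limit}), but this bookkeeping remark does not affect the argument.
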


\begin{proof} The starting point is Theorem 5.1 with $\mu$ from 
\eqref{eq:main-fused} being $\mu^{\ge 1}$ there. 

The first step is to turn each row of the quadrant into $\l$ rows of the same 
colour, and fuse them as was described above. This does not 
affect the integral representation much, except for the change described around 
\eqref{eq:chain}. 

The next step is the limit transition in column $1$ described 
above. To see how it affects the factor $f_{\mu^{\ge 
1}}^\stoch(y_1^{-1},\cdots,y_m^{-1})$, recall the partition function definiton 
\eqref{eq:f-function} (and its coloured
modification \eqref{eq:f-generic}) of the $f$'s. Since all parts of $\mu^{\ge 1}$
are assumed to be at least 2, the first column contains only vertices of the 
form 
$\tikz{0.3}{
	\draw[lgray,line width=0.7pt,->] (-1,1) -- (1,1);
	\draw[lgray,line width=2.5pt,->] (0,0) -- (0,2.2);
	\node[left] at (-1,1) {\tiny $i$};\node[right] at (1,1) {\tiny $i$};
	\node[below] at (0,0) {\tiny $\bm{0}$};\node[above] at (0,2) {\tiny 
$\bm{0}$};
}
$ 
that have $L^\stoch$-weights 
$$
\frac{(-s_1)(\xi_1 
y^{-1}_j-s_1)}{1-s_1\xi_1 y^{-1}_j}=\frac{(-s_1)(\xi_1-s_1y_j)}{y_j-s_1\xi_1 
}=\frac{(-s_1)(s/s_1-s_1y_j)}{y_j-s 
}=\frac{s_1^2 y_j-s}{y_j-s 
},\qquad 1\le j\le m, 
$$
where we used the value $\xi_1=s/s_1$ as above. The limit $s_1\to 0$ of this 
expression is $-s/(y_j-s)$, and the product over all $1\le j\le m$ gives
$(-s)^m \prod_{j=1}^m (y_j-s)^{-1}$. Adding that to the integrand of 
\eqref{eq:main} yields the integrand of \eqref{eq:main-fused}, together with
the replacement $\mu^{\ge 1}\mapsto \mu-1^m$ in the index of $f^\stoch$, where 
the subtraction of $1^m$ is responsible 
for removing a column from the partition function representation of 
$f^\stoch$ that we just performed. 

Let us now look at the contours. For the application of Theorem 5.1 we could 
choose them to $q$-nest around $\{x_i,qx_i,\dots,q^{\l-1}x_i\}_{i=1}^n=
\{s,qs,\dots,q^{\l-1}s\}$ and either contain 0 or not. (We could not choose
them to encircle $\{x_i,qx_i,\dots,q^{\l-1}x_i\}_{i=1}^n$ and no other 
singularities as this choice of horizontal rapidities forces to include 
at least some singularities of $\prod_{i<j} (y_j-qy_i)^{-1}$ into the contours.)
Since the integrand is manifestly nonsingular at $qs,\dots,q^{\l-1}s$, we can 
remove the condition of encircling those points. As the integrand is readily 
seen to not have poles at $y_*=\infty$, we can also move the contours through 
$\infty$ and 
have them $q$-nest around the only other singularity, which is at $s$ (again, 
$0$ can be either inside or outside). 

Thus, we have now proved \eqref{eq:main-fused} for $\l=1,2,\dots$, and 
the final step consists in analytic continuation in $q^\l=z^{-2}$ from 
the set of points $\{q,q^2,q^3,\dots\}$ accumulating at 0. This, 
however, is straightforward: The only dependence on $z$ of the right-hand side 
is through factors $(1-z^{-2}y_*)$, and the left-hand side is readily seen
to be given by uniformly convergent series with rational terms at least as long 
as $|q|,|s|,|sz^{-1}|<const<1$.
\end{proof}
\begin{rmk}
	The principal reason for our carrying the (incomplete) column inhomogeneity of the model
throughout Sections \ref{sec:extensions}-\ref{sec:obs} was to be able to perform the limit transition in column 1 that was just described, which gave us access to averages of observables in the fused models and, as a consequence discussed in the next section, in integrable models 
of directed random polymers. 
\end{rmk}

\section{Limit to polymers}\label{sec:polymers}
The goal of this section is to explore the consequences of Corollary \ref{cor:main-fused} for a few models of directed polymers in (1+1) dimensions. The exposition of the limit transitions from fused coloured stochastic models to directed polymers follows \cite[Section 7]{BGW}. 

\subsection{Continuum stochatic vertex model}\label{ssec:cont-vertex} Let us start by introducing a vertex model that will serve as a limiting object for the fused vertex model of Section \ref{ssec:fusion} described by weights \eqref{eq:first-column}-\eqref{eq:fused-bulk}. 

As in the fused case, the vertices of the continuum model will be parameterized by points of a quadrant, and to keep the notation parallel to that of Section \ref{ssec:fusion}, we will use the quadrant $\mathbb{Z}_{\ge 2}\times \mathbb{Z}_{\ge 1}$. 

Each vertex will have a certain \emph{mass} of each colour $\ge 1$ entering from the bottom and from the left, and exiting through the top and to the right. The mass is a real number in $[0,\infty)$, and for each vertex the total number of colours that have nonzero mass entering the vertex will always be finite. The mass of each colour passing through a vertex will always be preserved -- the sum of incoming mass from the bottom and from the left must be equal to the sum of exiting mass to the right and through the top. 

Let us denote the masses of colours $1,2,\dots$ entering through the bottom of a vertex by $\aa=(\alpha_1,\alpha_2,\dots)$, entering from the left by $\bb=(\beta_1,\beta_2,\dots)$, exiting through the top by $\cc=(\gamma_1,\gamma_2,\dots)$, and exiting to the right by $\dd=(\delta_1,\delta_2,\dots)$, respectively. The mass preservation means $|\aa|+|\bb|=|\cc|+|\dd|$. The notation is chosen to be in parallel with $(\AA,\BB;\CC,\DD)$ notation for the vertex models, as in \eqref{eq:W-picture}.  

Recall that a random variable with values in $(0,1)$ is said to be \emph{Beta-distributed} with parameters $a,b>0$ if it has a density, with respect to the Lebesgue measure, given by 
\begin{equation}
\label{eq:beta-density}
\frac{\Gamma(a+b)}{\Gamma(a)\Gamma(b)}\,x^{a-1}(1-x)^{b-1}, \qquad 0<x<1. 
\end{equation}

Given the coloured masses $\aa,\bb$ entering a vertex of our continuum vertex model, the coloured masses $\cc,\dd$ exiting the vertex are random and determined as follows. The procedure has two parameters $a,b>0$, same as in the Beta-distribution \eqref{eq:beta-density}.  
If all coordinates $\aa$ are zero, \ie\ $\aa=\bm{0}$, then we set $\cc=\bb$ and $\dd=\bm{0}$. If $\aa\ne \bm{0}$, let $n\ge 1$ be the maximal natural number such that $\alpha_n\ne 0$, and let $\zeta$ be an $(a,b)$-Beta distributed random variable. Then we set $\delta_{n+1},\delta_{n+2},\dots$ to 0 and define $\delta_n,\delta_{n-1},\dots,\delta_1$ recursively via
\begin{equation}
\label{eq:vertex-continuum}
\begin{aligned}
\delta_n&=-\log\bigl(e^{-\alpha_n}+\zeta(1-e^{-\alpha_n})\bigr),\\
\delta_{n-1}+\delta_n&=-\log\bigl(e^{-\alpha_{n-1}-\alpha_n}+\zeta(1-e^{-\alpha_{n-1}+\alpha_n})\bigr),\\
\dots &\qquad  \dots \\
\delta_1+\ldots+\delta_{n-1}+\delta_n&=-\log\bigl(e^{-\alpha_1-\ldots-\alpha_{n-1}-\alpha_n}+\zeta(1-e^{-\alpha_1-\ldots-\alpha_{n-1}+\alpha_n})\bigr).
\end{aligned}  
\end{equation}
One can show that this implies $0<\delta_j<\alpha_j$ for $1\le j\le n$. 
Finally, we set $\cc=\bb+(\aa-\dd)$, thus enforcing mass conservation. 

In addition to defining what happens at the vertices of the quadrant, we need to specify boundary conditions. As before, we will assume that no mass enters the quadrant from the bottom, \ie\ $\aa=\bm{0}$ for all vertices in the bottom row. On the other hand, along the left boundary we will assume that for the left most vertex in row $i$, the left-entering coloured mass $\bb$ has all but one coordinates equal to 0, with the exception of the $i$th one, which is $(a,b)$-Beta distributed.

As usual, we think of the randomness as have no space dependency, which means that the Beta-distributed random variables at different vertices, as well as those used to define the left boundary condition, are independent. 

The following statement was proved in \cite[Corollary 6.22]{BGW}. 

\begin{prop}\label{prop:limit-to-continuum} Consider the fused coloured vertex 
model defined around \eqref{eq:first-column}-\eqref{eq:fused-bulk} and set
\begin{equation}
\label{eq:limit-parameters}
q=\exp(-\epsilon),\qquad s^2=q^\sigma, \qquad z^2=q^\rho,
\end{equation}
for some $\sigma>\rho>0$ and $\epsilon>0$. Then as $\epsilon\to 0$, the fused coloured vertex models scaled by $\epsilon$ converges to the continuum vertex model defined above with parameters $(a,b)=(\sigma-\rho,\rho)$, in the sense that any finite collection of numbers of paths of arbitrary fixed colours entering/exiting any fixed set of vertices in fixed directions, when multiplied by $\epsilon$, weakly converges to the collection of corresponding  colour masses entering/exiting the corresponding vertices of the continuum model. 
\end{prop}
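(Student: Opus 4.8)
The plan is to reduce the statement to a vertex-by-vertex (and boundary-edge-by-boundary-edge) local limit, using the fact that all the randomness in the fused model is produced independently at individual vertices and along the left boundary. Since convergence of finitely many edge occupation numbers (scaled by $\epsilon$) to the masses of the continuum model is a statement about joint distributions of finitely many local quantities, and since the transition probabilities at each vertex of the fused model depend only on the colours entering that vertex, it suffices to show two things: (i) the left-boundary distribution \eqref{eq:first-column} for the number $k$ of entering paths, when scaled by $\epsilon$, converges weakly to the $(a,b)=(\sigma-\rho,\rho)$-Beta law; and (ii) for each fixed vertex, given incoming coloured counts $\AA,\BB$ that scale like $\epsilon^{-1}\aa,\epsilon^{-1}\bb$, the conditional law of the outgoing counts $(\CC,\DD)$ determined by the weights \eqref{eq:fused-bulk}, when scaled by $\epsilon$, converges to the law of $(\cc,\dd)$ prescribed by \eqref{eq:vertex-continuum} with the same $(a,b)$. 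Granting (i) and (ii), the full statement follows by propagating the limit through any finite sub-domain of the quadrant: one orders the relevant vertices so that each is processed after the vertices feeding into it, applies the continuous mapping / Slutsky-type argument at each step, and uses independence of the local randomness across vertices and boundary edges.

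For step (i), one substitutes \eqref{eq:limit-parameters} into \eqref{eq:first-column} and analyses $\mathrm{Prob}\{k\}$ with $k=\lfloor \epsilon^{-1}u\rfloor$ for $u\in(0,\infty)$. Using $q^{-\l}=z^{-2}=q^{-\rho}$, $q^\m=s^{-2}=q^{-\sigma}$, and the asymptotics $(q^c;q)_k\sim \Gamma$-ratios in the $\epsilon\to 0$, $\epsilon k\to u$ regime (the standard $q\to 1$ degeneration of $q$-Pochhammer symbols to Gamma functions, cf. the computations behind Proposition 6.22 of \cite{BGW}), one checks that $\epsilon^{-1}$ times the probability mass converges to the Beta$(\sigma-\rho,\rho)$ density $\frac{\Gamma(\sigma)}{\Gamma(\sigma-\rho)\Gamma(\rho)}\zeta^{\sigma-\rho-1}(1-\zeta)^{\rho-1}$ evaluated at $\zeta$ related to $u$ through $e^{-u}$ or $1-e^{-u}$ — matching the form in which $\zeta$ enters \eqref{eq:vertex-continuum}. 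For step (ii), one takes the explicit $q$-Hahn-type weight \eqref{eq:fused-bulk}; conditionally on $\AA$, the vector $\DD$ (equivalently $\CC=\BB+\AA-\DD$) has a $q$-deformed distribution, and one performs the same $q\to1$ with masses scaling like $\epsilon^{-1}$. The nested structure of the recursion \eqref{eq:vertex-continuum} — where the partial sums $\delta_n,\ \delta_{n-1}+\delta_n,\ \dots$ are governed by a single Beta variable $\zeta$ — should emerge from the product $\prod_i \binom{A_i}{A_i-D_i}_q$ together with the factor $q^{\sum_{i<j}D_i(A_j-D_j)}$, which in the scaling limit ties the $\delta_i$'s together through one common random parameter; the prefactor $(s^2/z^2)^{|\DD|}(s^2/z^2;q)_{|\AA|-|\DD|}(z^2;q)_{|\DD|}/(s^2;q)_{|\AA|}$ degenerates to the Beta normalization and the relevant exponentials $e^{-\alpha_i}$.

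The main obstacle I anticipate is step (ii) in full generality — controlling the joint asymptotics of the multi-colour weight \eqref{eq:fused-bulk} uniformly enough to conclude \emph{weak} convergence of the whole vector $(\delta_1,\dots,\delta_n)$, not just of individual marginals. Two technical points need care: first, the $q$-binomial $\binom{A_i}{A_i-D_i}_q$ must be shown to localize $D_i/A_i$ around the correct deterministic-plus-Beta value dictated by \eqref{eq:vertex-continuum}, which requires a local central limit / saddle-point estimate for $q$-binomial coefficients in the $q\to1$ regime; second, one must handle the colours $i$ with $\alpha_i=0$ (so $\delta_i=0$ forced) and the truncation at $n=\max\{i:\alpha_i\neq0\}$ consistently, which is exactly the role of the indicator and the recursion in \eqref{eq:vertex-continuum}. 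Since \cite[Corollary 6.22]{BGW} is invoked in the statement of Proposition \ref{prop:limit-to-continuum}, the cleanest route is to \emph{cite} that computation for the vertex-level limit and the boundary-level limit, and devote the proof here to the bookkeeping of propagating the independent local limits through an arbitrary finite sub-domain — i.e., to make rigorous the claim that joint weak convergence of the inputs plus the continuous (a.s.-continuous) update maps at each vertex yields joint weak convergence of all the scaled edge quantities. That propagation argument is routine given independence, so I would present it carefully but briefly, and point to \cite{BGW} for the analytic heart of steps (i)–(ii).
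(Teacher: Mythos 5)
The paper does not actually prove Proposition \ref{prop:limit-to-continuum} at all: it is imported verbatim from \cite[Corollary 6.22]{BGW}, so the ``proof'' in the paper is exactly the citation you end up recommending. Your outline (weak convergence of the boundary law \eqref{eq:first-column} and of the single-vertex update \eqref{eq:fused-bulk} to the Beta-driven recursion \eqref{eq:vertex-continuum}, followed by a routine propagation of independent local limits through any finite subdomain) is consistent with how that corollary is established in \cite{BGW} — including your correct hedge that the scaled edge counts converge to masses expressed through $e^{-u}$, i.e.\ logarithmic transforms of Beta variables rather than Beta variables themselves — so your proposal matches the paper's treatment, with the analytic heart delegated to \cite{BGW} exactly as the paper does.
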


This immediately implies the convergence of the averages of the observables from Corollary \ref{cor:main-fused} as well, but we will postpone the limiting statement until we reformulate the continuum vertex model as a directed random polymer in the next section. 
 
\subsection{Random Beta-polymer}\label{ssec:beta} The Beta-polymer was first introduced in \cite{BarCor}. In order to define it, let $\{\eta_{t,m}\}_{t,m\ge 1}$ be a family of independent identically Beta-distributed random variables with parameters $a,b>0$, cf. \eqref{eq:beta-density}. The partition function $\z_{t,m}$ of the Beta-polymer, with $(t,m)\in\mathbb{Z}_{\ge 0}\times\mathbb{Z}_{\ge 1}$ and $t\ge m-1$, is determined by the recurrence relation 
$$
\z_{t,m}=\eta_{t,m}\z_{t-1,m}+(1-\eta_{t,m})\z_{t-1,m-1}
$$
and boundary conditions 
$$
\z_{t,t+1}\equiv 1,\qquad \z_{t,1}=\eta_{1,1}\eta_{2,1}\cdots\eta_{t,1}.
$$
Pictorially, $\z_{t,m}$ is a sum over all directed lattice paths with $(0,1)$ and $(1,1)$ steps that joint $(0,1)$ and $(t,m)$, of products of edge weights that all have the form $\eta_*$ or $1-\eta_*$, cf. \cite{BarCor, BGW}. 

Let us also define \emph{delayed} partition functions $\z^{(k)}_{t,m}$, $k\ge 1$, $t\ge m+k-1$, by the same recurrence relation 
$$
\z_{t,m}^{(k)}=\eta_{t,m}\z_{t-1,m}^{(k)}+(1-\eta_{t,m})\z_{t-1,m-1}^{(k)},
$$
where we are using the same family of random variable $\{\eta_{t,m}\}_{t,m\ge 1}$ to evaluate the coefficients, and shifted boundary conditions 
$$
\z_{t,t-k+2}^{(k)}\equiv 1,\qquad \z_{t,1}^{(k)}=\eta_{k,1}\eta_{k+1,1}\cdots\eta_{t,1}. 
$$
Clearly, $\z^{(1)}_{t,m}\equiv \z^{(k)}_{t,m}$, and for any $k\ge 1$, $\z_{t,m}^{(k)}$ can be interpreted graphically in a similar way to $\z_{t,m}$, but with paths jointing $(k-1,1)$ and $(t,m)$. 

As was shown in \cite[Section 7.1]{BGW}, there is a way to identify the 
continuum vertex model of Section \ref{ssec:cont-vertex} and the family of 
Beta-polymer partition functions $\{\z^{(k)}_{t,m}\}$. In order to see the 
equivalence, let us introduce the coloured height functions $\{h^{(\ge 
k)}(x,y)\mid {x\ge 2;y,k\ge 1}\}$ that count the total mass of colours $\ge k$ 
that exit vertices $(x,1),(x,2),\dots,(x,y)$, either upward or rightward, in the 
continuum model. Then one has the identification
\begin{equation}
\label{eq:beta-to-vertex}
-\log\z_{t,m}^{(k)} = h^{(\ge k)} (m+1,t),\qquad  t,m,k\ge 1,\  t\ge m+k-1.
\end{equation}
For $t,m,k\ge 1$ that do not satisfy the inequality $t\ge m+k-1$, the right-hand
side of \eqref{eq:beta-to-vertex} is readily seen to vanish, and we set 
$\z_{t,m}^{(k)}$ to 1 for these values as well; then \eqref{eq:beta-to-vertex}
holds for any $t,m,k\ge 1$.

Together with Proposition \ref{prop:limit-to-continuum}, this allows us to obtain a limiting version of Corollary \ref{cor:main-fused} for the Beta-polymer. But in order to take the corresponding limit of the integral representation, we need to introduce limiting versions of the rational functions $f_\mu$. 

\begin{lem}\label{lemma:f-lim} Take $q=\exp(-\epsilon)$, $s^2=q^\sigma$ (as in \eqref{eq:limit-parameters}), and fix a $\lambda$-coloured composition $\mu=0^{\bm{m}^{(0)}} 1^{\bm{m}^{(1)}} 2^{\bm{m}^{(2)}} \cdots$ of length $m\ge 1$, where $\lambda$ is a composition of weight $|\lambda|=m$. Then there exists a limit 
\begin{equation}
\label{eq:f-limit}
\mathfrak{f}_\mu(\lambda;u_1,\dots,u_m)=\lim_{\epsilon\to 0} \epsilon^m\cdot
\frac{f_\mu(\lambda;1+\epsilon u_1,\dots,1+\epsilon u_m)}{\prod_{j\ge 0}(s^2;q)_{|\bm{m}^{(j)}|}},
\end{equation} 
where the convergence is uniform for complex $u_1,\dots,u_m$ varying in compact 
sets that do not include $\sigma/2$, and the rational function 
$\ff_\mu(\lambda;u_1,\dots,u_m)$ can be characterized as follows. 

{\rm (i)} For a rainbow $\mu$, \ie\ for $\lambda=(1,\dots,1)$, in the anti-dominant sector $\mu_1\le\mu_2\le\dots\le\mu_m$ one has (omitting $\lambda$ from the notation)
\begin{equation}
\label{eq:f-lim-antidominant}
\ff_\mu(u_1,\dots,u_m)=\prod_{i=1}^m 
\frac1{\sigma/2-u_i}\left(
\frac{\sigma/2+u_i}{\sigma/2-u_i}
\right)^{\mu_i},
\end{equation} 
and in case $\mu_i<\mu_{i+1}$ for some $1\le i\le m-1$, one has
\begin{equation}
\begin{aligned}
\label{eq:exchange-limit}
&\mathfrak{T}_i \cdot \ff_{\mu}(u_1,\dots,u_m) = \ff_{(\mu_1,\dots,\mu_{i+1},\mu_i,\dots,\mu_m)}(u_1,\dots,u_m),\\
&\mathfrak{T}_i \equiv 1 - \frac{u_i-u_{i+1}+1}{u_i-u_{i+1}} (1-\mathfrak{s}_i),
\qquad
1 \leq i \leq m-1,
\end{aligned}
\end{equation}
with elementary transpositions $\mathfrak{s}_i \cdot h(u_1,\dots,u_m)
:=h(u_1,\dots,u_{i+1},u_i,\dots,u_m)$, cf. \eqref{eq:exchange}-\eqref{eq:DL-operators}.

{\rm (ii)} For a general colouring composition $\lambda$, let $\theta:\{1,\dots,m\}\to\{1,\dots,n\}$ be a monotone map (as in Section \ref{ssec:colour-merge}) with $\theta_*((1,\dots,1))=\lambda$. Then 
\begin{equation}
\label{eq:f-lim-merge}
\sum_{{\rm rainbow}\,  \nu\,:\, \theta_*(\nu) = \mu}
\ff_{\nu}(u_1,\dots,u_m)
= \ff_{\mu}(\lambda; u_1,\dots,u_m).
\end{equation}

\end{lem}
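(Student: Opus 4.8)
The plan is to establish the three assertions of Lemma~\ref{lemma:f-lim} in the order (i)-antidominant, (i)-exchange, (ii)-merge, deducing the existence of the limit along the way. First I would treat the rainbow antidominant case directly from the explicit factorization: substituting $x_i = 1+\epsilon u_i$, $q = e^{-\epsilon}$, $s^2 = q^\sigma$ into \eqref{eq:f-delta} and dividing by $\prod_j (s^2;q)_{m_j(\mu)}$ exactly cancels the numerator $\prod_j (s^2;q)_{m_j(\delta)}$ in \eqref{eq:f-delta}, so that
\begin{equation*}
\frac{f_\mu(1+\epsilon u_1,\dots,1+\epsilon u_m)}{\prod_{j\ge 0}(s^2;q)_{|\bm m^{(j)}|}}
=
\prod_{i=1}^m \frac{1}{1-s(1+\epsilon u_i)}\left(\frac{(1+\epsilon u_i)-s}{1-s(1+\epsilon u_i)}\right)^{\mu_i}.
\end{equation*}
Now I expand each factor in $\epsilon$: since $s = q^{\sigma/2} = e^{-\epsilon\sigma/2} = 1 - \epsilon\sigma/2 + O(\epsilon^2)$, one gets $1 - s(1+\epsilon u_i) = \epsilon(\sigma/2 - u_i) + O(\epsilon^2)$ and $(1+\epsilon u_i) - s = \epsilon(\sigma/2 + u_i) + O(\epsilon^2)$, so the ratio $\bigl((1+\epsilon u_i)-s\bigr)/\bigl(1-s(1+\epsilon u_i)\bigr) \to (\sigma/2+u_i)/(\sigma/2-u_i)$ while the prefactor contributes a single power of $\epsilon^{-1}$. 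Multiplying by $\epsilon^m$ gives exactly \eqref{eq:f-lim-antidominant}, and the convergence is uniform on compacts avoiding $u_i = \sigma/2$ because that is the only place the denominators degenerate. This simultaneously proves existence of the limit on the antidominant sector.

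Next I would propagate the limit to all rainbow $\mu$ via the exchange relation \eqref{eq:exchange}. The key computation is that the Demazure--Lusztig operator $T_i = q - \frac{x_i - qx_{i+1}}{x_i - x_{i+1}}(1-\mathfrak s_i)$ converges, after the substitution $x_i = 1+\epsilon u_i$ and division by $\epsilon^0$ (no rescaling needed since $T_i$ is degree-preserving in the relevant sense), to the operator $\mathfrak T_i$ of \eqref{eq:exchange-limit}. Indeed $q \to 1$, and $\frac{x_i - qx_{i+1}}{x_i - x_{i+1}} = \frac{(1+\epsilon u_i) - e^{-\epsilon}(1+\epsilon u_{i+1})}{\epsilon(u_i - u_{i+1})} = \frac{\epsilon(u_i - u_{i+1}) + \epsilon + O(\epsilon^2)}{\epsilon(u_i - u_{i+1})} \to \frac{u_i - u_{i+1} + 1}{u_i - u_{i+1}}$, so $T_i \to 1 - \frac{u_i-u_{i+1}+1}{u_i-u_{i+1}}(1-\mathfrak s_i) = \mathfrak T_i$. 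Since $T_i$ acts on rational functions of $x_1,\dots,x_m$ by a finite-order linear operation with coefficients that converge locally uniformly, and $\epsilon^m f_\mu/\prod(s^2;q)$ converges locally uniformly for antidominant $\mu$, applying a fixed finite word of $T_i$'s shows that the limit exists for every rainbow $\mu$ and satisfies $\mathfrak T_i \cdot \ff_\mu = \ff_{(\dots,\mu_{i+1},\mu_i,\dots)}$. One must check (as in \cite{BW}) that the braid/Hecke relations satisfied by the $T_i$ pass to the $\mathfrak T_i$, so that the result is independent of the chosen reduced word; this is routine since the $\mathfrak T_i$ manifestly satisfy $\mathfrak T_i^2 = 1 + (\text{lower})\cdot\mathfrak T_i$ with the analogous structure, or one can simply note that it follows by taking the limit of the corresponding identities among the $T_i$.

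For part (ii), the colour-merging relation \eqref{eq:f-colour-merge} gives $f_\mu(\lambda;x_1,\dots,x_m) = \sum_{\text{rainbow }\nu:\theta_*(\nu)=\mu} f_\nu(x_1,\dots,x_m)$ as an identity valid \emph{before} any limit. Here I need to match the normalizing factor $\prod_{j\ge 0}(s^2;q)_{|\bm m^{(j)}|}$: for the $\lambda$-coloured $\mu$, $|\bm m^{(j)}|$ is the number of parts of $\mu$ equal to $j$ counted over all colours, which is exactly $m_j(\nu)$ for any rainbow $\nu$ with $\theta_*(\nu) = \mu$, so the same scalar $\prod_j (s^2;q)_{|\bm m^{(j)}|}$ divides each term on the right. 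Dividing the merging identity by $\prod_j (s^2;q)_{|\bm m^{(j)}|}$, multiplying by $\epsilon^m$, and passing to the limit (the sum is finite, each term converges locally uniformly by part (i)) yields \eqref{eq:f-lim-merge}, and in particular the existence of the limit for arbitrary $\lambda$. The main obstacle, and the only place requiring genuine care, is the commutation of the limit with the iterated application of the operators $T_i$ in the second step: one must ensure the number of $T_i$'s is bounded independently of $\epsilon$ (it is — it depends only on the combinatorics of $\mu$) and that no spurious poles in $\epsilon$ are introduced, i.e. that the coefficients $\frac{x_i - qx_{i+1}}{x_i-x_{i+1}}$ stay bounded away from their poles on the chosen compact sets, which is why $u_i = \sigma/2$ — and implicitly the hyperplanes $u_i = u_{i+1}$ that are integrable singularities of $\mathfrak T_i$ — are the excluded loci; the latter cause no trouble because, just as for the $T_i$ in \cite{BW}, the operator $\mathfrak T_i$ maps rational functions to rational functions with controlled denominators.
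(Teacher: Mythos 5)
Your proposal is correct and follows exactly the route the paper intends: the paper's proof consists of the single remark that the limit is an immediate consequence of the recursive definition \eqref{eq:f-delta}--\eqref{eq:DL-operators} and the colour-merging relation \eqref{eq:f-colour-merge}, which is precisely the antidominant computation, the $T_i\to\mathfrak{T}_i$ degeneration, and the term-by-term limit of the finite merging sum that you carry out. Your additional checks (matching of the normalizing $q$-Pochhammer factor across the merge, and the behaviour near $u_i=u_{i+1}$) are fine and only make explicit what the paper leaves implicit.
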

\begin{proof} This limit is an immediate consequence of the recursive definition
\eqref{eq:f-delta}-\eqref{eq:DL-operators} of Section \ref{ssec:recursive} and 
\eqref{eq:f-colour-merge} of Proposition \ref{prop:fG-merge}.
\end{proof}

We are now ready to take a $q\to 1$ limit in Corollary \ref{cor:main-fused}. 

\begin{prop}
\label{prop:main-beta}
Fix $\sigma>\rho>0$, and consider the partition functions $\z_{t,m}^{(k)}$ of 
the Beta-polymer as defined above with the parameters of \eqref{eq:beta-density} 
given by $(a,b)=(\sigma-\rho,\rho)$. Let 
$\mu=2^{\bm{m}^{(2)}}3^{\bm{m}^{(3)}}\cdots$ be a coloured
composition with no parts smaller than 2, $m=\ell(\mu)$ be the length of 
$\mu$, $c_1<\dots<c_\alpha$ be the colours represented in $\mu$, and  
$\fm_1,\dots,\fm_\alpha\ge 1$ be  the number of parts of $\mu$ of 
colours $c_1,\dots,c_\alpha$, respectively. Also, let $\text{colour}(\mu)$ 
be the composition that encodes the colouring of $\mu$. Then for any $t\ge \max\{i:m_i^{(x)}>0\}$ 
we have
 \begin{multline}
\label{eq:main-beta}
\E\, \prod_{x\ge 1}\displaystyle\prod_{i \ge 1}  
\frac{\left(\z^{(i+1)}_{t,x}- 
\z^{(i)}_{t,x}\right)^{m_i^{(x)}}}{{m_i^{(x)}}!}
= 
\frac{(-1)^{|\mu|}}{(2\pi\sqrt{-1})^m}\oint\cdots\oint
\prod_{1\le i<j\le m} \frac{u_j-u_i}{u_j-u_i+1} \\
\times
\prod_{k=1}^{\alpha}\left(
\sum_{j=0}^{\fm_k} \frac{(-1)^j}
{j!(\fm_k-j)!}
\prod_{p>\fm[1,{k-1}]}^{j+\fm[1,{k-1}]}\left(\frac{\sigma/2-u_p-\rho}{
\sigma/2-u_p}\right)^{t-c_k+1}
\prod_{r>j+\fm[1,{k-1}]}^{\fm[1,k]}\left(\frac{\sigma/2-u_r-\rho}{
\sigma/2-u_r}\right)^{t-c_k}
\right)
\\
\times  \ff_{\mu-1^m}({\rm colour}(\mu);-u_1,\dots,-u_m)
\prod_{i=1}^m 
{du_i},
\end{multline}
with the integration contours either positively oriented and nested around 
$\sigma/2$ with $u_{i}$-contour containing $(u_j\text{-contour})+1$ for all 
$i<j$, or negatively oriented and nested 
around $-\sigma/2$, with $u_j$-contour containing $(u_i\text{-contour})-1$ 
for all 
$i<j$. 
\end{prop}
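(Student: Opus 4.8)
The plan is to obtain Proposition \ref{prop:main-beta} as the $\epsilon\to 0$ limit of Corollary \ref{cor:main-fused}, using the identification \eqref{eq:beta-to-vertex} of the continuum vertex model with the delayed Beta-polymer partition functions together with the weak convergence of Proposition \ref{prop:limit-to-continuum}. First I would set $q=e^{-\epsilon}$, $s^2=q^\sigma$, $z^2=q^\rho$ as in \eqref{eq:limit-parameters} and substitute $y_i = 1+\epsilon u_i$ (more precisely, reverse the orientation/relabel so that the $y_i$-contours around $s^{-1}\approx 1$ become $u_i$-contours around $\sigma/2$, picking up the sign bookkeeping). Under this scaling the coloured height functions $H_i^{\nu/\mu}$ of the fused model, multiplied by $\epsilon$, converge to the continuum masses, and the $q$-binomial observables $\binom{H}{m}_q$ in \eqref{eq:obs2} degenerate: a product $q^{mH_{>i}}\binom{H_i}{m}_q$ becomes, after the substitution $-\log\z^{(\cdot)}_{t,\cdot}=h^{(\ge\cdot)}$ and the standard computation $\binom{H}{m}_q \to$ (a ratio of powers) with $(q;q)_m\to \epsilon^m m!$, exactly the factor $\bigl(\z^{(i+1)}_{t,x}-\z^{(i)}_{t,x}\bigr)^{m_i^{(x)}}/m_i^{(x)}!$ after accounting for the colour-to-starting-point dictionary of \cite[Section 7.1]{BGW}. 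This is the step where one has to be careful: I would verify that the telescoping $H_{\ge i}-H_{>i}$ survives the limit as $e^{-h^{(\ge i)}}-e^{-h^{(\ge i+1)}} = \z^{(i)}-\z^{(i+1)}$, matching the second line of \eqref{eq:obs2}.

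Next I would treat the right-hand side of \eqref{eq:main-fused} term by term. The prefactor $q^{\sum\sum m_i^{(u)}m_j^{(u)}}\to 1$ and $(s^2;q)_{|\bm m^{(j)}|}\to \epsilon^{|\bm m^{(j)}|}|\bm m^{(j)}|!$, while $(-s)^m\to(-1)^m$ up to lower order; these combine with the $\epsilon^m$ from $f^\stoch$ in Lemma \ref{lemma:f-lim} and the $\epsilon$ from each $dy_i = \epsilon\,du_i$ and from each Vandermonde-type factor $\frac{y_j-y_i}{y_j-qy_i}\to\frac{u_j-u_i}{u_j-u_i+1}$. The product $\frac{1-z^{-2}sy_p}{1-sy_p}$ becomes $\frac{\sigma/2-u_p-\rho}{\sigma/2-u_p}$ (here one uses $1-sy = 1-q^{\sigma/2}(1+\epsilon u)\approx \epsilon(\sigma/2 - u)$ and similarly with $z^{-2}s = q^{\sigma/2-\rho}$), raised to the power $n-\ell_{c_k-1}$; and under the fusion/limit dictionary the role of $n$ is played by $t$ and $\ell_{c_k}$ by $c_k-1$ (the starting point of the $c_k$-coloured polymer), so the exponent becomes $t-c_k+1$ — this exponent matching is the one bit of bookkeeping I would be most careful with, cross-checking against \eqref{eq:beta-to-vertex} and the shift in the boundary condition $\z^{(k)}_{t,t-k+2}\equiv 1$. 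The factor $q^{\binom{\fm_k-j}{2}}\to 1$, and $\frac{1}{(q;q)_j(q;q)_{\fm_k-j}}$ contributes $\epsilon^{-\fm_k}$ times $\frac{1}{j!(\fm_k-j)!}$; collecting all powers of $\epsilon$ should show the net power is $0$, so the limit is finite. Finally $f^\stoch_{\mu-1^m}(\mathrm{colour}(\mu);y_1^{-1},\dots,y_m^{-1})/\prod(s^2;q)_{|\bm m^{(j)}|}\to \ff_{\mu-1^m}(\mathrm{colour}(\mu);-u_1,\dots,-u_m)$ by Lemma \ref{lemma:f-lim} (the sign flip in the arguments comes from $y_i^{-1}=(1+\epsilon u_i)^{-1}\approx 1-\epsilon u_i$).

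The remaining point is the justification of interchanging the limit with the contour integral and with the expectation. For the integral: the contours in \eqref{eq:main-fused} are $q$-nested around $s^{-1}$, and under $y_i=1+\epsilon u_i$ these rescale to fixed contours $u_i$-nested around $\sigma/2$ with the stated $+1$ spacing; I would check that the integrand converges uniformly on these (compact) contours — which holds because Lemma \ref{lemma:f-lim} gives uniform convergence of $\ff_\mu$ on compacta avoiding $\sigma/2$, and all other factors are elementary — and that the contours can be taken independent of $\epsilon$ by the usual residue-displacement argument, noting the only singularities of the limiting integrand are at $u_i=\sigma/2$ (resp. $-\sigma/2$) and at $u_j-u_i+1=0$, which the nesting avoids. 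For the expectation: the weak convergence of Proposition \ref{prop:limit-to-continuum} gives convergence in distribution of the (finitely many) masses feeding into the observable, and since $\mu$ has finitely many parts with $t$ fixed, the observable depends only on finitely many such masses; uniform integrability follows from the boundedness $0<\delta_j<\alpha_j$ at each continuum vertex (so $\z^{(k)}_{t,x}\in(0,1]$ and the moment-type observable is bounded in the limit), hence $\E$ of the limit equals the limit of $\E$.

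The main obstacle I anticipate is \textbf{not} analysis but bookkeeping: getting the index shifts exactly right in the dictionary between the coloured fused vertex model (colours $c_k$, colouring composition $\lambda$, partial sums $\ell_k$, row count $n$) and the Beta-polymer (delayed partition functions $\z^{(k)}$, starting points $k-1$, time $t$), so that the exponents $t-c_k+1$ and $t-c_k$, the shift $\mu^{\ge1}\mapsto\mu-1^m$, the condition $t\ge\max\{i:m_i^{(x)}>0\}$, and the ranges $p>\fm[1,k-1]$ etc. all land precisely as written in \eqref{eq:main-beta}. I would pin this down by first doing the rainbow, single-colour, anti-dominant case explicitly (where $\ff$ factorizes via \eqref{eq:f-lim-antidominant}) and checking it against the known Beta-polymer moment formulas of \cite{BarCor, BGW}, then invoking Lemma \ref{lemma:f-lim}(ii) and the exchange relations \eqref{eq:exchange-limit} to propagate to the general coloured case, exactly paralleling how Corollary \ref{cor:main-fused} was reduced to Proposition \ref{prop:integral-rainbow} via colour merging.
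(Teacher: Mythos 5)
Your proposal is correct and follows essentially the same route as the paper: take the $\epsilon\to 0$ limit of Corollary \ref{cor:main-fused} under the scaling \eqref{eq:limit-parameters} with $\lambda=(1,1,\dots)$, handling the left-hand side via Proposition \ref{prop:limit-to-continuum} and the identification \eqref{eq:beta-to-vertex} (with $(q;q)_{m_i^{(x)}}\sim\epsilon^{m_i^{(x)}}m_i^{(x)}!$ and bounded observables), and the right-hand side via the change of variables $y_i=1+\epsilon u_i$ and Lemma \ref{lemma:f-lim}, with the powers of $\epsilon$ cancelling and $(-s)^{m}\cdot(-s)^{|\mu|-m}\to(-1)^{|\mu|}$. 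Your added attention to contour deformation, uniform convergence, and uniform integrability only fleshes out steps the paper treats briefly, and your index bookkeeping (exponents $t-c_k+1$, $t-c_k$, the shift $\mu\mapsto\mu-1^m$) lands where it should.
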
	
\begin{proof} We start with \eqref{eq:main-fused} and 
$(\lambda_1,\lambda_2,\dots)=(1,1,\dots)$; equivalently, $\ell_j=j$. 
Let us make the substitution \eqref{eq:limit-parameters} and look at the 
asymptotics of both sides. 

On the left-hand side we have averages of the observables $\O_\mu$ given 
by \eqref{eq:obs2}. The denominators are deterministic and asymptotically give
$(q;q)_{m_i^{(x)}}^{-1}\sim \epsilon^{-m_i^{(x)}}m_i^{(x)}!$ as 
$\epsilon\to 0$. For the numerators, according to Proposition \ref{prop:limit-to-continuum},
we obtain, along row $t=n$, $\epsilon^{-1} H_{\ge i}^\nu (x+1)\to h^{(\ge i)}(x+1,t)$, and, changing $\nu$ to $\nu/\mu$ with $\epsilon$-independent $\mu$, $\epsilon^{-1} H_{\ge k}^{\nu/\mu} (x+1)\to h^{(\ge k)}(x+1,t)$ weakly as $\epsilon\to 0$, where $h$ denotes the coloured height functions of the Beta-polymer. Using \eqref{eq:beta-to-vertex} and the fact that all the observables are bounded, we see that $\E\O_\mu$ is asymptotically equivalent to $\epsilon^{-m}$ times the left-hand side of \eqref{eq:main-beta}. 

For the right-hand side of \eqref{eq:main-fused}, we change the variables $y_i=1+\epsilon u_i$, use Lemma \ref{lemma:f-lim}, and also $(q;q)_{j}^{-1}(q;q)_{\fm_k-j}^{-1}\sim \epsilon^{-\fm_k} j!^{-1}(\fm_k-j)!^{-1}$. 

The powers of $\epsilon$ from the $q$-symbols on both sides cancel out, the powers of $\epsilon$ from the changes of variables and $f\to\ff$ limit also cancel out, and the prefactor $(-s)^m$ in the right-hand side of \eqref{eq:main-fused} together with $(-s)^{|\mu|-m}$ required to convert $f^\stoch_{\mu-1^m}$ to $f_{\mu-1^m}$ give $(-1)^{|{\mu}|}$  in the limit. This concludes the proof. 
\end{proof}

\subsection{Strict-weak polymer}\label{ssec:strict-weak} The strict-weak or gamma polymer was first introduced in \cite{CSS} and \cite{OCO}. Its partition functions are determined by a very similar recurrence as those for the Beta-polymer. Namely, let us define $Z_{t,m}^{(k)}$ for $(t,m,k)\in \mathbb{Z}_{\ge 0}\times\mathbb{Z}_{\ge1}\times\mathbb{Z}_{\ge 1}$,  $t\ge m+k-1$,  by 
$$
Z_{t,m}^{(k)}=\eta_{t,m}Z_{t-1,m}^{(k)}+Z_{t-1,m-1}^{(k)}
$$
with boundary conditions 
$$
Z^{(k)}_{t,t-k+2}\equiv 1, \qquad Z_{t,1}^{(k)}=\eta_{k,1}\eta_{k+1,1}\cdots \eta_{t,1},
$$
where $\{\eta_{t,m}\}_{t,m\ge 1}$ is a family of independent identically distributed random variables with a Gamma distribution that has density
$$
\frac{1}{\Gamma(\kappa)}\,x^{\kappa-1}\exp(-x),\qquad x>0, 
$$ 	
with respect to the Lebesgue measure. Here $\kappa>0$ is a parameter.

The strict-weak polymer is a limiting instance of the Beta-polymer of Section \ref{ssec:beta}, because a Beta-distributed random variable with density \eqref{eq:beta-density} and parameters $(a,b)=(\kappa,\epsilon^{-1})$, when multiplied by $\epsilon^{-1}$, converges to a Gamma-distributed random variable with parameter $\kappa$ as $\epsilon\to 0$, both in distribution and with all moments. 

In order to argue the convergence of the partition functions and their moments, we will appeal to the following 

\begin{lem}
\label{lem:moments-conv}
Let $\{X_n\}_{n\ge 1}$ and $\{Y_n\}_{n\ge 1}$ be two sequences of nonnegative random variables such that $\{(X_n,Y_n)\}_{n\ge 1}$ weakly converges to a two-dimensional random variable $(X,Y)$ with finite moments and a jointly continuous distribution function.\footnote{This requirement of continuity can be easily removed by an extra step in the proof.} Furthermore, assume that coordinate moments converge:
$$
\lim_{n\to\infty} \E X_n^k = \E X^k, \qquad \lim_{n\to\infty} \E Y_n^k = \E Y^k,\qquad k\ge 1. 
$$
Then the joint moments also converge:
$$
\lim_{n\to\infty} \E \bigl( X_n^k Y_n^l\bigr) = \E \bigl(X^k Y^l\bigr), \qquad k,l\ge 1.
$$ 
\end{lem}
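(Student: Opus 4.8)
\textbf{Proof proposal for Lemma \ref{lem:moments-conv}.}

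The plan is to reduce joint moment convergence to uniform integrability of the products $X_n^k Y_n^l$ together with the already-assumed weak convergence of $(X_n,Y_n)$ to $(X,Y)$. The standard fact I would invoke is that if $W_n\Rightarrow W$ and $\{W_n\}$ is uniformly integrable, then $\E W_n\to\E W$; applied to $W_n=X_n^k Y_n^l$ and $W=X^k Y^l$ (the latter having finite expectation by the finite-moment hypothesis on $(X,Y)$, after an application of Cauchy--Schwarz), this gives exactly the conclusion. So the whole argument hinges on establishing that $\{X_n^k Y_n^l\}_{n\ge 1}$ is uniformly integrable for each fixed $k,l\ge 1$.

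To get uniform integrability I would use the Cauchy--Schwarz / de la Vallée-Poussin style bound: $\E\bigl[(X_n^k Y_n^l)^{1+\varepsilon}\bigr]\le \bigl(\E X_n^{2k(1+\varepsilon)}\bigr)^{1/2}\bigl(\E Y_n^{2l(1+\varepsilon)}\bigr)^{1/2}$ for any $\varepsilon>0$, so it suffices to show that for a single exponent $p$ (take $p=2k(1+\varepsilon)$ and $p=2l(1+\varepsilon)$ respectively, and in fact any fixed integer exponent above these will do since the $\eta$'s have all moments) the quantities $\E X_n^p$ and $\E Y_n^p$ are bounded uniformly in $n$. But the coordinate-moment convergence hypothesis says precisely $\E X_n^p\to\E X^p<\infty$ and $\E Y_n^p\to\E Y^p<\infty$ for every integer $p\ge 1$; a convergent sequence is bounded, so $\sup_n \E X_n^p<\infty$ and $\sup_n\E Y_n^p<\infty$. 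This yields $\sup_n \E\bigl[(X_n^kY_n^l)^{1+\varepsilon}\bigr]<\infty$, which is the classical sufficient condition for uniform integrability of $\{X_n^kY_n^l\}$.

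The remaining point is that weak convergence of $(X_n,Y_n)$ to $(X,Y)$ entails weak convergence of the continuous images $X_n^kY_n^l$ to $X^kY^l$; this is immediate from the continuous mapping theorem applied to the map $(x,y)\mapsto x^k y^l$ on $[0,\infty)^2$. (The joint continuity of the limiting distribution function stated in the hypothesis is only a convenience and, as the footnote already notes, can be bypassed; I would not dwell on it.) Combining weak convergence of $X_n^kY_n^l$ with the uniform integrability established above gives $\E\bigl(X_n^kY_n^l\bigr)\to\E\bigl(X^kY^l\bigr)$, completing the proof.

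I do not expect a genuine obstacle here: the lemma is a soft measure-theoretic statement and every ingredient (continuous mapping theorem, uniform integrability criterion via bounded $(1+\varepsilon)$-moments, convergence-plus-uniform-integrability implies convergence of expectations) is completely standard. The only mild care needed is bookkeeping the exponents in the Cauchy--Schwarz split so that one only ever appeals to boundedness of \emph{coordinate} moments, which is exactly what is assumed — there is no circularity and no need for any joint control beyond the weak convergence.
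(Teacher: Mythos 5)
Your proof is correct, and it reaches the conclusion by a slightly different packaging of the same underlying mechanism as the paper. The paper argues by hand: it truncates, writing $\E(X_n^kY_n^l)$ as the sum of $\E\bigl(X_n^kY_n^l\,\bm{1}\{X_n<C,Y_n<C\}\bigr)$ and a tail term, sends the first piece to its limit by weak convergence (this is where the continuity of the limiting distribution function, or a choice of $C$ at a continuity point, is needed, since the truncated functional is discontinuous), and kills the tail uniformly in $n$ via the elementary inequality $x^ay^b\le x^{a+b}+y^{a+b}$, which bounds it by $2C^{-1}\bigl(\E X_n^{k+l+1}+\E Y_n^{k+l+1}\bigr)$ with the coordinate moments bounded because they converge. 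You instead invoke the continuous mapping theorem to get $X_n^kY_n^l\Rightarrow X^kY^l$ and verify uniform integrability of the products through Cauchy--Schwarz plus boundedness of coordinate moments, then conclude by the standard ``weak convergence plus uniform integrability implies convergence of expectations'' theorem. The two routes buy slightly different things: yours never touches indicator functionals, so it genuinely dispenses with the continuity hypothesis (this is exactly the ``extra step'' the paper's footnote alludes to), while the paper's version is self-contained and uses only first principles. Two small points of hygiene in your write-up: the appeal to ``the $\eta$'s have all moments'' is out of place, since the lemma is an abstract statement with no $\eta$'s; and because the hypothesis only provides integer moments, you should either take $\varepsilon$ so that $2k(1+\varepsilon)$ and $2l(1+\varepsilon)$ are integers (e.g.\ $\varepsilon=1$, giving $\E\bigl[(X_n^kY_n^l)^2\bigr]\le\bigl(\E X_n^{4k}\bigr)^{1/2}\bigl(\E Y_n^{4l}\bigr)^{1/2}$) or note, as you gesture at, that Lyapunov/Jensen lets a bounded higher integer moment dominate the fractional one. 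With either fix the argument is complete.
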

\begin{proof}\footnote{We are very grateful to Vadim Gorin for providing the argument below.}
Fix $C>0$ and write
$$
\E \bigl( X_n^k Y_n^l\bigr)=\E \bigl( X_n^k Y_n^l \cdot \bm{1}\{X_n<C, Y_n<C\}\bigr)+\E \bigl( X_n^k Y_n^l \cdot \bm{1}\{X_n\ge C\text{ or }Y_n\ge C\}\bigr),
$$
where we use $\bm{1}\{A\}$ to denote the indicator function of an event $A$. In the first term, we have a bounded functional under the expectation, which converges to $\E \bigl( X^k Y^l \cdot \bm{1}\{X<C, Y<C\}\bigr)$ by the distributional convergence of $(X_n,Y_n)$. (If the distribution of $(X,Y)$ were not continuous, we would have needed to choose $C$ as its continuity point.)

Further, let us show that the second term converges to 0 as $C\to\infty$ uniformly in $n$. We have
\begin{multline*}
\E \bigl( X_n^k Y_n^l \cdot \bm{1}\{X_n\ge C\text{ or }Y_n\ge C\}\bigr)\le \E \bigl( X_n^k Y_n^l \cdot (X_n/C+Y_n/C)\bigr)=C^{-1} \E \bigl( X_n^k Y_n^l \cdot(X_n+Y_n)\bigr)\\ \le 2C^{-1}\bigl(\E X_n^{k+l+1}+\E Y_n^{k+l+1}\bigr),
\end{multline*}
where we used the inequality $x^ay^b\le x^{a+b}+y^{a+b}$ that holds for $x,y,a,b>0$. Since the moments of $X_n$ and $Y_n$ are bounded (because they converge by the hypothesis), the final expression tends to 0 as $C\to\infty$ uniformly in $n$.

As $\lim_{C\to \infty}\E \bigl( X^k Y^l \cdot \bm{1}\{X<C, Y<C\}\bigr) =\E \bigl( X^k Y^l \bigr)$, the proof is complete. 
\end{proof}

Lemma \ref{lem:moments-conv} implies, in particular, that by choosing $(\sigma-\rho,\rho)=(\kappa,\epsilon^{-1})$, we can ensure the convergence
$$
\lim_{\epsilon \to 0} \epsilon^{m+k-t-2} \cdot \z^{(k)}_{t,m}=Z^{(k)}_{t,m}, \qquad t\ge m+k-2,
$$
together will all the joint moments. This will allow us to take such a limit in Proposition \ref{prop:main-beta} momentarily, after the following analog of Lemma \ref{lemma:f-lim}. 

\begin{lem}\label{lemma:f-lim2} Fix a $\lambda$-coloured composition $\mu=0^{\bm{m}^{(0)}} 1^{\bm{m}^{(1)}} 2^{\bm{m}^{(2)}} \cdots$ of length $m\ge 1$, where $\lambda$ is a composition of weight $|\lambda|=m$. Then there exists a limit 
	\begin{equation}
	\label{eq:f-limit2}
	\fp_\mu(\lambda;v_1,\dots,v_m)=\lim_{\sigma\to \infty} (-1)^{|\mu|+m}\sigma^{-|\mu|}\cdot
	{\ff_\mu(\lambda;\sigma/2+v_1,\dots,\sigma/2+v_m)}
	\end{equation} 
	where the convergence is uniform for complex $v_1,\dots,v_m$ varying in compact 
	sets that do not include $0$, and the function 
	$\fp_\mu(\lambda;u_1,\dots,u_m)$ is a polynomial in $v_1^{-1},\ldots,v_m^{-1}$ that can be characterized as follows.
	
	 {\rm (i)} For a rainbow $\mu$, in the anti-dominant sector $\mu_1\le\mu_2\le\dots\le\mu_m$ one has (omitting $\lambda$ from the notation)
	\begin{equation}
	\label{eq:f-lim-antidominant2}
	\fp_\mu(v_1,\dots,v_m)=\prod_{i=1}^m 
    v_i^{-\mu_i-1},
	\end{equation} 
	and in case $\mu_i<\mu_{i+1}$ for some $1\le i\le m-1$ one uses the exchange relations \eqref{eq:exchange-limit} with $(\fp, v_*)$ instead of $(\ff, u_*)$.

	{\rm (ii)} For a general colouring composition $\lambda$, let $\theta:\{1,\dots,m\}\to\{1,\dots,n\}$ be a monotone map with $\theta_*((1,\dots,1))=\lambda$. Then 
	\begin{equation}
	\label{eq:f-lim-merge2}
	\sum_{{\rm rainbow}\,  \nu\,:\, \theta_*(\nu) = \mu}
	\fp_{\nu}(u_1,\dots,u_m)
	= \fp_{\mu}(\lambda; u_1,\dots,u_m).
	\end{equation}
\end{lem}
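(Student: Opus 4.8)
The statement is the exact analog of Lemma \ref{lemma:f-lim}, one rung further down the ladder of limits: there we passed from $f_\mu$ to $\ff_\mu$ via $q=e^{-\epsilon}\to 1$ with $s^2=q^\sigma$; here we push $\ff_\mu$ to $\fp_\mu$ via $\sigma\to\infty$. The plan is to mimic the structure of the proof of Lemma \ref{lemma:f-lim}, namely to verify the claimed limit on the explicit factorized seed \eqref{eq:f-lim-antidominant}, check that it is compatible with the two recursive mechanisms that generate all $\ff_\mu$ (the Hecke-type exchange operators $\mathfrak{T}_i$ of \eqref{eq:exchange-limit} and the colour-merging sum \eqref{eq:f-lim-merge}), and conclude by induction. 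Since $\ff_\mu(\lambda;\cdot)$ is, by construction, a rational function obtained from the seed by finitely many applications of $\mathfrak{T}_i$ and a finite colour-merging sum, it suffices to control each of these operations under the rescaling $u_i=\sigma/2+v_i$, $\sigma\to\infty$.

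\textbf{Key steps.} First I would treat the anti-dominant rainbow case directly: substituting $u_i=\sigma/2+v_i$ into \eqref{eq:f-lim-antidominant} gives
$$
\ff_\mu(\sigma/2+v_1,\dots,\sigma/2+v_m)=\prod_{i=1}^m\frac{1}{-v_i}\left(\frac{\sigma+v_i}{-v_i}\right)^{\mu_i}=(-1)^{m+|\mu|}\prod_{i=1}^m v_i^{-\mu_i-1}\left(1+\frac{v_i}{\sigma}\right)^{\mu_i}\sigma^{|\mu|}\cdot\sigma^{0},
$$
wait—more carefully, $(\sigma+v_i)^{\mu_i}=\sigma^{\mu_i}(1+v_i/\sigma)^{\mu_i}$, so multiplying by $(-1)^{|\mu|+m}\sigma^{-|\mu|}$ and letting $\sigma\to\infty$ yields $\prod_i v_i^{-\mu_i-1}$, uniformly on compacta avoiding $\{v_i=0\}$; this is \eqref{eq:f-lim-antidominant2}. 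Second, I would check the exchange step: the operator $\mathfrak{T}_i$ in \eqref{eq:exchange-limit} is expressed purely in terms of differences $u_i-u_{i+1}=v_i-v_{i+1}$, hence is literally invariant under the shift $u_j\mapsto\sigma/2+v_j$, so it commutes with taking the $\sigma\to\infty$ limit; since both the normalizing prefactor $(-1)^{|\mu|+m}\sigma^{-|\mu|}$ and the operator $\mathfrak{T}_i$ are unchanged when $\mu$ is replaced by the transposed composition (same length, same weight), applying $\mathfrak{T}_i$ to the limit relation for $\mu$ produces the limit relation for $(\mu_1,\dots,\mu_{i+1},\mu_i,\dots,\mu_m)$, which establishes (i) by the usual induction from anti-dominant toward dominant compositions as in Section \ref{ssec:recursive}. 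Third, for (ii) I would simply sum the rainbow limit relation \eqref{eq:f-limit2} over all rainbow $\nu$ with $\theta_*(\nu)=\mu$; the prefactor $(-1)^{|\mu|+m}\sigma^{-|\mu|}$ depends only on $|\mu|=|\nu|$ and $m$, so it pulls out of the finite sum, and combining with the defining colour-merging relation \eqref{eq:f-lim-merge} for $\ff$ on the left and using uniform convergence on compacta to pass the limit through the finite sum yields \eqref{eq:f-lim-merge2}. Finally, the assertion that $\fp_\mu$ is a polynomial in $v_1^{-1},\dots,v_m^{-1}$ follows because the anti-dominant seed \eqref{eq:f-lim-antidominant2} is such a polynomial and $\mathfrak{T}_i$ preserves the ring $\mathbb{C}[v_1^{\pm1},\dots,v_m^{\pm1}]$ localized away from the hyperplanes $v_i=v_j$—one must observe that the potential denominators $v_i-v_{i+1}$ introduced by $\mathfrak{T}_i$ always cancel, exactly as the analogous cancellation makes $f_\mu$ a genuine (Laurent) polynomial-type object in \cite{BW}; concretely, $\mathfrak{T}_i=1-\frac{v_i-v_{i+1}+1}{v_i-v_{i+1}}(1-\mathfrak s_i)$ applied to a function symmetric in $v_i,v_{i+1}$ gives back that function, and applied to a general element the antisymmetric part supplies the divisibility by $v_i-v_{i+1}$.

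\textbf{Main obstacle.} The only genuinely delicate point is the uniformity and validity of the limit interchange: one must be sure that the $\sigma\to\infty$ limit really commutes with the finitely many applications of $\mathfrak{T}_i$ and with the finite colour-merging sum, and that no spurious pole in $v_i$ or along $v_i=v_j$ is created in the process — i.e.\ that the limiting object is again a bona fide rational function of the asserted shape rather than something that only makes sense after further cancellation. Because $\mathfrak{T}_i$ involves division by $v_i-v_{i+1}$, the cleanest way to handle this is to carry the induction not for the bare function but for the pair (function, its product with $\prod_{i<j}(v_i-v_j)$), showing the latter is a genuine Laurent polynomial at every stage; then the limit is taken coefficientwise and uniformity on compacta avoiding $\{v_i=0\}\cup\{v_i=v_j\}$ is automatic, and one removes the $\{v_i=v_j\}$ restriction at the end by noting the function extends rationally there. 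I expect this bookkeeping to be routine given the parallel arguments already in \cite{BW} and in the proof of Lemma \ref{lemma:f-lim}, so I would present it briefly: "This limit is an immediate consequence of the recursive characterization \eqref{eq:f-lim-antidominant}--\eqref{eq:exchange-limit} of $\ff_\mu$ together with \eqref{eq:f-lim-merge}, exactly as in the proof of Lemma \ref{lemma:f-lim}; the shift $u_i=\sigma/2+v_i$ leaves the operators $\mathfrak{T}_i$ invariant, and the normalization $(-1)^{|\mu|+m}\sigma^{-|\mu|}$ is constant along each exchange and merging step."
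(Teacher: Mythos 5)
Your proposal is correct and is precisely the argument the paper has in mind: the paper offers no written proof (it simply declares the lemma "straightforward"), and your route — direct substitution $u_i=\sigma/2+v_i$ into the factorized anti-dominant seed \eqref{eq:f-lim-antidominant}, the observation that the operators $\mathfrak{T}_i$ depend only on the differences $u_i-u_{i+1}=v_i-v_{i+1}$ and hence commute with the shift and the limit, and pulling the common prefactor $(-1)^{|\mu|+m}\sigma^{-|\mu|}$ out of the finite colour-merging sum — is exactly the intended routine verification, with your sign and power-of-$\sigma$ bookkeeping checking out. Your remark on why $\fp_\mu$ stays a polynomial in $v_1^{-1},\dots,v_m^{-1}$ (divisibility of the antisymmetric part by $v_i-v_{i+1}$) is a correct and welcome extra detail.
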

The proof of this lemma is straightforward.

\begin{prop}
\label{prop:main-strict-weak}
Fix $\kappa>0$, and consider the partition functions $Z_{t,m}^{(k)}$ of 
the strict-weak polymer as defined above with $\kappa$ being the parameter of the Gamma distribution. Let $\mu$ and the associate notation be as in Proposition \ref{prop:main-beta}. Then for any $t\ge m+\max\{i:m_i^{(x)}>0\}-1$ we have
\begin{multline}
\label{eq:main-strict-weak}
\E\, \prod_{x\ge 1}\displaystyle\prod_{i=1}^n  
\frac{\left(Z^{(i+1)}_{t,x}\right)^{m_i^{(x)}}}{{m_i^{(x)}}!}
= 
\frac{1}{(2\pi\sqrt{-1})^m}\oint\cdots\oint
\prod_{1\le i<j\le m} \frac{v_j-v_i}{v_j-v_i-1} \\
\times
\prod_{k=1}^{\alpha}
\prod_{r>\fm[1,{k-1}]}^{\fm[1,k]}\frac{(\kappa+v_r)^{t-c_k}}{
	\fm_k!}
\cdot \fp_{\mu-1^m}({\rm colour}(\mu);v_1,\dots,v_m)
\prod_{i=1}^m 
{dv_i},
\end{multline}
where the integration contours are positively oriented and nested around $0$, with $v_j$-contour containing $(v_i\text{-contour})+1$ 
for all $i<j$. 
\end{prop}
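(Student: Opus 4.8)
The plan is to obtain \eqref{eq:main-strict-weak} as the $\sigma\to\infty$ degeneration of the Beta-polymer formula \eqref{eq:main-beta}, using the substitution $(\sigma-\rho,\rho)=(\kappa,\epsilon^{-1})$ and letting $\epsilon\to 0$ (equivalently $\sigma\to\infty$), exactly parallel to the derivation of Proposition \ref{prop:main-beta} from Corollary \ref{cor:main-fused}. First I would recall that, by the remarks preceding Lemma \ref{lemma:f-lim2}, with this choice of parameters a Beta$(\kappa,\epsilon^{-1})$ variable multiplied by $\epsilon^{-1}$ converges in distribution and in all moments to a Gamma$(\kappa)$ variable, so the recursion defining $\z^{(k)}_{t,m}$ degenerates (the factor $1-\eta$ tends to $1$ after rescaling) to the recursion for $Z^{(k)}_{t,m}$, and hence $\epsilon^{m+k-t-2}\z^{(k)}_{t,m}\to Z^{(k)}_{t,m}$. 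Lemma \ref{lem:moments-conv}, applied inductively over the finitely many pairs $(i,x)$ with $m_i^{(x)}>0$, upgrades this to convergence of all the joint moments appearing on the left-hand side of \eqref{eq:main-beta}; note that $\z^{(i+1)}_{t,x}-\z^{(i)}_{t,x}$ rescales the same way as $\z^{(i)}_{t,x}$ (both by $\epsilon^{t-x-i+1}$, up to the shift), and since the rescaling exponents are additive in the product one checks that the total power of $\epsilon$ pulled out of the left-hand side matches the one pulled out of the right-hand side, so the limit is finite and nonzero.

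Next I would track the right-hand side of \eqref{eq:main-beta} under $\sigma\to\infty$. The natural change of variables is $u_i=\sigma/2+v_i$, which moves the contours (positively oriented, nested around $\sigma/2$) to contours positively oriented and nested around $0$ with the same $+1$-nesting, as claimed. Under this substitution $u_j-u_i=v_j-v_i$ and $u_j-u_i+1=v_j-v_i+1$, so the Vandermonde-type prefactor $\prod_{i<j}(u_j-u_i)/(u_j-u_i+1)$ becomes $\prod_{i<j}(v_j-v_i)/(v_j-v_i+1)$; comparing with \eqref{eq:main-strict-weak}, which has $(v_j-v_i)/(v_j-v_i-1)$, I must also flip the sign of each integration variable, $v_i\mapsto -v_i$, which reverses contour orientation an even number of times overall and accounts for the change $v_j-v_i+1\mapsto v_j-v_i-1$ together with the argument of $\fp$. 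For the rapidity factors: $\sigma/2-u_p-\rho=-v_p-\rho=-v_p-\epsilon^{-1}$ and $\sigma/2-u_p=-v_p$, so $\bigl((\sigma/2-u_p-\rho)/(\sigma/2-u_p)\bigr)^{t-c_k+1}=\bigl(1+\epsilon^{-1}/v_p\bigr)^{t-c_k+1}$, which after the rescaling of the left-hand side (each power of $\epsilon^{-1}$ inside these factors is exactly absorbed by the rescaling exponent $t-c_k+1$, resp.\ $t-c_k$, of the corresponding delayed partition function) degenerates to $(\kappa+v_p)^{t-c_k+1}$ once $v_p\mapsto-v_p$ is applied and $\epsilon^{-1}=\rho\to\infty$ with $\sigma-\rho=\kappa$; I would do this bookkeeping carefully so that the two $r$-blocks collapse to the single product $\prod_r(\kappa+v_r)^{t-c_k}$ in \eqref{eq:main-strict-weak}. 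The $j$-summation $\sum_{j=0}^{\fm_k}(-1)^j/(j!(\fm_k-j)!)$ times a quantity of the form $\prod(1+\epsilon^{-1}/v)^{t-c_k+1}/\prod(1+\epsilon^{-1}/v)^{t-c_k}$ collapses in the limit because the $j$-dependence (through the single extra power on the first $j$ factors) disappears, leaving $\prod_{k}\prod_r(\kappa+v_r)^{t-c_k}/\fm_k!$ after using $\sum_j(-1)^j/(j!(\fm_k-j)!)\cdot(\text{unit})=\ldots$; in fact the cleanest route is to note that in the Gamma limit the $j=0$ term already has the correct form and the cancellation of the remaining $j$ terms is forced by the vanishing of certain residues, but I would instead simply pass to the limit termwise and recognize $\sum_{j=0}^{\fm_k}\binom{\fm_k}{j}(-1)^j/\fm_k!$-type identities.

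Finally, the $f$-factor: by Lemma \ref{lemma:f-lim2}, $(-1)^{|\mu|+m}\sigma^{-|\mu|}\ff_{\mu-1^m}(\mathrm{colour}(\mu);\sigma/2+v_1,\dots,\sigma/2+v_m)\to\fp_{\mu-1^m}(\mathrm{colour}(\mu);v_1,\dots,v_m)$ uniformly on compacta avoiding $0$ (and note $\ff_{\mu-1^m}$ has $|\mu-1^m|=|\mu|-m$, so I'd match the powers of $\sigma$ accordingly; the extra $(-1)$ factors combine with the $(-1)^{|\mu|}$ prefactor of \eqref{eq:main-beta} and the sign flips $v_i\mapsto-v_i$ to produce the clean $+1$ coefficient in \eqref{eq:main-strict-weak}). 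One subtlety I would address is dominated convergence for the contour integral: the integrand of \eqref{eq:main-beta} converges uniformly on the (fixed, $\sigma$-independent after rescaling) contours around $0$ — the only poles are at $v_i=0$ (from $\fp$) and at $v_i=-\kappa$ (from the rapidity factors, which in the limit sit outside the contours) and at $v_j-v_i=\pm 1$ (handled by the nesting) — so passing the limit under the integral sign is justified, and the residue-sum interpretation of Remark \ref{rmk:sum-of-residues} gives an even more elementary justification since both sides are then finite sums of rational expressions converging coefficientwise. The main obstacle, and the step I would spend the most care on, is the precise matching of all the powers of $\epsilon$ (equivalently $\sigma$) and all the signs across the three ingredients — the rescaling of the partition functions via Lemma \ref{lem:moments-conv}, the $\epsilon^{-\fm_k}$ from the $q$-Pochhammers $(q;q)_j^{-1}(q;q)_{\fm_k-j}^{-1}$, and the $\sigma^{-|\mu|}$ normalization of $\fp$ in Lemma \ref{lemma:f-lim2} — together with verifying that the surviving $j$-sum indeed telescopes to $\prod_k \fm_k!^{-1}\prod_r(\kappa+v_r)^{t-c_k}$; everything else is a routine limit of a uniformly convergent family of contour integrals.
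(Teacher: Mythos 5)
Your overall strategy (degenerate the Beta-polymer identity \eqref{eq:main-beta} with $(\sigma-\rho,\rho)=(\kappa,\epsilon^{-1})$, $\epsilon\to0$, using Lemma \ref{lem:moments-conv} on the left and Lemma \ref{lemma:f-lim2} on the right) is the same as the paper's, but the execution of the right-hand side contains a genuine error: you chose the wrong change of variables, and with it the claimed limits do not hold. You substitute $u_i=\sigma/2+v_i$ (then flip $v_i\mapsto -v_i$), which centers the contours at the poles of the rapidity factors rather than at the poles coming from $\ff_{\mu-1^m}$. Two things then go wrong. First, the rapidity factor becomes $\bigl(1+\rho/v_p\bigr)^{t-c_k+1}$, which depends on $\rho$ only; multiplying by the compensating power of $\epsilon=\rho^{-1}$ gives $(\epsilon+1/v_p)^{t-c_k+1}\to v_p^{-(t-c_k+1)}$, not $(\kappa+v_p)^{t-c_k+1}$ — the $\kappa$-dependence is irretrievably lost, so your integrand cannot converge to that of \eqref{eq:main-strict-weak}. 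Second, the argument of $\ff_{\mu-1^m}$ in \eqref{eq:main-beta} is $(-u_1,\dots,-u_m)$, which under your substitution sits near $-\sigma/2$, whereas Lemma \ref{lemma:f-lim2} (eq. \eqref{eq:f-limit2}) describes the asymptotics of $\ff$ at arguments $\sigma/2+v_i$; so the lemma does not apply as you invoke it. The correct substitution is $u_i=-\sigma/2-v_i$, using the \emph{second} contour choice of Proposition \ref{prop:main-beta} (negatively oriented around $-\sigma/2$): then $-u_i=\sigma/2+v_i$ matches Lemma \ref{lemma:f-lim2} exactly, $\sigma/2-u_p-\rho=\kappa+v_p$ and $\sigma/2-u_p=\sigma+v_p\sim\sigma$ produce the factors $(\kappa+v_p)$ with the right $\sigma^{-1}$ decay, and the orientation flip turns the contours into the positively oriented ones of \eqref{eq:main-strict-weak}.

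Your treatment of the $j$-summation is also wrong in a way that matters. You claim the $j$-dependence disappears in the limit and that one then uses identities of the type $\sum_{j=0}^{\fm_k}(-1)^j/(j!(\fm_k-j)!)$ — but that sum equals $(1-1)^{\fm_k}/\fm_k!=0$, so your route would make the whole right-hand side vanish. The actual mechanism is the opposite: with the correct substitution each factor $(\kappa+v)/(\sigma+v)$ contributes a factor $\sigma^{-1}$, so the $j$-th term is of order $\sigma^{-\fm_k(t-c_k)-j}$ and only the $j=0$ term survives after normalization, yielding $\fm_k!^{-1}\prod_r(\kappa+v_r)^{t-c_k}$. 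A smaller slip on the left-hand side: the difference $\z^{(i+1)}_{t,x}-\z^{(i)}_{t,x}$ is dominated by $\z^{(i+1)}_{t,x}\sim\sigma^{x+i-t-1}Z^{(i+1)}_{t,x}$ (the other term is smaller by a factor $\sigma^{-1}$), not ``rescales the same way as $\z^{(i)}_{t,x}$''; this dominance is precisely why the pure moments of $Z^{(i+1)}_{t,x}$, and not differences, appear in \eqref{eq:main-strict-weak}, and it fixes the exponent $\sum_{i,x}m_i^{(x)}(x+i-t-1)$ that must be matched against the powers of $\sigma$ produced by the rapidity factors and by \eqref{eq:f-limit2} on the right.
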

\begin{proof} We start with \eqref{eq:main-beta}, set $\sigma-\rho$ to $\kappa$, change the integration variables via $u_i=-v_i-\sigma/2$, and take the limit. Since $\z^{(i)}_{t,x}\sim \sigma^{x+i-t-2}Z^{(i)}_{t,x}$ and 
$\z^{(i+1)}_{t,x}\sim \sigma^{x+i-t-1}Z^{(i)}_{t,x}$, the second term dominates, and employing Lemma \ref{lem:moments-conv}, we obtain the convergence of the left-hand side of \eqref{eq:main-beta} to that of \eqref{eq:main-strict-weak}, with an additional power of $\sigma$ that has exponent 
$$
\sum_{i,x\ge 1} m_i^{(x)}(x+i-t-1)=|\mu|+\sum_{k=1}^\alpha c_k\fm_k-(t+1)m.
$$
On the other hand, in the integrand
we have factors of the form $\sigma/2-u_*=\sigma+v_*\sim\sigma$ in the denominator,
that make the terms with $j=0$ dominate and produce the power of $\sigma$ with the exponent
$$
\sum_{k=1}^\alpha \fm_k (c_k-t)=\sum_{k=1}^\alpha c_k\fm_k-tm. 
$$
Finally, the limit relation \eqref{eq:f-limit2} yields the power of $\sigma$ with the exponent $|\mu|-m$, thus matching the powers of $\sigma$ on both sides. All the signs cancel out, where one needs to note that we used the second choice of the contours in Proposition \ref{prop:main-beta} and changed the negative orientation to the positive one for \eqref{eq:main-strict-weak}.  
\end{proof}

\subsection{O'Connell-Yor semi-discrete Brownian polymer}\label{ssec:OY-polymer} This polymer model was first introduced in \cite{OY}. It is defined using a family $\{B_n(t)\}_{n\ge 1, t\ge 0}$ of independent standard Brownian motions. For each $n\ge 1$ and $t \ge s\ge 0$ we define its point-to-point partition function (with one of the points situated on level 1) as 
\begin{equation}
\label{eq:OY-polymer}
Z^{OY}_{(1,s)\to (n,t)}=
\int_{s=\tau_0<\tau_2<\dots<\tau_{n}=t}
\exp\left(\sum_{i=1}^{n}(B_{i}(\tau_i)-B_{i}(\tau_{i-1}))\right)d\tau_1\cdots d\tau_{n-1}.
\end{equation}

The classical functional central limit theorem and the fact that a 
Gamma-distributed random variable with large parameter $L$ divided by $L$ is 
approximately equal to 1 plus a standard normal variable divided by $\sqrt{L}$, 
yield the convergence, cf. \cite[Section 7.3]{BGW},
\begin{equation}
\label{eq:strict-weak-to-OY}
\lim_{L\to\infty} 
\frac{Z^{(Ls+1)}_{Lt,n}(\kappa=L)}{L^{L(t-s)}}=\exp\bigl({\tfrac{s-t}2}
\bigr)\cdot Z^ { OY } _ { (1 , s)\to (n,t)},\qquad t\ge s\ge 0,\quad n\ge 1, 
\end{equation}
where on the left we take the strict-weak polymer partition functions from the 
previous section with the parameter $\kappa=L$, and the convergence is in 
finite-dimensional distributions.  

\begin{prop}
\label{prop:main-OY}
Let $\mu$ and the associate notation be as in Proposition \ref{prop:main-beta}, and fix $0\le s_{1}<\dots<s_{\alpha}$. Then for any $t\ge s_{\alpha}$ we have
\begin{multline}
\label{eq:main-OY}
\E\, \prod_{x\ge 1}\displaystyle\prod_{i\ge 1}  
\frac{\left(\exp\bigl({\tfrac{s_i-t}2}
\bigr)\cdot Z^{OY}_{(1,s_i)\to(x,t)}\right)^{m_i^{(x)}}}{{m_i^{(x)}}!}
= 
\frac{1}{(2\pi\sqrt{-1})^m}\oint\cdots\oint
\prod_{1\le i<j\le m} \frac{v_j-v_i}{v_j-v_i-1} \\
\times
\prod_{k=1}^{\alpha}
\frac{\exp\left((t-s_k)\cdot\sum_{r>\fm[1,{k-1}]}^{\fm[1,k]}v_r\right)}{
	\fm_k!}
\cdot \fp_{\mu-1^m}({\rm colour}(\mu);v_1,\dots,v_m)
\prod_{i=1}^m 
{dv_i},
\end{multline}
with the integration contours are positively oriented and nested around $0$, with $v_j$-contour containing $(v_i\text{-contour})+1$ 
for all $i<j$. 
\end{prop}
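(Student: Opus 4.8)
The strategy is to obtain Proposition \ref{prop:main-OY} as a limit of Proposition \ref{prop:main-strict-weak}, exactly paralleling the way the strict-weak case was derived from the Beta case. The input is the convergence \eqref{eq:strict-weak-to-OY} of rescaled strict-weak partition functions (with Gamma parameter $\kappa = L$) to the O'Connell--Yor partition functions in finite-dimensional distributions, together with the moment-convergence upgrade supplied by Lemma \ref{lem:moments-conv}. First I would fix the colours $c_1 < \dots < c_\alpha$ of $\mu$ and, for each $k$, set the time coordinate to scale with $L$: take $t \mapsto Lt$ in Proposition \ref{prop:main-strict-weak} and let the colour $c_k$ scale as $c_k \mapsto L s_k + 1$ (so that the shift $i+1$ in the delayed partition functions $Z^{(i+1)}$ matches the level-1 convention in \eqref{eq:OY-polymer}). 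With $\kappa = L$, the left-hand side of \eqref{eq:main-strict-weak} becomes $\E \prod_{x,i} (Z^{(Ls_i+1)}_{Lt,x}(\kappa=L))^{m_i^{(x)}}/(m_i^{(x)}!)$, which after dividing by the appropriate power of $L$ converges by \eqref{eq:strict-weak-to-OY} and Lemma \ref{lem:moments-conv} to the left-hand side of \eqref{eq:main-OY}.

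On the right-hand side of \eqref{eq:main-strict-weak} I would perform the change of variables $v_i \mapsto v_i / L$ (keeping the contours positively oriented and nested around $0$, now with $v_j$-contour containing $(v_i\text{-contour}) + 1/L$, which can be rescaled back to spacing $1$ after the limit by a further trivial change of variables, or simply tracked as contours shrinking toward $0$). Under this substitution the Vandermonde-type factor $\prod_{i<j}(v_j - v_i)/(v_j - v_i - 1)$ is invariant up to the rescaling bookkeeping, the measure $\prod dv_i$ contributes a power of $L^{-m}$, and the key factor $\prod_{k}\prod_{r}(\kappa + v_r)^{t - c_k}$ becomes $\prod_k \prod_r (L + v_r/L)^{Lt - Ls_k - 1} = \prod_k \prod_r L^{L(t-s_k)-1}(1 + v_r/L^2)^{Lt-Ls_k-1}$, and the elementary limit $(1 + v_r/L^2)^{L(t-s_k)} \to 1$ while $L^{L(t-s_k)}$ matches a corresponding power of $L$ from the left side; more carefully one should write $(\kappa + v_r)^{t-c_k}$ with $t \mapsto Lt$, $c_k \mapsto Ls_k + 1$ so the exponent is $L(t-s_k) - 1$, and $(L+v_r/L)^{L(t-s_k)} = L^{L(t-s_k)} e^{(t-s_k)v_r + o(1)}$ by a standard $\log(1+x)\approx x$ expansion, producing precisely the factor $\exp((t - s_k)\sum_r v_r)$ in \eqref{eq:main-OY}. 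Finally the function $\fp_{\mu - 1^m}(\mathrm{colour}(\mu); v_1,\dots,v_m)$ is a polynomial in the $v_i^{-1}$ by Lemma \ref{lemma:f-lim2}, homogeneous of degree $-(|\mu - 1^m| + m) = -(|\mu| - m + m) = -|\mu|$ in an appropriate sense, so under $v_i \mapsto v_i/L$ it scales by a power of $L$ that, together with the $L^{-m}$ from the measure and the powers of $L$ on the left and from the $(\kappa + v_r)$ factors, exactly cancels; I would verify the bookkeeping of the exponents of $L$ is consistent as was done in the proof of Proposition \ref{prop:main-strict-weak} (the exponent on the left is $\sum_{i,x} m_i^{(x)}(x + i - t - 1)$ after rescaling, and on the right one collects powers from the $(\kappa+v_r)$-factors and from $\fp$, and these must agree).

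The main obstacle is justifying the interchange of limit and expectation for the \emph{joint} moments on the left-hand side. Lemma \ref{lem:moments-conv} is stated for two sequences, so I would first remark (as the excerpt does in the proof of Proposition \ref{prop:main-strict-weak} and in the footnote to Lemma \ref{lem:moments-conv}) that an obvious induction extends it to any finite collection of nonnegative random variables, provided the relevant mixed moments of the limiting O'Connell--Yor partition functions are finite — which is known, e.g.\ from the explicit moment formulas or log-normal-type tail bounds for $Z^{OY}$ — and provided the coordinatewise moments of the rescaled strict-weak partition functions converge, which follows from Proposition \ref{prop:main-strict-weak} itself applied to single points. A secondary, more routine obstacle is controlling the right-hand side: the contour integral must be shown to converge to the claimed expression uniformly, which amounts to dominated convergence on a fixed compact contour, using that the integrand converges uniformly on compacta bounded away from $0$ (Lemma \ref{lemma:f-lim2} gives this for the $\fp$-factor, and the remaining factors converge by elementary estimates). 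Once these two points are in place the computation is entirely mechanical, and the statement \eqref{eq:main-OY} follows.
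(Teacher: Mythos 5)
Your overall strategy is the paper's: take $\kappa=L$, time $Lt$, colours $\approx s_kL$ in Proposition \ref{prop:main-strict-weak}, use \eqref{eq:strict-weak-to-OY} plus Lemma \ref{lem:moments-conv} (extended to finitely many variables) on the left, and pass to the limit in the contour integral on the right. However, the execution of the right-hand side contains a genuine error: the change of variables $v_i\mapsto v_i/L$ is both unnecessary and breaks the computation. After that substitution the key factor becomes $(L+v_r/L)^{L(t-s_k)-1}=L^{L(t-s_k)-1}\bigl(1+v_r/L^2\bigr)^{L(t-s_k)-1}$, and since $L(t-s_k)\log(1+v_r/L^2)=(t-s_k)v_r/L+o(1)\to 0$, the ratio to $L^{L(t-s_k)}$ tends to a constant, \emph{not} to $e^{(t-s_k)v_r}$; your ``more careful'' expansion $(L+v_r/L)^{L(t-s_k)}=L^{L(t-s_k)}e^{(t-s_k)v_r+o(1)}$ is false. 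Moreover the cross factor $\prod_{i<j}\frac{v_j-v_i}{v_j-v_i-1}$ is not invariant under this rescaling: after $v\mapsto v/L$ each factor reads $\frac{(v_j-v_i)/L}{(v_j-v_i)/L-1}$, which degenerates as $L\to\infty$, and the extra powers of $L$ from the measure and from the homogeneity of $\fp_{\mu-1^m}$, which you assert ``exactly cancel,'' do not balance in this scheme (your quoted exponent $\sum_{i,x}m_i^{(x)}(x+i-t-1)$ is the $\sigma$-bookkeeping from the strict-weak proof, not the normalization $L^{tL-[s_kL]}$ relevant here).

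The correct route — and the one the paper takes — is to leave the integration variables and contours completely fixed (positively oriented, nested around $0$, with unit spacing). Then the only $L$-dependent factors in the integrand are $(L+v_r)^{Lt-[s_kL]}$, and dividing by the same normalization $L^{\sum_k\fm_k(tL-[s_kL])}$ used for the left-hand side one gets the elementary uniform limit
\begin{equation*}
\frac{(L+v_r)^{tL-[s_kL]}}{L^{tL-[s_kL]}}=\Bigl(1+\frac{v_r}{L}\Bigr)^{tL-[s_kL]}\longrightarrow \exp\bigl((t-s_k)v_r\bigr)
\end{equation*}
on the compact contours, while $\fp_{\mu-1^m}(\mathrm{colour}(\mu);v_1,\dots,v_m)$ is independent of $L$, so no homogeneity or measure bookkeeping is needed at all. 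With that correction (and with the left-hand side normalized by $L^{tL-[s_iL]}$ per factor, matching \eqref{eq:strict-weak-to-OY}, where convergence of joint moments is justified via convergence of the single-variable moment formulas and Lemma \ref{lem:moments-conv}), your argument becomes the paper's proof.
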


\begin{proof} Let us take the limit $L\to\infty$ of \eqref{eq:main-strict-weak} with $\kappa=L$ and the coloured composition $\mu$ replaced by $\mu^{(L)}$ that has exactly the same parts, but the colours of those parts are $[s_1L],\dots,[s_\alpha L]$ instead of $c_1,\dots,c_\alpha$, respectively. 
	
Let us look at the left-hand side first. The weak joint convergence of the 
random variables
$$
\lim_{L\to\infty} 
\frac{Z^{([s_iL]+1)}_{Lt,x}(\kappa=L)}{L^{tL-[s_iL]}}=\exp\bigl({\tfrac
{s_i-t}2}
\bigr)\cdot Z^{OY}_{(1,s_i)\to (n,t)}
$$	 
for various $i$ and $x$ follows from the central limit theorem, as was 
mentioned above, and the convergence of moments of these random variables 
follows from the corresponding convergence of their integral representations, 
see 
\cite[Theorem 5.3]{CSS} for moments of the left-hand side, and 
\cite[Proposition 5.2.8]{BorodinC} for the moments of the right-hand 
side.\footnote{That convergence is, in fact, a special case of the one we are 
about to observe for single-coloured $\mu$.} Lemma 
\ref{lem:moments-conv} then shows that the left-hand side of 
\eqref{eq:main-strict-weak} divided by 
the power of $L$ with exponent 
$$
\sum_{i,x\ge 1} m_i^{(x)} (tL-[s_iL])=\sum_{k=1}^\alpha \fm_k (tL-[s_kL])
$$ 
converges to 
that of \eqref{eq:main-OY}. 

On the other hand, for the right-hand side the convergence of the $L$-dependent 
factors in the integrand is elementary:
$$
\lim_{L\to\infty} 
\frac{(L+v_r)^{tL-[s_kL]}}{L^{tL-[s_kL]}}=\exp\left((t-s_k)v_r\right),
$$
uniformly for bounded $v_r$'s, which leads to the right-hand side of 
\eqref{eq:main-OY}. 
\end{proof}

\subsection{Continuum Brownian polymer}\label{ssec:cont_Brown} One way to define partition
functions of the continuum Brownian polymer in (1+1)-dimensions is through 
solving the stochastic heat equation with multiplicative 2d white noise. More 
exactly, let $\widetilde{\mathcal{Z}}^{(y)}(t,x)$ be the unique solution of the following 
stochastic partial differential equation with the initial condition:
$$
\widetilde{\mathcal{Z}}^{(y)}_t=\tfrac12 \widetilde{\mathcal{Z}}_{xx}^{(y)}+\eta(t,x)\widetilde{\mathcal{Z}}^{(y)}, \qquad t>0,\quad 
x\in\mathbb{R}; \qquad \widetilde{\mathcal{Z}}^{(y)}(0,x)=\delta(x-y), 
$$
where $\eta=\eta(t,x)$ is the two-dimensional white noise, and set 
$$
\mathcal Z^{(y)}(t,x)=(2\pi t)^{1/2}\, e^{{(x-y)^2}/{2t}}\cdot \widetilde{\mathcal Z}^{(y)}(t,x).
$$ 
We refer to the 
survey \cite{Q} and references therein for an extensive literature on this 
equation and its close relation to continuum Brownian path integrals and the Kardar-Parisi-Zhang equation. 

The solutions $\mathcal{Z}^{(y)}(t,x)$ arise naturally as limits of the 
partition functions of the semi-discrete Brownian polymer from the previous 
section:
\begin{equation}
\label{eq:OY-to-SHE}
\lim_{L\to\infty} \frac{\exp\bigl({\tfrac{y-t\sqrt{L}}2}
\bigr)\cdot Z^{OY}_{(1,y)\to 
(tL-x\sqrt{L},t\sqrt{L})}}{\exp(tL)\cdot L^{(x\sqrt{L}-tL)/2}}=\mathcal{Z}^{
(y)}(t,x).
\end{equation}
This was essentially verified on the level of convergence of integral
representations for moments in \cite{BorodinC}, and a complete proof for convergence of finite-dimensional distributions and moments with varying $x$ was given in \cite{Nica} (in different scalings).
It is very likely that the methods of \cite{Nica} are sufficient to achieve the same result for varying $y$ as well; we will not address that here but rather focus on convergence of integral representations for joint moments instead. 
We need to start with an appropriate analog of Lemmas \ref{lemma:f-lim} and \ref{lemma:f-lim2}. 

\begin{lem}
\label{lemma:f-lim3}  Take a $\lambda$-coloured composition $\mu=0^{\bm{m}^{(0)}} 1^{\bm{m}^{(1)}} 2^{\bm{m}^{(2)}} \cdots$ of length $m\ge 1$, where $\lambda$ is a composition of weight $|\lambda|=m$, and assume that 
$$
\mu_i=tL-\kappa_i\sqrt{L}+o(\sqrt{L}) \quad \text{as }L\to\infty, \quad 1\le 
i\le m,
$$
for a fixed $m$-tuple of reals $\kappa=(\kappa_1,\dots,\kappa_m)$ and $t>0$. Then there exists a limit
\begin{equation}
\label{eq:near-crit-point}
\lim_{L\to\infty}\frac{e^{t\sqrt{L}(w_1+\dots+w_m)}\cdot
\fp_{\mu-1^m}(\sqrt{L}+w_1,\dots,\sqrt{L}+w_m)}{L^{-(\mu_1+\dots+\mu_m)/2
}}=e^{\tfrac t2(w_1^2+\dots+w_m^2)}\cdot\fe_\kappa(\lambda;w_1,\dots,w_m),
\end{equation}
uniformly for bounded $w_i$'s, where the function $\fe_\kappa(w_1,\dots,w_m)$ can be characterized as follows. 

 {\rm (i)} For a rainbow $\mu$, in the dominant sector $\kappa_1\ge\kappa_2\ge\dots\ge\kappa_m$ one has (omitting $\lambda$ from the notation)
\begin{equation}
\label{eq:f-lim-antidominant3}
\fe_\mu(w_1,\dots,w_m)=\exp(\kappa_1w_1+\dots+\kappa_m w_m),
\end{equation} 
and in case $\kappa_i>\kappa_{i+1}$ for some $1\le i\le m-1$ one uses the exchange relations \eqref{eq:exchange-limit} with $(\fe, w_*)$ instead of $(\ff, u_*)$.

{\rm (ii)} For a general colouring composition $\lambda$, let $\theta:\{1,\dots,m\}\to\{1,\dots,n\}$ be a monotone map with $\theta_*((1,\dots,1))=\lambda$. Then 
\begin{equation}
\label{eq:f-lim-merge3}
\sum_{{\rm rainbow}\,  \kappa'\,:\, \theta_*(\kappa') = \kappa}
\fe_{\kappa'}(w_1,\dots,w_m)
= \fe_{\kappa}(\lambda; w_1,\dots,w_m).
\end{equation}
\end{lem}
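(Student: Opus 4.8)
The plan is to obtain Lemma \ref{lemma:f-lim3} from Lemma \ref{lemma:f-lim2} by a saddle-point-type rescaling, exactly as Lemma \ref{lemma:f-lim2} was obtained from Lemma \ref{lemma:f-lim}. First I would recall that $\fp_{\mu-1^m}(v_1,\dots,v_m)$ is a polynomial in $v_1^{-1},\dots,v_m^{-1}$ built from the anti-dominant building blocks $\prod_i v_i^{-(\mu_i-1)-1}=\prod_i v_i^{-\mu_i}$ by the exchange operators $\mathfrak{T}_i$ of \eqref{eq:exchange-limit} and the colour-merging sum \eqref{eq:f-lim-merge2}. In the dominant sector, after the substitution $v_i=\sqrt{L}+w_i$ with $\mu_i=tL-\kappa_i\sqrt{L}+o(\sqrt{L})$, each factor $v_i^{-\mu_i}$ becomes $(\sqrt{L}+w_i)^{-tL+\kappa_i\sqrt{L}+o(\sqrt{L})}$, and $\log(\sqrt{L}+w_i)=\tfrac12\log L+\log(1+w_i/\sqrt{L})=\tfrac12\log L+ w_i/\sqrt{L}-w_i^2/(2L)+O(L^{-3/2})$. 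Multiplying out the exponent, the $\tfrac12\log L$ term contributes $L^{-(\mu_1+\dots+\mu_m)/2}$, the linear-in-$w$ term contributes $e^{-(tL-\kappa_i\sqrt L)w_i/\sqrt L}=e^{-t\sqrt{L}w_i}e^{\kappa_i w_i}(1+o(1))$, and the quadratic term contributes $e^{tw_i^2/2}(1+o(1))$; the $o(\sqrt L)$ error in $\mu_i$ only contributes $e^{o(1)}$ uniformly for bounded $w_i$. This is precisely \eqref{eq:near-crit-point} with $\fe_\mu(w_1,\dots,w_m)=\exp(\kappa_1w_1+\dots+\kappa_m w_m)$ in the dominant sector, giving (i) in that sector.

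Next I would handle the non-dominant sectors and the colour merging. For a rainbow $\mu$ that is not dominant, I would argue that the limit commutes with the action of the exchange operators: the operator $\mathfrak{T}_i=1-\tfrac{v_i-v_{i+1}+1}{v_i-v_{i+1}}(1-\mathfrak{s}_i)$ depends on $v_i,v_{i+1}$ only through $v_i-v_{i+1}=w_i-w_{i+1}$, which is unaffected by the substitution $v=\sqrt{L}+w$, so $\mathfrak{T}_i$ acting on $v$-variables becomes $\mathfrak{T}_i=1-\tfrac{w_i-w_{i+1}+1}{w_i-w_{i+1}}(1-\mathfrak{s}_i)$ acting on $w$-variables. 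Since $\fp_{\mu-1^m}$ for a non-dominant $\mu$ is obtained by applying a fixed finite product of such $\mathfrak{T}_i$'s to the dominant building block, and the prefactors $e^{t\sqrt{L}(w_1+\dots+w_m)}L^{(\mu_1+\dots+\mu_m)/2}$ are symmetric in the $w_i$'s (note $\sum\mu_i$ is sector-independent), the limit passes through $\mathfrak{T}_i$ and produces $\fe_\mu$ satisfying \eqref{eq:exchange-limit}. The uniformity of convergence on compacts, inherited from Lemma \ref{lemma:f-lim2} and the fact that $\mathfrak{T}_i$ is a fixed operator with bounded coefficients away from the diagonal $w_i=w_{i+1}$, lets one interchange limit and operator. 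Part (ii) is then immediate: summing \eqref{eq:near-crit-point} over rainbow $\kappa'$ with $\theta_*(\kappa')=\kappa$ and using \eqref{eq:f-lim-merge2} on the left gives \eqref{eq:f-lim-merge3} on the right.

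I would also spell out the bookkeeping for the prefactor: on the left of \eqref{eq:near-crit-point} the index of $\fp$ is $\mu-1^m$, whose parts are $\mu_i-1$, so the natural power from $\prod v_i^{-(\mu_i-1)-1}=\prod v_i^{-\mu_i}$ indeed produces the stated $L^{-(\mu_1+\dots+\mu_m)/2}$ rather than $L^{-(\mu_1+\dots+\mu_m-m)/2}$; this is consistent with \eqref{eq:f-lim-antidominant2} having exponent $-\mu_i-1$ for index $\mu$, hence $-(\mu_i-1)-1=-\mu_i$ for index $\mu-1^m$. One small point to verify is that the limit is taken through a continuous parameter $L\to\infty$ (not along a lattice), which is fine because $\fp_\mu$ is an explicit rational function and the asymptotic expansions above are genuine real-variable asymptotics.

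The main obstacle, though a mild one, is justifying the interchange of the $L\to\infty$ limit with the exchange operators $\mathfrak{T}_i$ uniformly on compact sets avoiding the diagonals $w_i=w_{i+1}$ — i.e., making sure no spurious singularities of $\tfrac{w_i-w_{i+1}+1}{w_i-w_{i+1}}$ interfere and that the error terms $o(1)$ in the factorwise asymptotics remain $o(1)$ after differentiation/permutation. This is handled by noting that for a \emph{fixed} composition $\mu$ the representation of $\fp_{\mu-1^m}$ as a fixed finite word in the $\mathfrak{T}_i$'s is independent of $L$, so one is interchanging a limit with a fixed finite-dimensional linear operation, and term-by-term convergence with locally uniform control suffices. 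The rest is the routine Taylor-expansion calculation sketched above, which the statement ("The proof of this lemma is straightforward") already signals should not be belabored.
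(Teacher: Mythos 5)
Your proposal is correct and follows essentially the same route as the paper: reduce to the factorized (anti-dominant $\mu$, i.e.\ dominant $\kappa$) case, perform the single-variable expansion of $-\mu_i\log(\sqrt{L}+w_i)$ to extract $L^{-\mu_i/2}e^{-t\sqrt{L}w_i}e^{\kappa_i w_i+tw_i^2/2}$, and pass to the remaining sectors and colourings by finite linear combinations, using that the exchange operators depend only on the differences $v_i-v_{i+1}=w_i-w_{i+1}$, which the shift $v=\sqrt{L}+w$ leaves intact. The extra bookkeeping you include (the exponent $-\mu_i$ coming from the index $\mu-1^m$, and the symmetric $L$-dependent prefactors) matches the paper's implicit computation.
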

\begin{proof}
It suffices to check the convergence for rainbow anti-dominant $\mu$'s, as the other cases follow from that one by finite linear combinations (note that $w_i$'s are shifted but not scaled in the left-hand side of \eqref{eq:near-crit-point}). Then we need to prove that 
$$
\lim_{L\to\infty}\frac{e^{t\sqrt{L}(w_1+\dots+w_m)}
(\sqrt{L}+w_1)^{-\mu_1}\cdots(\sqrt{L}+w_m)^{-\mu_m}}{L^{-
(\mu_1+\dots+\mu_m)/2}}=e^{\tfrac t2(w_1^2+\dots+w_m^2)}\cdot 
e^{\kappa_1w_1+\dots+\kappa_m w_m}.
$$
As the relation splits into a product over $w_i$'s, it suffices to consider the case of a single variable. We have 
\begin{multline*}
-\mu_1\log(\sqrt{L}+w_1)=-\mu_1\left(\log\sqrt{L}+\left(\frac{w_1}{\sqrt{L
}}-\frac{w_1^2}{2L}+o(L^{-1})\right)\right)\\=
-\frac{\mu_1\log{(L)}}{2}-(tL-\kappa_1\sqrt{L}+o(\sqrt{L}))\left(\frac{w_1}{
\sqrt{L}}-\frac{w_1^2}{2L}+o(L^{-1})\right)\\
=\log{L^{-\mu_1/2}}-t\sqrt{L} w_1+\kappa_1w_1+\frac{tw_1^2}{2}
+o(1),
\end{multline*}
as required.
\end{proof}

We can now make a limiting statement for the moments of $\mathcal{Z}^{(y)}(t,x)$. 
\begin{prop}
\label{prop:main-SHE}
Let $\kappa=(\kappa_1,\dots,\kappa_m)$ be a coloured composition of length $m$ 
with real  coordinates, and let the colours $s_1<\dots<s_\alpha$ of the parts of 
$\kappa$ also take real values; denote 
$$
m_i^{(x)}=\#\{j:\kappa_j=x\ {\rm and\  has\  colour}\ s_i\},\qquad
\fm_i=\sum_x m_i^{(x)}, \qquad 1\le i\le \alpha, \ x\in\mathbb{R}.
$$
Then
\begin{multline}
\label{eq:main-SHE}
\E\, \prod_{(i,x)\,:\,m_i^{(x)}>0}
\frac{\left( 
\mathcal{Z}^{(s_i)}(t,x)\right)^{m_i^{(x)}}}{{m_i^{(x)}}!}
= 
\frac{1}{(2\pi\sqrt{-1})^m}\int\cdots\int
\prod_{1\le i<j\le m} \frac{w_j-w_i}{w_j-w_i-1} \\
\times
\prod_{k=1}^{\alpha}
\frac{\exp\left(-s_k\cdot\sum_{r>\fm[1,{k-1}]}^{\fm[1,k]}w_r\right)}{
	\fm_k!}
\cdot \fe_{\kappa}({\rm colour}(\kappa);w_1,\dots,w_m)
\prod_{i=1}^m e^{{tw_i^2}/{2}}{dw_i},
\end{multline}
where the integration is over upwardly oriented lines 
$w_i=a_i+\sqrt{-1}\cdot\mathbb{R}$ with 
$\Re a_j>\Re a_i +1$ for $j>i$. 
\end{prop}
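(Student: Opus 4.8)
The plan is to obtain \eqref{eq:main-SHE} as the $L\to\infty$ limit of the O'Connell–Yor moment formula \eqref{eq:main-OY}, exactly as Proposition \ref{prop:main-strict-weak} was obtained from Proposition \ref{prop:main-beta} and Proposition \ref{prop:main-OY} from Proposition \ref{prop:main-strict-weak}. First I would set up the scaling: in \eqref{eq:main-OY} replace the time parameter $t$ by $t\sqrt{L}$, take the colours of $\mu$ to be $s_i^{(L)}=[s_i\sqrt{L}]$ (so the rows $s_k$ become $s_k\sqrt{L}+o(\sqrt L)$), and take the spatial coordinates of the parts of $\mu$ to be $\mu_j=tL-\kappa_j\sqrt{L}+o(\sqrt L)$, matching the hypotheses of Lemma \ref{lemma:f-lim3}. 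The left-hand side then converges, by \eqref{eq:OY-to-SHE} together with Lemma \ref{lem:moments-conv}, to $\E\,\prod_{(i,x):m_i^{(x)}>0}(\mathcal Z^{(s_i)}(t,x))^{m_i^{(x)}}/m_i^{(x)}!$ after dividing by an appropriate power of $L$ (and the required convergence of individual moments is the content of \cite{BorodinC, Nica}, applied coordinatewise). The bookkeeping of the powers of $L$ on the two sides is routine and parallels the proof of Proposition \ref{prop:main-OY}.

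The substantive analytic step is the contour deformation and steepest-descent-type change of variables on the right-hand side of \eqref{eq:main-OY}. There the $v_i$-contours are small loops around $0$; I would substitute $v_i=\sqrt{L}+w_i$, which moves the relevant critical point of the exponential factor $\exp((t-s_k)v_r)$ (after the $L$-scaling of $t$) to the line $\Re v_i=\sqrt L$, i.e.\ to $\Re w_i=O(1)$. Under this substitution the factor $\prod_{i<j}(v_j-v_i)/(v_j-v_i-1)$ is unchanged, and the $L$-dependent pieces organize as follows: the product $\prod_{k}\prod_{r}(L+v_r)^{t-s_k}$ (with $t\to t\sqrt L$, $s_k\to s_k\sqrt L$) combines with $\fp_{\mu-1^m}(\sqrt L+w_1,\dots)$ precisely into the combination treated by Lemma \ref{lemma:f-lim3}, producing $e^{(t/2)\sum w_i^2}\fe_\kappa({\rm colour}(\kappa);w_1,\dots,w_m)$ times the explicit exponential prefactor $\prod_k \exp(-s_k\sum_{r}w_r)/\fm_k!$, while the nested loop contours around $0$ become nested vertical lines $w_i=a_i+\sqrt{-1}\mathbb R$ with $\Re a_j>\Re a_i+1$, the poles at $v_j-v_i-1=0$ surviving the limit as the condition $\Re a_j>\Re a_i+1$. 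One must check that the Gaussian decay $e^{(t/2)w_i^2}$ — more precisely $e^{t w_i^2/2}$ along vertical lines, which decays — justifies exchanging limit and integral; a standard dominated-convergence argument using the uniform bounds in Lemma \ref{lemma:f-lim3} (valid for bounded $w_i$) together with Gaussian tail bounds does this, after first truncating the contours to bounded pieces and controlling the tails.

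I expect the main obstacle to be exactly this justification of the interchange of limit and integral along the now-unbounded (vertical line) contours, since Lemma \ref{lemma:f-lim3} only asserts uniform convergence for \emph{bounded} $w_i$, whereas the final contours are non-compact. The resolution is to note that for large $|w_i|$ the integrand on the right-hand side of \eqref{eq:main-OY}, after the change of variables, is dominated by the Gaussian factor coming from $(L+v_r)^{t\sqrt L - s_k\sqrt L}$: writing $(L+v_r)^{c\sqrt L}=L^{c\sqrt L}\exp(c\sqrt L\log(1+w_r/\sqrt L))=L^{c\sqrt L}\exp(c\sqrt L\,w_r - \tfrac{c}{2}w_r^2+o(1))$ shows the real part of the exponent is $\le -\tfrac{c}{2}(\Re w_r)^2+\tfrac{c}{2}(\Im w_r)^2 + \ldots$, so after choosing the vertical lines appropriately one gets a genuine $e^{-c|\Im w_r|^2/2}$-type bound uniformly in $L$, legitimizing dominated convergence. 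The remaining steps — checking that no poles are crossed when deforming the nested loops around $0$ out to the vertical lines, and that the two sides have matching powers of $L$ so the limit is finite and nonzero — are routine and follow the template of the proofs of Propositions \ref{prop:main-strict-weak} and \ref{prop:main-OY}.
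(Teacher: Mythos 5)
Your overall strategy --- obtain \eqref{eq:main-SHE} as the $L\to\infty$ limit of \eqref{eq:main-OY} via the substitution $v_i=\sqrt{L}+w_i$, Lemma \ref{lemma:f-lim3}, and a steepest-descent/dominated-convergence argument --- is exactly the route the paper takes (the paper only sketches it, delegating the tail control you rightly identify as the main analytic point to \cite[Proposition 5.4.2]{BorodinC}). However, your scaling is set up incorrectly, and as written the computation would not produce \eqref{eq:main-SHE}. In \eqref{eq:main-OY} the colours are already the continuum starting times $s_1<\dots<s_\alpha$: the rescaling of discrete colours, $c_k=[s_kL]$, happened one step earlier, in the passage from \eqref{eq:main-strict-weak} to \eqref{eq:main-OY}. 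In the OY$\to$SHE limit \eqref{eq:OY-to-SHE} these must be kept \emph{fixed}, since they become the spatial starting points of $\mathcal{Z}^{(s_i)}$; the only quantities that scale are the OY time, $t\mapsto t\sqrt{L}$, and the parts of $\mu$, $\mu_j=tL-\kappa_j\sqrt{L}+o(\sqrt{L})$. Your substitution $s_k\mapsto[s_k\sqrt{L}]$ spoils both sides: on the left, $Z^{OY}_{(1,s_k\sqrt{L})\to(tL-\kappa_j\sqrt{L},\,t\sqrt{L})}$ does not converge to $\mathcal{Z}^{(s_k)}(t,\kappa_j)$ (the duration is then $(t-s_k)\sqrt{L}$ while the number of levels is $\sim tL$, the wrong relation for \eqref{eq:OY-to-SHE} unless $s_k=0$); on the right, the exponential factor becomes $\exp\bigl((t-s_k)\sqrt{L}\sum_r w_r\bigr)$, which no longer supplies the factor $e^{t\sqrt{L}\sum_i w_i}$ required by Lemma \ref{lemma:f-lim3} against parts $\mu_j=tL-\kappa_j\sqrt{L}$, and the leftover $e^{-s_k\sqrt{L}\,w_r}$ does not converge to the prefactor $e^{-s_k w_r}$ of \eqref{eq:main-SHE}.

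Relatedly, the factors $(L+v_r)^{t-s_k}$ that you expand do not occur in \eqref{eq:main-OY} at all; they belong to the strict-weak formula \eqref{eq:main-strict-weak}, so your asymptotic bookkeeping is performed on the wrong integrand. Moreover the expansion you use is algebraically incorrect: $L+v_r=L+\sqrt{L}+w_r$ is not $\sqrt{L}\,(\sqrt{L}+w_r)$, so $(L+v_r)^{c\sqrt{L}}\ne L^{c\sqrt{L}}\exp\bigl(c\sqrt{L}\log(1+w_r/\sqrt{L})\bigr)$, and in the correct computation the Gaussian $e^{tw_i^2/2}$ is not produced by any exponential prefactor of the integrand but by the expansion of $\fp_{\mu-1^m}(\sqrt{L}+w_1,\dots,\sqrt{L}+w_m)\sim\prod_i(\sqrt{L}+w_i)^{-\mu_i}$, i.e.\ it is the content of Lemma \ref{lemma:f-lim3} itself. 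Consequently the uniform-in-$L$ bound justifying the interchange of limit and integral along the unbounded vertical contours must be extracted from this $\fp$-factor (the standard steepest-descent estimate showing the integrand is negligible away from a bounded neighbourhood of the critical point $v=\sqrt{L}$), not from the factor you propose, which is not present in the integrand. With the scaling corrected as above, the rest of your outline (convergence of the left-hand side via \eqref{eq:OY-to-SHE} and Lemma \ref{lem:moments-conv}, matching of powers of $L$, deformation of the nested loops to nested vertical lines preserving the condition $\Re a_j>\Re a_i+1$) does follow the paper's argument.
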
 
\begin{proof}[Sketch of the proof] We obtain \eqref{eq:main-SHE} as limit of 
\eqref{eq:main-OY}. The convergence of the left-hand sides was discussed below 
\eqref{eq:OY-to-SHE}. The convergence of the right-hand sides is a standard 
steepest descent argument with the main contribution coming from a finite 
neighborhood of the critical point $v=\sqrt{L}$, see the proof of 
\cite[Proposition 5.4.2]{BorodinC} for a similar situation. The change of 
variables $v_i=\sqrt{L}+w_i$, $1\le i\le m$, together with Lemma 
\ref{lemma:f-lim3}, leads to the convergence of the integrands. 
\end{proof}

\begin{rmk}\label{rem:shift-invariance} Assume that the string 
$\kappa=(\kappa_1,\dots,\kappa_m)$ can be split into three sequential 
(possibly empty) substrings $\kappa=(\kappa',\kappa'',\kappa''')$, with all the 
coordinates of $\kappa''$ being (weakly) smaller than those of $\kappa'$ and 
(weakly) larger than
those of $\kappa'''$, and with all the colours $s_i$ of the coordinates of 
$\kappa''$ being (strictly) larger than those of $\kappa'$ and 
(strictly) smaller than
those of $\kappa'''$. A special case of this situation is the dominant sector 
$\kappa_1\ge \dots\ge \kappa_m$ with rainbow compositions served by 
\eqref{eq:f-lim-antidominant3}. 

It is not hard to show from the definition of 
the $\fe_\kappa$-functions in Lemma \ref{lemma:f-lim3}, that under a simultaneous 
shift
of all the coordinates of $\kappa''$ by $\Delta$ that does not change the 
ordering conditions above, $\fe_\kappa(w_1,\dots,w_m)$ is multiplied by 
$\exp(\Delta(w_a+\dots,w_b))$, where $\kappa''=(\kappa_a,\dots,\kappa_b)$. 
Hence, if one simultaneously performs the shift of all the colours of the 
parts of $\kappa''$ by the same amount $\Delta$, then the right-hand side of 
\eqref{eq:main-SHE} is not going to change. Since this means that the moments
in the left-hand side do not change as well, it is natural to conjecture that
the joint distribution of the participating $\mathcal{Z}$'s also does not change
(the moments do not determine this distribution uniquely, though). When $\kappa''$
consists of one part, this conjecture was verified in \cite{BGW}, along
with its versions for higher models, up to coloured stochastic vertex models 
in general ``down-right'' domains. It would be very interesting to extend those results 
to $\kappa''$ consisting of more than a single part.
\end{rmk}

\bibliographystyle{alpha}
\bibliography{references}

\end{document}